\title{\ \ \ \ \ \ \ \  KOSZUL DUALITY PATTERNS IN FLOER THEORY}
\newtheorem{theorem}{Theorem}
\newtheorem{lemma}[theorem]{Lemma}
\newtheorem{proposition}[theorem]{Proposition}
\newtheorem{definition}[theorem]{Definition}
\newtheorem{corollary}[theorem]{Corollary}
\newtheorem{example}[theorem]{Example}
\newtheorem{remark}[theorem]{Remark}
\newtheorem{conjecture}[theorem]{Conjecture}
\newcommand{\f}[1]{\mathbb{#1}}
\newcommand{\s}{\mathfrak{s}}
\newcommand{\K}{\mathrm{k}}
\newcommand{\QED}{\vspace{-.31in}\begin{flushright}\qed\end{flushright}}
\author{\ \ \ Tolga Etg\"u and Yank\i\ Lekili} 
\address{Ko\c{c} University \newline University of Illinois at Chicago}
\begin{document}

\begin{abstract} We study symplectic invariants of the open
	symplectic manifolds $X_\Gamma$ obtained by plumbing cotangent bundles
	of 2-spheres according to a plumbing tree $\Gamma$. For any tree $\Gamma$, we calculate (DG-)algebra models of the Fukaya category $\mathcal{F}(X_\Gamma)$ of closed exact
    Lagrangians in $X_\Gamma$ and the wrapped Fukaya category $\mathcal{W}(X_\Gamma)$.
    When $\Gamma$ is a Dynkin tree of type
    $A_n$ or $D_n$ (and conjecturally also for $E_6,E_7,E_8$), we prove that these models for the
    Fukaya category $\mathcal{F}(X_\Gamma)$ and
    $\mathcal{W}(X_\Gamma)$ are related by (derived) Koszul duality. As an
	application, we give explicit computations of symplectic cohomology of $X_\Gamma$ for
    $\Gamma=A_n,D_n$, based on the Legendrian surgery formula of Bourgeois, Ekholm and Eliashberg. 
\end{abstract} 
\maketitle

\section{Introduction}

Let us begin by recalling a simple example that we learned from Blumberg, Cohen and Teleman \cite{blucohtel}. Consider a \emph{simply-connected} smooth compact manifold
$S$ and its cotangent bundle $M = T^*S$ with its canonical symplectic
structure. The zero-section $S$ is a Lagrangian submanifold. We choose a basepoint $x \in S$ and consider the corresponding cotangent fibre $L = T^*_x S$.
This is another Lagrangian submanifold, a noncompact one. Throughout, our
Lagrangian submanifolds will be equipped with a \emph{brane structure}. This means
that they will be given an orientation, a spin structure (in particular, we assume here that $S$ is spinnable) and they will be equipped with a grading in the sense of \cite{seidelgraded}. 

Fix a coefficient field $\f{K}$. Lagrangian Floer theory gives us $\f{Z}$-graded $A_\infty$-algebras over $\f{K}$: \[ \mathscr{A} = CF^*(S, S) 
\text{ \ \ \ \  , \ \ \ \  }  \mathscr{B} = CW^*(L,L) \] 

Indeed, $S$ is an object of $\mathcal{F}(M)$, the Fukaya category of closed
exact Lagrangian branes in the Liouville manifold $M$ (see Seidel \cite{thebook}). The
endomorphisms of the object $S$ in this category are given by the Fukaya-Floer
$A_\infty$-algebra $CF^*(S,S)$. On the other hand, $L$ is an object
of $\mathcal{W}(M)$, the wrapped Fukaya category of $M$ (\cite{abouzseidel}). The
endomorphisms of the object $L$ in this category are given by the wrapped Floer cochain
complex $CW^*(L,L)$ which again has an associated $A_\infty$-structure (well-defined up to
quasi-isomorphism). 

Now, in this setting, by construction, there exists a full and faithful embedding
\[  \mathcal{F}(M) \to \mathcal{W}(M) \] 
since by definition $\mathcal{W}(M)$ allows certain non-compact Lagrangians in $M$ with controlled
behaviour at infinity, in addition to the exact compact Lagrangians in $M$. Furthermore, it is a
general fact (see \cite{abouzcot}) that a cotangent fibre generates the wrapped Fukaya category in
the derived sense. Hence, in particular, one has a Yoneda functor to the DG-category of
$A_\infty$-modules over $\mathscr{B}$: 
\[ \mathcal{Y} : \mathcal{F}(M) \to \mathscr{B}^{mod} \] 
which is a cohomologically full and faithful embedding. This sends an exact compact Lagrangian $T$
to the (right) $A_\infty$-module $\mathcal{Y}_{T}= CW^*(L,T)$ over $\mathscr{B}$. As a consequence,
one can compute $\mathscr{A}$ via its quasi-isomorphic image under $\mathcal{Y}$: \begin{equation}
\label{EM} \mathscr{A} \simeq \mathrm{hom}_{\mathscr{B}} (\f{K}, \f{K})  \end{equation}
where we write $\f{K}$ for the right $A_\infty$-module over $\mathscr{B}$ with underlying vector space $\f{K} \cdot x = CW^*(L,S)$. Equipping $S$ and $L$ with suitable brane structures, one can arrange that the degree $|x|=0$. The only non-trivial module map is the multiplication by the idempotent element in $CW^0(L,L) = \f{K} \cdot e$ which acts as the identity. The other products (including the higher products) are necessarily trivial. This can be seen from the fact that $CW^*(L,L)$ is supported in non-positive degrees (as we shall see below). Note that we are following the conventions of \cite{thebook} where, for example, the $A_\infty$-module maps are given by Floer products:
\[ CW^*(L,S) \otimes CW^*(L,L)^{\otimes k}  \to CW^*(L,S)[1-k] \ \ \ \ \ \ \ k=0,\ldots   \]

Throughout, upwards shift of grading by $n$ is written as $[-n]$.

On the other hand $CW^*(L,S)$ is also a (left) $A_\infty$-module over $CF^*(S,S)$, where $A_\infty$-module maps are given by Floer products:  
\[ CF^*(S,S)^{\otimes k} \otimes CW^*(L,S) \to CW^*(L,S)[1-k] \ \ \ \ \ \ \ k=0,\ldots   \]
To be in line with the conventions of \cite{thebook}, we prefer to view this as a right $\mathscr{A}^{op}$-module (which entails slightly different sign conventions). In fact, in our setting, it turns out that $\mathscr{A}$ is quasi-isomorphic to $\mathscr{A}^{op}$. 

Somewhat more surprisingly, one can also compute $\mathscr{B}$ as: 
\begin{equation} \label{dual} \mathscr{B}^{op} \simeq \mathrm{hom}_{\mathscr{A}^{op}} (\f{K}, \f{K}) \end{equation} 

This is an instance of \emph{Koszul duality}.

\begin{figure}[tb!]
\centering
\includegraphics[width=0.3\textwidth]{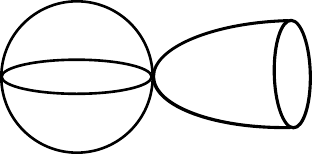}
\caption{A picture of Koszul duality}
\label{fig0}
\end{figure}

To see this, we observe that both $\mathscr{A}$ and $\mathscr{B}$ have topological models due to Abouzaid (\cite{abouzplum}, \cite{abouzwrap}). Indeed, there are $A_\infty$-equivalences:
\[ \mathscr{A} \simeq C^*(S; \f{K} ) \text{\ \ \ \ and\ \ \ \ } \mathscr{B} \simeq C_{-*} ( \Omega_x S; \f{K}), \]
where $\Omega_x S$ is the based loop space of $S$ at $x$. Notice the cohomological grading in place. In particular, $\mathscr{A}$ is supported in non-negative
degrees and $\mathscr{B}$ is supported in non-positive degrees. 

Now, Eqn.~(\ref{EM}) becomes an Eilenberg-Moore equivalence (of DGA's):
\[ \mathrm{Rhom}_{C_{-*}(\Omega_x S)}(\f{K}, \f{K}) \simeq C^*(S; \f{K})   \]
and Eqn.~(\ref{dual}) is Adams' cobar equivalence (see \cite{adams, JM}):
\[ \mathrm{Rhom}_{C^*(S)^{\, op}} (\f{K},\f{K}) \simeq C_{-*}( \Omega_x S)^{op}. \] 
($^{op}$ operations get removed from both sides, if one considers $\f{K}$ as a left $C^*(S)$-module.)

This duality is relevant to us because it induces an isomorphism at the level of Hochschild cohomology. Namely, by a general result of Keller \cite{keller} (see also \cite{FMT}) one obtains an isomorphism of Gerstenhaber algebras (in fact, of $B_\infty$-algebras at the chain level) : 
\[ HH^*(C^*(S), C^*(S)) \cong HH^*(C_{-*} (\Omega_x S), C_{-*}(\Omega_x S)). \]

In view of Abouzaid's generation result \cite{abouzwrap}, the right-hand side is in turn isomorphic
to $HH^*(\mathcal{W}(M))$ as a Gerstenhaber algebra. On the other hand, the work of
Bourgeois-Ekholm-Eliashberg \cite{BEE}, can be interpreted, over a field $\mathbb{K}$ of
characteristic 0, to give an isomorphism of Gerstenhaber algebras:
\[ HH^*(\mathcal{W}(M)) \cong SH^*(M). \]

The group on the right-hand side is called \emph{symplectic cohomology}.
Strictly speaking, the results of \cite{BEE} relate symplectic and Hochschild
\emph{homologies}. However, in our computations, we will use an explicit DG-algebra as a model for
$\mathcal{W}(M)$, which has an (open) Calabi-Yau property (in the sense of \cite{ginzburg}) implying
an isomorphism between Hochschild homology and cohomology. This allows us to use the
cohomological statement above that we find more convenient. Symplectic
(co)homology of a Liouville manifold is a symplectic invariant based on an
extension of Hamiltonian Floer (co)homology to non-compact symplectic
manifolds. It was introduced by Viterbo \cite{viterbo} in its current form. We
recommend \cite{biased} for an excellent introduction to symplectic cohomology
and the recent manuscript \cite{abouzmanuz} for more. Briefly, this is a very
interesting invariant of a Stein manifold that is sensitive to the underlying
symplectic structure (cf. \cite{groeli}). Symplectic cohomology is in general difficult to
calculate explicitly. However, \cite{BEE} and \cite{bee2} recently outlined a
proof of a surgery formula for symplectic (co)homology.  Combining this with
the very recent \cite{EkNg}, one obtains a purely combinatorial description of
symplectic cohomology of any 4-dimensional Weinstein manifold. (In the absence
of 1-handles and when the coefficient field is $\f{Z}_2$, one had
\cite{chekanov} as a precursor to \cite{EkNg}). This combinatorial description
is in general still highly complicated. It involves non-commutative and
infinite-dimensional chain complexes.

In the above setting, assuming that $\mathscr{A}= C^*(S)$ is a formal DG-algebra, that is, it is quasi-isomorphic to its homology $A= H^*(S)$, we get a promising way of computing symplectic cohomology. Namely, one has:
\[ HH^*(H^*(S), H^*(S)) = SH^*(M) \] 
By a famous result of Deligne-Griffiths-Morgan-Sullivan \cite{DGMS}, the formality assumption holds
if $S$ is a K\"ahler manifold and $\f{K}$ has characteristic zero. Note that as a consequence of
formality of $C^*(S)$, one has a bigrading on $HH^*(C^*(S), C^*(S))$; there is a cohomological grading $r$ associated with the Hochschild cochain complex and there is an internal grading $s$ coming from the grading on $H^*(S)$. To get an isomorphism to $SH^*(M)$, where the grading is given by a Conley-Zehnder type index, one has to consider the grading of the total complex corresponding to $r+s$. 

Let us note that one could arrive at the same conclusion by combining theorems of Viterbo \cite{viterbo} and Cohen-Jones \cite{cohenjones}. 

In this paper, we give a generalization of the above (in dimension 4) to Liouville manifolds $M=X_\Gamma$ obtained via plumbings of $T^*S^2$ according to a plumbing tree $\Gamma$. We will work over semisimple rings $\mathrm{k}$ of the form 
\[ \mathrm{k}= \bigoplus_v \f{K} e_v,  \] where $e_v^2 = e_v$ and $e_v e_w =0$ for $v \neq w$ and the index set of the sum is the vertex set $\Gamma_0$ of $\Gamma$.  

To wit, using Floer complexes over $\f{K}$, we set: 
\[ \mathscr{A}_\Gamma = \bigoplus_{v,w} CF^*(S_v, S_w), \] 
where the $S_v$ are the Lagrangian spheres corresponding to the zero sections of the cotangent bundles $T^*S^2$ that we are plumbing, and similarly we have 
\[ \mathscr{B}_\Gamma = \bigoplus_{v,w} CW^*(L_v, L_w) \]
where $L_v$ is a cotangent fibre at a chosen base point on $S_v$ (away from the plumbing region). 

In fact, assuming $\text{char}(\f{K}) =0$, it turns out that $\mathscr{A}_\Gamma$ tends to be a
formal DG-algebra (we can prove this when $\Gamma$ is of type $A_n$ or $D_n$, and conjecture it for
$E_6,E_7,E_8$), hence in such cases, 
we may replace it with $A_\Gamma = H^*(\mathscr{A}_\Gamma)$. Furthermore, very
early on, we will replace $\mathscr{B}_\Gamma$ with a quasi-isomorphic DG-algebra (see \cite[Prop. 4.11 and Thm. 5.8]{BEE}) which has a combinatorial description. Namely, we will use Chekanov's DG-algebra (\cite{chekanov}), with the cohomological grading, associated to a Legendrian link $\Lambda_\Gamma  = \bigcup \Lambda_v$ giving a
Legendrian surgery diagram for $X_\Gamma$ where the components are indexed by vertices $v$ of $\Gamma$ and each component $\Lambda_v$ is a
Legendrian unknot in $\f{R}^3$ (see Fig.~(\ref{fig2})).  In the
context of \cite{BEE}, the homologically graded version of this is also called
the Legendrian contact homology algebra. 

At this point, one obtains an explicit description of the DG-algebra
$\mathscr{B}_\Gamma$. A careful choice of the surgery diagram (with suitable decorations)
enables us to observe that the DG-algebra $\mathscr{B}_\Gamma$ is a
\emph{deformation} of  Ginzburg's
(cohomologically graded) DG-algebra $\mathscr{G}_\Gamma$ associated with the tree $\Gamma$ (see
Thm. (\ref{balik})).\footnote{An earlier version of this manuscript claimed an isomorphism between
    $\mathscr{B}_\Gamma$ and $\mathscr{G}_\Gamma$, due to our blindness to some higher energy
curves. We are indebted to the referee for opening our eyes.} 
    Note that in
\cite{ginzburg} Ginzburg associates a $CY3$ DG-algebra to any graph $\Gamma$
and a potential function. In this paper, $\Gamma$ is a tree and the potential
function vanishes. On the other hand, since we are plumbing copies of $T^* S^2$, our
DG-algebras are $CY2$. This generalization of the construction of
Ginzburg's DG-algebra in order to obtain a $CY2$ DG-algebra appears in \cite{vandenbergh}.
(See Def.~(\ref{ginzdga}) for the definition of $\mathscr{G}_\Gamma$).

The observation that $\mathscr{B}_\Gamma$ is a deformation of the corresponding Ginzburg DG-algebra
$\mathscr{G}_\Gamma$ enables us to use the vast literature on the study of Ginzburg's DG-algebras to
study symplectic invariants of $X_\Gamma$. Now, our discussion branches into two according to whether the underlying tree $\Gamma$ is of Dynkin type or not.

{\bf Dynkin case:} 

For $\Gamma$ of type $A_n$ or $D_n$, we prove the following theorem:
\begin{theorem} For $\Gamma= A_n$ and $\mathbb{K}$ arbitrary field, or $\Gamma=D_n$ and $\mathbb{K}$
    a field with $\mathrm{char}\ \mathbb{K} \neq 2$, there is a quasi-isomorphism of DG-algebras:
    \[ \mathscr{B}_\Gamma \simeq \mathscr{G}_\Gamma \] 
\end{theorem}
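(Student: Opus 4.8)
The plan is to begin from Theorem~\ref{balik}, which presents $\mathscr{B}_\Gamma$ and $\mathscr{G}_\Gamma$ on the \emph{same} underlying graded $\mathrm{k}$-algebra: the tensor algebra over $\mathrm{k}$ generated by the arrows $a,a^*$ of the doubled quiver of $\Gamma$ in degree $0$, together with a loop $t_v$ at each vertex $v$ in degree $-1$. Writing $r_v = e_v\big(\sum_a [a,a^*]\big)e_v$ for the preprojective relation at $v$, so that $d_{\mathscr{G}}t_v = r_v$ and $d_{\mathscr{G}}a = d_{\mathscr{G}}a^* = 0$, the content of Theorem~\ref{balik} is that
\[ d_{\mathscr{B}}t_v = r_v + c_v, \qquad d_{\mathscr{B}}a = d_{\mathscr{B}}a^* = 0, \]
where each $c_v$ is a $\mathrm{k}$-linear combination of cyclic words at $v$ of length at least $3$ in the arrows — the contributions of the higher-energy disks flagged in the footnote. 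Since every generator lies in non-positive degree and the arrows are $d_{\mathscr{B}}$-closed, the relation $d_{\mathscr{B}}^2 = 0$ holds automatically and imposes no further constraint; the theorem is therefore equivalent to showing that the corrections $c_v$ can be removed by a filtered isomorphism of the tensor algebra that is the identity on the arrows.

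First I would introduce the word-length filtration in which each arrow has weight $1$ and each loop $t_v$ has weight $2$, so that $d_{\mathscr{G}}$ is weight-preserving while $d_{\mathscr{B}}-d_{\mathscr{G}}$ strictly raises weight. I would then remove the $c_v$ one weight at a time. A change of variables $t_v \mapsto t_v + \eta_v$, with $\eta_v$ a degree $-1$ element of the form $\sum p_1\, t_w\, p_2$ (the $p_i$ nonconstant paths), transforms the differential by
\[ d_{\mathscr{B}}(t_v + \eta_v) = r_v + c_v + \sum p_1\, r_w\, p_2 + (\text{strictly higher weight}). \]
Hence the lowest-weight part of $c_v$ can be cancelled precisely when it lies in the two-sided ideal $I=(r_w)_w$ generated by the preprojective relations, equivalently when its image in the preprojective algebra $\Pi_\Gamma = \mathrm{k}\langle\text{arrows}\rangle/I$ vanishes. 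Thus the obstruction to trivializing the deformation, order by order, is a sequence of classes in $e_v\,\Pi_\Gamma\, e_v$. Because $\Gamma$ is Dynkin, $\Pi_\Gamma$ is finite-dimensional and concentrated in bounded path-length, so only finitely many weights carry cyclic words and the induction terminates, producing an honest isomorphism of DG-algebras (in particular a quasi-isomorphism).

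The hard part is to compute these obstruction classes explicitly from the Legendrian surgery diagram of $X_\Gamma$ and to verify their vanishing in the stated characteristics. For $\Gamma = A_n$ the diagram is a chain of Legendrian unknots, and I would carry out a direct bookkeeping of the rigid disks to check that the cyclic words produced at each vertex already vanish in $\Pi_{A_n}$, so that no correction survives and the identification holds over an arbitrary field. For $\Gamma = D_n$ the only genuinely new phenomenon occurs at the trivalent vertex, where the higher disks contribute a cubic term whose class in $e_v\,\Pi_{D_n}\,e_v$ is annihilated by $2$; removing it through the change of variables above requires dividing by $2$, which is exactly why the hypothesis $\mathrm{char}(\mathbb{K}) \neq 2$ appears. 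I expect the two principal obstacles to be (i) pinning down the precise correction terms $c_v$, i.e.\ controlling the higher-energy holomorphic disks overlooked in the earlier version of the argument, and (ii) the representation-theoretic computation of the relevant graded pieces of $\Pi_{D_n}$ at the branch vertex, where the single factor of $2$ forces the characteristic restriction and where failure of the vanishing would genuinely obstruct the quasi-isomorphism.
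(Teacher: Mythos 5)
Your reduction to Thm.~(\ref{balik}) — same underlying tensor algebra, differential differing by higher-length corrections $c_v$, to be killed order by order along the length filtration — is the right starting point, and your treatment of $A_n$ is fine (in the standard projection no higher-order polygons occur at all, so $\mathscr{B}_{A_n}$ and $\mathscr{G}_{A_n}$ coincide on the nose; there is nothing to cancel in $\Pi_{A_n}$). But for $D_n$ there is a genuine gap. By insisting that the trivializing isomorphism be the identity on the degree-$0$ arrow generators, you force the order-by-order obstruction to be the class of the lowest-weight part of $c_v$ in $e_v\Pi_\Gamma e_v$, and that class does \emph{not} vanish. Concretely, for $D_4$ the only correction is $c_3=-xy$ with $x=c_{31}c_{13}$, $y=c_{32}c_{23}$ (a word of length $4$, not $3$); in $\Pi_{D_4}$ one has $x^2=y^2=(x+y)^2=0$, hence $xy=-yx$, and $xy$ spans the one-dimensional socle $e_3\Pi_{D_4}e_3$ in top path-length $h-2=4$, so $xy\neq 0$ and $2xy\neq 0$ in every characteristic. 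Your induction therefore halts at the first step, and your claim that the class is "annihilated by $2$" is false. The missing idea is that the isomorphism must also be allowed to deform the arrows $a,a^*$ by higher-length degree-$0$ words: this changes $r_v=\sum [a,a^*]$ by commutator-type terms that do \emph{not} lie in the two-sided ideal $(r_w)$, and it is precisely this extra freedom that makes the correct obstruction the Hochschild class $[\tilde d_3]\in HH^2(\mathscr{G}_\Gamma,\mathscr{G}_\Gamma[-2])$ rather than an element of $e_v\Pi_\Gamma e_v$. (The paper's explicit trivialization for $D_4$ indeed sends $g_{13}\mapsto c_{13}+\tfrac12\bigl(c_{13}c_{32}c_{23}-c_{13}c_{34}c_{43}\bigr)$, etc.; the factors $\tfrac12$ are where $\mathrm{char}(\mathbb{K})\neq 2$ enters.) Showing that this Hochschild group vanishes for negative internal degree when $\mathrm{char}(\mathbb{K})\neq 2$ is itself nontrivial; the paper does it via the Koszul duality $HH^*(\mathscr{G}_\Gamma)\cong HH^*(A_\Gamma)$ of Thm.~(\ref{koszulness}) and the computation of Sec.~(\ref{typeD}), and some such computation cannot be avoided.

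A second, smaller gap: even once all obstructions vanish, a substitution $t_v\mapsto t_v+\eta_v$ with $\eta_v$ containing loop generators is not invertible on the uncompleted tensor algebra (the inverse is an infinite series), so the order-by-order procedure only produces a quasi-isomorphism of length-completions; "the induction terminates because $\Pi_\Gamma$ is finite-dimensional" does not yield an honest isomorphism of the uncompleted DG-algebras. To descend one needs to know that the induced length filtration on $H^*(LCA(\Lambda_{D_n}))$ is Hausdorff (every sufficiently long word is null-homologous), truncate the completed map at finite length, correct the truncation to a chain map, and compare the two length-filtration spectral sequences. This is the content of Lem.~(\ref{Hausdorff}) and the final part of the proof of Thm.~(\ref{chainmap}), and it is absent from your argument.
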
 
For $\Gamma=A_n$, this result follows from a direct computation of
$\mathscr{B}_\Gamma$. However, for $\Gamma=D_n$, direct computation only shows that
$\mathscr{B}_\Gamma$ is a certain deformation of $\mathscr{G}_\Gamma$. We then appeal to standard
deformation theory arguments to show that this deformation is trivial when
$\mathrm{char}\ \mathbb{K} \neq 2$. In fact, we also prove that $\mathscr{B}_\Gamma$ and $\mathscr{G}_\Gamma$ are not
quasi-isomorphic when $\Gamma =D_n$ and $\mathrm{char}\ \mathbb{K} =2$ by showing that the
relevant obstruction class in $HH^2(\mathscr{G}_\Gamma)$ is non-trivial.  

We conjecture that $\mathscr{B}_\Gamma \simeq \mathscr{G}_\Gamma$ for $\Gamma= E_6, E_7$ if
$\mathrm{char}\ \mathbb{K} \neq 2,3$ and for $\Gamma = E_8$ if $\mathrm{char}\ \mathbb{K} \neq 2,3,
5$ but we leave the study of these exceptional cases to a future work. 

Assuming for brevity $\mathrm{char}\ \mathbb{K} \neq 2$, and $\Gamma = A_n$ or $D_n$, we can now
write $\mathscr{B}_\Gamma \simeq \mathscr{G}_\Gamma$. For $\Gamma$ of type $ADE$, $\mathscr{G}_\Gamma$ turns out to be non-formal
\cite{hermes}. Its cohomology has locally finite grading. Indeed, for an (algebraically
closed) field with characteristic zero, it was computed in \cite{hermes} that \[
H^*(\mathscr{G}_\Gamma) \cong \Pi_\Gamma \rtimes_\nu \mathrm{k}[u] \] as a $\mathrm{k}$-algebra,
where $\Pi_\Gamma$ is the preprojective algebra associated with the tree $\Gamma$, $|u|=-1$, and the multiplication is twisted by the Nakayama automorphism $\nu$ of $\Pi_\Gamma$. This is an involution, which is induced by an involution of the underlying Dynkin graph (see Sec.~(\ref{ginzburg})).

Because $\mathscr{G}_\Gamma$ is not formal, it is not immediately clear how to compute Hochschild cohomology of
$\mathscr{G}_\Gamma$. To help with this, we prove in Sec.~(\ref{seckoszul}) the following: 
\begin{theorem} Let $\mathbb{K}$ be any field. For any tree $\Gamma$, the associative
    algebra $A_\Gamma$ is Koszul dual to the DG-algebra $\mathscr{G}_\Gamma$, in the sense that there are DG-algebra
isomorphisms: \[ \operatorname{RHom}_{\mathscr{G}_\Gamma} (\K,\K) \simeq A_\Gamma \text{\ \ \ and \
\ \ } \operatorname{RHom}_{A_\Gamma^{op}} (\K,\K) \simeq \mathscr{G}_\Gamma^{op} \]
\end{theorem}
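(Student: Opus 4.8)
The plan is to obtain both quasi-isomorphisms from a single explicit Koszul resolution, built from a twisting cochain that pairs the generators of $A_\Gamma$ against those of $\mathscr{G}_\Gamma$. I would work with the presentations recalled in Section~\ref{ginzburg}: $A_\Gamma$ is the (Floer-)graded zigzag algebra of $\Gamma$, with idempotents $e_v$ in degree $0$, a pair of arrows $a,a^*$ per edge in degree $1$, and socle loops $x_v$ in degree $2$, subject to the zigzag relations under which the loops $x_v$ are the only nonzero length-two paths and all length-three paths vanish; while $\mathscr{G}_\Gamma$ (Definition~\ref{ginzdga}) is the Ginzburg DG-algebra on the doubled quiver, with arrows $a,a^*$ in degree $0$, loops $t_v$ in degree $-1$, zero differential on the arrows, and $d t_v = e_v\big(\sum_\alpha [\alpha,\alpha^*]\big)e_v$. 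The structural fact that drives everything is that $A_\Gamma$ is a symmetric (Frobenius) algebra of infinite global dimension, so the minimal resolution of the semisimple module $\K$ is infinite and eventually periodic; this periodicity is precisely what manufactures the degree $-1$ polynomial variable $u=t$ on the dual side, while the quadratic part of the relations manufactures the preprojective relations in $H^0$.

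Concretely, I would introduce the canonical degree-$1$ element $\tau = \sum_\alpha \alpha \otimes \alpha^\vee + \sum_v x_v \otimes t_v \in (A_\Gamma \otimes_\K \mathscr{G}_\Gamma)^1$, where $\alpha$ ranges over the arrows of the doubled quiver and $\alpha^\vee$ denotes the opposite arrow regarded in $\mathscr{G}_\Gamma$, and form the twisted complex $\mathcal{K} = A_\Gamma \otimes_\K \mathscr{G}_\Gamma$ with differential $1\otimes d_{\mathscr{G}_\Gamma} + \tau\cdot(-)$. The key point is that $\tau$ satisfies the Maurer--Cartan equation: this is a direct translation of the statement that the Ginzburg differential $d t_v = \sum_\alpha[\alpha,\alpha^*]$ is dual to the quadratic multiplication of the zigzag algebra, with the loops $x_v$ accounting for the degree-$2$ relations. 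Granting the Maurer--Cartan equation, $\mathcal{K}$ is a semifree DG-$A_\Gamma$-module, and the main technical step is to prove that it is acyclic, hence a resolution of $\K$. Acyclicity then identifies $\operatorname{RHom}_{A_\Gamma^{op}}(\K,\K)$ with the endomorphism DG-algebra of $\mathcal{K}$, which the twisting-cochain formalism reads off as $\mathscr{G}_\Gamma^{op}$ together with its full differential --- in particular reproducing the non-formal part, as it must since $\mathscr{G}_\Gamma$ is non-formal for $\Gamma$ of type $ADE$.

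For the first equivalence I would not recompute from scratch but invoke the reflexivity (bar--cobar involutivity) of Koszul duality: since $A_\Gamma$ is finite-dimensional and $\mathscr{G}_\Gamma$ is homologically smooth with non-positively graded, degreewise finite cohomology, $\K$ is a compact generator on both sides and the canonical biduality map $A_\Gamma \to \operatorname{RHom}_{\operatorname{RHom}_{A_\Gamma^{op}}(\K,\K)}(\K,\K)$ is a quasi-isomorphism. Combined with the identification of $\operatorname{RHom}_{A_\Gamma^{op}}(\K,\K)$ with $\mathscr{G}_\Gamma^{op}$, this yields $\operatorname{RHom}_{\mathscr{G}_\Gamma}(\K,\K)\simeq A_\Gamma$, completing both halves. (Alternatively one can compute $\operatorname{RHom}_{\mathscr{G}_\Gamma}(\K,\K)$ directly from the economical semifree resolution of $\K$ over the Ginzburg algebra, read off that the cohomology is $A_\Gamma$ term by term, and check by an explicit finite calculation that the higher $A_\infty$-products vanish, so that the answer is the formal algebra $A_\Gamma$ with the zigzag multiplication.)

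The main obstacle is the pair of interlocking verifications above: establishing the Maurer--Cartan equation for $\tau$ and the acyclicity of $\mathcal{K}$, carried out with explicit signs so that the argument is valid over an arbitrary field $\f{K}$. The non-formality of $\mathscr{G}_\Gamma$ means there is no shortcut through a purely graded quadratic-duality computation: one must track the genuine chain-level differential and confirm that the surviving higher operations on $\operatorname{RHom}_{A_\Gamma^{op}}(\K,\K)$ assemble exactly into the Ginzburg differential and nothing more. Managing the signs coming from the doubling of the quiver and from the Floer grading --- and checking that, in contrast to the comparison of $\mathscr{B}_\Gamma$ with $\mathscr{G}_\Gamma$, the present duality is insensitive to the characteristic --- is the delicate part of the bookkeeping.
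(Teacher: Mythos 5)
Your overall architecture --- produce both halves of the duality from one explicit Koszul-type resolution built from a twisting element, and get the remaining half by biduality --- is a legitimate alternative to the paper's route, and your identification of the Maurer--Cartan equation for $\tau$ with the compatibility between the Ginzburg differential $dh_v=\sum[g,g^*]$ and the zigzag multiplication is correct. But there is a genuine error in which $\operatorname{RHom}$ your complex $\mathcal{K}=A_\Gamma\otimes_\tau\mathscr{G}_\Gamma$ computes. With the differential $1\otimes d_{\mathscr{G}_\Gamma}+\tau\cdot(-)$, the twisting term sends a generator $1\otimes\xi$ to terms $a\otimes\alpha^\vee\xi$ with $\alpha^\vee\xi$ \emph{longer} than $\xi$ in $\mathscr{G}_\Gamma$; consequently $\mathcal{K}$ admits a finite semifree filtration as a $\mathscr{G}_\Gamma$-module (filter by the degree of the $A_\Gamma$-factor, which is bounded by $2$), but it is \emph{not} semifree, nor K-projective, as an $A_\Gamma$-module: the length filtration on $\mathscr{G}_\Gamma$ runs the wrong way and no admissible exhaustive filtration exists. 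One sees the failure already for $\Gamma=A_1$, where $A=\f{K}[x]/(x^2)$, $\mathscr{G}=\f{K}[h]$ with $|h|=-1$ and $\tau=x\otimes h$: the complex $\operatorname{Hom}_A(\mathcal{K},\K)\cong\f{K}[h]^\vee$ has zero differential and sits in non-negative total degrees, whereas $\operatorname{Ext}_A(\K,\K)\cong\f{K}[h]$ sits in non-positive ones. So acyclicity of $\mathcal{K}$ would prove $\operatorname{RHom}_{\mathscr{G}_\Gamma}(\K,\K)\simeq A_\Gamma$ --- the \emph{first} displayed equivalence --- and not, as you assert, $\operatorname{RHom}_{A_\Gamma^{op}}(\K,\K)\simeq\mathscr{G}_\Gamma^{op}$. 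A twisted tensor product that is semifree over $A_\Gamma$ must be built on the bar \emph{coalgebra}, i.e.\ it is the bar resolution; replacing the coalgebra by the dual algebra $\mathscr{G}_\Gamma$ on that side produces the completion $\widehat{\mathscr{G}_\Gamma}$ phenomenon the paper warns about (cf.\ the $\f{K}[x,x^{-1}]$ example and Sec.~(\ref{ginzburg})).

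The paper sidesteps all of this for the second equivalence: it computes $\operatorname{RHom}_{A_\Gamma^{op}}(\K,\K)$ from the reduced bar resolution, which is a resolution for free, and exhibits an explicit DG-algebra \emph{isomorphism} (not merely quasi-isomorphism) between the resulting cobar-type complex $\operatorname{hom}_{A^{op}}((A\otimes_{\K}T\bar A)^{op},\K)$ and $\mathscr{G}_{\Gamma^{op}}$ --- the whole content is sign bookkeeping, made finite in each bidegree by the Adams grading. The first equivalence is then deduced from Thm.~(\ref{dgkoszul}) of Lu--Palmieri--Wu--Zhang. Your plan can be repaired by swapping the roles of the two halves: let $\mathcal{K}$ (once its acyclicity is actually proved --- you defer this, and it is the real content, essentially equivalent to the theorem) establish $\operatorname{RHom}_{\mathscr{G}_\Gamma}(\K,\K)\simeq A_\Gamma$, and then run biduality in the other direction. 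But note that recovering the infinite-dimensional $\mathscr{G}_\Gamma$, rather than a completion of it, as a double dual is exactly where the connectedness and local finiteness in the internal grading must be invoked; your appeal to compact generation alone does not supply this.
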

Therefore, by Keller's result \cite{keller},  we can use this to compute $SH^*(X_\Gamma)$ as: \[
    SH^*(X_\Gamma) \cong HH^*(\mathscr{G}_\Gamma) \cong HH^*(A_\Gamma). \] Since $A_\Gamma$ is a rather small finite-dimensional algebra over $\mathrm{k}$, one can find a minimal projective
resolution to compute the latter group. Indeed, Brenner, Butler and King \cite{almostkoszul} give a
minimal periodic (graded) resolution for $A_\Gamma$. However, we will find a short-cut to compute
$HH^*(A_\Gamma)$ as a bigraded algebra for $\Gamma = A_n, D_n$ over a field $\f{K}$ of arbitrary
characteristic. An explicit presentation of $HH^*(A_\Gamma)$ as a (graded) commutative
$\mathbb{K}$-algebra is provided in Thm. (\ref{hha}) for $A_n$ and in Thm. (\ref{hhd}) for
$D_n$. 

As we noted above in the case $\Gamma=D_n$ and when $\mathrm{char}\ \mathbb{K} =2$,
$\mathscr{B}_\Gamma$ is indeed a non-trivial deformation of $\mathscr{G}_\Gamma$. In this case $\mathscr{A}_\Gamma$ is not formal and indeed $\mathscr{B}_\Gamma$
and $\mathscr{A}_\Gamma$ are Koszul dual in the above sense. Therefore, it appears that a natural
statement (that applies in all characteristics) maybe that $\mathscr{A}_\Gamma$ and $\mathscr{B}_\Gamma$ are Koszul dual when $\Gamma$ is a Dynkin tree. 
\ \ 

{\bf Non-Dynkin case:} 

In this case, we only know that $\mathscr{B}_\Gamma$ is a deformation of
$\mathscr{G}_\Gamma$ and even at the formal level there are many non-trivial deformations of $\mathscr{G}_\Gamma$ since $HH^2(\mathscr{G}_\Gamma,\mathscr{G}_\Gamma)$ is big (see Thm. (\ref{sched})) and $HH^3(\mathscr{G}_\Gamma, \mathscr{G}_\Gamma)=0$. Hence, it is not clear whether the deformation
corresponding to $\mathscr{B}_\Gamma$ is trivial or not. On the other hand, as $\mathscr{B}_\Gamma$ (being a model for the wrapped Fukaya category of $X_\Gamma$) is also
a Calabi-Yau (CY) algebra, one can see the deformation of $\mathscr{G}_\Gamma$ to $\mathscr{B}_\Gamma$ as a deformation of CY2-algebras. In characteristic 0, this allows one to conclude that the corresponding formal deformation is trivial as follows.

$\mathscr{G}_\Gamma$ is in a sense simpler for $\Gamma$ non-Dynkin. Namely,
in this case, the homology $H^*(\mathscr{G}_\Gamma)$ turns out to be concentrated in degree 0 and \[ H^0(\mathscr{G}_\Gamma) \cong
\Pi_\Gamma \] is the preprojective algebra associated with the tree $\Gamma$.
For a non-Dynkin tree $\Gamma$, working over $\mathbb{K}$ of characteristic 0,  Hermes \cite{hermes} proved that $\mathscr{G}_\Gamma$ is formal, that is, $\mathscr{G}_\Gamma$ is
quasi-isomorphic to its homology $\Pi_\Gamma$ (see also Cor.~(\ref{rehermes}) for another proof that works over any field). Furthermore, it is well-known
that $\Pi_\Gamma$ is Koszul in the classical sense (cf. \cite{priddy},
\cite{bgs}) over $\mathrm{k}$. The quadratic dual to $\Pi_\Gamma$ is given by
the associative algebra $A_\Gamma = H^*(\mathscr{A}_\Gamma)$ - the zigzag
algebra associated with the tree $\Gamma$ (\cite{huekho}). 

The Gerstenhaber algebra structure of the Hochschild cohomology of the preprojective algebra
$\Pi_\Gamma$ in the non-Dynkin case has already been computed by Crawley-Boevey, Etingof, Ginzburg
in \cite{BVEG} when $\f{K}$ has characteristic zero, and by Schedler \cite{schedler} in
general. $HH^*(\Pi_\Gamma) \neq 0$ only for $*=0,1,2$. We give a brief review of these computations of $HH^*(\Pi_\Gamma)$ for completeness; see
Sec.~(\ref{nondynkin}) for a full description. Now, $\mathscr{B}_\Gamma$ can be seen as a
deformation of the CY2 algebra $\Pi_\Gamma$. If we consider the corresponding formal deformation, then the
associated Kodaira-Spencer class lives in $\mathrm{Ker}(\Delta: HH^2(\Pi_\Gamma) \to
HH^1(\Pi_\Gamma))$, where $\Delta$ is the BV-operator (see for ex. \cite{EtGi}). Now, it can be
observed from the description given in Sec.~(\ref{nondynkin}) that this kernel is trivial if $\mathrm{char}\ \mathbb{K}=0$.  This result does not hold in arbitrary characteristic, see Remark (\ref{subtree})
(cf. Remark (\ref{BVremark})) for a proof that this deformation is non-trivial over a field $\mathbb{K}$ of characteristic 2.  

Finally, let us remark that the above argument only shows that the associated formal deformation is trivial. This does not mean that $\mathscr{B}_\Gamma$ is quasi-isomorphic to $\mathscr{G}_\Gamma$ - such a quasi-isomorphism holds only after certain completion. As was shown in our subsequent work \cite{EL2}, $H^0(\mathscr{B}_\Gamma)$ is isomorphic to the multiplicative preprojective algebra associated with $\Gamma$, introduced by Crawley-Boevey and Shaw \cite{CBS}. On the other hand $H^0(\mathscr{G}_\Gamma)$ is isomorphic to the additive preprojective algebra $\Pi_\Gamma$. It is known that additive and multiplicative preprojective algebras are isomorphic only when $\mathrm{char}\ \mathbb{K}=0$ and $\Gamma$ is Dynkin, and in general, they are isomorphic when $\mathrm{char}\ \mathbb{K}=0$ only after completion as it follows from the above deformation theory argument.

In Sec.~(\ref{plumbing}), we provide geometric preliminaries on plumbings of cotangent bundles. In
Sec.~(\ref{ginzburg}), we give a computation of Legendrian contact homology of the Legendrian link
$\Lambda_\Gamma$ associated to a tree $\Gamma$, show that it is isomorphic to a deformation of
the  corresponding
CY2 Ginzburg DG-algebra $\mathscr{G}_\Gamma$ (Thm.~(\ref{balik})) and that this deformation is
trivial for $\Gamma= A_n$ or $D_n$, when $\mathrm{char}\ \mathbb{K} \neq 2$ in the latter case (Thm.~(\ref{chainmap})).  Sec.~(\ref{spheres}) computes the
Floer cohomology algebra $\mathscr{A}_\Gamma$ of the spheres in $X_\Gamma$. The main result appears
in Sec.~(\ref{seckoszul}) where we show that $\mathscr{G}_\Gamma$ and
$A_\Gamma=H^*(\mathscr{A}_\Gamma)$ are Koszul duals for any tree $\Gamma$. Finally, as an
application of our main result, in Sec.~(\ref{weshallcompute}), we explicitly compute Hochschild
cohomology of $ \mathscr{G}_\Gamma$, hence also of $\mathscr{B}_\Gamma$ for $\Gamma = A_n$ and $D_n$, assuming $\mathrm{char}\ \mathbb{K} \neq 2$ if $\Gamma=D_n$. 

{\bf Convention. } Throughout, we adhere to the following conventions.
$\f{K}$ is a field, of arbitrary characteristic unless otherwise specified, and $\mathrm{k}$ is a semisimple ring, generated over $\f{K}$ by finitely many mutually orthogonal idempotents.
Letters $A,B,\ldots$ denote associative algebras over $\mathrm{k}$.
All our modules are {\it right} modules and all our multiplications are read from {\it right to left}. 
Letters $\mathscr{A},\mathscr{B},\ldots $ denote $A_\infty$- or DG-algebras over $\mathrm{k}$. 
We follow the sign conventions as given in \cite[Ch. 1]{thebook} and its sequel \cite{thesequel}. In particular, an $A_\infty$-algebra $\mathscr{A}$ over $\mathrm{k}$ is a $\f{Z}$-graded $\mathrm{k}$-module with a collection of $\mathrm{k}$-linear maps:
\[ \mu^{d} : \mathscr{A}^{\otimes d} \to \mathscr{A}[2-d] , \ \ \text{for } d \geq 1, \]
where $[2-d]$ means $\mu^{d}$ lowers the degree by $d-2$. These maps are required to satisfy the $A_\infty$-relations:
\[ \sum_{m,n} (-1)^{|a_1|+\ldots + |a_n|-n} \mu^{d-m+1}(a_d,\ldots, a_{n+m+1}, \mu^m 
(a_{n+m},\ldots, a_{n+1}), a_n,\ldots a_1) = 0. \]
A DG-algebra over $\mathrm{k}$ is an $A_\infty$-algebra over $\mathrm{k}$ such
that $\mu^d =0$ for $d\geq 3$. In this case, we put \begin{equation}\label{dgsigns} da=(-1)^{|a|}
\mu^1(a) \ \ , \ \ a_2 a_1 = (-1)^{|a_1|} \mu^2(a_2,a_1). \end{equation} 
With this convention the $A_\infty$-equation for $d=2$ gives us the usual graded Leibniz rule: \[ d(a_2
a_1) = (da_2) a_1 + (-1)^{|a_2|} a_2 (d a_1). \]

$\mathscr{A}^{op}$ denotes the opposite of an $A_\infty$-algebra $\mathscr{A}$ and its operations are given by:
\[ \mu^{d}_{\mathscr{A}^{op}} (a_d,\ldots,a_1) = (-1)^{|a_1|+\ldots +|a_d|-d} 
\mu^d_{\mathscr{A}}(a_1,\ldots,a_d). \]

With the above conventions, a DG-algebra and its opposite are related as follows:
\[ d^{op}(a) = (-1)^{|a|-1} da \ \ \ , \ \ \ a_2 a_1 = a_1 a_2. \]

All our complexes are cohomological, i.e., the differential increases the
grading by $1$. It often happens that our complexes are bigraded. In this case,
we denote these gradings by the pair $(r,s)$ where $r$ refers to a
cohomological (or length) grading and $s$ refers to an internal grading
(the notation $|a|$ is used to denote the internal grading of a specific
element). The grading $r+s$ is referred to as the total degree. If a
second grading is not specified in the notation, for example as in
$HH^*(A_\Gamma)$, it is understood that the grading $*$ refers to the total
degree.  

The notation $HH^*(A)$ is used to denote Hochschild cohomology of a graded
$\f{K}$-algebra $A$ with coefficients in $A$. It is a bigraded algebra over $\f{K}$. We
write $\text{deg}(x)$ for the total degree $r+s$ of a specific element. There
are two binary $\f{K}$-linear operations: an associative graded commutative product of
bidegree $(0,0)$ and a Lie bracket of bidegree $(-1,0)$. 
These are called the cup product and Gerstenhaber bracket, respectively.  
The product is graded commutative: \[  xy =
	(-1)^{\text{deg}(x)\text{\ deg}(y)} yx. \] The Gerstenhaber bracket is
	graded antisymmetric on $HH^*(A)[1]$, that is: \[ [x,y] = -
	(-1)^{(\text{deg}(x)-1)(\text{\ deg}(y)-1)} [y,x] .\] Finally,
	Hochschild cohomology of a (formal) Calabi-Yau algebra can be equipped
	with a Batalin-Vilkovisky operator $\Delta$ of bidegree $(-1,0)$, and
	we have the following compatibility equation between these structures:
	\[ [x,y] = (-1)^{|x|} \Delta(xy) - (-1)^{|x|} \Delta(x) y - x \Delta(y)
\] 

{\bf Acknowledgments:} Lekili is partially supported by a Royal Society
Fellowship and the NSF grant DMS-1509141. We thank Mohammed Abouzaid, Ben Antieau, Georgios Dimitroglou Rizell, Tobias Ekholm, Sheel Ganatra, Travis Schedler, Paul Seidel and Ivan Smith. We are especially grateful to the referee for a careful reading of the manuscript: In an earlier version of this
paper, we used a more complicated Lagrangian projection than the one given in Fig.~(\ref{fig3}), which resulted in higher energy curves being immersed and elusive. We are indebted to the
referee for communicating to us the existence of these higher order contributions to the differential of $\mathscr{B}_\Gamma$.

\section{Plumbing of cotangent bundles of 2-spheres} 
\label{plumbing} 

Let $\Gamma$ be a finite tree. In the body of the paper, we will study Weinstein manifolds that are given by a plumbing of cotangent bundles of the 2-sphere according to the tree $\Gamma$. These are exact symplectic manifolds with a convexity condition at infinity. We briefly recall the construction of these manifolds (cf. \cite{abouzplum}).

Associated to each vertex of $\Gamma$, we prepare a copy of $D^*S^2$, the
unit cotangent bundle of the 2-sphere with its canonical symplectic
structure. Now, say we have an edge that connects the vertices $v$ and $w$, and
let us write $D^*S_v$ and $D^*S_w$ for the corresponding copies of $T^*S^2$ and
choose base points $s_v \in S_v$ and $s_w \in S_w$. Near $s_v$ and $s_w$ one
can find real coordinates $p_1, p_2, q_1,q_2$ where the coordinates $q_1,q_2$
correspond to variations on the base and the coordinates $p_1, p_2$
correspond to variations in the fibre direction. Furthermore, on these neighborhoods symplectic form can be identified with $dp \wedge dq$. We then glue $D^*S_v$
and $D^*S_w$ together near $s_v$ and $s_w$ via a symplectomorphism that sends $(q, p)$ to
$(p, -q)$. 

This leads to a symplectic manifold which has a boundary with corners. One
then smoothens the boundary and completes it to obtain a Weinstein manifold.
The precise details of this construction are somewhat technical; we refer to \cite[Sec. 2.3]{abouzplum} (see also \cite[Ch. 7.6]{geiges}). 

An alternative description of $X_\Gamma$ can be given via \emph{Legendrian
surgery} a la Eliashberg \cite{eli} and Gompf \cite{gompf}, which we will take
as primary.\footnote{Both the plumbing and surgery constructions lead to
homotopic Weinstein manifolds but we do not check this here. Throughout,
we use the surgery construction and appeal to the plumbing picture only for
differential topological aspects.} In this description, we exhibit $X_\Gamma$
as a surgery along a Legendrian link $\Lambda$ on $(S^3, \xi_{std})$ such that the
vertices $v$ of $\Gamma$ correspond to the components $\Lambda_v$ of this link, which are Legendrian unknots. Two such Legendrian unknots are ``plumbed together'' if
there is an edge in $\Gamma$ between the corresponding vertices. To be precise, by choosing a vertex as the \emph{root} of our tree, we put our tree $\Gamma$ in a standard form as in Fig.~(\ref{fig1}) and the corresponding Legendrian unknots in a standard form in $(\f{R}^3, dz - y dx)$, which when projected to $(x, z)$ (front projection) gives the surgery diagram as in Fig.~(\ref{fig2}). 

% Set the overall layout of the tree
\tikzstyle{level 1}=[level distance=3cm, sibling distance=1.8cm]
\tikzstyle{level 2}=[level distance=3cm, sibling distance=0.8cm]

% Define styles for bags and leafs
\tikzstyle{bag} = [circle, minimum width=3pt,fill, inner sep=0pt]
\tikzstyle{end} = [circle, minimum width=3pt,fill, inner sep=0pt]

\begin{figure}[htb!]
\centering
\begin{tikzpicture}[grow=right, sloped, scale=0.8]	
\node[bag, label=above: {$v_1$}]{}
  child {
       node[bag, label=above: {$v_4$}]{}    
       child {
       node[end, label=right:
       {$v_{10}$}]{}
       edge from parent}
   }
  child {
       node[bag, label=above: {$v_3$}]{}    
       child {
       node[end, label=right:
       {$v_9$}]{}
       edge from parent}
        child {
       node[end, label=right:
       {$v_8$}]{}
       edge from parent}
       child {
       node[end, label=right:
       {$v_7$}]{}
       edge from parent}
  }
  child {
       node[bag, label=above: {$v_2$}]{}    
       child {
       node[end, label=right:
       {$v_6$}]{}
       edge from parent}
        child {
       node[end, label=right:
       {$v_5$}]{}
       edge from parent}
};
\end{tikzpicture}
 \caption{Standard form of $\Gamma$}
 \label{fig1}     
\end{figure}
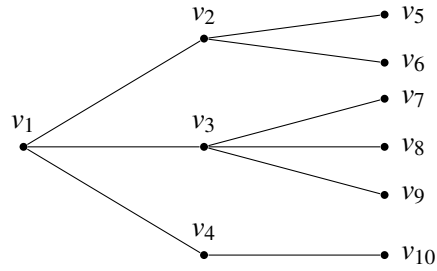

\begin{figure}[htb!]
\centering
\begin{tikzpicture}
	\draw (0,0) node[inner sep=0] {\includegraphics[scale=0.5]{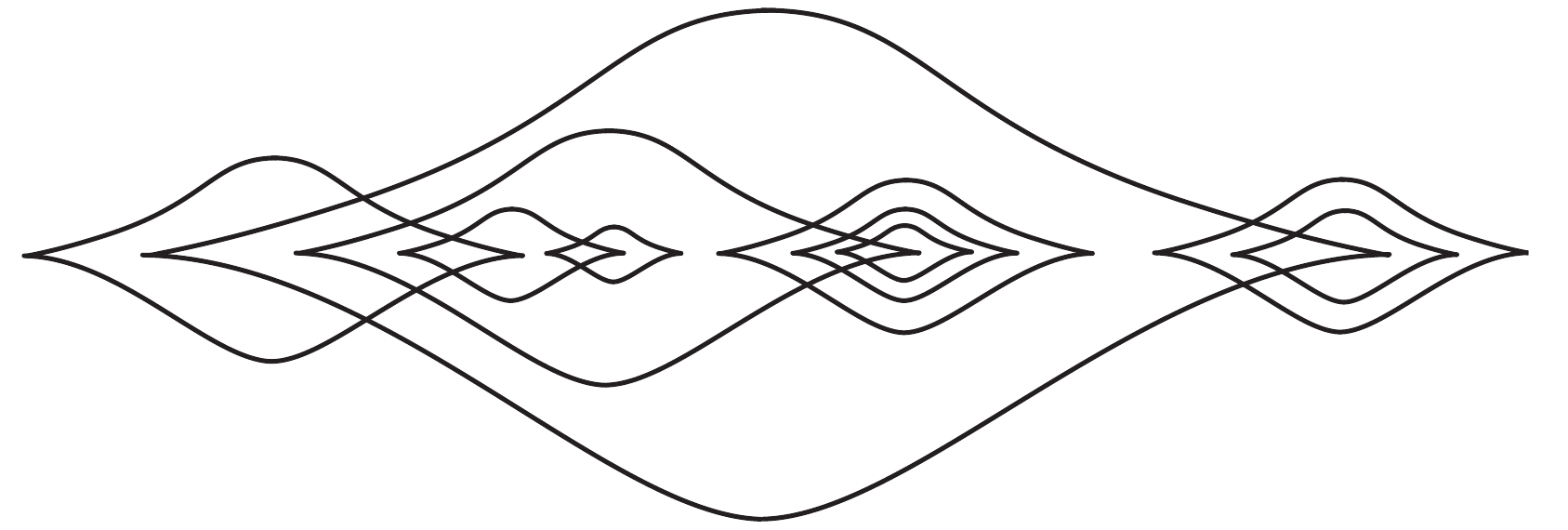}} ;
\node[] at (-5.5,1) {\footnotesize 1};
\node[] at (-2.5,1.5) {\footnotesize 2};
\node[] at (-0.5,1) {\footnotesize 3};
\node[] at (-2.1,0.6) {\footnotesize 4};
\node[] at (-1.2,0.39) {\footnotesize 10};
\node[] at (2.12,0.6) {\footnotesize 7};
\node[] at (1.7,0.51) {\footnotesize 8};
\node[] at (1.4,0.41) {\footnotesize 9};
\node[] at (4.5,0.8) {\footnotesize 5};
\node[] at (5.1,0.63) {\footnotesize 6};
\end{tikzpicture}
\caption{$X_\Gamma$ as given by Legendrian surgery along $\Lambda$}
\label{fig2}
\end{figure}

The surgery construction equips $X_\Gamma$ with a Weinstein structure (in fact,
a Stein structure) by extending the standard Weinstein structure on $D^4$ via
attaching 2-handles (\cite{weinstein}) along Legendrian unknots $\Lambda_i$.
Each such Legendrian unknot bounds an embedded Lagrangian disk in $D^4$ and
another capping disk given by the attaching disk of the corresponding Weinstein
$2$-handle. These fit together, as can be seen from the case of $T^*S^2$, to give the Lagrangian spheres $S_v$ in
$X_\Gamma$ corresponding to the vertices of $\Gamma$, whereas the edges of $\Gamma$
encode the intersection pattern of these spheres. The symplectic form $\omega$ on
$X_\Gamma$ is exact and it can be written as a primitive of a one-form $\theta$
for which the embedding of each sphere $S_v$ is an exact Lagrangian
submanifold of $X_\Gamma$. Both of these are easy facts since $H_2(X_\Gamma;\f{Z})$ is generated by the Lagrangian spheres $S_v$ and $H^1(S_v; \f{Z})=0$.

Furthermore, the cocores of the $2$-handles give non-compact (exact)
Lagrangians $L_v$ which are asymptotically Legendrian. The Lagrangian $L_v$ intersects $S_w$
only if $v=w$ in which case the intersection is transverse at a unique point
$x_v$. In the plumbing description, the $L_v$ correspond to the cotangent fibres
$T^*_{x_v} S_v \subset T^* S_v$ where the $x_v$ are base points on each $S_v$ away
from the plumbing regions.

In the next section, we will be concerned with Reeb chords between the components of $\Lambda$ in
$(\f{R}^3, dz - y dx)$. The Reeb flow is in the direction of the vector field
$\frac{\partial}{\partial z}$, hence it is more convenient for computations to consider the
Lagrangian projection, i.e., the projection to $(x,y)$ as in Fig.~(\ref{fig3}). Then, the crossings
of the projection $\Lambda$ are in one-to-one correspondence with Reeb chords from $\Lambda$ to
itself. There is some freedom in drawing the Lagrangian projection, we prefer the one given in
Fig.~(\ref{fig3}) as it makes enumeration of holomorphic curves manifest. (Notice that the diagram
has the property that each component links at most one other component on its left. Clearly this is
an artifact of the way we put our tree in a standard form and is not necessary.)

\begin{figure}[htb!]
\centering
\begin{tikzpicture}
\draw (0,0) node[inner sep=0] {\includegraphics[width=0.7\textwidth]{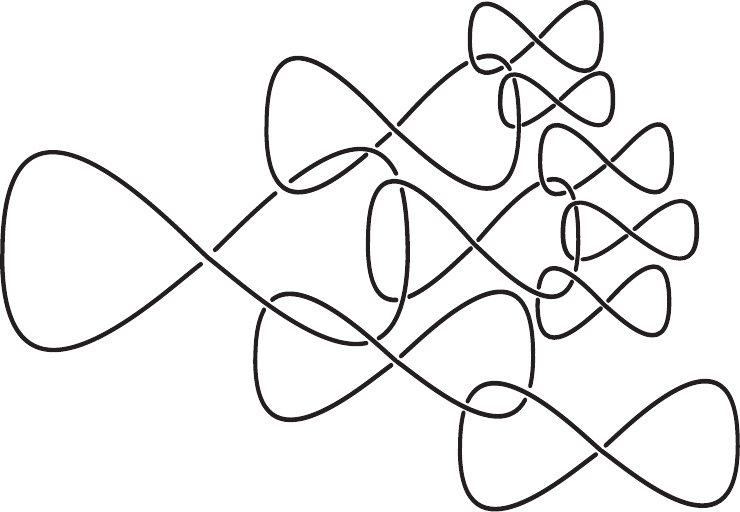} } ;

\node[] at (-5.6,0.9) {\footnotesize 1};

\node[] at (-1.80,2.3) {\footnotesize 2};
\node[] at (3.6,3.2) {\footnotesize 5};
\node[] at (3.75,2.3) {\footnotesize 6};

\node[] at (-0.3,0.) {\footnotesize 3};
\node[] at (2.58+2,1.4) {\footnotesize 7};
\node[] at (2.93+2,0.3)  {\footnotesize 8};
\node[] at (2.54+2,-0.82){\footnotesize 9};

\node[] at (-1.9,-1.8) {\footnotesize 4};
\node[] at (2.1+3.5,-2.7) {\footnotesize 10};

\node[] at (-5.3,0) {{\tiny $\bigstar$}};
\node[] at (-1.65,-1.45)  {{\tiny $\bigstar$}};
\node[] at (-1.48,1.85)  {{\tiny $\bigstar$}};
\node[] at (-0.03,0.25) {{\tiny $\bigstar$}};
\node[] at (1.3,-2.8){{\tiny $\bigstar$}};
\node[] at (1.45,3.15){{\tiny $\bigstar$}};
\node[] at (1.88,2.2){{\tiny $\bigstar$}};
\node[] at (2.45,1.4) {{\tiny $\bigstar$}};
\node[] at (2.42,-0.85)  {{\tiny $\bigstar$}};
\node[] at (2.79,0.3)  {{\tiny $\bigstar$}};

\node[] at (-4.65,1.49) {{\rotatebox{4}{$\blacktriangleleft$}}};
\node[] at (-1.65+0.44,-1.45+0.9)  {{\rotatebox{12}{$\blacktriangleleft$}}};
\node[] at (-1.48+0.35,1.85+0.98)  {{\rotatebox{15}{$\blacktriangleleft$}}};
\node[] at (0.88,0.80) {{\rotatebox{-36}{$\blacktriangleleft$}}};
\node[] at (1.3+0.35,-2.8+0.93){{\rotatebox{25}{$\blacktriangleleft$}}};
\node[] at (1.45+0.2,3.15+0.48){{\rotatebox{5}{\footnotesize $\blacktriangleleft$}}};
\node[] at (1.88+0.37,2.2+0.37){{\rotatebox{335}{\footnotesize
$\blacktriangleleft$}}};
\node[] at (2.45+0.27,1.4+0.47) {{\rotatebox{-8}{\footnotesize $\blacktriangleleft$}}};
\node[] at (2.42+0.3,-0.85+0.66)  {{\rotatebox{350}{\footnotesize
$\blacktriangleleft$}}};
\node[] at (2.79+0.35,0.3+0.45)  {{\rotatebox{340}{\footnotesize
$\blacktriangleleft$}}}; 
\end{tikzpicture}
\caption{Lagrangian projection of $\Lambda$ decorated with orientations and basepoints}
\label{fig3}
\end{figure}

In Fig.~(\ref{fig3}), besides a basepoint on each component, we also indicated an orientation on our Legendrian link $\Lambda$ by putting an arrow on each component. This, in turn, induces orientations on the Lagrangian spheres $S_v$. Notice that 
\[ S_v \cdot S_w = +1 \] 
if $v$ and $w$ are adjacent vertices. This ensures that the Floer complex $CF^*(S_v, S_w)$ is supported at an odd degree (see \cite[Sec. 2d]{seidelgraded}).

We orient the non-compact Lagrangians $L_v$ so that the algebraic intersection
number $L_v \cdot S_v$ is given by \[ L_v \cdot S_v = -1. \] As above, this ensures that the Floer complex $CF^*(L_v,
S_v)$ is supported at an even degree (which we will fix below to be 0 by
picking suitable grading structures).

The classical topology of $X_\Gamma$ is easy to study via the plumbing
description, which shows that $X_\Gamma$ deformation retracts onto a wedge of spheres formed by the union of the $S_v$. In particular, $X_\Gamma$ is simply connected and the non-zero cohomology
groups of $X_\Gamma$ are given by: \[ H^0(X_\Gamma; \f{K}) = \f{K} \ \ , \ \
	H^{2}(X_\Gamma ; \f{K} ) = \bigoplus_{v} \f{K} \cdot [S_v]^\vee . \]

The noncompact end of $X_\Gamma$ is a symplectization of a contact 3-manifold
$Y_\Gamma$ which is topologically a plumbing of circle bundles over $S^2$ with
Euler number $-2$. By abuse of notation, we will write $\partial X_\Gamma =
Y_\Gamma$. 

To equip our Lagrangians with a brane structure, so as to have
$\f{Z}$-gradings, we need : \begin{lemma} $c_1(X_\Gamma, \omega) = 0$.
\end{lemma}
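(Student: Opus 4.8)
The plan is to reduce the computation of $c_1(X_\Gamma,\omega)$ to evaluations on the Lagrangian spheres $S_v$, and then to exploit the fact that the complexified tangent bundle of a Lagrangian has vanishing first Chern class over a $2$-sphere. First I would record the topological input already established: since $X_\Gamma$ deformation retracts onto the wedge of spheres $\bigcup_v S_v$, its integral cohomology is free abelian, with $H^2(X_\Gamma;\f{Z}) \cong \bigoplus_v \f{Z}\cdot [S_v]^\vee$ and all higher groups vanishing. Consequently $H^2(X_\Gamma;\f{Z})$ is torsion-free and a class in it is determined by its pairings with the homology classes $[S_v]$. Moreover $c_1(X_\Gamma,\omega)$ is well-defined independently of the choice of $\omega$-compatible almost complex structure $J$, because the space of such $J$ is contractible. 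Hence it suffices to show that $\langle c_1(X_\Gamma,\omega), [S_v]\rangle = 0$ for every vertex $v$.

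The key step is the standard Lagrangian splitting. Fix an $\omega$-compatible $J$. Since $S_v$ is an exact Lagrangian submanifold, along $S_v$ one has the $J$-invariant splitting $TX_\Gamma|_{S_v} = TS_v \oplus J(TS_v)$ with $TS_v \cap J(TS_v) = 0$ (this follows from $S_v$ being Lagrangian together with the compatibility of $J$ with $\omega$), and the map $v\otimes 1 \mapsto v,\ v\otimes i \mapsto Jv$ realizes a complex vector bundle isomorphism $TX_\Gamma|_{S_v} \cong TS_v \otimes_{\f{R}} \f{C}$. By naturality of Chern classes under restriction, $\langle c_1(X_\Gamma,\omega), [S_v]\rangle = \langle c_1(TS_v \otimes_{\f{R}} \f{C}), [S_v]\rangle$.

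It then remains to observe that the complexification of a real bundle has $2$-torsion first Chern class: for any real bundle $E$ there is a conjugation isomorphism $E\otimes_{\f{R}}\f{C} \cong \overline{E\otimes_{\f{R}}\f{C}}$, whence $c_1(E\otimes_{\f{R}}\f{C}) = -c_1(E\otimes_{\f{R}}\f{C})$ and $2\,c_1(E\otimes_{\f{R}}\f{C}) = 0$. Applying this with $E = TS_v$ and using that $H^2(S^2;\f{Z}) \cong \f{Z}$ is torsion-free gives $c_1(TS_v \otimes_{\f{R}}\f{C}) = 0$, so $\langle c_1(X_\Gamma,\omega), [S_v]\rangle = 0$ for all $v$. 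Since the $[S_v]^\vee$ generate the torsion-free group $H^2(X_\Gamma;\f{Z})$, this forces $c_1(X_\Gamma,\omega) = 0$, as desired.

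I do not expect a genuine obstacle here: the argument is a short combination of the retraction onto the wedge of $2$-spheres, the Lagrangian splitting $TX_\Gamma|_{S_v}\cong TS_v\otimes_{\f{R}}\f{C}$, and the torsion property of $c_1$ of a complexification. The only point requiring minimal care is verifying the splitting isomorphism (equivalently $TS_v\cap J(TS_v)=0$), which is routine linear algebra using that $\omega(w,Jw)=g(w,w)>0$ for $w\neq 0$ while $\omega$ vanishes on the Lagrangian $TS_v$.
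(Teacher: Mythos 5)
Your argument is correct, but it takes a genuinely different route from the paper. The paper invokes the surgery description: by Gompf's formula for Stein surfaces presented by Legendrian surgery, $\langle c_1(X_\Gamma),[S_v]\rangle = \mathrm{rot}(\Lambda_v)$, and each $\Lambda_v$ is an oriented Legendrian unknot with rotation number $0$. You instead work purely with the plumbing picture: since $X_\Gamma$ retracts onto the configuration of spheres, $H^2(X_\Gamma;\f{Z})$ is free on the $[S_v]^\vee$, so it suffices to evaluate on each $[S_v]$; the Lagrangian splitting $TX_\Gamma|_{S_v}\cong TS_v\otimes_{\f{R}}\f{C}$ together with the $2$-torsion property of $c_1$ of a complexification (and torsion-freeness of $H^2(S^2;\f{Z})$) then gives $\langle c_1,[S_v]\rangle=0$. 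All the steps check out, including the linear-algebra verification that $TS_v\cap J(TS_v)=0$. What each approach buys: the paper's proof is a one-line citation that fits its stated convention of treating the Legendrian surgery presentation as primary, and the rotation-number formula also pins down the relevant framing data elsewhere; your proof is more self-contained and more general --- it shows $c_1$ vanishes for any symplectic $4$-manifold whose $H_2$ is generated by orientable Lagrangian surfaces, with no reference to a handle decomposition. One minor point of care: the paper records $H^2(X_\Gamma;\f{K})$ with field coefficients, so you should note (as you implicitly do) that the retraction onto a tree-plumbing of spheres gives the integral statement, which is what you need for the torsion-freeness step.
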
 \begin{proof} We have $\langle c_1 (X_\Gamma) , [S_v] \rangle =
	\mathrm{rot}(\Lambda_v)$ (see \cite[Prop.~2.3]{gompf}). Now, each
	$\Lambda_v$ is an oriented Legendrian unknot in $(S^3, \xi_{std})$ and
	as such its rotation number can be computed to be
	$\mathrm{rot}(\Lambda_v) = 0$.  \end{proof} 

Therefore, the canonical bundle $\mathcal{K} = \Lambda_{\f{C}}^2 (T^* X_{\Gamma})$
representing $-c_1(X_\Gamma)$ is trivial. To define $\f{Z}$-gradings in various
Floer type invariants, one needs to fix a trivialization of
$\mathcal{K}^{\otimes 2}$. Of course, since $H^1(X_\Gamma) =0$, there is
actually only one homotopy class of trivializations. We can induce a trivialization by picking a complexified volume form $\Omega \in \Lambda^2_{\f{C}}(T^* X_\Gamma)$.

In this setup, a grading structure on a Lagrangian $L$ can be thought of as a lift of the squared-phase map: \[ \alpha_{L}: L \to S^1 \ \ \ , \ \ \ \alpha_{L}(x) = \frac{\Omega (T_x L)^2}{ | \Omega (T_x L)^2 |} \] 
to a map $\tilde{\alpha}_{L} : L \to \f{R}$. The fact that $S_v$ and $L_v$ are simply connected ensures that such a lift exists for our Lagrangians. 

A grading structure allows one to associate an absolute Maslov index in $\f{Z}$ to an intersection point $x \in S_v \cap S_w$ (see \cite[Sec. (2d)]{seidelgraded}).  
In our situation, all our Lagrangians $S_v$ are simply-connected and if any two of them intersect they do so at a unique point.
If $x$ is the intersection point of $S_v$ and $S_w$, then for any
given $d \in \f{Z}$ we can ensure that $x \in CF^{*}(S_v, S_w)$ lies in degree $d$ by shifting the grading structure on, say
$S_w$. When viewed as a generator of
$CF^*(S_w, S_v)$, the same intersection point would then be forced to have degree $2-d$ by Poincar\'e
duality in Floer cohomology of compact Lagrangians (see \cite[Sec.~12e]{thebook}). Furthermore, since $\Gamma$
is a tree, we can grade our Lagrangians inductively using the standard form of
$\Gamma$ as in Fig.~(\ref{fig1}). Therefore, we can grade all of our
Lagrangians $S_v$ at once such that for any pair of intersecting Lagrangians
$S_v$ and $S_w$ we are free to pick the gradings $(d,2-d)$ as we would like.
Collapsing a grading structure on a Lagrangian to a $\f{Z}_2$-grading, we get
an orientation of the underlying Lagrangian. To be compatible with the above
choice of orientations for the Lagrangian spheres $S_v$, we will need to demand
that the gradings $d$ be odd. Throughout, a convenient choice will be to simply
demand that $d=1$, that is: \[ CF^*(S_v, S_w) = \f{K} [-1]  \ \ \ \text{if}
\ \ \ v,w  \ \ \ \text{are adjacent.} \] Having graded the Lagrangian spheres
$S_v$ for all $v$, we now pick grading structures for the non-compact
Lagrangians $L_v$. As $L_v$ is simply-connected as well, we have the
freedom to choose a grading such that \[ CF^*(L_v, S_v) = \f{K} [0]. \]  
This is compatible with our choice of orientations on $L_v$ and $S_v$ as given
before. 

These considerations fix the orientations and the grading data up to an overall shift (which does
not change the degrees of intersection points) on our Lagrangians. (Note that there is a unique
choice of Spin structures as our Lagrangians are simply connected.)

Somewhat more nontrivially, these choices force that if $v$ and $w$ are adjacent vertices, then we have the following.
\begin{lemma} \label{nonpositive} For $v$ and $w$ adjacent vertices of the tree $\Gamma$, the shortest Reeb chord between $L_v$ and $L_w$ lies in the degree 0 part of $CW^*(L_v,L_w)$.  Furthermore, for any pair $v,w$, the complex $CW^*(L_{v},L_{w})$ is supported in nonpositive degrees. 
\end{lemma}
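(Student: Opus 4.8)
The plan is to compute the cohomological degrees of the generators of $CW^*(L_v,L_w)$ directly from their Conley--Zehnder indices, and to reduce every index computation to the local cotangent model $T^*S^2$, where the answer is governed by the homology of the based loop space $\Omega S^2$.

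First I would fix the generators and the grading normalization. For a Hamiltonian that is quadratic at infinity, the generators of $CW^*(L_v,L_w)$ are the interior intersection points of $L_v$ with $L_w$ (absent for $v\neq w$, as distinct cocores are disjoint) together with the Reeb chords of $\partial_\infty L_v$ to $\partial_\infty L_w$ in the contact boundary $Y_\Gamma$. Since $c_1(X_\Gamma,\omega)=0$ and $\mathrm{rot}(\Lambda_v)=0$, each such chord carries a well-defined integer grading, computed from the grading structures $\tilde\alpha_{L_v}$ already chosen and normalized---consistently with $CF^*(L_v,S_v)=\f{K}[0]$---so that the cohomological degree of a chord equals minus its Conley--Zehnder index.

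The core input is the local model. Near each sphere $S_v$ and each plumbing region the geometry is symplectomorphic to a piece of $T^*S^2$, and the identification $CW^*(L_v,L_v)\simeq C_{-*}(\Omega_{x_v}S_v)$ recalled in the Introduction forces the self-chords of $L_v$ into non-positive degree: since $H_*(\Omega S^2;\f{K})$ is concentrated in non-negative degrees with degree-$0$ part the unit, the shortest self-chord (the once-wrapped Reeb chord of the $tb=-1$ unknot $\Lambda_v$) lands in degree $-1$, and its iterates in degrees $-2,-3,\dots$. For adjacent $v,w$ I would represent the shortest Reeb chord by the short geodesic-type trajectory running from $x_v$ through the plumbing region and out along $S_w$ to $x_w$; identifying this trajectory inside a neighbourhood modelled on a plumbing of two copies of $T^*S^2$, its Conley--Zehnder index equals the Morse index of the corresponding shortest (broken) geodesic in $S_v\cup S_w$, which has no conjugate points and hence index $0$. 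With the above normalization this places the shortest mixed chord in degree $0$, giving the first assertion.

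For the global non-positivity I would argue that the degree of every generator is $\le 0$. Geometrically, each Reeb chord of $\partial_\infty L_v$ to $\partial_\infty L_w$ projects to a (broken) geodesic in the union $\bigcup_v S_v$, and its Conley--Zehnder index equals the Morse index of that path for the path-length functional, which is non-negative; hence the cohomological degree is $\le 0$. Concretely this is realized by the chosen Lagrangian projection of Fig.~\ref{fig3}, whose crossings occur only between linked---i.e.\ adjacent---components: the self-crossings contribute degree $-1$ and the mixed crossings between adjacent $\Lambda_v,\Lambda_w$ contribute degree $0$, and since these degrees add under concatenation, every word, and therefore every generator of $CW^*(L_v,L_w)$, has degree $\le 0$. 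The main obstacle I anticipate is transporting the clean loop-space index count, valid for a single $T^*S^2$, across the plumbing gluing: one must verify that the Conley--Zehnder index of each Reeb chord genuinely equals the non-negative Morse index of a path in the singular skeleton $\bigcup_v S_v$, and in particular that the shortest mixed chord has index exactly $0$ in the true (non-cotangent) plumbing geometry. The explicit Legendrian contact homology computation of Section~\ref{ginzburg}, in which these same chords reappear as the degree-$0$ arrows and degree-$(-1)$ loops of $\mathscr{G}_\Gamma$, provides an independent confirmation.
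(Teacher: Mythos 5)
Your approach is genuinely different from the paper's, but it has gaps that I do not think can be papered over. The paper's proof of the first claim is a short indirect argument: by \cite[Sec.\ 4.2]{abouzsmith} there is a \emph{rigid} holomorphic square contributing to $\mu^3: HF^0(L_v,S_v)\otimes HW^*(L_w,L_v)\otimes HF^2(S_w,L_w)\to HF^1(S_w,S_v)$, and since $\mu^3$ has degree $-1$, the nonvanishing of this product forces the shortest chord to sit in degree $0$; the second claim then follows from additivity of the Maslov grading \cite[Lem.\ 4.11]{abouzsmith}. Your proposal instead tries a direct Conley--Zehnder computation. The first problem is your use of $CW^*(L_v,L_v)\simeq C_{-*}(\Omega_{x_v}S_v)$: that equivalence (recalled in the introduction) holds for a cotangent fibre in $T^*S_v$, not for $L_v$ inside the plumbing $X_\Gamma$, where $CW^*(L_v,L_v)=e_v\mathscr{B}_\Gamma e_v$ is much larger and the Reeb dynamics on $Y_\Gamma$ is global, so "reducing every index computation to the local cotangent model" is not available. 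The second problem is the central step of your global argument: the claim that every Reeb chord in $Y_\Gamma$ corresponds to a broken geodesic in the singular skeleton $\bigcup_v S_v$ with Conley--Zehnder index equal to its Morse index. This is a heuristic modelled on the Viterbo/Abbondandolo--Schwarz picture for a single cotangent bundle; no such theorem exists for the singular Lagrangian skeleton of a plumbing, and you acknowledge this obstacle without resolving it, so the non-positivity claim is not actually proved.

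Your fallback via the Lagrangian projection of Fig.~(\ref{fig3}) is also problematic in the logic of this paper. The crossing degrees ($-1$ for self-crossings, $0$ for mixed crossings, additive on words) do establish that $LCA^*(\Lambda_\Gamma)$ is supported in non-positive degrees --- this is Thm.~(\ref{balik}) --- but transferring that statement to $CW^*(L_v,L_w)$ requires the grading-preserving quasi-isomorphism $LCA^*(\Lambda_\Gamma)\simeq\bigoplus_{v,w}CW^*(L_v,L_w)$ of Rem.~(\ref{sftshit}), which the paper explicitly describes as not fully established and, moreover, whose grading normalization is pinned down \emph{using} Lem.~(\ref{nonpositive}). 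So this route is both foundationally incomplete and circular here. If you want a direct argument, you should replace the geodesic heuristic with the Abouzaid--Smith square: it is the one piece of rigorous Floer-theoretic input that fixes the degree of the shortest mixed chord, after which additivity of the Maslov grading gives non-positivity for all of $CW^*(L_v,L_w)$.
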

\begin{proof} The first claim follows from a rigidity of a certain holomorphic square that contributes to the higher multiplication
	\[ \mu^3: HF^0(L_{v}, S_v) \otimes HW^0(L_w,L_v) \otimes HF^2(S_w, L_w) \to HF^1(S_w, S_v) \] as explained in  \cite[Sec. 4.2]{abouzsmith}. The second claim is a consequence of the first by additivity properties of the Maslov grading (see \cite[Lem. 4.11]{abouzsmith}). 
\end{proof} 

We do not use the above result in our computations below. We have stated and
proved it as it helps motivate various grading choices (see also Rem.~(\ref{sftshit})). Let us also note that Thm.~(\ref{koszulness}) below provides an
indirect check of this lemma.

\section{Ginzburg DG-algebra of $\Gamma$ and Legendrian cohomology DG-algebra of $\Lambda_\Gamma$} \label{ginzburg}

\subsection{Ginzburg DG-algebra of $\Gamma$}
\label{ginzburgDGA}

A \emph{quiver} $Q$ is a directed graph with a vertex set $Q_0$ and an arrow
set $Q_1$. A rooted tree $\Gamma$ in a standard form, as in Fig.~(\ref{fig1}),
gives rise to a quiver by orienting the edges so that they point \emph{away}
from the root. We will denote this quiver again by $\Gamma$ unless otherwise
specified. Recall that the path algebra $\f{K}\Gamma$ of quiver $\Gamma$ is
defined as a vector space having all the paths in the quiver as basis
(including, for each vertex $v$ of the quiver $\Gamma$, a trivial path $e_v$ of
length $0$), and multiplication is given by concatenation of paths. As
mentioned before, throughout we concatenate paths from right-to-left, when we
express them as a product. 

The cohomologically graded 2-Calabi-Yau \emph{Ginzburg DG-algebra} $\mathscr{G}_\Gamma$ of $\Gamma$ (with zero potential) is defined as follows (cf. \cite{ginzburg}, \cite{vandenbergh} \cite{hermes}). 

\begin{definition} 
	\label{ginzdga} 
	Consider the extended quiver $\widehat{\Gamma}$ with vertices $\widehat{\Gamma}_0 = \Gamma_0$ and arrows $\widehat{\Gamma}_1$ consisting of 
\begin{itemize} 
 \item the original arrows $g$ in $\Gamma_1$ in bidegree (1, -1) 
 \item the opposite arrows $g^*$ to $g$ in $\Gamma_1$ in bidegree (1, -1)
 \item loops $h_v$ at the vertex $v \in \Gamma_0$ of bidegree (1, -2) 
\end{itemize}

We define $\mathscr{G}_\Gamma$ to be the DG-algebra over the semisimple ring $\mathrm{k} = \bigoplus_{v \in \Gamma_0} \mathbb{K}e_v$ given by the path algebra $\mathbb{K}\widehat{\Gamma}$ with the differential $d$ of bidegree (1,0) defined as a $\mathrm{k}$-bimodule map by	
\[ dg = dg^* = 0  \ \mbox{ and } \  dh = \sum_{g\in \Gamma_1} g^* g - g g^*  \]
where $h = \sum_{v\in \Gamma_0} h_v$. 
\end{definition}

In the notation $(r,s)$ for bigraded complexes, $r$ corresponds to the path-length grading and as usual we will call $r+s$ the total degree. In particular, the notation $H^*(\mathscr{G})$ will stand for the cohomology graded by the total degree. Note also that with respect to the total grading $\mathscr{G}_\Gamma$ is supported in nonpositive degrees. 

The way we chose to orient the edges of $\Gamma$ has only a minor
effect on $\mathscr{G}_\Gamma$. Namely, different choices change the signs in
the formula for the differential. Our choice is to ensure the consistency with
the choice of orientations of the Lagrangians $L_\nu$ as we shall see in the
next section. In particular, let $\Gamma^{op}$ be the quiver obtained from $\Gamma$  by reversing
the orientation of all edges of $\Gamma$. Then the associated Ginzburg algebra gives
$\mathscr{G}_\Gamma^{op}$, the opposite of the Ginzburg algebra $\mathscr{G}_\Gamma$ associated to
the original quiver $\Gamma$. In other words,
\[ \mathscr{G}_{\Gamma^{op}} = \mathscr{G}_\Gamma^{op} \]

\begin{definition} \label{preprojective} The cohomology in total degree 0 of $\mathscr{G}_\Gamma$ is
    called the \emph{preprojective algebra} $\Pi_\Gamma := H^0(\mathscr{G}_\Gamma)$. It is the
    quotient of the path algebra $\mathbb{K}\mathrm{D}\Gamma$ by the ideal generated by 
\[ \sum_{g\in \Gamma_1} g^*g - gg^* \ , \]
    where $\mathrm{D}\Gamma$ denotes the double of $\Gamma$ obtained by adding the opposite arrow $g^*$ for every $g \in \Gamma_1$. 
\end{definition} 
 
It turns out that the nature of the DG-algebra $\mathscr{G}_\Gamma$ depends on whether $\Gamma$ is
of Dynkin type or not as shown in the following theorem. It was first proven by Hermes
\cite{hermes} under the assumption that $\mathbb{K}$ is algebraically closed and characteristic
0. In Cor.~(\ref{rehermes}), we give a proof of the first part of the theorem over an arbitrary field.

\begin{theorem}(Hermes \cite{hermes}, and also Cor.~(\ref{rehermes})) \label{hermes} 
	\begin{enumerate}
	\item Suppose $\Gamma$ is non-Dynkin. Then $H^*(\mathscr{G}_\Gamma) = \Pi_\Gamma$ is supported in degree 0 and is quasi-isomorphic to $\mathscr{G}_\Gamma$. In other words, $\mathscr{G}_\Gamma$ is formal.
    \item Suppose $\Gamma$ is Dynkin and $\mathbb{K}$ is characteristic 0 and algebraically
        closed. Then \[  H^*(\mathscr{G}_\Gamma) \cong \Pi_\Gamma \rtimes_\nu \mathrm{k}[u] \ \ \ \ \ , \ \ \ \ \ |u| = -1 
	\] as a $\mathrm{k}$-algebra, where the multiplication is twisted by the Nakayama automorphism $\nu$ on $\Pi_\Gamma$.
	Furthermore, $\mathscr{G}_\Gamma$ is not formal and there is an $A_\infty$-structure $(\mu^n)_{n \geq 2}$ on the
	twisted polynomial algebra $\Pi_\Gamma \rtimes_\nu \mathrm{k}[u]$ making it a
	minimal model of $\mathscr{G}_\Gamma$. Moreover, this $A_\infty$-structure is $u$-equivariant and $\mu^n = 0$
	for $n \neq 2, 3.$ \end{enumerate} \end{theorem}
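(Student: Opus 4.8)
The plan is to read the cohomology of $\mathscr{G}_\Gamma$ off its explicit complex. With respect to the total grading, $\mathscr{G}_\Gamma$ is the tensor algebra $T_{\bar A}(M)$, where $\bar A = \mathbb K\,\mathrm D\Gamma$ is the path algebra of the double quiver (in degree $0$) and $M = \bigoplus_v \bar A e_v \otimes_{\mathbb K} e_v\bar A \cong \bar A\otimes_{\mathrm k}\bar A$ is the free bimodule on the loops $h_v$ (in degree $-1$), the differential being the unique derivation with $d\bar A = 0$ and $dh_v = \rho_v := e_v\bigl(\sum_{g} g^*g - gg^*\bigr)e_v$. Identifying $\mathscr{G}_\Gamma^{-n} = \bar A^{\otimes_{\mathrm k}(n+1)}$, the differential becomes the $\rho$-insertion map that merges two adjacent tensor factors through the preprojective relations. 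First I would record the only immediate computation, $H^0(\mathscr{G}_\Gamma) = \bar A/(\rho) = \Pi_\Gamma$: the image of $d\colon \mathscr{G}_\Gamma^{-1}\to\mathscr{G}_\Gamma^0$ is exactly the two-sided ideal generated by the relations $\rho_v$.

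The conceptual heart is that $\mathscr{G}_\Gamma$ is the $2$-Calabi--Yau completion of the hereditary algebra $\mathbb K\Gamma$ (cf. \cite{vandenbergh}), which reduces the computation of its negative cohomology to the minimal projective bimodule resolution of $\Pi_\Gamma$. Concretely, after passing to $\Pi_\Gamma$-coefficients the complex begins with the self-dual three-term piece
\[ \Pi_\Gamma\otimes_{\mathrm k}\Pi_\Gamma \xrightarrow{d_2} \Pi_\Gamma\otimes_{\mathrm k}V\otimes_{\mathrm k}\Pi_\Gamma \xrightarrow{d_1} \Pi_\Gamma\otimes_{\mathrm k}\Pi_\Gamma \xrightarrow{\mu}\Pi_\Gamma\to 0, \]
where $V = \mathbb K\,\mathrm D\Gamma_1$ is the arrow space of the double, $d_1(x\otimes a\otimes y) = xa\otimes y - x\otimes ay$, and $d_2$ encodes $\rho$. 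The dichotomy is then dictated entirely by the homological algebra of $\Pi_\Gamma$. For $\Gamma$ non-Dynkin, $\Pi_\Gamma$ is Koszul of global dimension $2$ (its quadratic dual being the zigzag algebra $A_\Gamma$), so the displayed complex is a genuine length-$2$ resolution; exactness forces $H^{-n}(\mathscr{G}_\Gamma) = 0$ for every $n\geq 1$, whence $\mathscr{G}_\Gamma\to\Pi_\Gamma$ is a quasi-isomorphism and $\mathscr{G}_\Gamma$ is formal. This proves part (1), and I would emphasize that the argument never uses special properties of the field, matching Cor.~(\ref{rehermes}).

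For part (2), with $\Gamma$ Dynkin, $\Pi_\Gamma$ is instead a finite-dimensional self-injective algebra of infinite global dimension: the map $d_2$ is no longer injective, and $\ker d_2 \cong {}_\nu\Pi_\Gamma$ is the bimodule $\Pi_\Gamma$ twisted on one side by the Nakayama automorphism $\nu$. This twisted self-injectivity is precisely the periodicity of the minimal bimodule resolution, and iterating it produces exactly one new cohomology class in each negative total degree. I would then show $H^{-n}(\mathscr{G}_\Gamma)\cong {}_{\nu^n}\Pi_\Gamma$ and that these classes are generated over $H^0 = \Pi_\Gamma$ by a single element $u$ of total degree $-1$, giving the twisted polynomial ring $\Pi_\Gamma\rtimes_\nu\mathrm k[u]$ with relation $u\cdot a = \nu(a)\cdot u$; since $\nu$ is the involution induced by the Dynkin diagram symmetry, $\nu^2 = \mathrm{id}$, so $u^2$ is central and the description closes up. Identifying $\nu$ with the graph involution, and keeping the relevant $\mathrm{Ext}$-computations in this clean form, is where I would invoke the hypothesis that $\mathbb K$ is algebraically closed of characteristic $0$, following Hermes.

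Finally, to pin down the minimal $A_\infty$-model and establish non-formality, I would transfer the DG-structure on $\mathscr{G}_\Gamma$ to $H^*(\mathscr{G}_\Gamma)$ by the homological perturbation lemma, using the internal grading $s$ as bookkeeping: since $d$ has bidegree $(1,0)$ and the product bidegree $(0,0)$, every transferred product $\mu^n$ is homogeneous of bidegree $(2-n,0)$, and a degree count shows it raises the $u$-power by exactly $n-2$. Combined with $u$-equivariance and the fact that the periodicity of the resolution is generated in a single step, this forces $\mu^n = 0$ for $n\geq 4$ and leaves a single higher product $\mu^3$ (raising the $u$-power by one); one then checks $\mu^3\neq 0$ by exhibiting a nonzero triple Massey product, which simultaneously shows $\mathscr{G}_\Gamma$ is not formal. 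The main obstacle is the whole of part (2): establishing the Nakayama-twisted periodicity of the bimodule resolution of the Dynkin preprojective algebra, and extracting from it both the exact multiplicative structure $\Pi_\Gamma\rtimes_\nu\mathrm k[u]$ and the single higher product $\mu^3$; by contrast, once finite global dimension is in hand the non-Dynkin case (1) is essentially formal.
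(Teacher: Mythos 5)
Your proposal follows Hermes' original direct computation \cite{hermes}, which is not the route this paper takes. In the paper the theorem is essentially a citation: part (2) is quoted from \cite{hermes} without proof, and only part (1) is reproved, in Cor.~(\ref{rehermes}), by a quite different mechanism. There, one first establishes the Koszul duality $\operatorname{RHom}_{A_\Gamma^{op}}(\mathrm{k},\mathrm{k})\simeq\mathscr{G}_{\Gamma^{op}}$ (Thm.~(\ref{koszulness})), passes to a minimal model whose operations $\mu^d$ have bidegree $(2-d,0)$, and then invokes classical Koszulity of the zigzag algebra $A_\Gamma$ for non-Dynkin $\Gamma$ \cite{MV}: Koszulity forces the cohomological and length gradings on $\mathrm{Ext}^*_{A_\Gamma^{op}}(\mathrm{k},\mathrm{k})$ to agree, so no $\mu^d$ with $d\neq 2$ can survive, and $\mathscr{G}_\Gamma$ is formal with cohomology $\Pi_\Gamma$. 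Your route instead reads the cohomology off the complex $\bigl(\mathbb{K}\mathrm{D}\Gamma^{\otimes_{\mathrm{k}}(n+1)},\ \rho\text{-insertion}\bigr)$ together with the standard bimodule resolution of $\Pi_\Gamma$. The trade-off: the paper's argument is short, works over any field, and piggybacks on a duality it needs anyway, but it yields only part (1); your approach treats Dynkin and non-Dynkin uniformly and in principle delivers the minimal model of part (2), at the price of redoing the two hardest steps of \cite{hermes}.

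Two steps in your outline are genuinely thin. First, the phrase ``after passing to $\Pi_\Gamma$-coefficients'' conceals a real argument: the terms of your complex are tensor powers of the infinite-dimensional algebra $\mathbb{K}\mathrm{D}\Gamma$, not of $\Pi_\Gamma$, and deducing acyclicity in negative degrees from exactness of the displayed four-term sequence requires the noncommutative-complete-intersection property of $\rho$ (equivalently, a filtration/splicing argument), not a mere change of coefficients. Second, in part (2) the vanishing $\mu^n=0$ for $n\geq 4$ does not follow from the bidegree count plus $u$-equivariance: the count only says $\mu^n$ raises the $u$-power by $n-2$, which is perfectly consistent with $\mu^4\neq 0$; pinning this down is one of the delicate points of \cite{hermes}. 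Since the paper itself defers part (2) entirely to \cite{hermes}, your sketch of it should be read as an outline of the cited proof rather than a self-contained argument.
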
 
	
The \emph{Nakayama automorphism} $\nu:\Pi_\Gamma \to \Pi_\Gamma$ in the above theorem refers to the automorphism defined by

\begin{equation*} \nu (g_{wv}) = \begin{cases} g_{\rho(w)\rho(v)} & \ \ \ \text{if} \ \ \ g_{wv} \in \Gamma  \ \ \text{or $ g_{\rho(w)\rho(v)}\in \Gamma$,}  \\ -g_{\rho(w)\rho(v)} & \ \ \ \text{if} \ \ \ g_{vw} , g_{\rho(v)\rho(w)}\in \Gamma\end{cases} \end{equation*}

where $g_{wv}$ denotes the arrow from the vertex $v$ to $w$ in $\Pi_\Gamma$, and $\rho$ denotes either the natural involution of the Dynkin graph (precisely when $\Gamma$ is of type $A_n, D_{2n+1}$ or $E_6$)  or the identity.  
We will abuse the notation and always denote the arrow from $v$ to $w$ by $g_{wv}$ regardless of
    where it is considered, in the quiver $\Gamma$, its double $D\Gamma$, the extended quiver
    $\widehat{\Gamma}$ or in the algebras $\mathscr{G}_\Gamma$ and $\Pi_\Gamma$ for that matter. In particular, $g_{vw}=g^*_{wv}$ if $g_{wv}$ belongs to $\Gamma$.
Note that $\nu$ has order at most $2$ and it is the identity if and only if $\Gamma$ is of type $A_1$ or it is of type $D_{2n}, E_7$ or $E_8$ and the base field $\f{K}$ is of characteristic $2$ (see \cite[Def. 4.6]{almostkoszul}).

\subsection{Legendrian cohomology DG-algebra of $\Lambda_\Gamma$} 

	We recall the definition of the $\f{Z}$-graded Chekanov-Eliashberg DG-algebra of the Legendrian link $\Lambda_\Gamma = \bigcup \Lambda_v $ following  \cite[Sec. 4]{BEE},  where it is denoted as $LHA(\Lambda_\Gamma)$. It was originally introduced in \cite{SFT}, \cite{chekanov}. 

Let $\mathcal{R}$ denote the finite set of Reeb chords from $\Lambda_\Gamma$ to
itself. Recall from Sec.~(\ref{plumbing}) that $\mathcal{R}$ is in bijection
with the set of crossings in the Lagrangian projection of
$\Lambda_\Gamma$ (Fig.~(\ref{fig3})). We endow the vector space
$\mathbb{K}\langle \mathcal{R} \rangle$ with a $\mathrm{k}$-bimodule structure by declaring 
	\[ e_w \mathcal{R}  e_v \] to be the set of Reeb
chords from $\Lambda_w$ to $\Lambda_v$. As a $\mathrm{k}$-module,
$LHA(\Lambda)$ is the tensor algebra over the semisimple ring $\mathrm{k}$
given by: \[ LHA_* (\Lambda_\Gamma) :=\bigoplus_{i=0}^\infty \f{K}\langle \mathcal{R}
\rangle^{\otimes_\mathrm{k} i}. \] 
After decorating $\Lambda_\Gamma$ with extra data by orienting each component and picking a base point at each component as in Fig.~(\ref{fig3}), the chords $c \in \mathcal{R}$ acquire a kind of Conley-Zehnder grading by $\f{Z}$ which we denote by $|c|$. 
The subscript in the notation of $LHA_*(\Lambda_\Gamma)$ denotes the induced grading on the tensor algebra. 
Elements $e_v \in \mathrm{k}$ have degree 0, however in general there may also be Reeb chords which have degree 0.
The differential $D : LHA_* (\Lambda_\Gamma) \to LHA_{*-1} (\Lambda_\Gamma)$ is defined as a map  $ D: \f{K}\langle \mathcal{R} \rangle_* \to LHA_{*-1}(\Lambda_\Gamma)$ and extended by the graded Leibniz rule to $LHA_* (\Lambda)$. 

Note that in general the differential is not compatible with the path-length grading corresponding
    to the index $i$ in the definition of $LHA_*(\Lambda)$. 

As we follow the cohomological convention to be consistent with the literature on Fukaya categories,  instead of $LHA_*(\Lambda)$ we will use the cohomologically graded DG-algebra $LCA^*(\Lambda)$. As a $\mathrm{k}$-module, it is given by:
\[ LCA^*(\Lambda_\Gamma) := LHA_{-*} (\Lambda_\Gamma) \]

The differential $D: LCA^*(\Lambda_\Gamma) \to LCA^{*+1}(\Lambda_\Gamma)$ is just carried over from the one on $LHA_* (\Lambda_\Gamma)$. 

Let us describe the Legendrian cohomology DG-algebra of $\Lambda_\Gamma$ more explicitly. The underlying algebra of $LCA^*(\Lambda_\Gamma)$ is the tensor algebra of the $\mathrm{k}$-bimodule $\mathbb{K}\langle \mathcal{R} \rangle$ generated by the Reeb chords (i.e. crossings in Fig.~(\ref{fig3})): \[ \mathcal{R} = \{ c_{wv} , c_{vw} : g_{wv} \in \Gamma_1 \} \cup \{ c_v : v \in \Gamma_0 \} \] 
where $c_v$ is the Reeb chord at the unique self-crossing of the component $\Lambda_v$, and for every two adjacent vertices $v$ and $w$ of the tree $\Gamma$, $c_{wv}$ corresponds to the unique Reeb chord from $\Lambda_w$ to $\Lambda_v$, i.e. the chord at the unique crossing between $\Lambda_v$ and $\Lambda_w$ where $\Lambda_w$ is the undercrossing component. 

Notice the remarkable coincidence of the $\mathrm{k}$-bimodule structure on $LCA^* (\Lambda_\Gamma)$ and the $\mathrm{k}$-bimodule structure on $\mathscr{G}_\Gamma$ from Def.~(\ref{ginzdga}). 
Next, we will see that the differentials do not agree in general. Nonetheless the Legendrian cohomology DG-algebra is isomorphic to a deformation of the Ginzburg algebra.

\begin{theorem} \label{balik} If $\Lambda_\Gamma$ is the Legendrian link in the standard form
    associated to the tree $\Gamma$ with Lagrangian projection in Fig.~(\ref{fig3}) with the grading decoration as
    indicated, then there is an isomorphism between $(LCA^*(\Lambda_\Gamma),D)$ and a deformation of
    $(\mathscr{G}_\Gamma,d)$ as DG-algebras. More precisely, there is a graded derivation
    $\mathfrak{d}: \mathscr{G}_\Gamma \to \mathscr{G}_\Gamma$ with homogeneous components
    $\mathfrak{d} = d_3 + d_5 + \ldots +
    d_{2m-1}$ for some $m\geq 1$, $d_{2i-1}$ having bidegree $(2i-1, 2-2i)$, and there is an isomorphism of DG-algebras
    \[ (LCA^*(\Lambda_\Gamma), D) \simeq (\mathscr{G}_\Gamma, d+ \mathfrak{d})  \]
such that the Conley-Zehnder degree on the left-hand-side agrees with the total degree on the right-hand-side. \end{theorem}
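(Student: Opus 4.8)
The plan is to compute the Chekanov--Eliashberg differential $D$ on $LCA^*(\Lambda_\Gamma)$ explicitly from the Lagrangian projection in Fig.~(\ref{fig3}), match its leading terms with the Ginzburg differential $d$, and then package the remaining contributions as a graded derivation $\mathfrak{d}$ raising path-length. First I would set up the identification of generators: by the discussion preceding the theorem, the Reeb chords $\{c_{wv}, c_{vw}\}$ for adjacent $v,w$ correspond to the arrows $g_{wv}, g_{vw}=g^*_{wv}$, and the self-crossing chords $c_v$ correspond to the loops $h_v$, and this matches the $\mathrm{k}$-bimodule structures on the two sides. I would then verify that the prescribed Conley--Zehnder gradings of these chords agree with the total degrees $(1,-1)$ for the $g$'s and $g^*$'s and $(1,-2)$ for the $h_v$'s, using the orientation and basepoint decorations indicated in the figure together with the grading conventions of Sec.~(\ref{plumbing}). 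This is the step that justifies the claimed bidegrees on the deformation terms, since $D$ lowers Conley--Zehnder degree by $1$ and the derivation $\mathfrak{d}$ must therefore have bidegree $(2i-1, 2-2i)$ in each homogeneous piece to be degree-$(+1)$ for the total grading.

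The heart of the argument is the count of rigid holomorphic disks contributing to $D$. Since $D(c_{wv})=D(c_{vw})=0$ is forced by the absence of negative-degree chords to receive the output (mirroring $dg=dg^*=0$), the only nonzero differential is $D(c_v)$. I would enumerate the immersed polygons in Fig.~(\ref{fig3}) with a single positive puncture at $c_v$, reading off the words traced by their boundary. The linear-in-energy (i.e. lowest path-length) disks reproduce exactly $\sum_g g^*g - gg^*$, giving the Ginzburg differential $d$. The remaining disks have boundary words of odd path-length $2i-1 \geq 3$ and contribute the higher homogeneous pieces $d_{2i-1}$ of $\mathfrak{d}$. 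The chosen Lagrangian projection — in which each component links at most one component on its left — is precisely what makes these higher polygons enumerable and confines them to the self-crossing output, so the differential is a sum $D = d + \mathfrak{d}$ of the required shape with $\mathfrak{d}$ supported in odd path-lengths at least $3$.

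The main obstacle, and the reason the footnote records an earlier erroneous claim, will be the correct identification of these higher-energy curves: one must argue that no holomorphic disk is missed and that each contributes with the right sign, so that $D^2=0$ holds and $d+\mathfrak{d}$ is genuinely a differential (equivalently, that $\mathfrak{d}$ is a Maurer--Cartan/deformation term over $d$). I would handle signs via the orientation data and basepoints decorating $\Lambda_\Gamma$, following the sign prescription of \cite{BEE}, and verify $D^2=0$ directly from the combinatorics, which simultaneously confirms that $\mathfrak{d}$ is a graded derivation commuting appropriately with $d$. Finally, I would observe that $\mathfrak{d}$ vanishes on $g, g^*$ (as $D$ does) and is determined by its value on $h$, so the isomorphism of DG-algebras $(LCA^*(\Lambda_\Gamma),D)\simeq(\mathscr{G}_\Gamma, d+\mathfrak{d})$ is the identity on generators, with the Conley--Zehnder grading matching the total grading by the degree computation above. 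The finiteness bound $m$ follows since $\Lambda_\Gamma$ has finitely many crossings, bounding the path-length of any embedded polygon.
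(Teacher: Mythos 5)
Your proposal follows essentially the same route as the paper: identify the generators and their gradings, observe that $D(c_{wv})=0$ for degree reasons, and enumerate the immersed polygons contributing to $D(c_v)$, with the length-two words reproducing Ginzburg's differential and the longer embedded polygons supplying $\mathfrak{d}$ (the traversal argument ruling out any other polygons being the key step, as you correctly flag). One small slip: the higher-order words in $D(c_v)$ have \emph{even} length $2i \geq 4$, not odd length $2i-1 \geq 3$ as you state --- the piece $d_{2i-1}$ raises path-length by $2i-1$, so applied to the length-one generator it outputs words of length $2i$.
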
 
\begin{proof}

 \emph{Generators:} The natural one-to-one correspondence, i.e., $g_{wv} \leftrightarrow c_{wv}$, $h_v \leftrightarrow c_v$, between the arrow set $\widehat{\Gamma}_1$ of the extended quiver $\widehat{\Gamma}$ and the set $\mathcal{R}$ of Reeb chords provides the isomorphism of the underlying $\mathrm{k}$-algebras, the path algebra $\mathbb{K} \widehat{\Gamma}$ and the tensor algebra of $\mathbb{K}\langle \mathcal{R} \rangle$. 
Note that the Reeb orientation of the chord $c_{wv}$ is from $\Lambda_w$ to $\Lambda_v$ whereas the arrow $g_{wv}$ goes from the vertex $v$ to $w$.

 \emph{Gradings:} It suffices to identify the gradings of the generators.  We first recall the definition for an arbitrary Legendrian link $\Lambda \subset (S^3 , \xi_{std})$.
  
According to the original combinatorial description \cite{chekanov}, $LCA$ has a
$\mathbb{Z}/r\mathbb{Z}$-grading, where $r$ is the $gcd$ of the rotation numbers of the components.
In our case, each component of $\Lambda_\Gamma$ is an unknot with rotation number $0$ providing a
$\mathbb{Z}$-grading on $LCA^*(\Lambda_\Gamma)$. 

Let $z_\pm$ be the endpoints of a Reeb chord $c$ of an oriented Legendrian link $\Lambda$ equipped with basepoints on every component, $z_+$ being the one with the greater $z$-coordinate. 
Let $\gamma_\pm$ be the shortest paths in $\Lambda$, from $z_\pm$ to the basepoint of the corresponding component, in the direction of the orientation of $\Lambda$.
The grading of $c$ in $LCA$ is defined to be $2r_--2r_++1/2$, where $r_{\pm} \in \mathbb{Q}$ is the number of counterclockwise rotations the tangent vector of $\gamma_\pm$ makes (in the $xy$-plane). 
It is straightforward to verify that the grading of  every generator of the form $c_v$  of $LCA(\Lambda_\Gamma)$ is $-1$ and that of the form $c_{wv}$ is $0$. 

\emph{Differential:}  We briefly recall the definition of the differential of $LCA$ for any Legendrian link in the standard contact $S^3$, and then compute the differentials on the set $\mathcal{R}$ of generators of $LCA^*(\Lambda_\Gamma)$. The rest will be determined by the Leibniz rule.

To simplify the definition, we arrange that at every crossing of the Lagrangian projection, the understrand and the overstrand have slopes $+1$ and $-1$, respectively. 
We also use the same notation for a crossing in the Lagrangian projection as the corresponding Reeb chord.

First of all, each quadrant around a crossing in the Lagrangian projection is decorated with a Reeb sign. 
The right and left quadrants at a crossing have positive signs whereas the top and bottom quadrants have negative signs. 

There is also a second set of signs, orientation signs, for these quadrants.  Every quadrant has
    orientation sign $+1$ except for the bottom and right quadrants at an even-graded crossing which
    are decorated with $-1$ as in Fig.~(\ref{fig4}). In fact, the choice of orientation signs for a
    given diagram depends on an isotopy of the diagram near the crossing so that the strand with
    a positive slope goes under the strand with a negative slope as in Fig.~(\ref{fig4}). We indicated our
    choice in the upper left diagram of Fig.~(\ref{fig5}). This affects the signs but
    different choices give isomorphic DG-algebras (see \cite[pg. 80]{EkNg}).

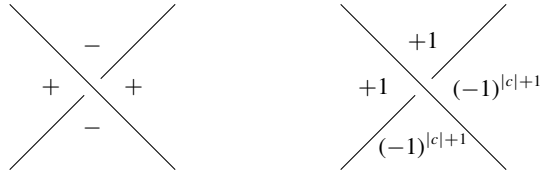
\begin{figure}[htb!]
\centering
\begin{tikzpicture} [scale=1.1]
\draw (0,0) -- (0.9,0.9);
\draw (1.1, 1.1) -- (2,2);
\draw (0,2) -- (2,0);
\draw (4,0) -- (4.9,0.9);
\draw (5.1,1.1) -- (6,2);
\draw (4,2) -- (6,0);
\node at (0.5,1)   {\footnotesize $+$};
\node at (1.5,1)   {\footnotesize $+$};
\node at (1,1.5)   {\footnotesize $-$};
\node at (1,0.5)   {\footnotesize $-$};
\node at (4.4,1)   {\footnotesize $+1$};
\node at (5.9,1)   {\footnotesize $(-1)^{|c|+1}$};
\node at (5,1.55)   {\footnotesize $+1$};
\node at (5,0.3)   {\footnotesize $(-1)^{|c|+1}$};
\end{tikzpicture}
\caption{Reeb signs (left) and orientation signs (right) at a crossing $c$}
\label{fig4}
\end{figure}

On a generator, the differential is given by a count of immersed polygons and it is extended by the graded Leibniz rule. The polygons taken into account are in the $xy$-plane with boundary on the Lagrangian projection of the link and vertices at the crossings. It is also required that at all but one vertex of the polygon, the quadrant included in the polygon should have a negative \emph{Reeb} sign.
Suppose that $\Delta$ is such an immersed polygon whose positive vertex is at $c$ and the negative vertices $c_1, c_2 , \dots , c_m$ are in order as we traverse the boundary of $\Delta$ counterclockwise starting at $c$. 
Note that $m$ may be $0$ and the $c_i$ are not necessarily distinct. 
If $b$ is the total number of times the boundary of $\Delta$ passes through basepoints of the Legendrian link, the orientation sign $\epsilon_{_{\Delta}}$ is defined to be $(-1)^b$ times the product of the \emph{orientation} signs at the vertices.

With this setup, we have
$$ d c = \sum_{\Delta} \epsilon_{_{\Delta}} c_m c_{m-1} \cdots c_1 $$
for any generator $c$. Observe that the differential of a generator of the form $c_{wv}$ vanishes since it has grading $0$ and $LCA^*(\Lambda_\Gamma)$ is nonpositively graded.
Again for grading reasons, any negative vertex of an immersed polygon which contributes to the differential of a generator $c_v$ is of type $c_{uw}$. 

In the rest of the proof we will show that 

$$D (c_v) = - \sum_{u:  g_{vu} \in \Gamma_1} c_{vu} c_{uv} + \sum_{i\geq 1} \sum_{ \scriptsize \begin{array}{c} w_1, \dots , w_i \\  g_{w_jv} \in \Gamma_1 \\ w_1 < \cdots < w_i   \end{array}} c_{vw_1} c_{w_1v} \dots  c_{vw_i} c_{w_iv} , $$

where the ordering in the last summation refers to the clockwise ordering of the components of
$\Lambda_{\Gamma}$ which are linked to $v$ from the right in the Lagrangian projection in
Fig.~(\ref{fig3}), e.g. the natural ordering of the integers associated to components in
Fig.~(\ref{fig3}). Note that the second sum not only corresponds to higher order terms in the
length filtration, it also contributes terms of wordlength 2 of
the form $c_{vw_1} c_{w_1 v}$. Indeed, all the terms of wordlength 2 that appear in the image of
$D(c_v)$ precisely correspond to $d(c_v)$ in $\mathscr{G}_\Gamma$. In particular, the first sum has at most one term as long as our Legendrian link is associated to a tree in the standard form.

\begin{figure}[!h]
\centering
\begin{tikzpicture}
	\draw (0,0) node[inner sep=0] {\includegraphics[scale=0.95]{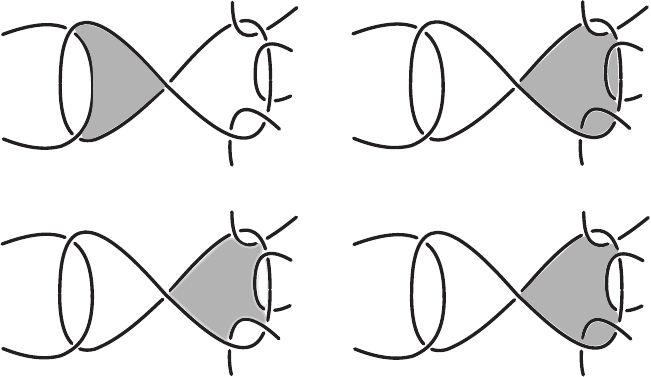}} ;
	\node[] at (-4.15,2.68) {\tiny $-$};
    \node[] at (-3.87,2.52) {\tiny  $-$};
    \node[] at (-4.2,0.80) {\tiny $-$};
    \node[] at (-3.95,0.65) {\tiny  $-$};
    \node[] at (-1.52,2.52) {\tiny $-$};
    \node[] at (-1.30,2.60) {\tiny  $-$};
    \node[] at (-1.32+0.45,2.64-0.15) {\tiny  $-$};
    \node[] at (-1.345+0.27,2.61-0.2) {\tiny  $-$};
    \node[] at (-1.54+0.75,2.52-0.3) {\tiny $-$};
    \node[] at (-1.32+0.5,2.63-0.25) {\tiny  $-$};
    \node[] at (-1.29+0.47,1.33-0.17) {\tiny  $-$};
    \node[] at (-1.32+0.43,1.33-0.35) {\tiny  $-$};
    \node[] at (-1.32+0.55,1.33) {\tiny  $-$};
    \node[] at (-1.32+0.3,1.33) {\tiny  $-$};
    \node[] at (-1.37,0.9) {\tiny  $-$};
    \node[] at (-1.37,0.75) {\tiny  $-$};
    \node[] at (-4.253,1.6)  {{\tiny $\bigstar$}};
    \node[] at (-1.127,1.8)  {{\tiny $\bigstar$}};
    \node[] at (-1.49,3.0)  {{\tiny $\bigstar$}};
    \node[] at (-1.53,0.55)  {{\tiny $\bigstar$}};

\end{tikzpicture} \caption{The polygons which correspond to the words in the differential $D(c_v)$:
    (from top left in clockwise order) a triangle (with a negative orientation sign), a triangle, a
    pentagon, and a heptagon (all with positive orientation signs). The quadrants with negative
    orientation signs and the basepoints are indicated in the upper left diagram.} \label{fig5} \end{figure}

We will prove that all the terms in the above differential are induced by \emph{embedded} polygons as indicated in
Fig.~(\ref{fig5}), the relevant piece of the Lagrangian projection given in Fig.~(\ref{fig3}), together with the orientation signs at the crossings. 
There are also two unigons with a unique vertex at $c_v$, one to the left and the other to the
right with canceling contributions to the differential $D(c_v)$ since they come with opposite signs. 

\begin{figure}[!h]
\centering
\begin{tikzpicture}
	\draw (0,0) node[inner sep=0] {\includegraphics[scale=1.5]{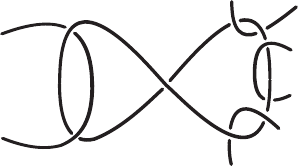}} ;
	
	\node[] at (0.15,0) {\tiny $+$};
        \node[] at (0.7,0) {\tiny  $+$};
        \node[] at (0.45,-0.35) {\footnotesize  $c_v$};
        
        \node[] at (-1.92,1.1) {\tiny $+$};
        \node[] at (-1.92+0.22,1.1+0.22) {\tiny  $-$};
          \node[] at (-2.02,1.65) {\footnotesize  $c_{uv}$};
        
	\node[] at (-1.88,-1.1) {\tiny  $+$};
	\node[] at (-1.88+0.24,-1.1-0.22) {\tiny  $-$};
	 \node[] at (-1.87,-1.6) {\footnotesize  $c_{vu}$};
	
	\node[] at (1.88,-1.) {\tiny  $-$};
	\node[] at (2.2,-1.2) {\tiny  $+$};
	
	\node[] at (2,1.3) {\tiny  $-$};
	\node[] at (2.37,1.45) {\tiny  $+$};
	
		\node[] at (2+0.75,1.15) {\tiny $-$};
	\node[] at (2.37+0.25,1.44) {\tiny  $+$};
	
		\node[] at (2+0.75,1.) {\tiny  $-$};
	\node[] at (2.37+0.53,0.75) {\tiny  $+$};
	
			\node[] at (2.9,-0.5) {\tiny  $-$};
	\node[] at (2.95,-0.25) {\tiny  $+$};
	
	\node[] at (2.85,-0.65) {\tiny  $-$};
	\node[] at (2.82,-0.92) {\tiny $+$};

\end{tikzpicture}
\caption{Reeb signs}
\label{fig6}
\end{figure}

We now prove that there are no other immersed polygons which contribute to the differential $D(c_v)$. To begin with,
any such polygon has a (Reeb-) positive vertex at $c_v$ (see Fig.~(\ref{fig6}) for the Reeb signs at
the relevant crossings). Start traversing its boundary in the counterclockwise direction assuming
that the polygon includes the left quadrant at $c_v$. 
If it has a vertex other than $c_v$, i.e. if it is not the unigon canceled by a similar unigon to the right, then the only option for an initial negative vertex is at $c_{uv}$ because of the configuration of the Reeb signs. 
Moreover, this vertex has to be followed (as we continue traversing the boundary) by a vertex at $c_{vu}$ since otherwise the polygon would intersect the region outside the Lagrangian projection which is prohibited. 
Similar considerations imply that a polygon which includes the right quadrant at $c_v$ can only have vertices at the crossings of $\Lambda_v$ with other components of $\Lambda$ as shown in Fig.~(\ref{fig5}) above so as not to intersect the noncompact region.  
\end{proof}

\begin{remark}  A relation between Ginzburg's construction of CY3 DG-algebras associated with quivers (with
potentials) and Fukaya categories of certain quasi-projective 3-folds also appears in the work of Smith \cite{smith}. 
\end{remark}

\begin{remark}
\label{sftshit} 
Recall that $LCA^*(\Lambda_\Gamma)$ is associated to the Legendrian attaching
spheres $\Lambda_v$ of Weinstein 2-handles. Stated results of \cite{BEE}
provide a dual picture given in terms of the wrapped Floer cohomology of the
cocores $L_v$ of these handles induced by cobordism maps associated to the
handle attachments. Namely, there is a grading preserving quasi-isomorphism of
	$A_\infty$-algebras: \[ LCA^*(\Lambda_\Gamma) \simeq \bigoplus_{v,w}
	CW^*(L_v,L_w). \] 
A rigorous justification of the equivalence of these two dual pictures is not fully established at
	this time. However, a detailed sketch of proof based on the results of \cite{BEE} has recently appeared in \cite[Thm. 2]{EL}. We must emphasize that we do not make use of this correspondence anywhere in our computations. Rather, this appealing geometric picture serves us as a guide to find the correct algebraic statement to be proven rigorously. 
\end{remark} 

\subsubsection{Recourse to deformation theory of DG-algebras} 

As a consequence of the explicit computation given above we can see the Legendrian cohomology DG-algebra $LCA^*(\Lambda_\Gamma)$ as a deformation of the Ginzburg DG-algebra $\mathscr{G}_\Gamma$. Therefore, it is natural to check whether
this deformation is trivial or not (up to equivalence). We recall here the basics of deformation theory of DG-algebras and exploit it to
determine the relationship between our computation of $LCA^*(\Lambda_\Gamma)$ and the Ginzburg
DG-algebra $\mathscr{G}_\Gamma$. A classical reference for this material is
\cite{GersWilker}. A recent exposition close to our purpose appears in
\cite[Appendix A]{seidel2half}. 

Unfortunately, these methods do not help directly as they apply in the setting of formal
deformations (such as a deformation over $\mathrm{k}[[t]]$) whereas here we have that
$LCA^*(\Lambda_\Gamma)$ is a global deformation of $\mathscr{G}_\Gamma$ (over $\mathrm{k}[t]$).
Nonetheless, it is helpful to start at the formal level and observe that we can arrange for a
globalisation in certain cases.

There is a decreasing, exhaustive, bounded above filtration on the complex $LCA^*(\Lambda_\Gamma)$ 
\[ \mathcal{F}^{0} := LCA^*(\Lambda_\Gamma) \supset \mathcal{F}^1 := \bigoplus_{i=1}^\infty \mathbb{K} \langle \mathcal{R}
    \rangle^{\otimes_\mathrm{k}^i} \supset \ldots \supset  \mathcal{F}^p := \bigoplus_{i=p}^\infty \mathbb{K} \langle \mathcal{R}
            \rangle^{\otimes_\mathrm{k}^i} \supset \ldots 
\]
Let us write $(LCA^*(\Lambda_\Gamma), D) = (\mathscr{G}_\Gamma, d_1+ d_2 + \ldots d_m)$, for some finite $m$,
where $d_i : \mathcal{F}^p \to \mathcal{F}^{p+i}$ is the $i^{th}$ homogeneous piece of the
differential.  
Observe that $d_1= d$ can be identified as the differential in the Ginzburg DG-algebra.  
It follows from $\mathrm{k}$-linearity of the differential that in fact $d_i$ is identically zero for even $i$. 
Note also that since $\mathscr{G}_\Gamma$ is bigraded, this complex is doubly graded. 
Denoting the second grading by$s$, we have $s(d_{2i-1}) = 2-2i$.  

Now, the first non-trivial $d_i$ for $i>1$ is possibly
$d_3$. Because $D^2=0$, using the filtration, we deduce that
 \[ d_1 d_3 + d_3 d_1= 0 \]
 Recall that the reduced bar complex $(\hom_\mathrm{k} (\mathrm{T}\overline{\mathscr{G}_\Gamma},
 \mathscr{G}_\Gamma), \delta = \delta_0 + \delta_1) $ can be used to compute Hochschild cohomology of
 $\mathscr{G}_\Gamma$. Here, we only need the explicit form of the Hochschild differential for
 elements $\phi \in \hom_{\mathrm{k}}(\overline{\mathscr{G}_\Gamma},
 \mathscr{G}_\Gamma)$ (see formula in \cite[Eqn. 1.8]{thebook}, which we adapted using
 DG-algebra conventions given in the introduction). For such $\phi$, we have
 \[     (-1)^{|\phi|+|b|}(\delta_0 \phi) (a  \otimes_{\mathrm{k}} b) = a \phi(b) +
 (-1)^{|\phi||b|} \phi(a) b - \phi(ab)  \ \ , \] 
\[ (-1)^{|\phi|+|a|} (\delta_1 \phi) (a) = d\phi(a) - \phi(da) \ \ . \] 
 
By definition, $\mathscr{G}_\Gamma$ is bigraded and its differential has bidegree
$(1,0)$, so the Hochschild cochain complex $CC^*(\mathscr{G}_\Gamma,
\mathscr{G}_\Gamma) = \hom_\mathrm{k} (\mathrm{T}\overline{\mathscr{G}_\Gamma},
 \mathscr{G}_\Gamma)$ has three gradings: the cohomological degree, the degree induced by the
 total degree $r+s$ on $\mathscr{G}_\Gamma$ and the internal grading induced by the second grading
 $s$ on $\mathscr{G}_\Gamma$. However, the Hochschild differential $\delta = \delta_0 +
 \delta_1$ is homogeneous (of degree 1) with respect to the sum of the first two gradings and it also preserves the
 internal degree, hence we get a bigrading on 
 $HH^*(\mathscr{G}_\Gamma,\mathscr{G}_\Gamma)$ which we write as
 \begin{equation} \label{decomp} HH^*(\mathscr{G}_\Gamma, \mathscr{G}_\Gamma) \cong \bigoplus_{r,s} HH^r
 (\mathscr{G}_\Gamma,\mathscr{G_\Gamma}_\Gamma[s]),  \end{equation} 
 where $r$ is the total degree (the sum of the cohomological degree and the degree induced by the total degree
 on $\mathscr{G}_\Gamma$) and $s$ is the internal grading induced by the internal grading on
 $\mathscr{G}_\Gamma$. 

Now, the fact that $d_3$ is a degree 1 derivation which anti-commutes with $d_1$ means that the
 sign-modified map $\tilde{d}_3 \in
 \hom^1_{\mathrm{k}}(\overline{\mathscr{G}_\Gamma}, \mathscr{G}_\Gamma)$ defined by 
 \[ \tilde{d}_3 a  = (-1)^{|a|} d_3 a\]
is closed under the Hochschild
differential. This yields the first obstruction class of the deformation:
\[ [\tilde{d}_3] \in HH^2(\mathscr{G}_\Gamma, \mathscr{G}_\Gamma[-2]) \]
If this class is trivial, choosing a trivializing class  
$\phi_2 \in \hom^{0}_{\mathrm{k}}(\overline{\mathscr{G}_\Gamma}, \mathscr{G}_\Gamma [-2])$,
we get a map $\phi_2$ for which we have: 
\[ d_3 = d \phi_2 - \phi_2 d \] 
Note that $\phi_2$ is induced by a map $\mathbb{K}\langle \mathcal{R}
\rangle \to \mathbb{K}\langle \mathcal{R} \rangle^{\otimes_\mathrm{k} 3}$ . 
Therefore, we can consider an \emph{algebra} map  \[ \Phi_2 = \mathrm{Id} + \phi_2 :
\mathscr{G}_\Gamma \to \mathscr{G}_\Gamma \] defined initially as a map on $\mathbb{K} \langle
\mathcal{R} \rangle \to \mathscr{G}_\Gamma$ and then extended to an algebra map.

Then, we would like to define a new differential $D'$ on $\mathscr{G}_\Gamma$ of the form 
\[ D'= d+ d'_5+ \ldots  \]
so that $\Phi_2: (\mathscr{G}_\Gamma, D') \to (\mathscr{G}_\Gamma, D)$ is a chain map (in addition to
being an algebra map). The obvious candidate for $D'$ is given by:
\[ D' = (\mathrm{Id} - \phi_2 + \phi_2^2  - \ldots ) \circ D \circ ( \mathrm{Id} +\phi_2) \] 
However, the alternating sum $(\mathrm{Id} - \phi_2 + \phi_2^2  - \ldots )$ will in general be an
infinite series, therefore, to make sense of this we need to consider the completion of $\mathscr{G}_\Gamma$ with respect to the length
filtration $\mathcal{F}^\bullet$: 
\[ \widehat{\mathscr{G}_\Gamma} = \varprojlim_{p} \mathscr{G}_\Gamma /\mathcal{F}^p
\mathscr{G}_\Gamma \] 
The differential $D$ of $LCA^*(\Lambda_\Gamma)$ extends naturally to
$\widehat{\mathscr{G}_\Gamma}$. We write the resulting complex as:
\[ \widehat{LCA}(\Lambda_\Gamma)  = (\widehat{\mathscr{G}_\Gamma}, D) \] 
Concretely, we can write the underlying $\mathrm{k}$-bimodule as: $\widehat{LCA}(\Lambda_\Gamma) = \mathbb{K} \langle \mathcal{R}
\rangle [[t]]$, where $t$ is a formal parameter in degree 0. In other words, we now allow formal power series in Reeb chords. 

We can now proceed with the construction mentioned above. Notice that since $\phi_2$ increases the
length by $2$, there is no convergence issue for the series $(\mathrm{Id} - \phi_2 + \phi_2^2  -
\ldots )$ on $\widehat{\mathscr{G}_\Gamma}$. Therefore, we have a filtered DG-algebra map
\[ \Phi_2 : (\widehat{\mathscr{G}_\Gamma}, D') \to (\widehat{\mathscr{G}_\Gamma}, D) \] 
which by construction is a chain map with an inverse, hence is in particular a quasi-isomorphism. 

We can then focus on the complex $(\widehat{\mathscr{G}_\Gamma}, D'=d +d_5'+ \ldots)$. As
before, we have that $d'_5$ is a derivation which anti-commutes with $d$, hence
the sign-twisted map $\tilde{d}'_5$ leads to an obstruction class $[\tilde{d}'_5] \in
HH^2(\mathscr{G}_\Gamma, \mathscr{G}_\Gamma[-4])$. If this vanishes we can continue along and find a
quasi-isomorphism of the form $\mathrm{Id} + \phi_4$. Iterating
this argument infinitely many times (which we can do as each quasi-isomorphism increases the
length), we obtain the following lemma (cf. \cite[Lemma
A.5]{seidel2half}).  \begin{lemma} \label{deftheory} Suppose that $HH^2(\mathscr{G}_\Gamma, \mathscr{G}_\Gamma[s]) = 0$ for all $s <0$,
    then there exists a quasi-isomorphism of completed DG-algebras:
\[ (\widehat{\mathscr{G}_\Gamma}, d) \simeq (\widehat{LCA}(\Lambda_\Gamma),D) \] \end{lemma}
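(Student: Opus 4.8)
The plan is to turn the single trivialization step carried out above for $d_3$ into a formal induction that successively removes the higher pieces $d_5, d_7, \ldots$ of the differential $D$, working throughout in the completion $\widehat{\mathscr{G}_\Gamma}$ so that the inverse gauge transformations make sense. Concretely, I would run an induction on the length filtration, maintaining at stage $k$ a filtered DG-algebra isomorphism $\Psi_k : (\widehat{\mathscr{G}_\Gamma}, D_k) \to (\widehat{LCA}(\Lambda_\Gamma), D)$, where $D_k = d + d^{(k)}_{2k+1} + d^{(k)}_{2k+3} + \ldots$ agrees with the Ginzburg differential $d$ up to terms of length $\geq 2k+1$. The base case $k=1$ is exactly $(\widehat{\mathscr{G}_\Gamma}, D) = (\widehat{\mathscr{G}_\Gamma}, d + d_3 + \ldots) = \widehat{LCA}(\Lambda_\Gamma)$, with $\Psi_1 = \mathrm{Id}$.

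For the inductive step I would argue as in the computation preceding the lemma. Expanding $D_k^2 = 0$ along the filtration, the lowest non-trivial identity reads $d\, d^{(k)}_{2k+1} + d^{(k)}_{2k+1}\, d = 0$, so the sign-twisted map $\tilde{d}^{(k)}_{2k+1}$, defined by $\tilde{d}^{(k)}_{2k+1} a = (-1)^{|a|} d^{(k)}_{2k+1} a$, is a Hochschild cocycle. Since $d^{(k)}_{2k+1}$ has internal bidegree $s = 2 - 2(k+1) = -2k < 0$, its class lives in $HH^2(\mathscr{G}_\Gamma, \mathscr{G}_\Gamma[-2k])$, which vanishes by the hypothesis of the lemma. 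Choosing a primitive $\phi_{2k} \in \hom^0_{\K}(\overline{\mathscr{G}_\Gamma}, \mathscr{G}_\Gamma[-2k])$ with $d^{(k)}_{2k+1} = d\phi_{2k} - \phi_{2k} d$, I form the algebra automorphism $\Phi_{2k} = \mathrm{Id} + \phi_{2k}$ of $\widehat{\mathscr{G}_\Gamma}$; because $\phi_{2k}$ strictly raises length, the series $\mathrm{Id} - \phi_{2k} + \phi_{2k}^2 - \ldots$ converges in the completion and furnishes its inverse. Conjugating, $D_{k+1} := \Phi_{2k}^{-1} \circ D_k \circ \Phi_{2k}$ is again a differential agreeing with $d$ through length $2k+2$, and setting $\Psi_{k+1} = \Psi_k \circ \Phi_{2k}$ advances the induction.

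Finally I would check convergence of the infinite composite $\Psi = \Phi_2 \circ \Phi_4 \circ \Phi_6 \circ \cdots$. Each $\Phi_{2k}$ differs from the identity only in length $\geq 2k+1$, so modulo any fixed filtration level $\mathcal{F}^p$ only finitely many factors act non-trivially; hence the composite is a well-defined filtered algebra automorphism of $\widehat{\mathscr{G}_\Gamma}$, and by the same bookkeeping the conjugated differentials $D_k$ stabilize to $d$ in each filtration degree. Thus $\Psi : (\widehat{\mathscr{G}_\Gamma}, d) \to (\widehat{LCA}(\Lambda_\Gamma), D)$ is an isomorphism of filtered DG-algebras, a fortiori a quasi-isomorphism. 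The point requiring care is precisely this convergence bookkeeping in the completion: verifying at each stage that the obstruction genuinely sits in strictly negative internal degree (so that the hypothesis applies) and that both the cumulative gauge transformation and the conjugated differential stabilize modulo every $\mathcal{F}^p$. The cohomological input, the vanishing of $HH^2(\mathscr{G}_\Gamma, \mathscr{G}_\Gamma[s])$ for $s<0$, is consumed exactly once per stage and is what makes each successive obstruction class vanish.
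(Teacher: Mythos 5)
Your proposal is correct and follows essentially the same route as the paper: iteratively killing the homogeneous pieces $d_{2k+1}$ by gauge transformations $\mathrm{Id}+\phi_{2k}$ whose obstruction classes live in $HH^2(\mathscr{G}_\Gamma,\mathscr{G}_\Gamma[-2k])$, and passing to the completion $\widehat{\mathscr{G}_\Gamma}$ so that the inverse series and the infinite composite converge filtration-by-filtration. The bookkeeping you flag (each $\Phi_{2k}$ acting trivially below length $2k$, hence stabilization modulo every $\mathcal{F}^p$) is exactly the point the paper invokes when it says the iteration is possible ``as each quasi-isomorphism increases the length.''
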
 We next apply these ideas to the case where $\Gamma = D_n$ and show that 
all the obstructions vanish in this case. Furthermore, we prove that one can truncate the above quasi-isomorphism eliminating the
need for completions. Here, we make use of the results of Sec.~(\ref{typeD}) where
$HH^*(\mathscr{G}_\Gamma, \mathscr{G}_\Gamma)$ is computed for 
$\Gamma = D_n$. We would like to point out that the computation given there is independent of the conclusions we are drawing here. 

The following lemma is the key technical result that we will use to truncate the
quasi-isomorphism given on completions by the above deformation theory argument.

\begin{lemma} \label{Hausdorff}  Let $\mathcal{F}^\bullet$ denote the length filtration on
    $LCA^*(\Lambda_{D_n})$. For each grading $k$, there exists a $p(k)$ such that for all $p \geq
    p(k)$  we have that
    \[ \mathcal{F}^p H^k(LCA(\Lambda_{D_n})) = \mathrm{Im}\left(H^k ( \mathcal{F}^p LCA(\Lambda_{D_n})) \to H^k(
	LCA(\Lambda_{D_n}))\right)  = 0 \]
    In particular, for all $k$, the filtration on $H^k( LCA(\Lambda_{D_n}))$ induced by
$\mathcal{F}^\bullet$ is complete and Hausdorff. 
\end{lemma}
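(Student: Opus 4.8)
The plan is to analyse the spectral sequence of the length filtration $\mathcal{F}^\bullet$ and to reduce the statement to a single non-formal point: that this filtration, once transported to cohomology, acquires no ``phantom'' classes supported in arbitrarily high length. First I would record that the associated graded of $(LCA(\Lambda_{D_n}),D)$ for $\mathcal{F}^\bullet$ is exactly the Ginzburg complex $(\mathscr{G}_{D_n},d)$: by Theorem \ref{balik} the higher terms $d_3,d_5,\dots$ of $D$ raise the length by at least three, hence vanish on $\mathrm{gr}_{\mathcal{F}}$, so the induced ($E_1$-)differential is the Ginzburg differential $d$, which raises the length $r$ by one and preserves the internal grading $s$. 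Thus the $E_2$-page is the bigraded Ginzburg cohomology $E_2^{r,s}=H^{(r,s)}(\mathscr{G}_{D_n})$, with total degree $k=r+s$.

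The only global input is the computation of $H^*(\mathscr{G}_{D_n})$. By Theorem \ref{hermes} (resp. the explicit description in Section \ref{typeD}) one has $H^*(\mathscr{G}_{D_n})\cong\Pi_{D_n}\rtimes_\nu\mathrm{k}[u]$ with $\Pi_{D_n}$ finite dimensional, say of top path-length $N$, and $u$ of bidegree $(1,-2)$. Since an element $xu^m$ (with $x\in\Pi_{D_n}$ of path-length $\ell_x\le N$) has total degree $-m$ and length $\ell_x+m$, in each fixed total degree $k$ the $E_2$-page is concentrated in the band $-k\le r\le N-k$. In particular $H^k(LCA(\Lambda_{D_n}))$ is finite dimensional, the spectral sequence degenerates at a finite page in each total degree, and $E_\infty^{r,k-r}=0$ for $r>p_0(k):=N-k$. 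Because the chain-level filtration is exhaustive and Hausdorff (every element of $LCA$ has finite length) and degenerates at a finite page, the identification $\mathrm{gr}^r_{\mathcal{F}}H^k=E_\infty^{r,k-r}$ is valid; hence the descending filtration on the finite-dimensional space $H^k$ stabilises for $r>p_0(k)$ to the subspace $\mathcal{F}^\infty H^k:=\bigcap_p\mathcal{F}^pH^k$.

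The first equality asserted in the lemma is the definition of the induced filtration, and ``complete and Hausdorff'' follows the instant we know $\mathcal{F}^pH^k=0$ for $p\gg 0$; by the previous paragraph this is \emph{equivalent} to $\mathcal{F}^\infty H^k=0$. This is the heart of the matter, and I would stress that it is not a formal consequence of the boundedness of $E_2$: since $D$ strictly raises length, a class can in principle be representable only by an infinite-length cocycle in the completion $\widehat{LCA}(\Lambda_{D_n})=\varprojlim_p LCA/\mathcal{F}^p$, in which case it lies in $\ker\big(H^k(LCA)\to H^k(\widehat{LCA})\big)=\mathcal{F}^\infty H^k$. In $\widehat{LCA}$ itself there is no problem: the bottom-up trivialisation — trivialising the lowest-length component by exactness of $(\mathscr{G}_{D_n},d)$ in lengths $>p_0(k)$ and iterating — converges in the complete algebra, so $\widehat{\mathcal{F}}^pH^k(\widehat{LCA})=0$ for $p>p_0(k)$ and $\dim H^k(\widehat{LCA})=\sum_r\dim E_\infty^{r,k-r}$. (Moreover the hypotheses of Lemma \ref{deftheory} hold for $D_n$ by Section \ref{typeD}, so $\widehat{LCA}\simeq\widehat{\mathscr{G}_{D_n}}$ and $\dim H^k(\widehat{LCA})=\dim H^k(\mathscr{G}_{D_n})$, a number we know explicitly.) The genuine obstacle is to descend this vanishing to the uncompleted $LCA$.

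To close the gap I would argue by a dimension count. From $\mathrm{gr}^rH^k(LCA)=E_\infty^{r,k-r}=\mathrm{gr}^rH^k(\widehat{LCA})$ together with the completeness and Hausdorffness of the filtration on $H^k(\widehat{LCA})$, one obtains
\[ \dim H^k(LCA)=\dim H^k(\widehat{LCA})+\dim\mathcal{F}^\infty H^k(LCA), \]
so that $\mathcal{F}^\infty H^k(LCA)=0$ is equivalent to the comparison map $H^k(LCA)\xrightarrow{\ \sim\ }H^k(\widehat{LCA})$ being an isomorphism, i.e. to $\dim H^k(LCA)=\dim H^k(\mathscr{G}_{D_n})$. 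I would establish this last equality using the $D_n$-specific structure rather than abstract filtration arguments: the Calabi--Yau (Poincaré-type) duality carried by $LCA(\Lambda_{D_n})$ as a model for the wrapped Floer cohomology of the Weinstein $4$-manifold $X_{D_n}$ pairs $H^k$ with $H^{2-k}$ nondegenerately; a one-sided stable subspace $\mathcal{F}^\infty H^k$ lying above the length band $[-k,N-k]$ is incompatible with such a finite self-dual structure and must vanish. Alternatively, one extracts explicit bounded-length cocycle representatives from the Section \ref{typeD} computation and matches dimensions directly against $H^k(\mathscr{G}_{D_n})$. Either way, verifying that the chosen independent computation of $\dim H^k(LCA)$ is compatible with $\mathcal{F}^\bullet$ — equivalently, directly ruling out the phantom classes — is precisely where the main work, and the principal difficulty, resides; the elementary example of a degree-raising differential whose inverse exists only after completion shows that no softer argument can suffice.
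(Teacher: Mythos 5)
You correctly isolate the central difficulty: the length spectral sequence only controls the associated graded of the filtration on cohomology, and the real content of the lemma is ruling out ``phantom'' classes in $\mathcal{F}^\infty H^k$ that die only after completion. But your proposal does not actually close this gap. The two mechanisms you offer are (a) a Calabi--Yau/Poincar\'e-type pairing on $H^*(LCA(\Lambda_{D_n}))$ coming from its identification with wrapped Floer cohomology, and (b) ``extract explicit bounded-length cocycle representatives and match dimensions.'' Option (a) rests on the $LCA \simeq \bigoplus CW^*(L_v,L_w)$ correspondence, which the paper explicitly refuses to use in any computation because its foundations are not fully established (see Rem.~(\ref{sftshit})); moreover, even granting a nondegenerate pairing on $H^*$, you give no argument for why it should interact with the length filtration so as to kill a subspace sitting in $\mathcal{F}^\infty$ --- nondegeneracy of a pairing on an a priori possibly infinite-dimensional $H^k$ does not by itself forbid a filtration-stable complement. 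Option (b) is a description of the task, not a proof. In addition, the bookkeeping identity $\dim H^k(LCA)=\dim H^k(\widehat{LCA})+\dim\mathcal{F}^\infty H^k$ presupposes $\mathrm{gr}^r_{\mathcal{F}}H^k=E_\infty^{r,k-r}$ and finite-dimensionality of $\mathcal{F}^\infty H^k$, neither of which is available before the lemma is proved.

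The paper's proof proceeds entirely differently and supplies exactly the ingredient you are missing. First, a direct word-combinatorial computation in degree $0$: using the explicit differential, $e_3H^0 e_3$ is presented as $\mathbb{K}\langle x,y\rangle/(x^2,y^2,(x+y+xy)^{n-2})$, the identity of ideals $(x^2,y^2,(x+y+xy)^{n-2})=(x^2,y^2,(x+y)^{n-2})$ is verified, and one concludes that every sufficiently long word in the degree-$0$ generators lies in the two-sided ideal $\langle\mathrm{Im}\,D\rangle$. Second --- and this is the key external input absent from your proposal --- Dimitroglou Rizell's theorem that $H^*(LCA(\Lambda))\to LCA^*(\Lambda)/\langle\mathrm{Im}\,D\rangle$ is \emph{injective}. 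This upgrades ``lies in the ideal generated by boundaries'' to ``is zero in cohomology,'' and immediately handles all negative degrees: a long word in total degree $k$ contains only $|k|$ degree-$(-1)$ letters, hence a long subword of degree-$0$ letters, hence lies in $\langle\mathrm{Im}\,D\rangle$. Without an input of this kind (or an equally concrete substitute), your argument does not establish the vanishing of $\mathcal{F}^p H^k$ for large $p$.
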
 

\begin{figure}[!h]
\centering
    \begin{tikzpicture}
\draw (0,0) node[inner sep=0] {\includegraphics[scale=0.95]{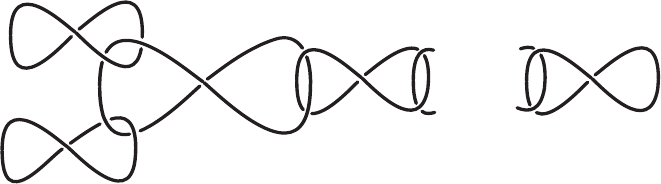}} ;
\node at (-5.3,1)   {\footnotesize $2$};
\node at (-5.5,-1)   {\footnotesize $1$};
\node at (-3.9,0.1)   {\footnotesize $3$};
\node at (-0.75,0.2)   {\footnotesize $4$};
\node at (5.5,0.2)   {\footnotesize $n$};
\path (-0.75,0.2)  to node {\dots}  (5.5,0.2) ;

\end{tikzpicture}
\caption{Lagrangian projection of a Legendrian link associated to the $D_n$ tree}
\label{fig7}
\end{figure}

\begin{proof} Consider the Lagrangian projection in Figure (\ref{fig7}). The proof of Thm.~(\ref{balik}) gives us the following description of the differential on $(LCA^*(\Lambda_{D_n}), D)$ 
\begin{align*}
    D c_1 & = c_{13} c_{31} \\
    D c_2 & = c_{23} c_{32} \\
    D c_3 &=  - c_{31} c_{13} - c_{32} c_{23} + c_{34} c_{43} - c_{31} c_{13} c_{32} c_{23} \\
       D c_4 & = - c_{43} c_{34} + c_{45} c_{54} \\
     & \cdots \\ 
    D c_{n-1} &= -c_{(n-1)(n-2)} c_{(n-2)(n-1)} + c_{(n-1)n} c_{n(n-1)}  \\
    D c_n &= - c_{n(n-1)} c_{(n-1)n} 
\end{align*}

where the gradings are given by $|c_i|=-1$ and $|c_{ij}| =0$. In particular,
$H^*(LCA(\Lambda_{D_n}))$ is supported in non-positive degrees.

Notice that $D = d_1 + d_3$, where $d_1$ is the differential on the Ginzburg DG-algebra 
$\mathscr{G}_{D_{n}}$ and $d_3$ is zero on all the generators except $c_3$, and we have
\[ d_3(c_3) = - c_{31} c_{13} c_{32} c_{23}. \]
We shall first establish the result for $H^0(LCA(\Lambda_{D_n}))$ by direct computation. The
        goal here is to take any word in $c_{ij}$ and prove that if the word is long enough, then it is actually null-homologous. 
        
Note that we have a decomposition
\[ H^0(LCA(\Lambda_{D_n})) \cong \bigoplus_{i,j=1}^n e_i H^0(LCA(\Lambda_{D_n}))e_j \]

Letting $x=c_{31}c_{13}$, $y=c_{32}c_{23}$ and $z=c_{34}c_{43}$ we obtain
\[ e_3 H^0(LCA(\Lambda_{D_n})) e_3 \cong \mathbb{K} \langle x,y,z \rangle /(x^2,y^2,z^{n-2}, x+y+xy-z)
\]
		(cf. \cite[Prop. 11.3.2 (i)]{schedler2}). Indeed, we have $$x^2= D (c_{31} c_1 c_{13}) \ \ , \ y^2 = D(
c_{32} c_2 c_{23})\ \ , \ x+y+xy-z = D(-c_3) \ \ . $$

Next, observe that for $4 \leq i \leq n-1$, we have $ c_{i (i-1)} c_{(i-1)i} = c_{i (i+1)} c_{(i+1)i} \in H^0(LCA(\Lambda_{D_n})) $ since their difference is precisely $Dc_i$. Consequently, 
we get 
\begin{align*}
z^{n-2} &= c_{34} (c_{43} c_{34})^{n-3} c_{43} = c_{34} (c_{45} c_{54})^{n-3} c_{43} = c_{34} c_{45} (c_{56} c_{65})^{n-4} c_{54} c_{43} =  \cdots \\ & = c_{34} c_{45} \ldots c_{(n-1)n}  c_{n(n-1)}c_{(n-1)n}  c_{n(n-1)} \ldots c_{54} c_{43} \\
&=  
D(- c_{34} c_{45} \ldots c_{(n-1)n} c_n c_{n(n-1)} \ldots c_{54} c_{43})
\end{align*}

Furthermore, any word in $e_3 H^0(LCA(\Lambda_{D_n}))e_3$ is cohomologous to a word in $x,y,z$ which
        is of the same length (note that the lengths of $x$, $y$ and $z$ are 2).
Namely, whenever a word $w$ has terms which goes along the long branch of the $D_n$ tree, it has to
return back at some point, hence it will include a subword of the form $c_{i(i+1)} c_{(i+1)i}$ which can be replaced with $c_{i(i-1)} c_{(i-1)i}$ applying the relation $Dc_i$. 
This can be repeated until we replace each subword that lies in the long branch by a power of $z$.

Arguing similarly, one can see why it suffices to consider $e_3 H^0(LCA(\Lambda_{D_n})) e_3$ to prove the statement in the lemma for the zeroth cohomology. 
Indeed, the relations given by $Dc_4, Dc_5,\ldots, Dc_n$ can be used to show that any sufficiently long word in
$LCA^0(\Lambda_{D_n})$ can be replaced by a word which contains a sufficiently long subword in $e_3
LCA^0(\Lambda_{D_n}) e_3$. More precisely, for any word $w \in \langle c_{ij} | i,j=1,n \rangle$ we can write 
\[ w = \alpha v \beta + \langle \mathrm{Im} D \rangle \]
		such that $v$ lies in $e_3 LCA^0(\Lambda_{D_n}) e_3$ and is sufficiently long. In fact, since we only use the preprojective relations, $Dc_i$ for $i\neq 3$, one can show that the analogue of \cite[Prop. 11.3.2 (ii)]{schedler2} holds in this case. 

We can simplify the presentation of $e_3 H^0(LCA(\Lambda_{D_n})) e_3$ further by eliminating the $z$ variable and write
\[ e_3 H^0(LCA(\Lambda_{D_n})) e_3 \cong \mathbb{K} \langle x,y \rangle / (x^2,y^2,(x+y+xy)^{n-2}). \]
Let us define two-sided ideals 
		\[ I_n=(x^2,y^2,(x+y+xy)^{n-2}) \text{\ \ \  and\ \ \ } J_n= (x^2,y^2,(x+y)^{n-2})\] in $\mathbb{K} \langle x,y \rangle$ and claim that they are equal for $n \geq 4$.
Note that in $\mathbb{K} \langle x,y \rangle / J_n$ any word that is long enough is trivial; in particular, this is a finite-dimensional vector space. This is because the only words that are not killed by the relations $x^2=y^2=0$ are words alternating in $x$ and $y$, and sufficiently long such words are killed by $x(x+y)^{n-2}y$ and $y(x+y)^{n-2}x$.
Therefore the result for $H^0(LCA(\Lambda_{D_n}))$ follows from the claim $I_n=J_n$.

To prove this claim, first observe that $A= x+y$ and $B=x+y+xy$ satisfy
$$  B^2 = (1+x)A^2(1+y)  \in \mathbb{K} \langle x,y \rangle /(x^2,y^2). $$
Moreover, since $(1+x)(1-x)=1=(1+y)(1-y)$ the above identity leads to $A^2 = (1-x)B^2(1-y)$ and together they show $I_4=J_4$. 
We similarly obtain $I_5=J_5$, using the observation
$$ B^3 = (1+x)A^3(1+x)(1+y) \in \mathbb{K} \langle x,y \rangle /(x^2,y^2).$$
The fact that $A^2$ is in the center of $\mathbb{K} \langle x,y \rangle /(x^2,y^2)$ implies
$$B^{2k}=(B^2)^k=(1+x)A^{2k}(1+y)(1+x)\cdots (1+y), $$
$$B^{2k+1}=B^3(B^2)^{k-1}=(1+x)A^{2k+1}(1+y)(1+x)\cdots (1+y) $$
proving $I_n=J_n$ for every $n \geq 4$.

Alternatively, one can check that a noncommutative Gr\"obner basis (with respect to the
lexicographical order) for both $I_n$ and $J_n$ is given
by the collection of the following three elements: 
\[ \{ x^2, y^2, xyxy\ldots + yxyx\ldots \} \]
where the lengths of the words in the last element is $n-2$.

This completes the proof of the lemma for $H^0(LCA(\Lambda_{D_n}))$. It is much harder to
directly compute $H^i(LCA(\Lambda_{D_n}))$ for $i<0$ and verify Hausdorffness of the length
filtration. Fortunately,
there is an alternative way to go about this making use of a recent result of Dimitroglou Rizell
\cite{rizell} which in turn exploits the weak division algorithm in free noncommutative algebras due
to P. Cohn \cite{cohn}. This is a general result about Legendrian
cohomology DG-algebras which states that the natural algebra homomorphism \[ H^*(LCA(\Lambda_\Gamma))
\to LCA^*(\Lambda_\Gamma) / \langle \mathrm{Im} D \rangle \] induced by inclusion is injective, where $\langle
\mathrm{Im} D \rangle $ denotes the two-sided ideal in the tensor algebra $LCA^*(\Lambda_\Gamma)$
generated by the image of the differential. 
In view of this, it suffices to show that for each $k$ there exists a $p(k)$ such that if $w$ is a
word in $c_{ij}$ of length greater than $p(k)$ and contains exactly $k$ instances of $c_i$, then $w$ is in $\langle \mathrm{Im} D \rangle$. 

This is, however, quite straightforward given what we have already proven. Namely, in any such word,
since the number of degree $-1$ generators, $c_i$, is precisely $k$ as soon as the length is
sufficiently large, we can find a sufficiently long subword consisting of degree 0 generators
$c_{ij}$ only. Now, we proved above that any sufficiently long word in the degree 0 generators
$c_{ij}$ is in the image of $D$. Thus, the result follows. \end{proof} 

Note that the corresponding result also holds true for $\mathscr{G}_{D_n}$ but this is much simpler. The cohomology $H^*(\mathscr{G}_{D_n})$ is a graded filtered algebra, where the filtered
subalgebras $\mathcal{F}^p H^*(\mathscr{G}_{D_n})$ for $p \geq 0$ are induced by the length
filtration on $\mathscr{G}_{D_n}$. We claim that this filtration on
$H^*(\mathscr{G}_{D_n})$ is complete and Hausdorff. To see this, observe the image of the
differential of $\mathscr{G}_{D_n}$ consists of homogeneous
terms (with respect to length filtration), hence the filtration is Hausdorff. The filtration is
complete because $H^*(\mathscr{G}_{D_n})$ is finite-dimensional at each degree. To see this, when $\mathbb{K}$ is algebraically closed and of characteristic $0$, one
can use the result by Hermes (see Thm.~(\ref{hermes})) that $H^i(\mathscr{G}_{D_n}) \cong \Pi_{D_n}$ for every $i \geq 0$,
and the well-known fact that the
preprojective algebra of a Dynkin quiver is finite-dimensional. Alternatively, for any field, $H^0(\mathscr{G}_{D_n}) = \Pi_{D_n}$ by definition, hence we can appeal to the argument given in the last part of the above lemma
to conclude. (Note that the result of \cite{rizell} requires an action filtration on the chain
complex respected by the differential. This is automatic for $LCA^*(\Lambda_\Gamma)$ as the relevant filtration is given by the
geometric action functional. On the other hand, if the complex is supported in nonpositive (or
nonnegative) degrees, then one can easily construct an action filtration of the required type
inductively, hence the main result of \cite{rizell} is applicable to $\mathscr{G}_\Gamma$ as well for any $\Gamma$.) 

We are now ready to prove the main result of this section:

\begin{theorem} \label{chainmap} Let $\Gamma = A_n$ or $D_n$, and assume that
    $\mathrm{char}(\mathbb{K}) \neq 2$
    if $\Gamma=D_n$. Then there exists a quasi-isomorphism    \[ LCA^*(\Lambda_\Gamma) \simeq \mathscr{G}_\Gamma \]
Furthermore, if $\mathrm{char}(\mathbb{K})=2$ and $\Gamma=D_n$, then $LCA^*(\Lambda)$ and
    $\mathscr{G}_\Gamma$ are not quasi-isomorphic.
\end{theorem}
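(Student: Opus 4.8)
The plan is to treat the two tree types separately and, in each case, to read off the deformation term $\mathfrak{d}$ from Theorem~\ref{balik} and then decide whether it can be trivialized. For $\Gamma = A_n$ the statement is immediate and needs no hypothesis on the characteristic: in the differential formula for $D(c_v)$ established in the proof of Theorem~\ref{balik}, the terms of word-length $\geq 4$ occur only when $v$ has two or more neighbours $w_1 < w_2 < \cdots$ linked to it from the right, but in the standard form of the path $A_n$ every vertex has at most one such neighbour. Hence the second sum collapses to its single word-length-$2$ term, which already equals $d_1 = d$, so $\mathfrak{d} = 0$ and $LCA^*(\Lambda_{A_n}) = \mathscr{G}_{A_n}$ on the nose.

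For $\Gamma = D_n$ with $\mathrm{char}(\mathbb{K}) \neq 2$ the deformation is genuinely present but has a single term: by the computation in the proof of Lemma~\ref{Hausdorff}, $\mathfrak{d} = d_3$ with $d_3 c_3 = -c_{31}c_{13}c_{32}c_{23}$ and $d_3$ zero on all other generators, so $(LCA^*(\Lambda_{D_n}), D) = (\mathscr{G}_{D_n}, d + d_3)$. I would run the deformation-theoretic argument preceding Lemma~\ref{deftheory}: the first-order obstruction is the class $[\tilde d_3] \in HH^2(\mathscr{G}_{D_n}, \mathscr{G}_{D_n}[-2])$, and trivializing it can create higher terms $d_5', d_7', \dots$ whose obstructions live in $HH^2(\mathscr{G}_{D_n}, \mathscr{G}_{D_n}[s])$ with $s < 0$. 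The plan is therefore to verify the hypothesis of Lemma~\ref{deftheory}, namely $HH^2(\mathscr{G}_{D_n}, \mathscr{G}_{D_n}[s]) = 0$ for all $s < 0$, by quoting the explicit computation of $HH^*(\mathscr{G}_{D_n})$ carried out in Section~\ref{typeD}. This is exactly the place where $\mathrm{char}(\mathbb{K}) \neq 2$ is used, since in characteristic $2$ the group $HH^2(\mathscr{G}_{D_n}, \mathscr{G}_{D_n}[-2])$ fails to vanish. Lemma~\ref{deftheory} then supplies a quasi-isomorphism of completed DG-algebras $(\widehat{\mathscr{G}_{D_n}}, d) \simeq (\widehat{LCA}(\Lambda_{D_n}), D)$.

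To remove the completions I would invoke Lemma~\ref{Hausdorff} together with its (easier) analogue for $\mathscr{G}_{D_n}$ recorded just after its proof. These show that on cohomology the length filtration of each side is complete, Hausdorff, and vanishes beyond a finite length in every fixed degree, so the inclusions $\mathscr{G}_{D_n} \hookrightarrow \widehat{\mathscr{G}_{D_n}}$ and $LCA^*(\Lambda_{D_n}) \hookrightarrow \widehat{LCA}(\Lambda_{D_n})$ are themselves quasi-isomorphisms. Composing, the chain $\mathscr{G}_{D_n} \hookrightarrow \widehat{\mathscr{G}_{D_n}} \xrightarrow{\Phi} \widehat{LCA}(\Lambda_{D_n}) \hookleftarrow LCA^*(\Lambda_{D_n})$ becomes a zig-zag of genuine quasi-isomorphisms, which truncates the completed equivalence to $LCA^*(\Lambda_{D_n}) \simeq \mathscr{G}_{D_n}$ as required.

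For the final assertion ($\Gamma = D_n$, $\mathrm{char}(\mathbb{K}) = 2$) I would show directly, from the $HH^2$-computation of Section~\ref{typeD}, that the leading deformation class $[\tilde d_3] \in HH^2(\mathscr{G}_{D_n}, \mathscr{G}_{D_n}[-2])$ is nonzero, i.e.\ that there is no $\mathrm{k}$-bimodule map $\phi_2$ raising word-length by $2$ with $d_3 = d\phi_2 - \phi_2 d$. To pass from nonvanishing of this class to genuine non-quasi-isomorphism, I would argue that, because $d_3$ is the lowest-order term relative to the canonical length (= action) filtration and that filtration is complete and Hausdorff on cohomology by Lemma~\ref{Hausdorff}, any quasi-isomorphism may be promoted to a filtered one, under which $[\tilde d_3]$ would be forced to be a coboundary --- contradicting its nontriviality; equivalently, $[\tilde d_3]$ records a nonzero higher $A_\infty$-product in the minimal model of $LCA^*(\Lambda_{D_n})$ that is absent from $\mathscr{G}_{D_n}$. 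I expect this last step --- establishing that the leading obstruction class is an invariant of the unfiltered quasi-isomorphism type, and not merely of the filtered one --- to be the main obstacle, since the length filtration is not a grading and need not be respected by an arbitrary quasi-isomorphism; the completeness afforded by Lemma~\ref{Hausdorff} is what makes the promotion argument viable.
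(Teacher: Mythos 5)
Your treatment of $A_n$, your use of Lemma~(\ref{deftheory}) together with the vanishing of $HH^2(\mathscr{G}_{D_n},\mathscr{G}_{D_n}[s])$ for $s<0$ from Sec.~(\ref{typeD}), and your identification of $\mathfrak{d}=d_3$ from Lemma~(\ref{Hausdorff}) all match the paper. Where you genuinely diverge is in how the completion is removed. The paper does \emph{not} argue that the inclusions into the completions are quasi-isomorphisms; instead it truncates the completed quasi-isomorphism $\Phi$ at a finite length $N$ chosen so that $\mathcal{F}^N H^0(LCA(\Lambda_\Gamma))=0$, corrects the resulting algebra map on the generators $h_v$ (whose failure to be a chain map is a $D$-closed element of $\mathcal{F}^N LCA^0$, hence exact) to obtain an honest filtered chain map $\Psi=\mathrm{Id}+\mathrm{h.o.t.}$ between the \emph{uncompleted} algebras, and then applies Boardman's comparison theorem to the length-filtration spectral sequences. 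Your zig-zag $\mathscr{G}\hookrightarrow\widehat{\mathscr{G}}\to\widehat{LCA}\hookleftarrow LCA$ would also suffice if both inclusions are quasi-isomorphisms, but the step ``Lemma~(\ref{Hausdorff}) shows the filtration on cohomology is complete and Hausdorff, so the inclusions are quasi-isomorphisms'' is the weak point: Hausdorffness gives injectivity of $H^*(LCA)\to H^*(\widehat{LCA})$ (a closed element of $\mathcal{F}^p$ with $p\geq p(k)$ is exact in $LCA$), but surjectivity is not formal. A cocycle $\hat x=\sum_i x_i$ in the completion has $Dx_{<N}\in\mathcal{F}^N$ closed, hence $Dx_{<N}=Dz_N$ for some $z_N$, but $z_N$ comes with no filtration control, so you cannot conclude that $\hat x$ is cohomologous to $x_{<N}-z_N$ inside $\widehat{LCA}$. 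To close this you would need essentially the same strong-convergence argument for the length-filtration spectral sequences (Boardman) that the paper uses for $\Psi$ — at which point the two routes carry the same technical content. The paper's truncate-and-correct device is precisely engineered to sidestep this surjectivity question.

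Two smaller remarks. For the characteristic~$2$ statement, the paper establishes $[\tilde d_3]\neq 0$ by the explicit linear-algebra check for $D_4$ and then transports it to $D_n$ via the restriction map induced by the full and faithful inclusion $\mathscr{G}_{D_4}\hookrightarrow\mathscr{G}_{D_n}$; if you quote the $HH^2$ computation of Sec.~(\ref{typeD}) directly you should note that it is stated for general $n$ but that the identification of the specific class $[\tilde d_3]$ with a nonzero element still requires the explicit cocycle check. Your concern that an arbitrary quasi-isomorphism need not respect the length filtration is legitimate, and the paper is itself terse here; the implicit resolution is that a (unital, $\mathrm{k}$-linear) DG-algebra morphism between these particular algebras automatically preserves the length filtration for degree reasons — degree-$0$ elements with distinct source and target and degree-$(-1)$ elements all lie in $\mathcal{F}^1$ — so the obstruction class is an invariant of the deformation in the standard filtered sense.
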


(We conjecture that $LCA^*(\Lambda) \simeq \mathscr{G}_\Gamma$ for $\Gamma= E_6, E_7$ if
$\mathrm{char}(\mathbb{K}) \neq 2,3$ and for $\Gamma = E_8$ if
$\mathrm{char}(\mathbb{K}) \neq 2,3, 5$.)

\begin{proof} The case when $\Gamma = A_n$ is immediate since $LCA^*(\Lambda_\Gamma)$ and
    $\mathscr{G}_\Gamma$ are identical in this case. So, we will focus on the case $\Gamma=D_n$. 
    
When $\mathrm{char}(\mathbb{K}) \neq 2$, we will construct a chain map $\Phi : \mathscr{G}_\Gamma \to LCA^*(\Lambda_\Gamma)$ which is of the
    form:
    \[ \Phi = \mathrm{Id} + \mathrm{h.o.t.} \]
    where $\mathrm{h.o.t.}$ stands for higher order terms in terms of the length filtration
$\mathcal{F}^\bullet$ on $LCA^*(\Lambda_{\Gamma})$. 

In Sec.(\ref{typeD}), we computed 
\[ HH^*(\mathscr{G}_\Gamma, \mathscr{G}_\Gamma) \cong HH^*(A_\Gamma, A_\Gamma) \]
where $A_\Gamma$ is the Koszul dual to $\mathscr{G}_\Gamma$ as proven in
Thm.~(\ref{koszulness}). Note that the isomorphism between the Hochschild cohomologies of $\mathscr{G}_\Gamma$ and $A_\Gamma$ is a consequence of the Koszul duality given by Thm.~(\ref{koszulness}) which also states that the Koszul duality functor sends the internal grading of $\mathscr{G}_\Gamma$ to those of $A_\Gamma$ implying that the internal gradings on their Hochschild cohomologies match as well. 
In particular, we have:
\[ HH^2(\mathscr{G}_\Gamma, \mathscr{G}_\Gamma[s]) \cong HH^{2-s}(A_\Gamma, A_\Gamma[s]) \]

Let us warn the reader of a potentially confusing point in our notation. On the right hand side,
$r=2-s$ refers to the length grading in Hochschild cohomology, and $s$ refers to the internal grading induced from the
internal grading of the algebra $A_\Gamma$. This group is a summand of $HH^2(A_\Gamma,
A_\Gamma)$ where $2=r+s$ is the total degree. On the other hand,
$HH^2(\mathscr{G}_\Gamma, \mathscr{G}_\Gamma[s])$ is a summand of
$HH^2(\mathscr{G}_\Gamma, \mathscr{G}_\Gamma)$ where $s$ refers to the second grading on
$\mathscr{G}_\Gamma$ (as was explained after Equation (\ref{decomp})). 

The computation given in Sec.~(\ref{typeD}) implies that for $\Gamma =D_n$ and when $\mathrm{char}(\mathbb{K}) \neq 2$,we have:
\[ HH^2(\mathscr{G}_\Gamma, \mathscr{G}_\Gamma[s]) =0 \ \ \ \text{for} \ \ \ s < 0 \]
Therefore, from Lem.~(\ref{deftheory}), we deduce that there exists a
quasi-isomorphism:
\[ \Phi: \widehat{\mathscr{G}_\Gamma} \to \widehat{LCA}^*(\Lambda_\Gamma) \]

Now, let $N$ be an integer large enough that
$\mathcal{F}^N H^0(LCA(\Lambda_\Gamma))= 0 $; such an $N$ exists as we proved above in Lem.~(\ref{Hausdorff}). 
We then consider the truncation of $\Phi$ at length $N$ to define an algebra map between
\emph{uncompleted} algebras:
\[ \Phi^N : \mathscr{G}_\Gamma \to LCA^*(\Lambda_{\Gamma}) \]
The apparent problem with $\Phi^N$ is that it is not a chain map, though it fails to be a chain
map only at large length. So, we can correct it as
follows. For each vertex $v$, let us find a chain $\alpha_v$ such that
\[ D \Phi^N (h_v) - \Phi^N (dh_v) = D \alpha_v. \] 
Note that the left-hand-side is automatically $D$-closed since it lies in
    $LCA^0(\Lambda_\Gamma)$.

We then define a new algebra map by setting:
\[ \Psi(h_v) := \Phi^N(h_v) + \alpha_v, \ \ \ \Psi(g_{vw}) := \Phi^N(g_{vw}) \] 

We now have a filtered chain map $\mathscr{G}_\Gamma \to LCA^*(\Lambda_\Gamma)$ which
respects the length filtrations on each side. Note that the $E_2$-pages of the associated
spectral sequences are identical:
\[ E_2^{p,q} \cong \mathcal{F}^{p} \mathscr{G}_\Gamma / \mathcal{F}^{p+2}
    \mathscr{G}_\Gamma \]
with the differential induced from the differential on the Ginzburg DG-algebra. Furthermore,
the length filtration is not only complete and
Hausdorff on both sides by Lem.~(\ref{Hausdorff}) 
and the discussion following its proof, but also easily seen to be weakly convergent. Therefore the spectral sequences converge \emph{strongly} to
$H^*(\mathscr{G}_{D_n})$ and $H^*(LCA(\Lambda_{D_n}))$, respectively.
Moreover, since, 
\[ \Psi =  \mathrm{Id} + \mathrm{h.o.t.} \]
where $\mathrm{h.o.t.}$ refers to a higher order term that sends $\mathcal{F}^\bullet$ to $\mathcal{F}^{\bullet+2}$, it induces an isomorphism on the $E_2$-page therefore we conclude that it induces a
quasi-isomorphism of chain complexes by \cite[Thm. 2.6]{boardman}. This completes the proof that
        $LCA^*(\Lambda_{D_n})$ and $\mathscr{G}_{D_n}$ are quasi-isomorphic over a field of
        characteristic $\neq 2$.
        
Next suppose that $\mathbb{K}$ is a field of characteristic $2$. Let us write $D= d + d_3$ for the differential on $LCA^*(\Lambda_{D_n})$ where, in the notation of Lem.~(\ref{Hausdorff}),
we have
\[ d_3(c_3) = -c_{31} c_{13} c_{32} c_{23} \] 
We want to show that there is no degree 0 derivation $\phi_2$ which increases
length by 2 and solves $d_3 = d \phi_2 - \phi_2 d$. For $\Gamma=D_4$, this is equivalent to the following set of \emph{linear} equations:
\begin{align*}
    0 &= d \phi_{2} (c_1) - \phi_2 (c_{13}) c_{31} - c_{13} \phi_2 (c_{31}) \\
    0 &= d \phi_{2} (c_2) - \phi_2 (c_{23}) c_{32} - c_{23} \phi_2 (c_{32})\\
    -c_{31} c_{13} c_{32} c_{23} &= d \phi_{2} (c_3) + \phi_2 (c_{31} c_{13} + c_{32} c_{23} -
    c_{34}
    c_{43} ) \\
        0 &= d \phi_{2} (c_4) + \phi_2 (c_{43}) c_{34} + c_{43} \phi_2 (c_{34}) \\
\end{align*} 
(Although we are working over characteristic 2 here, we have kept the signs in their general form for reference.)

Since $\phi_2$ is supposed to preserve the degree and increase the length by 2, there are only a few
possibilities. The general form of the possibilities is as follows:
\begin{align*}
\phi_2(c_1) &\in \mathbb{K} c_1 c_{13} c_{31} \oplus \mathbb{K} c_{13} c_{31} c_{1} \oplus \mathbb{K} c_{13} c_3 c_{31} \\  
\phi_2(c_2) &\in \mathbb{K} c_2 c_{23} c_{32} \oplus \mathbb{K} c_{23} c_{32} c_{2} \oplus \mathbb{K} c_{23} c_3 c_{32} \\  
 \phi_2(c_3) &\in \mathbb{K} c_3 c_{31} c_{13} \oplus \mathbb{K} c_{31} c_{13} c_{3} \oplus
\mathbb{K} c_3 c_{32} c_{23} \oplus \mathbb{K} c_{32} c_{23} c_{3} \oplus \mathbb{K} c_3 c_{34}
    c_{43} \oplus \mathbb{K} c_{34} c_{43} c_{3} \\ &  \oplus
     \mathbb{K} c_{31} c_1 c_{13} \oplus \mathbb{K} c_{32} c_2 c_{23} \oplus \mathbb{K} c_{34} c_4 c_{43} \\  \phi_2(c_4) &\in \mathbb{K} c_4 c_{43} c_{34} \oplus \mathbb{K} c_{43} c_{34} c_{4} \oplus
    \mathbb{K} c_{43} c_3 c_{34} \\ 
\phi_2(c_{13}) &\in \mathbb{K} c_{13} c_{31} c_{13} \oplus \mathbb{K} c_{13} c_{32} c_{23} \oplus \mathbb{K} c_{13} c_{34} c_{43} \\
\phi_2(c_{31}) &\in \mathbb{K} c_{31} c_{13} c_{31} \oplus \mathbb{K} c_{32} c_{23} c_{31} \oplus \mathbb{K} c_{34} c_{43} c_{31} \\
\phi_2(c_{23}) &\in \mathbb{K} c_{23} c_{32} c_{23} \oplus \mathbb{K} c_{23} c_{31} c_{13} \oplus \mathbb{K} c_{23} c_{34} c_{43} \\
\phi_2(c_{32}) &\in \mathbb{K} c_{32} c_{23} c_{32} \oplus \mathbb{K} c_{31} c_{13} c_{32} \oplus \mathbb{K} c_{34} c_{43} c_{32} \\
\phi_2(c_{43}) &\in \mathbb{K} c_{43} c_{34} c_{43} \oplus \mathbb{K} c_{43} c_{31} c_{13} \oplus \mathbb{K} c_{43} c_{32} c_{23} \\
\phi_2(c_{34}) &\in \mathbb{K} c_{34} c_{43} c_{34} \oplus \mathbb{K} c_{31} c_{13} c_{34} \oplus \mathbb{K} c_{32} c_{23} c_{34}  
\end{align*}

This leads to a system of 18 linear equations of 36 variables. It is straightforward, if tedious, to
verify directly (or with the help of a computer) that none of the possibilities gives a solution
when $\mathbb{K} = \mathbb{Z}_2$. This, in turn, implies that the class of $[\tilde{d}_3]$ is
non-trivial over any field $\mathbb{K}$ of characteristic 2 by the universal coefficient theorem. 

This implies that there is a non-vanishing obstruction for
constructing a chain map between $\mathscr{G}_{D_4}$ and $LCA^*(\Lambda)$ over a field of
characteristic 2 for $D_4$. In other words, the class $[\tilde{d}_3] \in
HH^2(\mathscr{G}_{D_4} ,
\mathscr{G}_{D_4} [-2])$ is non-trivial. (Compare this with our computation of
$HH^2(\mathscr{G}_{D_4}, \mathscr{G}_{D_4} [-2])$ given later on in Fig. (\ref{d4table}); where
this group is shown to be non-trivial only in characteristic 2.) Now, the class of $[\tilde{d}_3]$ for $\Gamma = D_n$ restricts to the class of $\Gamma=D_4$ under the restriction map. (Note that in general Hochschild cohomology does
not have good functoriality properties however there is a full and faithful
inclusion of the $\mathscr{G}_{D_4}$ to $\mathscr{G}_{D_n}$, and there is a restriction map on
Hochschild cohomology in this case.) Hence, it cannot vanish for $\Gamma= D_n$ either. 
\end{proof}

\begin{remark}
Over a field of characteristic $\neq 2$, and for $\Gamma=D_4$, we constructed an explicit chain map between
    $\mathscr{G}_{D_4}$ and $LCA^*(\Lambda_{D_4})$ as a check on our arguments above. The complication in this also displays the effectiveness of the deformation
    theory argument given above. (Notice the factors of $1/2$ which are indeed necessary.) The map is given as follows:
\begin{align*} 
    h_1 &\mapsto c_1 - (1/2) ( c_{13} c_{31} c_1 + c_{13} c_3 c_{31} + c_1 c_{13} c_{32} c_{23} c_{31}) \\
    h_2 &\mapsto c_2 - (1/2) (c_{23} c_{32} c_2 + c_{23} c_3 c_{32} + c_{23} c_{31} c_{13}  c_{32} c_2) \\ 
    &\ \ \ \ \ \ \ \ \    + (1/4) ( c_{23} c_{34} c_{43} c_3 c_{32} + c_{23} c_{34} c_4 c_{43} c_{32} + c_{23} c_{34} c_{43} c_{32}c_2 + c_{23} c_{34} c_{43} c_{31} c_{13}c_{32}c_2 ) \\
    h_3 &\mapsto c_3 - (1/4)( c_{31} c_{13} c_3 c_{34} c_{43} + c_{31} c_1 c_{13} c_{34} c_{43} +
    c_{31} c_{13} c_{34} c_4 c_{43} + c_{31} c_1 c_{13} c_{32} c_{23} c_{34} c_{43}) \\ 
    h_4 &\mapsto c_4 - (1/2) ( c_4 c_{43} c_{34}  + c_{43} c_3 c_{34} - c_{43} c_3 c_{32} c_{23} c_{34} -
    c_{43} c_{32} c_2 c_{23} c_{34} - c_4 c_{43} c_{32} c_{23} c_{34}\\  & \ \ \ \ \ \ \ \ \ \ \ \ \ \ \ \ \ \ \ \ \ \ \ \  -
    c_{43} c_{31} c_{13} c_{32}    c_2 c_{23} c_{34}) \\
g_{13} &\mapsto c_{13} + (1/2) (c_{13} c_{32} c_{23} - c_{13} c_{34} c_{43}) \\
    g_{31} &\mapsto c_{31} \\
    g_{23} &\mapsto c_{23} - (1/2) c_{23} c_{34} c_{43} \\
    g_{32} &\mapsto c_{32} + (1/2) c_{31} c_{13} c_{32} \\
    g_{34} &\mapsto c_{34} - (1/2)(c_{32} c_{23} c_{34}+c_{31} c_{13} c_{34} )   \\
    g_{43} &\mapsto c_{43} 
\end{align*}
\end{remark}

\begin{remark} \label{subtree} One can deduce from the argument given in the last part of the proof of Thm.
    (\ref{chainmap}) that for any tree $\Gamma$ which is not of type $A_n$, we have that
    $\mathscr{B}_\Gamma := LCA^*(\Lambda_\Gamma)$ is a non-trivial deformation of
    $\mathscr{G}_\Gamma$ over a field of characteristic $2$ since any such tree has a subtree of the
    form $D_4$ (see also Rmk. (\ref{BVremark})).
\end{remark}

\vspace{-.1in}
\section{Floer cohomology algebra of the spheres in $X_\Gamma$} 
\label{spheres}
\vspace{-.1in}

We next consider the $A_\infty$-algebra over $\mathrm{k}$ given by the Floer cochain complexes: 
\[ \mathscr{A}_\Gamma := \bigoplus_{v,w} CF^*(S_v,S_w) \]

Recall that the Lagrangian 2-spheres $S_v$ and $S_w$ intersect only if the vertices $v$ and $w$ are connected by an edge, in which case $S_v \cap S_w$ is a unique point. 
Recall also that we made choices of grading structures on the sphere $S_v$ in Sec.~(\ref{plumbing}) so that $CF^*(S_v,S_w)$ is concentrated in degree 1, if $v,w$ are adjacent vertices. 
On the other hand, the self-Floer cochain complex $CF^*(S_v,S_v)$ is quasi-isomorphic to the singular chain complex $C^*(S_v)$ since $S_v$ is an exact Lagrangian sphere in $X_\Gamma$. 
Therefore, we can take a model for $\mathscr{A}_\Gamma$ such that the differential on $\mathscr{A}_\Gamma$ necessarily vanishes for degree reasons.

Let us put $A_\Gamma = H^*(\mathscr{A}_\Gamma)$ for the corresponding associative algebra. We can
think of $\mathscr{A}_\Gamma$ as a minimal $A_\infty$-structure $(\mu^n)_{n \geq2}$ on the
associative algebra $A_\Gamma$. As before, by choosing a root, we make $\Gamma$ into a directed
graph such that oriented edges point away from the root. Let $\mathrm{D}\Gamma$ denote the double of the quiver $\Gamma$, formed by introducing a new oriented edge $a_{vw}$ from $w$ to $v$ for every oriented edge $a_{wv}$ from $v$ to $w$.

\begin{proposition} \label{compacts} Suppose $\Gamma \neq A_1$. The graded $\mathrm{k}$-algebra
    $A_\Gamma$ is isomorphic to the \emph{zigzag algebra} of $\Gamma$ given by the path algebra
    $\mathbb{K}\mathrm{D}\Gamma$ equipped with the path-length grading modulo the homogeneous ideal generated by the following elements:
	\begin{itemize}
	\item $a_{u v} a_{v w}$ such that $u \neq w$, where $v$ is adjacent to both $u,w$.
	\item $a_{v w} a_{w v} - a_{vu} a_{uv}$ where $v$ is adjacent to both $u,w$.
	\end{itemize}
If $\Gamma = A_1$, then $A_\Gamma \cong H^*(S^2) = \mathbb{K}[x]/(x^2)$ with $|x|=2$. 
\end{proposition}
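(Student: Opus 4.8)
The plan is to compute $A_\Gamma = H^*(\mathscr{A}_\Gamma)$ generator by generator and then pin down the product $\mu^2$, matching it against the stated presentation. Since the differential on $\mathscr{A}_\Gamma$ may be taken to vanish, $A_\Gamma = \mathscr{A}_\Gamma$ as a graded $\mathrm{k}$-module, so it suffices to understand the underlying graded vector space and the multiplication. For each vertex $v$ the self-Floer cohomology $HF^*(S_v,S_v)\cong H^*(S^2;\mathbb{K})$ is supported in degrees $0$ and $2$; write $e_v$ for the degree-$0$ unit and $x_v$ for a generator of $HF^2(S_v,S_v)$. For adjacent $v,w$ the spheres meet transversally in a single point, so both $HF^*(S_v,S_w)$ and (by Poincar\'e duality) $HF^*(S_w,S_v)$ are one-dimensional and concentrated in degree $1$; these provide the two arrows $a_{vw},a_{wv}$ of the doubled quiver $\mathrm{D}\Gamma$. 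For non-adjacent $v\neq w$ the spheres are disjoint and $HF^*(S_v,S_w)=0$. This already exhibits $A_\Gamma$ as being supported in total degrees $0,1,2$ with exactly the graded dimensions of the zigzag algebra, with the $e_v$ orthogonal idempotents recovering the $\mathrm{k}$-algebra structure and the idempotent pattern of the path algebra.

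Next I would observe that most relations are forced by the grading. A length-two product $a_{uv}a_{vw}$ with $u\neq w$ lies in the degree-$2$ part of the morphism space between the distinct spheres $S_u$ and $S_w$, which is zero (that space is either $0$ or concentrated in degree $1$); this gives the first family of relations $a_{uv}a_{vw}=0$ \emph{for free}. Likewise any product of total degree $\geq 3$ — in particular $x_v\,a_{vw}$, $a_{vw}\,x_w$, and $x_vx_w$ — vanishes because $A_\Gamma$ is supported in degrees $\leq 2$, which accounts for the vanishing of all paths of length $\geq 3$ (on the zigzag side this is part of the definition). The only products left to compute are the returning compositions $a_{vw}a_{wv}\in HF^2(S_v,S_v)=\mathbb{K}\cdot x_v$.

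The crux, and the one genuinely Floer-theoretic step, is to show this loop product is \emph{nonzero} and that, after a uniform normalisation of generators, $a_{vw}a_{wv}=x_v$ for every neighbour $w$ of $v$; this simultaneously realises $x_v$ as such a loop and yields the second family of relations $a_{vw}a_{wv}-a_{vu}a_{uv}=0$. I would argue the nonvanishing two ways. On the one hand, the composition $HF^1(S_v,S_w)\otimes HF^1(S_w,S_v)\to HF^2(S_v,S_v)\cong\mathbb{K}$ followed by the canonical trace is precisely the Poincar\'e duality pairing of the pair $(S_v,S_w)$ of closed exact Lagrangians in the $c_1=0$ manifold $X_\Gamma$, which is nondegenerate (cf. \cite[Sec.~12e]{thebook}); hence $a_{vw}a_{wv}\neq 0$. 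On the other hand, concretely, $a_{vw}a_{wv}$ is computed by a single rigid holomorphic triangle supported entirely within the plumbing region of the edge $\{v,w\}$ — the standard $A_2$ local model — so its structure constant is a \emph{universal} nonzero scalar independent of the edge (the local computation underlying \cite{abouzplum}). Since $\Gamma$ is a tree, the plumbing regions of distinct edges are disjoint and these contributions assemble without interference, so the same universal constant appears at every loop, and the chosen orientations of the $S_v$ (arranged so that $S_v\cdot S_w=+1$) fix its sign so that the two loops at a common vertex agree. I would then rescale the arrows $a_{vw}$ — possible on a tree by an inductive choice running outward from the root — to absorb this constant and arrange $a_{vw}a_{wv}=x_v$ on the nose, defining the desired isomorphism onto the zigzag algebra.

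Finally, the case $\Gamma=A_1$ is immediate: there are no edges, and $A_\Gamma=HF^*(S,S)\cong H^*(S^2)=\mathbb{K}[x]/(x^2)$ with $|x|=2$. The main obstacle is exactly the loop-product step: the vanishing relations and the dimension count are purely grading-theoretic, whereas the nonvanishing of $a_{vw}a_{wv}$ needs genuine input — the nondegeneracy of the Calabi--Yau pairing, or equivalently the explicit triangle count in the local $A_2$-plumbing — and the attendant sign-and-rescaling bookkeeping, routine though it is on a tree, must be carried out with care to reproduce the presentation exactly as stated.
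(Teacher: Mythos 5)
Your proposal is correct and follows essentially the same route as the paper's (much terser) proof: the generators and their degrees come from the intersection pattern together with the chosen grading structures, the vanishing relations are forced by the grading, and the nontriviality and matching of the loop products $a_{vw}a_{wv}$ is exactly the ``general Poincar\'e duality property of Floer cohomology'' that the paper cites from \cite[Sec.~12e]{thebook}. The extra material you supply --- the local $A_2$ triangle count and the root-outward rescaling on the tree --- is correct bookkeeping that the paper leaves implicit, but it is not a different method.
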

\begin{proof} Note that $S_v$ intersects $S_w$ for $w\neq v$ if and only if $v$ and $w$ are adjacent vertices in which case the intersection is transverse at a unique point. Furthermore, we have chosen the grading structures on the Lagrangians $S_v$ so as to ensure that for $v,w$ adjacent $CF^*(S_v,S_w)$ is of rank 1 and concentrated in degree 1. We let $a_{vw}$ be a generator for this 1-dimensional vector space. Finally, the algebra structure is determined by the general Poincar\'e duality property of Floer cohomology (see \cite[Sec. 12e]{thebook}). 
\end{proof}

The algebra $A_\Gamma$ only depends on the underlying tree
$\Gamma$; different ways of orienting its edges results in the same algebra.
We call the algebra $A_\Gamma$ the $\emph{zigzag algebra}$ of $\Gamma$
following Khovanov and Huerfano \cite{huekho} who studied
properties of this algebra and its appearances in a variety of areas related to
representation theory and categorification. On the other hand, the case where
$\Gamma$ is the $A_n$ quiver appeared in an earlier paper of Seidel and Thomas
\cite{seidelthomas} in the context of Floer cohomology (as it does here) and mirror
symmetry. In the context of Koszul duality (see \cite{priddy}, \cite{bgs}), the algebras $A_\Gamma$ were
studied much earlier by Martinez-Villa in \cite{MV}. This remarkable work is
the first paper, as far as we know, which draws attention to the fact that
$A_\Gamma$ is a Koszul algebra if and only if $\Gamma$ is not Dynkin or
$\Gamma=A_1$.  

We will next discuss formality of  $\mathscr{A}_\Gamma$ , i.e., the question of whether there is a
quasi-isomorphism between $\mathscr{A}_\Gamma$ and $A_\Gamma =
H^*(\mathscr{A}_\Gamma)$. In the case when $\Gamma$ is the $A_n$ quiver, the
formality was proven by Seidel and Thomas in \cite[Lem. 4.21]{seidelthomas}
based on the notion of \emph{intrinsic formality}.

\begin{definition} A graded algebra $A$ is called intrinsically formal if any $A_\infty$-algebra $\mathscr{A}$ with $H^*(\mathscr{A}) \cong A$ is quasi-isomorphic to $A$. 
\end{definition} 
 
Furthermore, Seidel and Thomas give a useful method to recognize intrinsically formal algebras. Recall that for a graded algebra $A$, $HH^*(A)$ has two gradings: the cohomological grading $r$ and the grading $s$ coming from the grading of the algebra $A$. To specify the decomposition into graded pieces, we write: 
\[ HH^*(A) = \bigoplus_{*= r+s} HH^{r}(A, A[s]) \]

Notice that the superscript denotes the diagonal grading, as usual. It is also
the grading that survives, if $A$ is more generally a DG-algebra or an
$A_\infty$-algebra.

\begin{theorem}\label{intrinsic} (Kadeishvili \cite{kadeishvili}, see also Seidel-Thomas \cite{seidelthomas}) Let $A$ be an augmented graded algebra.  If 
	\[ HH^{2-s}(A,A[s]) = 0 \text{ \  \ for all $s<0$ } \] then, $A$ is intrinsically formal.
\end{theorem}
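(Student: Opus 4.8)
The plan is to prove intrinsic formality by the standard obstruction-theoretic argument: encode the higher products of an $A_\infty$-algebra as Hochschild cochains of $A$ and remove them one arity at a time, using the vanishing of $HH^{2-s}(A,A[s])$ for $s<0$ to kill the successive obstructions. First I would reduce to a minimal model. Given any $A_\infty$-algebra $\mathscr{A}$ with $H^*(\mathscr{A}) \cong A$, Kadeishvili's minimal model theorem \cite{kadeishvili} (homotopy transfer) produces a quasi-isomorphic $A_\infty$-algebra whose underlying $\mathrm{k}$-module is $A = H^*(\mathscr{A})$, with $\mu^1 = 0$ and with $\mu^2$ equal to the product induced on cohomology, i.e. the given associative multiplication on $A$. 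Working with the reduced (normalized) complex of the augmented algebra $A$, it therefore suffices to show that every minimal $A_\infty$-structure $(\mu^n)_{n\geq 2}$ on $A$ refining the algebra structure $\mu^2$ is $A_\infty$-isomorphic to the formal one $(\mu^2,0,0,\ldots)$.

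I would then induct on the arity of the first potentially nonzero higher product. Assume $\mu^i = 0$ for $3 \leq i < n$, so that $\mu^n$ is the lowest higher product that may survive. Reading the $A_\infty$-relation of arity $n+1$ (with the signs of \cite{thebook}) and discarding every term containing a vanishing $\mu^i$, the surviving contributions are exactly those pairing $\mu^2$ with $\mu^n$; they assemble to the Hochschild differential $\delta$ applied to $\mu^n$ (here $\delta$ is the Hochschild differential of the associative algebra $A$, and since $A$ carries no internal differential only the $\delta_0$ summand of the formula recalled earlier appears). Hence $\mu^n$ is a Hochschild cocycle. Viewing $\mu^n \colon A^{\otimes n} \to A[2-n]$ as a length-$n$ cochain of internal degree $2-n$, its class lies in $HH^n(A,A[2-n])$; setting $s = 2-n$ this is precisely $HH^{2-s}(A,A[s])$, and $n\geq 3$ gives $s \leq -1 < 0$. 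By hypothesis the group vanishes, so $\mu^n = \delta\phi$ for some cochain $\phi \colon A^{\otimes(n-1)} \to A[2-n]$.

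Next I would use the primitive $\phi$ to build a formal change of coordinates that erases $\mu^n$. Define an $A_\infty$-morphism $F \colon A \to A$ with linear term $F^1 = \mathrm{id}$, with $F^{n-1} = \phi$, and with all intermediate components vanishing; note that $F^{n-1}$ has precisely the degree $2-n$ required of an $A_\infty$-morphism term. Expanding the $A_\infty$-morphism equations shows that $F$ transports $(\mu^2,0,\ldots,0,\mu^n,\mu^{n+1},\ldots)$ to a new structure whose products in arities $3,\ldots,n$ all vanish, the arity-$n$ equation being solved exactly by the identity $\delta\phi = \mu^n$. The transported structure thus has its lowest nontrivial higher product in arity $\geq n+1$, and I would iterate the construction. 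Since each step raises by at least one the arity of the first surviving higher product, for every fixed arity only finitely many of the morphisms act nontrivially, so the infinite composite is a well-defined $A_\infty$-isomorphism (again with linear term the identity) carrying the minimal structure to $(\mu^2,0,0,\ldots)$. Composing with the minimal-model quasi-isomorphism yields $\mathscr{A} \simeq A$, establishing intrinsic formality.

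The main obstacle I expect is bookkeeping rather than conceptual content: getting the bigrading conventions to line up so that $\mu^n$ genuinely represents a class in $HH^{2-s}(A,A[s])$ with $s = 2-n$ --- the very groups the hypothesis annihilates --- and pinning down the signs in both the $A_\infty$-relations and the morphism equations (following \cite{thebook}) so that the reductions ``surviving terms $= \delta\mu^n$'' and ``transported arity-$n$ product $= \mu^n - \delta\phi$'' hold on the nose. It is worth observing that the argument only ever calls on $HH^{2-s}(A,A[s])$ for $s = 2-n$ with $n\geq 3$, i.e. exactly the range $s<0$, so no finiteness or positivity hypothesis on $A$ beyond being augmented is needed.
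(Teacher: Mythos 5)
Your argument is correct and is precisely the standard obstruction-theoretic proof of this result: the paper itself states the theorem without proof, citing Kadeishvili and Seidel--Thomas, and your inductive scheme (minimal model, lowest surviving $\mu^n$ is a Hochschild cocycle of bidegree $(n,2-n)$ via the arity-$(n+1)$ $A_\infty$-relation, kill it by an $A_\infty$-isomorphism with $F^1=\mathrm{id}$ and $F^{n-1}=\phi$, then compose the infinitely many steps, which is legitimate since each fixed arity stabilizes) is exactly the argument of \cite[Lem.~4.7]{seidelthomas}. The identification $s=2-n<0$ for $n\geq 3$ matches the hypothesis exactly, so no gap remains.
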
 

As mentioned above, Seidel-Thomas proved intrinsic formality of $A_\Gamma$ where $\Gamma$ is the $A_n$ quiver by showing the vanishing of $HH^{2-s}(A_\Gamma, A_\Gamma[s])$ for $s<0$. 
In a similar vein, we prove in Thm.~(\ref{hhd}) that $A_\Gamma$ is intrinsically formal if $\Gamma$ is the $D_n$ quiver and the characteristic of the ground field is not $2$. 

We have the following conjecture for the remaining Dynkin types. 

\begin{conjecture} \label{formality} Working over a ground field $\f{K}$ of characteristic 0, 
let $\Gamma$ be a tree of type $E_6,E_7$ or $E_8$. Then, the corresponding zigzag algebra $A_\Gamma$ is intrinsically formal. 
\end{conjecture}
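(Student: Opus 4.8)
The plan is to verify the hypothesis of the intrinsic formality criterion, Theorem~\ref{intrinsic}: it suffices to show that
\[ HH^{2-s}(A_\Gamma, A_\Gamma[s]) = 0 \qquad \text{for all } s < 0, \]
since this already forces every $A_\infty$-structure with cohomology $A_\Gamma$ to be formal. This is exactly the route taken for $\Gamma = A_n$ by Seidel--Thomas \cite{seidelthomas} and for $\Gamma = D_n$ in Theorem~\ref{hhd}, so the whole problem reduces to a bigraded Hochschild cohomology computation for the three exceptional zigzag algebras $A_{E_6}$, $A_{E_7}$, $A_{E_8}$.

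First I would exploit the fact, recorded in Proposition~\ref{compacts}, that the zigzag algebra is concentrated in internal degrees $0,1,2$, i.e.\ $A_\Gamma = A^0 \oplus A^1 \oplus A^2$ with $A^0 = \mathrm{k}$, $A^1$ spanned by the arrows of $\mathrm{D}\Gamma$ and $A^2$ the socle (the length-two cycles). Working in the reduced bar complex $\mathrm{Hom}_{\mathrm{k}}(\overline{A_\Gamma}^{\otimes \bullet}, A_\Gamma)$ and fixing the internal degree $s < 0$, a simple degree count pins the computation down drastically. A cochain of internal degree $s$ and length $r = 2-s$ must send $r$ tensor factors of total degree $\geq r$ to an element of degree $\leq 2$; since $r + s = 2$ this forces every input to be an arrow (degree $1$) and the output to land in the socle $A^2$. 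The same count shows that the length-$(r{+}1)$ part of the internal-degree-$s$ complex vanishes identically, since its inputs have total degree $\geq 3-s$ and would force an output of degree $\geq 3$. Consequently the complex truncates and
\[ HH^{2-s}(A_\Gamma, A_\Gamma[s]) \cong \operatorname{coker}\!\big(\delta \colon C^{\,r-1} \to C^{\,r}\big), \qquad r = 2-s, \]
where $C^{\,r} = \mathrm{Hom}_{\mathrm{k}\text{-}\mathrm{k}}\big((A^1)^{\otimes_{\mathrm{k}} r}, A^2\big)$ is finite-dimensional. Thus, for each fixed $s$, the vanishing we want is precisely the surjectivity of one explicit $\mathrm{k}$-bimodule map built from the multiplication $A^1 \otimes A^1 \to A^2$.

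To treat the infinitely many values of $s$ uniformly, I would replace the bar complex by the minimal, (quasi-)periodic graded projective bimodule resolution of $A_\Gamma$ constructed by Brenner--Butler--King \cite{almostkoszul} for ADE zigzag algebras, together with the symmetric Frobenius structure of $A_\Gamma$, which yields a Poincar\'e-type duality exchanging Hochschild cohomology and homology and roughly halves the work. Because this resolution is periodic up to an internal-degree shift, the internal degrees of the generators of its terms follow a strictly increasing arithmetic pattern, so only finitely many genuinely distinct anti-diagonal checks $r + s = 2$, $s<0$ survive and the rest are forced by periodicity. One then carries out, for each of $E_6$, $E_7$, $E_8$ separately, the resulting finite linear-algebra computation, confirming the surjectivity of each $\delta$. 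Alternatively, one may transport the entire calculation through the Koszul duality of Theorem~\ref{koszulness} and compute inside $HH^*(\mathscr{G}_\Gamma)$, exactly mirroring the $D_n$ analysis.

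The main obstacle is that the $E$-type trees are sporadic and admit no inductive family structure, so each case is an explicit (if finite) computation whose size makes computer algebra essentially unavoidable. More seriously, the characteristic-$0$ hypothesis is genuine rather than cosmetic: the cokernels above acquire extra classes in small characteristics, paralleling the $\mathrm{char}(\mathbb{K}) = 2$ failure for $D_n$ in Theorem~\ref{chainmap}, and one expects obstructions precisely at the primes $2,3$ for $E_6, E_7$ and $2,3,5$ for $E_8$. The delicate point is therefore not merely to show that each $\delta$ is surjective, but to confirm that surjectivity holds over every field of characteristic zero while breaking at exactly these small primes; a clean conceptual reason for this pattern, rather than a prime-by-prime verification, is what the statement is still missing, which is why it remains a conjecture.
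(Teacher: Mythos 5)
The statement you are addressing is labelled a \emph{conjecture} in the paper, and the paper contains no proof of it: the authors explicitly write that they ``leave the study of these exceptional cases to a future work,'' and the only hard information they record is the negative direction, namely that intrinsic formality \emph{fails} for $E_6,E_7$ in characteristic $2$ or $3$ and for $E_8$ in characteristic $2$, $3$ or $5$. So there is nothing in the paper to compare your argument against, and the relevant question is whether your proposal actually closes the gap. It does not. Your reduction is sound and is exactly the route the paper takes in the $A_n$ and $D_n$ cases: by Theorem~(\ref{intrinsic}) it suffices to show $HH^{2-s}(A_\Gamma,A_\Gamma[s])=0$ for $s<0$, the degree count correctly truncates the bar complex on the anti-diagonal $r+s=2$ (every input must be an arrow and the output must lie in the socle, and $C^{3-s}$ in internal degree $s$ vanishes), and the periodicity of the Brenner--Butler--King resolution does reduce the problem to finitely many cokernel computations per exceptional type. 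But the entire mathematical content of the conjecture is precisely those finite computations, and you do not carry out any of them; you only describe how one would. A proof that ends with ``one then carries out the resulting finite linear-algebra computation, confirming the surjectivity of each $\delta$'' has not proved surjectivity.

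Two further cautions if you intend to complete this. First, in the $A_n$ and $D_n$ cases the paper does not recompute $HH^*(A_\Gamma)$ from scratch: it imports the ungraded ring structure of $HH^*(B_\Gamma)$ for the derived-equivalent trivial extension algebra from Holm, Erdmann--Holm and Volkov, and only then reinstates the internal grading by exhibiting explicit cocycles in low cohomological degree and propagating via the ring relations and the universal coefficient sequence~(\ref{uct}). For the $E$-types you would either need the analogous literature input or a genuinely self-contained computation; your sketch silently assumes one or the other is available. Second, your description of $C^{r-1}$ is slightly too restrictive: a cochain of length $r-1=1-s$ and internal degree $s$ can also take one input from the socle $A^2$ (total input degree $r-1$ or $r$ both land in degrees $\le 2$), so the source of the map whose cokernel you must compute is larger than ``maps on arrows only.'' This does not affect the strategy, but it does affect the linear algebra you would have to do. As it stands, your proposal is a correct plan for attacking the conjecture, essentially the plan the authors themselves indicate, but it is not a proof, and you correctly concede as much in your final paragraph.
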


Unlike the $A_n$ case, some restriction on characteristic of $\f{K}$ is
necessary as we have checked that the zigzag algebras are not intrinsically
formal in type $D_n$, $n \geq4$, over characteristic 2, in type $E_6$ and $E_7$
over characteristic 2 or 3, and in the type $E_8$, over characteristic 2, 3 or
5.  It is very likely that these are the only ``bad'' characteristics (cf.
\cite{schedler}).

\section{Koszul duality} 
\label{seckoszul}
\vspace{-.1in}

By combining the work of Bourgeois, Ekholm and Eliashberg \cite{BEE} with
Abouzaid's generation criteria \cite{abouzgen}, one might suspect that the
Lagrangians $L_v$ split-generate the wrapped Fukaya category
$\mathcal{W}(X_\Gamma)$. Now, there exists a full and faithful embedding  \[
\mathcal{F}(X_\Gamma) \to \mathcal{W}(X_\Gamma)\] of the exact Fukaya category
of compact Lagrangians. Therefore, in view of Rem.~(\ref{sftshit}), we would conclude that there is a quasi-isomorphism of DG-algebras: \begin{align} \label{rhom}
\operatorname{RHom}_{\mathscr{B}_\Gamma} (\mathrm{k},\mathrm{k}) \simeq
\mathscr{A}_\Gamma \end{align} The right-hand-side is in turn quasi-isomorphic to
$A_\Gamma$ if one checks that $\mathscr{A}_\Gamma$ is formal (for example this is known if $\Gamma$ is
of type $A_n$ \cite{seidelthomas} and we prove it in Thm.~(\ref{hhd}) for type $D_n$ over a
field of characteristic $\neq 2$). We will provide
an alternative independent approach via a purely algebraic argument based on
Koszul duality theory for DG- or $A_\infty$-algebras (see \cite{LPWZ}) to stay
within the algebraic framework of this paper (and avoid the technicalities that
go into the discussion in Rem.~(\ref{sftshit})). 

In fact, as we shall see below, Koszul duality theory allows us to work directly with $A_\Gamma = H^*(\mathscr{A}_\Gamma)$, hence in this way we bypass formality questions for $\mathscr{A}_\Gamma$. 

We now give a brief review of Koszul duality, first in the case of associative algebras and then for $A_\infty$-algebras.
\vspace{-0.2in}
\subsection{Quadratic duality and Koszul algebras} \label{quadraticduality} To begin with, we review quadratic duality for
associative algebras following \cite[Sec. 2.1]{seidelflux} which has an
explicit discussion of signs in the context relevant here. Original reference is \cite{priddy}, and see also the excellent exposition in \cite{bgs}.  

Let $\mathrm{k} = \bigoplus_v \f{K}e_v$ be the commutative semi-simple ring of orthogonal primitive idempotents over the
base-field $\f{K}$, as before. Let $V$ be a finite-dimensional graded
$\f{K}$-vector space with a $\mathrm{k}$-bimodule structure. We write 
\[ T_\mathrm{k} V := \bigoplus_{i=0}^{\infty} V^{\otimes_{\mathrm{k}} i} \]
for the tensor algebra over $\mathrm{k}$. A quadratic graded algebra $A$ is an associative unital graded $\mathrm{k}$-algebra that is a quotient
\[ A :=  T_\mathrm{k} V/  J  \]
of $T_\mathrm{k} V$ by the two-sided ideal generated by a graded $\mathrm{k}$-submodule $J \subset V \otimes_\mathrm{k} V$. 
In fact, this makes $A$ into a bigraded algebra: it has an internal grading coming from the graded vector space $V$, denoted by $s$ or $|x|$ if for a specific element , and a length grading coming from the tensor algebra, denoted by $r$. The reference \cite{LPWZ} refers to $s$ as Adams grading. 

Let $V^{\vee} = Hom_{\f{K}} (V, \f{K})$ be the linear dual of $V$ viewed naturally as a $\mathrm{k}$-bimodule, i.e. $e_i V^{\vee} e_j$ is the dual of $e_jVe_i$.
Next, we consider the orthogonal dual $J^{\perp} \subset V^{\vee} \otimes_{\mathrm{k}} V^{\vee}$ with respect to the canonical pairing given by:
\begin{align*} 
	V^{\vee} \otimes_{\mathrm{k}} V^{\vee} \otimes_{\mathrm{k}} V \otimes_{\mathrm{k}} V &\to \mathrm{k}  \\
	v_2^{\vee} \otimes_{\mathrm{k}} v_1^{\vee} \otimes_{\mathrm{k}} v_1 \otimes_{\mathrm{k}} v_2 &\mapsto (-1)^{|v_2|} v_2^{\vee} (v_2) v_1^{\vee}(v_1)  \end{align*}
The quadratic dual to $A$ is defined as:
\[ A^{!} = T_\mathrm{k} \left(V^{\vee}[-1]\right) / J^{\perp}[-2] \]
As does $A$, the graded quadratic algebra $A^{!}$ has two natural gradings: one internal grading coming from the internal grading of the vector space $V^{\vee}[-1]$, denoted by $s$ or $|x^!|$ for a specific element, and the length grading coming from the tensor algebra, denoted by $r$.

The Koszul complex of a quadratic algebra is the graded right $A$-module $A^{!}
\otimes_{\mathrm{k}} A$ with the differential \footnote{\cite{bgs}
prefers to use the graded left module $A
\otimes_\mathrm{k}^{\phantom{1}\vee\!}\!(A^{!})$, the two graded modules are
related by the right module isomorphism $A^{!} \otimes A \simeq Hom_A(A
\otimes_\mathrm{k}^{\phantom{1}\vee\!}\!(A^{!}), A)$ and the sign $(-1)^{|x|}$ coming from
this dualization.}: 
\begin{equation}\label{koszulcomplex}  x^{!} \otimes_{\mathrm{k}} x \to \sum_{i} (-1)^{|x|} x^{!} a_i^{\vee} \otimes_{\mathrm{k}} a_i x \end{equation}
where the sum is over a basis of $\{a_i \}$ of $V$, and $\{a_i^\vee \}$ is the dual basis in $V^\vee [-1]$. This should be thought of as an $(r,s)$-bigraded complex, where the grading $r$ is the path-length grading in the $A^{!}$ factor and the total grading $r+s$ corresponds to the natural grading $|x^!|+|x|$. In particular, one has $|a_i^\vee| + |a_i| =1$ for all $i$, hence the $s$ grading is preserved by the differential. 

A Koszul algebra $A$ is a quadratic algebra for which the Koszul complex is
acyclic (i.e. homology is isomorphic to $\mathrm{k}[0]$). Taking the dual by applying the left exact functor $Hom_{A}( \cdot ,  A)$, we
get a resolution of $\mathrm{k}$ as a graded right $A^{op}$-module (see \cite[Sec. 2]{bgs} for more details).

In fact, if $A$ is Koszul, considering $\mathrm{k}$ as a simple module in the abelian category of graded \emph{right} $A^{op}$-modules, one has a canonical isomorphism of bigraded rings:
\[ A^{!} \cong \mathrm{Ext}^*_{A^{op}} (\mathrm{k}, \mathrm{k}) \]

Since $A$ is bigraded, a priori $\mathrm{Ext}^*_{A^{op}} (\mathrm{k},\mathrm{k})$ is triply graded (by the cohomological degree and by the length and internal gradings, derived from the corresponding ones in $A$). One characterization of Koszulity is that the cohomological degree, which we denote by $r$, agrees with the grading induced by length. Finally, we denote the internal grading by $s$. With this understood, we have the graded identifications:
\[ A^{!}_{r,s}  \cong \mathrm{Ext}^{r}_{A^{op}} (\mathrm{k}, \mathrm{k}[s]) \] 
If $A$ is Koszul, then its Koszul dual $A^{!}$ is also Koszul and $(A^{!})^{!} \cong A$.

Finally, for a Koszul algebra $A$, the Hochschild cohomology can be computed via the Koszul bimodule resolution of $A$. The resulting complex which computes Hochschild cohomology is 
\begin{equation} \label{koszulbi} (A^{!} \otimes_{\mathrm{k}} A)_{diag} = \bigoplus_{v} e_v A^{!} \otimes_{\mathrm{k}} A e_v \end{equation} with the differential: 
\[  x^{!} \otimes_{\mathrm{k}} x \to \sum_{i} (-1)^{|x|} x^{!} a_i^{\vee} \otimes_{\mathrm{k}} a_i x  - (-1)^{(|a_i|+1)(|x|+|x^!|)}a_i^{\vee} x^! \otimes_k x a_i \]

It is often the case, as in this paper, that $V$ is generated either by odd elements or even elements; this simplifies the signs in the above formula. For Koszul algebras, the homology of this complex coincides with the bigraded Hochschild cohomology groups $HH^r(A,A[s])$ where $r+s$ corresponds to the natural grading on $(A^! \otimes A)_{diag}$, that is, an element $x^! \otimes_{\mathrm{k}} x$ has grading $|x^!|+|x|$. The length grading $r$ corresponds to the path-length grading in the $A^{!}$ factor.

\begin{example} \label{singlevertex} Let $A_\Gamma = \f{K}[x]/(x^2)$ with $|x|=2$ be the zigzag
	algebra associated with a single vertex, i.e. $\Gamma$ is of type
	$A_1$. It is easy to see that this is a Koszul algebra and we have
	$A^{!}_\Gamma = \f{K}[x^\vee]$, the free algebra with $|x^\vee|=-1$. One can
	compute Hochschild cohomology using the Koszul bimodule complex. This
	has generators $(x^\vee)^i \otimes 1$ and $(x^\vee)^i \otimes x$ for $i \geq 0$. The
	differential can be computed as: 
\begin{align*}
      d( (x^\vee)^i \otimes 1 ) &= (1 + (-1)^{i+1}) (x^\vee)^{i+1} \otimes x \\ 
      d( (x^\vee)^i \otimes x ) &= 0 
\end{align*}
Therefore, whenever  $\mathrm{char}(\f{K}) = 2$ the differential vanishes and as a consequence $HH^* (A_\Gamma)$ has a basis $(x^\vee)^i \otimes 1$, for $i \geq 0$, in bigrading $(r,s)= (i, -2i)$ and $(x^\vee)^i \otimes x$, for $i \geq 0$, in bigrading $(r,s) = (i, 2-2i)$. 

If $\mathrm{char}(\f{K}) \neq 2$, then $HH^*(A_\Gamma)$ has a basis $(x^\vee)^{2i} \otimes 1$, for $i \geq 0$, in bigrading $(r,s) = (2i, -4i)$ and $(x^\vee)^{2i+1} \otimes x$, for $i \geq 0$, in bigrading $(r,s) = (2i+1, -4i)$ and $1 \otimes x$ in bigrading $(0,2)$. 

In view of the discussion given in the introduction, this result computes $SH^*(T^* S^2)$ for $* = r+s$. For convenient access, we record a finite portion of this calculation in Table~(\ref{loophomology}). 

By Viterbo's isomorphism (\cite{viterbo},\cite{abouzmanuz}), this computation also gives $H_{2-*}(\mathcal{L}S^2)$, where $\mathcal{L}S^2$ is the free loop space of $S^2$.
This was previously computed as a ring by Cohen, Jones and Yan \cite{CJY} over $\mathbb{Z}$ to be: 
\[ H_{2-*}(\mathcal{L}S^2 ; \mathbb{Z}) \cong (\Lambda b \otimes
\mathbb{Z}[a,v])/(a^2,ab,2av), \ \ |a|=2, |b|=1, |v|=-2   \]
using the fibration $\Omega_x S^2 \to \mathcal{L}S^2 \to S^2$. From
this, one can deduce that:
\[ H_{2-*}(\mathcal{L}S^2 ; \mathbb{K}) \cong \Lambda a \otimes \mathbb{K}[u], \ \ |a|=2, |u|=-1\]
if $\mathrm{char}\ \mathbb{K}=2$ and
\[ H_{2-*}(\mathcal{L}S^2 ; \mathbb{K}) \cong (\Lambda b \otimes \mathbb{K}[a,v])/(a^2,ab,av), \ \
    |a|=2, |b|=1, |v|=-2\]
if $\mathrm{char}\ \mathbb{K} \neq 2$,
in agreement with our computation. 
\end{example}

\begin{table}[htbp!]
\centering
\begin{tikzpicture}
\matrix (mymatrix) [matrix of nodes, nodes in empty cells, text height=1.5ex, text width=2.5ex, align=right]
{
\begin{scope} \tikz\node[overlay] at (-2.2ex,-0.6ex){\footnotesize r+s};\tikz\node[overlay] at (-1ex,0.8ex){\footnotesize s}; \end{scope} 
   & 2 & 1 & 0 & $-1$ & $-2$ & $-3$ & $-4$ & $-5$ & $-6$ & $-7$ & $-8$ \\ 
 2 & 1 & 0 & 0 &  0 &  0 &  0 &  0 &  0 &  0 &  0 &  0 \\ 
 1 & 0 & 0 & 1 &  0 &  0 &  0 &  0 &  0 &  0 &  0 &  0 \\
 0 & 0 & 0 & 1 &  0 &  x &  0 &  0 &  0 &  0 &  0 &  0 \\
-1 & 0 & 0 & 0 &  0 &  x &  0 &  1 &  0 &  0 &  0 &  0 \\
-2 & 0 & 0 & 0 &  0 &  0 &  0 &  1 &  0 &  x &  0 &  0 \\
};
\draw (mymatrix-1-1.south west) ++ (-0.2cm,0) -- (mymatrix-1-12.south east);
\draw (mymatrix-1-2.north west) -- (mymatrix-6-2.south west);
\draw (mymatrix-1-1.north west) -- (mymatrix-1-1.south east);%n k diagonal line
\end{tikzpicture}
\caption{$\Gamma= A_1$.  $x$ is 1 if $\text{char} \f{K} = 2$, $0$ otherwise.} 
\label{loophomology}
\end{table}

\subsection{Koszul duality for $A_\infty$-algebras}

We now review Koszul duality for $A_\infty$-algebras. Our primary reference
for this material is \cite{LPWZ}. The discussion in $\cite{LPWZ}$ is about
$A_\infty$-algebras over a field $\f{K}$, but as in classical Koszul duality,
the proofs extend readily to $A_\infty$-algebras over a semisimple ring
$\mathrm{k}$ (see also \cite{segal}). The extension of Koszul duality theory to
DG- or $A_\infty$-algebras has appeared earlier (see eg. \cite{kellerderiving}). 

Suppose $A = \bigoplus_{i\geq0} A_i$ is a positively graded associative algebra
over $A_0=\mathrm{k}$. Then, as before, the complex \[ \mathrm{RHom}_{A^{op}}
(\mathrm{k},\mathrm{k}) \] inherits a bigrading by cohomological and length
gradings. However, it usually happens that at the level of homology these two
gradings do not agree, that is, $A$ is not Koszul as an associative algebra,
and passing to the homology of this complex yields an associative algebra
$\mathrm{Ext}^*_{A^{op}} (\mathrm{k},\mathrm{k})$ from which one cannot recover
$A$.  In this case, the idea is that rather than passing to homology, one
thinks of the DG-algebra $\mathrm{RHom}_{A^{op}} (\mathrm{k},\mathrm{k})$ as
the DG-Koszul dual of $A$. To be able to carry this out, one is led to work
with DG- or $A_\infty$-algebras from the beginning. So, let $\mathscr{A}$ be
a $\f{Z}$-graded $A_\infty$-algebra over $\mathrm{k}$ together with an
augmentation $\epsilon : \mathscr{A} \to \mathrm{k}$, making $\mathrm{k}$ into
a right $A_\infty$-module over $\mathscr{A}^{op}$. One defines \[
\mathscr{A}^{!} = \mathrm{RHom}_{\mathscr{A}^{op}} (\mathrm{k},\mathrm{k}). \]
Note that the Yoneda image of $\mathrm{k}$ given by
$\mathrm{RHom}_{\mathscr{A}^{op}}(\mathscr{A}^{op},\mathrm{k})$ makes $\mathrm{k}$ into a
right $(\mathscr{A}^{!})^{op}$-module. Now, the obvious concern is whether
$(\mathscr{A}^{!})^{!}$ gets back to $\mathscr{A}$ (up to quasi-isomorphism).
This is not quite the case in general; one recovers a certain completion of
$\mathscr{A}$ (see \cite{segal} for a beautiful geometric description of this
construction). However, suppose that $\mathscr{A}$ has an additional $s$
grading (called Adams grading in \cite{LPWZ}) which is required to be preserved
by the $A_\infty$ operations. Furthermore, assume that $\mathscr{A}$ is
connected and locally finite with respect to this grading; this means that $\mathscr{A}$ is either
non-negatively or non-positively graded and the $s$-homogeneous subspace of
$\mathscr{A}$ is of finite dimension for each $s$ (see \cite[Def. 2.1]{LPWZ}).
Then, it is true that $(\mathscr{A}^{!})^{!}$ is quasi-isomorphic to
$\mathscr{A}$. We state this as: \begin{theorem} (Lu-Palmieri-Wu-Zhang
	\cite[Thm. 2.4 \footnote{The proof of \cite[Thm. 2.4]{LPWZ} uses \cite[Lem. 1.15]{LPWZ} which omits a necessary hypothesis. Namely, in the notation of \cite[Lem. 1.15]{LPWZ}, one should further assume $B_{aug}^\infty R$ is locally finite. By \cite[Lem. 2.2]{LPWZ}, this requirement holds under our hypothesis.}]{LPWZ} ) \label{dgkoszul} Suppose $\mathscr{A}$ is an augmented
	$A_\infty$-algebra over the semisimple ring $\mathrm{k}$ with a
	bigrading for which $\mu^k$ has degree $(2-k,0)$ and suppose
	$\mathscr{A}$ is connected and locally finite with respect to the second grading. Let \[ \mathscr{A}^! =
	\mathrm{RHom}_{\mathscr{A}^{op}} (\mathrm{k},\mathrm{k}) \] be its
	Koszul dual as an $A_\infty$-algebra. Then, there is a quasi-isomorphism
	of $A_\infty$-algebras: \[ \mathscr{A} \simeq
	\mathrm{RHom}_{(\mathscr{A}^{!})^{op}}(\mathrm{k},\mathrm{k}) \]
\end{theorem}

Below, we will apply this result for $\mathscr{A}= A_\Gamma$ viewed as a formal $A_\infty$-algebra.

\begin{example} To see the importance of the connectedness and finiteness assumptions, let us consider $A=\f{K}[x,x^{-1}]$ with $x$ in bigrading
	$(0,0)$, the (trivially graded) algebra of Laurent polynomials.
	Consider the augmentation $\epsilon: A^{op} \to \f{K}$ given by mapping $x$ to $1 \in \f{K}$, which makes $\f{K}$ into a right $A$-module. Then, one can check
	that $A^{!} = \mathrm{RHom}_{A^{op}}(\f{K},\f{K})$ is quasi-isomorphic
	to the exterior algebra $\f{K}[x^{!}]/((x^{!})^2)$ with $x^{!}$ in
	bigrading $(0,1)$. However, $\mathrm{RHom}_{(A^!)^{op}}(\f{K},\f{K})
	\cong \f{K}[[y]]$ gives the power series ring with $y$ in bigrading
	$(0,0)$. Hence, dualizing twice does not get us back in this case.

\end{example}

\subsection{Koszul dual of $\mathscr{G}_\Gamma$}

We next prove that the DG-algebra $\mathscr{G}_\Gamma$ and $A_\Gamma$ (viewed as
a formal $A_\infty$-algebra) are related by Koszul duality. We remind the
reader that we always work with right modules (as we follow the sign
conventions from \cite{thebook}). 

We have the following analogue of  \cite[Prop. 2.9.5]{ginzburg}  in our setting:

\begin{theorem} \label{koszulness} Consider $\mathrm{k} = A^{op}_\Gamma/ (A^{op}_\Gamma)_{>0}$ as a
    right $A_\Gamma^{op}$-module. There is a DG-algebra isomorphism: \[
    \operatorname{RHom}_{A_\Gamma^{op}} (\mathrm{k},\mathrm{k}) \simeq \mathscr{G}_{\Gamma^{op}} \]
such that cohomological (resp. internal) grading on the left-hand-side agrees with the path-length (resp. internal) grading on the right-hand-side. 
\end{theorem}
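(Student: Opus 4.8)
The plan is to realize $\operatorname{RHom}_{A_\Gamma^{op}}(\mathrm{k},\mathrm{k})$ by an explicit small DG-algebra model and to observe that this model is literally $\mathscr{G}_{\Gamma^{op}}$. The model I would use is the \emph{cobar construction on the dual coalgebra}: for any finite-dimensional augmented $\mathrm{k}$-algebra $A$, the reduced part $\bar{A}$ is finite-dimensional, its linear dual $\bar{A}^\vee$ is a coaugmented $\mathrm{k}$-coalgebra with coproduct dual to the multiplication $\bar{A}\otimes_{\mathrm{k}}\bar{A}\to\bar{A}$, and the cobar construction $T_{\mathrm{k}}(\bar{A}^\vee[-1])$ with differential dual to multiplication is a DG-algebra model for $\operatorname{RHom}_{A^{op}}(\mathrm{k},\mathrm{k})$ (the DG-enhancement of the bar resolution of $\mathrm{k}$; cf. the $A_\infty$ Koszul-duality formalism of \cite{LPWZ}). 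The crucial point is that this holds for \emph{any} such $A$, Koszul or not, so the fact that $A_\Gamma$ fails to be Koszul in the Dynkin case causes no trouble, and the whole argument proceeds uniformly.

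The structural input that makes the model small is that $A_\Gamma$ has radical cube zero: by Proposition \ref{compacts} the relations $a_{uv}a_{vw}=0$ for $u\neq w$ kill every path of length $\geq 3$, so $\bar{A}_\Gamma=\operatorname{rad}(A_\Gamma)=V\oplus W$, where $V$ is spanned by the arrows $a_{vw}$ (internal degree $1$) and $W=\operatorname{rad}^2(A_\Gamma)$ is spanned by the length-two loops. The second zigzag relation $a_{vw}a_{wv}=a_{vu}a_{uv}$ forces all returning products at a fixed vertex to coincide, so $W=\bigoplus_{v}\mathrm{k}\,w_v$ has exactly one generator $w_v$ (internal degree $2$) per vertex. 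Hence the cobar model is the \emph{free} $\mathrm{k}$-algebra on $V^\vee[-1]\oplus W^\vee[-1]$, which I would identify with the path algebra of the extended quiver $\widehat{\Gamma}$ via $a_{vw}^\vee\leftrightarrow g,g^*$ and $w_v^\vee\leftrightarrow h_v$.

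Next I would match gradings and the differential. In the cobar construction the shift $[-1]$ contributes to the \emph{cohomological} grading, since it counts tensor factors, while the internal grading is inherited unchanged from $A_\Gamma$; thus each arrow dual sits in bidegree $(1,-1)$ and each loop dual in bidegree $(1,-2)$, so cohomological degree equals path length and internal degree equals internal degree, exactly as in Definition \ref{ginzdga}. The cobar differential is dual to multiplication: arrows are indecomposable in $\bar{A}_\Gamma$, giving $dg=dg^*=0$, whereas each loop $w_v$ factors as $a_{vx}a_{xv}$ once for every neighbour $x$ of $v$, so the differential of $w_v^\vee$ is the sum over edges at $v$ of the corresponding products of arrow duals. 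Carrying the sign $(-1)^{|v_2|}$ of the duality pairing of Section \ref{quadraticduality} through this computation, and using that passing to $A_\Gamma^{op}$ reverses the order of multiplication, hence reverses the quiver orientation, I would check that $d\,h_v=\sum g^*g-gg^*$ for the quiver $\Gamma^{op}$. This is precisely the differential of $\mathscr{G}_{\Gamma^{op}}$, consistent with the identity $\mathscr{G}_{\Gamma^{op}}=\mathscr{G}_\Gamma^{op}$ recorded after Definition \ref{ginzdga}.

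The main obstacle I anticipate is bookkeeping rather than conceptual: pinning down the signs and orientations so that the cobar differential reproduces the exact pattern $\sum g^*g-gg^*$, with the correct $\pm$ per edge, and confirming that the ``op'' appearing on the quiver matches the ``op'' on $A_\Gamma$ in the statement. The only genuinely structural facts needed are (i) radical cube zero together with the one-loop-per-vertex description of $W$, both immediate from Proposition \ref{compacts}, and (ii) the standard identification $\operatorname{RHom}_{A^{op}}(\mathrm{k},\mathrm{k})\simeq T_{\mathrm{k}}(\bar{A}^\vee[-1])$ as DG-algebras. Once these are in place the DG-algebra isomorphism, together with the asserted grading correspondence, follows by direct comparison of generators and differentials, with no separate treatment of the Dynkin and non-Dynkin cases.
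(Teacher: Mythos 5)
Your proposal is correct and is essentially the paper's own argument in different packaging: the paper also computes $\operatorname{RHom}_{A_\Gamma^{op}}(\mathrm{k},\mathrm{k})$ via the reduced bar resolution, and its complex $\operatorname{hom}_{A^{op}}\bigl((A\otimes_{\mathrm{k}}T\bar{A})^{op},\mathrm{k}\bigr)$ is exactly your cobar construction $T_{\mathrm{k}}(\bar{A}^\vee[-1])$, with your ``free algebra on arrow and loop duals'' observation playing the role of the paper's bijectivity check for its explicit map $\Phi$ and your ``differential dual to multiplication'' playing the role of its verification that $d\Phi(h_v)=\Phi(dh_v)$. The only content you defer --- the sign bookkeeping, which the paper resolves by the explicit choices $\epsilon_v=(-1)^{\delta_v}$, $\epsilon_{wv}$ --- is exactly where you say it is, so there is no gap.
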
 
\begin{proof} First, let us clarify the multiplication on $A_\Gamma^{op}$, which we view as a formal $A_\infty$-algebra. We identify the elements of $A_\Gamma^{op}$ with the elements of $A_\Gamma$ which are given by the symbols $a_{vw}$, and $a_{vw}a_{wv}$ as before. 
	Since $|a_{wv}|=1$ for all $w$ adjacent to $v$, the product is given by:  
\[ \mu^2_{A_\Gamma^{op}} (a_{wv},a_{vw}) = (-1)^{|a_{wv}|+|a_{vw}|} \mu^2_{A_\Gamma}(a_{vw},a_{wv}) = (-1)^{|a_{vw}|} a_{vw} a_{wv} = -a_{vw} a_{wv}\]
for $w$ adjacent to $v$ (see \cite[Sec. 1a]{thebook} for signs used in defining the opposite of an $A_\infty$-algebra).  	

We use the reduced bar resolution of $\mathrm{k}$ as a right $A_\Gamma^{op}$-module to calculate $\operatorname{RHom}_{A_\Gamma^{op}} (\mathrm{k},\mathrm{k})$ which takes the form
$$ \operatorname{RHom}_{{A}^{op}}  (\mathrm{k},\mathrm{k}) \simeq 
\operatorname{hom}_{{A}^{op}} ( (A \otimes_{\mathrm{k}} T\bar{A})^{op}, \mathrm{k}) \ , $$
where $A=A_\Gamma$, $\bar{A}= A_\Gamma /\mathrm{k}$, and $T\bar{A}$ is the
tensor algebra of $\bar{A}_\Gamma$ over $\mathrm{k}$.  

The fact that $k=A_0$ allows us to identify $\bar{A}$ with the positive graded subalgebra $A_1
\oplus A_2$ of $A$. We follow the conventions in \cite[Sec. 1j]{thebook} for the DG-algebra
structure of $ \operatorname{hom}_{A^{op}} ((A \otimes_{\mathrm{k}} T\bar{A})^{op} , \mathrm{k})$. However, we view $ \operatorname{hom}_{A^{op}} ((A\otimes_{\mathrm{k}} T\bar{A})^{op}, \mathrm{k})$ as a DG-algebra rather than an $A_\infty$-algebra with $\mu^k =0$ for $k>2$ since $\mathscr{G}_\Gamma$ is always viewed as a DG-algebra. The difference is in the signs and this was explained in the introduction (see Eqn.~(\ref{dgsigns})). 

More precisely, a generator $t \in  \operatorname{hom}_{A^{op}} ( (A\otimes_{\mathrm{k}}
T\bar{A})^{op}, \mathrm{k})$ of bidegree $(r,s)$ is an $A^{op}$-module homomorphism $ t :
A\otimes_{\mathrm{k}} \bar{A}^{\otimes r} \to \mathrm{k}$ of internal degree $|t|=s$. Observe that,
any such $t$ maps an element $(a_{r+1}, a_{r}, \dots , a_1)$ to $0$ unless $a_{r+1} \in A_0$ because
of the $A^{op}$-module structure of $\mathrm{k}$.

The differential and the product on the DG-algebra $ \operatorname{hom}_{A^{op}} (
(A\otimes_{\mathrm{k}} T\bar{A})^{op}, \mathrm{k})$ are defined by
$$ (d t )(e_v, a_{r+1}, \dots , a_1) = \sum_{n=1}^{r} (-1)^{\dagger+|t|} t (e_v, a_{r+1} , \dots a_{n+2}, \mu^2_{A^{op}} (a_{n+1} , a_{n}) , a_{n-1} , \dots , a_1) \ , $$
and if $t_1$ and $t_2$ are two generators of length $r_1,r_2$ then,
$$ (t_2 \cdot  t_1) (e_v, a_{r_2+r_1} , \dots , a_1) = (-1)^{\ddagger +|t_1|} t_2(t_1 (e_v, a_{r_2+r_1}, \dots , a_{r_2+1}) , a_{r_2} , \dots, a_1) \ , $$
where $\dagger = \sum_{i=n}^{r+1} (|a_i|- 1)$ and $\ddagger = \sum_{i=r_2+1}^{r_2+r_1} (|a_i|  - 1)$.  

We now construct a chain map 
\[ \Phi : \mathscr{G}_{\Gamma^{op}} \to \operatorname{hom}_{A^{op}} ( (A\otimes_{\mathrm{k}}
T\bar{A})^{op}, \mathrm{k}) \]  
that respects the cohomological and internal gradings, first by defining it on the generators $g_{wv}$ and $h_v$ of the underlying
tensor algebra of $\mathscr{G}_{\Gamma^{op}}$, and then extending by mapping the product $p_2 p_1$ of two
elements $p_2$ and $p_1$ in $\mathscr{G}_{\Gamma^{op}}$ to the homomorphism $\Phi (p_2) \cdot \Phi
(p_1) \in \operatorname{hom}_{A^{op}} ( (A\otimes_{\mathrm{k}} T\bar{A})^{op}, \mathrm{k})$. 

Indeed, let us define $\Phi (g_{wv})$ and $\Phi (h_v)$ to be $A$-module homomorphisms each of which is nonzero only on a 1-dimensional subspace of $A\otimes_{\mathrm{k}} T\bar{A}$ given by 
$$\Phi (g_{wv}):  (e_v, a_{wv}) \mapsto \epsilon_{wv} e_w \ \mbox{ and } \  
\Phi (h_v) : (e_v, a_{vw} a_{wv}) \mapsto \epsilon_v e_v \ , $$
for any vertex $w$ adjacent to $v$ in $\Gamma$. Here the signs $\epsilon_{wv}$, $\epsilon_v$ are determined as follows.
For a vertex $v \in \Gamma_0 $, we set $\epsilon_v=(-1)^{\delta_v}$, where $\delta_v$ is the distance from the root of $\Gamma$ to the vertex $v$.
If $g_{wv}$ is an arrow in the quiver $\Gamma^{op}$, then define $\epsilon_{wv} = \epsilon_v$ and $\epsilon_{vw} = +1$. 
Note that $\frac{\epsilon_{wv} \epsilon_{vw}}{\epsilon_v}$ is $+1$ if and only if $g_{wv}$ is an arrow in the quiver $\Gamma^{op}$.

Observe that the internal gradings are 
	\[ |\Phi(g_{wv})| = - |a_{wv}|= -1 \text{\ \ \ and\ \ \ } |\Phi(h_v)| = - |a_{vw} a_{wv}| = -2,\] respectively. Note also that $\Phi$ takes the path-length grading on $\mathscr{G}_\Gamma$ to the cohomological grading
on $\operatorname{hom}_{A^{op}}((A\otimes_{\mathrm{k}}T\bar{A})^{op},\mathrm{k})$, hence $\Phi$ respects the bigraded structure of both sides.

The differentials on the DG-algebras $\mathscr{G}_{\Gamma^{op}}$ and
$\operatorname{hom}_{A^{op}} ( (A\otimes_{\mathrm{k}} T\bar{A})^{op}, \mathrm{k})$ obey the graded Leibniz rule, hence it suffices to check that
$$ d (\Phi (g_{wv})) = \Phi (d g_{wv}) = 0 \ \text{\ \ \  and\ \ \ } d (\Phi (h_v) ) = \Phi (d h_v) $$
to verify that $\Phi$ is a DG-algebra homomorphism.

The first identity follows immediately since both $g_{wv}$ and $\Phi (g_{wv})$ are in total degree $0$ and the differential vanishes here. To check the second identity, observe that $d (\Phi (h_v) )$ is nonzero only on the subspace of $A \otimes_{\mathrm{k}} T\bar{A} $ spanned by $$\{ (e_v, a_{wv} , a_{vw}): w \mbox{ is adjacent to } v \} \ , $$
and for every $w$ adjacent to $v$,
$$(d(\Phi (h_v) ))( e_v, a_{wv} , a_{vw}) = (-1)^{|\Phi(h_v)|+(|a_{wv}|-1)+(|a_{vw}|-1)} \Phi(h_v)(e_v, -a_{vw} a_{wv}) = -\epsilon_v e_v  \ . $$
Note that the appearance of the extra sign here is precisely the point where the use of $A_\Gamma^{op}$ rather than $A_\Gamma$ takes effect.  

On the other hand, 
$$\Phi (d h_v) = \Phi \left(\sum_w \frac{\epsilon_{wv} \epsilon_{vw}}{\epsilon_v} \ g_{vw} g_{wv} \right) 
= \sum_w  \frac{\epsilon_{wv} \epsilon_{vw}}{\epsilon_v} \ \Phi(g_{vw}) \cdot \Phi(g_{wv}) $$

For each  $w$ adjacent to $v$, $\Phi(g_{vw}) \cdot \Phi(g_{wv})$ is nonzero only on the subspace spanned by $ (e_v, a_{wv} , a_{vw})$ and
$$ ( \Phi(g_{vw}) \cdot \Phi(g_{wv})) (e_v, a_{wv} , a_{vw}) = (-1)^{|\Phi(g_{wv})| + (|a_{wv} |-1)} \Phi (g_{vw} )((\Phi(g_{wv})( e_v, a_{wv})),a_{vw}) = - \epsilon_{wv}\epsilon_{vw} e_v \ .$$

Indeed, we also have an extra sign here, and hence the second identity holds.

To prove the bijectivity of $\Phi$, consider a generator $(e_v, a_r, \dots , a_1)$ of $A \otimes_{\mathrm{k}} \bar{A}^{\otimes r}$. 
Note that such a generator is uniquely determined by the initial and terminal points of $a_i$ considered as paths in $A_\Gamma$ which in turn determine a unique path $g_r \cdots g_1$ of length $r$ in ${\mathscr{G}_\Gamma}$, so that the initial and terminal points of each arrow $g_i$ in the extended quiver $\widehat{\Gamma}$ match those of $a_{r+1-i}$. It is straightforward to check that 
$$ (\Phi(g_r \cdots g_1) ) (e_v, a_r, \dots , a_1) = \pm e_w , $$
where $w$ is the terminal point of $a_1$. This proves that $\Phi$ is injective since the algebra underlying ${\mathscr{G}_\Gamma}$ is the path algebra generated by the arrows in $\widehat{\Gamma}$. Moreover, the observation that $\Phi(g_r \cdots g_1) $ is nonzero only on the subspace of $A \otimes_{\mathrm{k}} T\bar{A}$ spanned by $(e_v, a_r, \dots , a_1)$ shows that $\Phi$ is surjective as well.
\end{proof} 

\begin{remark} As can be seen from the proof of Thm.~(\ref{koszulness}), we could arrange the definition of the DG-algebra isomorphism $\Phi$ so as to obtain an isomorphism: 
\[ \operatorname{RHom}_{A_\Gamma} (\mathrm{k},\mathrm{k}) \simeq \mathscr{G}_\Gamma \]
where $\mathrm{k} = A_\Gamma/(A_\Gamma)_{>0}$ is viewed as a right $A_\Gamma$-module. 
This is because there happens to be an isomorphism of algebras between $A_\Gamma$ and $A_\Gamma^{op}$. We have opted to use $A_\Gamma^{op}$ to be consistent with the general framework of Koszul duality (see \cite[Thm. 2.10.1]{bgs}). 
\end{remark}

The following corollary is immediate from Thm.~(\ref{koszulness}) and Thm.~(\ref{dgkoszul}): 

\begin{corollary} \label{koszulness2} Consider $\mathrm{k} = \mathscr{G}_\Gamma / (\mathscr{G}_\Gamma)_{r>0}$ as a right $\mathscr{G}_\Gamma$-module, and $A_\Gamma$ as a DG-algebra with trivial differential. 
There is a quasi-isomorphism of DG-algebras: 
\[ \operatorname{RHom}_{\mathscr{G}_\Gamma} (\mathrm{k},\mathrm{k}) \simeq A_\Gamma \] 
such that the cohomological and internal gradings on the left-hand-side coincide with each other and they agree with the path-length grading on the right-hand-side. 
\end{corollary}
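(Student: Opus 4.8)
The plan is to deduce the statement immediately by feeding the explicit quadratic–duality computation of Theorem~\ref{koszulness} into the abstract $A_\infty$ double–duality of Theorem~\ref{dgkoszul}. First I would view $A_\Gamma$ as a formal augmented $A_\infty$-algebra over $\mathrm{k}$, so that in the notation of the preceding subsection its Koszul dual is $A_\Gamma^{!} = \operatorname{RHom}_{A_\Gamma^{op}}(\mathrm{k},\mathrm{k})$. Theorem~\ref{koszulness} identifies this with $\mathscr{G}_{\Gamma^{op}}$, and since $\mathscr{G}_{\Gamma^{op}} = \mathscr{G}_\Gamma^{op}$ (as recorded after Definition~\ref{ginzdga}), I obtain $A_\Gamma^{!} \simeq \mathscr{G}_\Gamma^{op}$ as DG-algebras.

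Second, I would verify the hypotheses of Theorem~\ref{dgkoszul} for $\mathscr{A} = A_\Gamma$. As $A_\Gamma$ is a genuine graded algebra with trivial differential, its only nonzero operation $\mu^2$ has bidegree $(0,0)$, matching the requirement that $\mu^k$ have degree $(2-k,0)$, and the augmentation is the projection onto $(A_\Gamma)_0 = \mathrm{k}$. The essential point is that $A_\Gamma$ is connected and locally finite with respect to its internal grading: it is finite-dimensional, non-negatively graded (concentrated in internal degrees $0,1,2$, resp.\ $0,2$ when $\Gamma = A_1$), with degree-zero part equal to $\mathrm{k}$. This is exactly the finiteness that the cautionary Laurent-polynomial example shows to be indispensable for the double dual to recover the original algebra, and here it holds trivially.

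With the hypotheses in place, Theorem~\ref{dgkoszul} produces a quasi-isomorphism of $A_\infty$-algebras $A_\Gamma \simeq \operatorname{RHom}_{(A_\Gamma^{!})^{op}}(\mathrm{k},\mathrm{k})$. Substituting $A_\Gamma^{!} \simeq \mathscr{G}_\Gamma^{op}$, hence $(A_\Gamma^{!})^{op} \simeq \mathscr{G}_\Gamma$, gives $A_\Gamma \simeq \operatorname{RHom}_{\mathscr{G}_\Gamma}(\mathrm{k},\mathrm{k})$, where $\mathrm{k} = \mathscr{G}_\Gamma/(\mathscr{G}_\Gamma)_{r>0}$ is the right $\mathscr{G}_\Gamma$-module; reading this equivalence in the other direction is precisely the corollary. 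Since $\mathscr{G}_\Gamma$ is already a DG-algebra, the right-hand side may be computed by the (co)bar construction and carries a genuine DG-algebra structure, so the equivalence is one of DG-algebras as asserted.

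For the grading claim I would propagate the grading statement of Theorem~\ref{koszulness} through the second dualization: that theorem matches the cohomological grading of $\operatorname{RHom}$ with the path-length grading $r$ and the internal grading with the internal grading $s$, and the duality of Theorem~\ref{dgkoszul} is compatible with this bigrading. Thus on $\operatorname{RHom}_{\mathscr{G}_\Gamma}(\mathrm{k},\mathrm{k})$ the cohomological grading corresponds to the path-length grading of $A_\Gamma$ and the internal grading to the internal grading of $A_\Gamma$; because the zigzag algebra (for $\Gamma \neq A_1$) is generated in internal degree $1$, its internal and path-length gradings coincide, which is exactly why the two gradings on the left-hand side coincide with each other and both agree with the path-length grading on $A_\Gamma$. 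I expect no genuine obstacle here — the content is truly ``immediate'' from the two cited theorems — and the only points demanding care are the $\mathrm{op}$-bookkeeping and the connectedness/local-finiteness check, since it is precisely these hypotheses that guarantee the $A_\infty$ double dual returns $A_\Gamma$ on the nose rather than a completion of it.
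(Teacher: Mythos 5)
Your proposal is correct and follows essentially the same route as the paper: the paper's proof likewise consists of feeding Theorem~(\ref{koszulness}) into the double-duality Theorem~(\ref{dgkoszul}) and observing that the only thing to check is that $A_\Gamma$ is positively graded and locally finite (immediate from finite-dimensionality). Your additional care with the $\mathrm{op}$-bookkeeping and the grading propagation is consistent with, and slightly more explicit than, what the paper records.
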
 
\begin{proof} In view of Thm.~(\ref{koszulness}) and Thm.~(\ref{dgkoszul}), we only need to check
    the hypothesis in Thm.~(\ref{dgkoszul}) but this is straightforward. Certainly, $A_\Gamma$ is
    positively graded and the local finiteness condition holds since $A_\Gamma$ is
    finite-dimensional (see \cite[Def. 2.1]{LPWZ}).  
\end{proof}

Since $A_\Gamma$ is known to be Koszul in the classical sense for non-Dynkin $\Gamma$, we easily get an alternative proof of formality result mentioned in part (1) of Thm.~(\ref{hermes}).

\begin{corollary} \label{rehermes} For $\Gamma$ non-Dynkin, $\mathscr{G}_\Gamma$ is formal, that is, it is quasi-isomorphic to the preprojective algebra $\Pi_\Gamma = H^0(\mathscr{G}_\Gamma)$. 
\end{corollary}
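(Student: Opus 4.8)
The plan is to deduce formality from the classical Koszulity of the zigzag algebra $A_\Gamma$ together with the DG-Koszul duality isomorphism of Thm.~(\ref{koszulness}). By the remark following that theorem (which uses the algebra isomorphism $A_\Gamma \cong A_\Gamma^{op}$), there is a DG-algebra isomorphism $\operatorname{RHom}_{A_\Gamma}(\mathrm{k},\mathrm{k}) \simeq \mathscr{G}_\Gamma$ under which the cohomological grading corresponds to the path-length grading $r$ on $\mathscr{G}_\Gamma$ and the internal grading to the internal grading $s$. Consequently, the cohomology of $\mathscr{G}_\Gamma$ in bidegree $(r,s)$ is computed by $\operatorname{Ext}^r_{A_\Gamma}(\mathrm{k},\mathrm{k}[s])$, so understanding $H^*(\mathscr{G}_\Gamma)$ is the same as understanding the $\operatorname{Ext}$-algebra of $\mathrm{k}$ over $A_\Gamma$.

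First I would invoke the fact, due to Martinez-Villa \cite{MV} and recalled in the introduction, that for $\Gamma$ non-Dynkin the algebra $A_\Gamma$ is Koszul in the classical sense, over an arbitrary field $\f{K}$. Since $A_\Gamma$ is quadratic with generating space $V$ concentrated in internal degree $1$, its Koszul dual $A_\Gamma^{!} \cong \operatorname{Ext}^*_{A_\Gamma}(\mathrm{k},\mathrm{k})$ is generated in bidegree $(r,s)=(1,-1)$ and hence lives entirely on the diagonal $s=-r$; equivalently, $\operatorname{Ext}^r_{A_\Gamma}(\mathrm{k},\mathrm{k}[s]) = 0$ unless $r+s=0$. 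Transporting this vanishing through the isomorphism above, I conclude that $H^*(\mathscr{G}_\Gamma)$, graded by the total degree $r+s$, is concentrated in total degree $0$, where by Def.~(\ref{preprojective}) it coincides with $H^0(\mathscr{G}_\Gamma)=\Pi_\Gamma$.

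It then remains to upgrade this concentration statement to an honest quasi-isomorphism $\mathscr{G}_\Gamma \simeq \Pi_\Gamma$. For this I would pass to a minimal $A_\infty$-model $(\Pi_\Gamma, \mu^n)$ of $\mathscr{G}_\Gamma$, whose underlying space is $H^*(\mathscr{G}_\Gamma)=\Pi_\Gamma$. Each product $\mu^n$ shifts the total degree by $2-n$; since $\Pi_\Gamma$ is concentrated in total degree $0$, for every $n\geq 3$ the target $H^{2-n}(\mathscr{G}_\Gamma)$ vanishes, forcing $\mu^n=0$, while $\mu^1=0$ by minimality. Hence the minimal model reduces to the associative algebra $(\Pi_\Gamma,\mu^2)$, and $\mathscr{G}_\Gamma$ is $A_\infty$-quasi-isomorphic, and therefore quasi-isomorphic as a DG-algebra, to $\Pi_\Gamma$.

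I expect the only genuine subtlety to be the grading bookkeeping: one must check that the grading correspondence in Thm.~(\ref{koszulness}) sends the total degree $r+s$ on $\mathscr{G}_\Gamma$ to the sum of the cohomological and internal degrees on the $\operatorname{Ext}$ side, so that the classical Koszul condition (diagonal concentration $s=-r$) is precisely what annihilates everything off the total-degree-$0$ line. Once the gradings are aligned, both the concentration step and the minimal-model degree count are routine and require no hypothesis on $\mathrm{char}(\f{K})$, which is exactly why this argument recovers part (1) of Thm.~(\ref{hermes}) over an arbitrary field.
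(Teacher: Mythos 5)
Your argument is correct and is essentially the paper's own: both proofs combine the DG-Koszul duality isomorphism of Thm.~(\ref{koszulness}) with the classical Koszulity of $A_\Gamma$ (so that $\operatorname{Ext}^r_{A_\Gamma}(\mathrm{k},\mathrm{k}[s])$ is concentrated on the diagonal $r+s=0$) and then kill the higher products of a minimal model by a degree count. The only cosmetic difference is that the paper phrases the last step via the bidegree $(2-d,0)$ of $\mu^d$ and the agreement of the cohomological and length gradings, whereas you collapse to the total degree and use concentration of $H^*(\mathscr{G}_\Gamma)$ in degree $0$; these are the same computation.
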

\begin{proof} Recall that the differential on the complex $\operatorname{RHom}_{A_\Gamma^{op}} (\mathrm{k},\mathrm{k})$ has bidegree $(1,0)$. Therefore, after applying homological perturbation lemma, we obtain a minimal $A_\infty$-structure on $Ext^*_{A^{op}}(\mathrm{k},\mathrm{k})$ such that $\mu^d$ has bidegree $(2-d,0)$. On the other hand, Koszulity of $A_\Gamma$ means that the two gradings agree at the level of cohomology. Therefore, it is impossible to have a non-trivial $\mu^d$ for $d \neq 2$. 
\end{proof}

Note that if $\Gamma$ is a Dynkin type graph, $\mathscr{G}_\Gamma$ is not
quasi-isomorphic to the preprojective algebra $\Pi_\Gamma$. Our result above
can be described as stating that $\mathscr{G}_\Gamma$ and $A_\Gamma$ are
$A_\infty$-Koszul dual. This should be seen as the natural extension to all
$\Gamma$ of the classical Koszul duality between $\Pi_\Gamma$ and $A_\Gamma$
which only worked when $\Gamma$ is non-Dynkin. 

Finally, in view of the Thm.~(\ref{koszulness}) and Cor.~(\ref{koszulness2}), we conclude from Keller's theorem \cite{keller} that there
is an isomorphism of Hochschild cohomologies as Gerstenhaber algebras. Besides this isomorphism, the following theorem also uses the fact that $HH_{2-*}(\mathscr{G}_\Gamma) \cong
HH^*(\mathscr{G}_\Gamma)$ by the duality result for smooth Calabi-Yau algebras \cite{VdB}, together
with \cite[Cor. 5.7]{BEE} which for technical reasons applies over $\mathbb{K}$ of characteristic 0, and Thm (\ref{chainmap}).
\begin{theorem} \label{voila} For any tree $\Gamma$, there is an isomorphism of Gerstenhaber
    algebras over $\mathbb{K}$: 
    \[ HH^*(\mathscr{G}_\Gamma) \cong HH^*(A_\Gamma). \]
    If $\Gamma$ is Dynkin type $A_n$ or $D_n$ (and conjecturally also for $E_6,E_7,E_8$) and $\mathbb{K}$
is of characteristic $0$, then we have:
\[ SH^*(X_\Gamma) \cong HH^*(\mathscr{G}_\Gamma) \cong HH^*(A_\Gamma). \]
\end{theorem}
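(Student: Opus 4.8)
The plan is to treat the two displayed isomorphisms separately, since they rest on entirely different inputs. For the first isomorphism, which is asserted for every tree $\Gamma$, I would invoke the Koszul duality already established above: Cor.~(\ref{koszulness2}) exhibits $A_\Gamma$ as $\operatorname{RHom}_{\mathscr{G}_\Gamma}(\mathrm{k},\mathrm{k})$, so $\mathscr{G}_\Gamma$ and $A_\Gamma$ form a Koszul dual pair of augmented $A_\infty$-algebras over $\mathrm{k}$, with $(\mathscr{G}_\Gamma^{!})^{!} \simeq \mathscr{G}_\Gamma$ by Thm.~(\ref{dgkoszul}). By Keller's theorem \cite{keller}, a Koszul dual pair has isomorphic Hochschild cochain complexes as $B_\infty$-algebras, and in particular isomorphic Hochschild cohomology as Gerstenhaber algebras. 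Applying this to the pair $(\mathscr{G}_\Gamma, A_\Gamma)$ yields $HH^*(\mathscr{G}_\Gamma) \cong HH^*(A_\Gamma)$ as Gerstenhaber algebras. The only bookkeeping point is to reconcile the $A_\Gamma^{op}$ and $\mathscr{G}_{\Gamma^{op}}$ appearing in Thm.~(\ref{koszulness}) with the unadorned algebras; but Hochschild cohomology together with its Gerstenhaber structure is insensitive to passing to the opposite algebra, and $A_\Gamma \cong A_\Gamma^{op}$ as noted in the remark following Thm.~(\ref{koszulness}), so these discrepancies are immaterial.

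For the second chain of isomorphisms I would restrict to $\Gamma = A_n$ or $D_n$ and $\mathrm{char}(\mathbb{K})=0$. Thm.~(\ref{chainmap}) provides a quasi-isomorphism $\mathscr{B}_\Gamma = LCA^*(\Lambda_\Gamma) \simeq \mathscr{G}_\Gamma$, and since Hochschild cohomology is a quasi-isomorphism invariant of DG-algebras this immediately gives $HH^*(\mathscr{B}_\Gamma) \cong HH^*(\mathscr{G}_\Gamma)$, hence $\cong HH^*(A_\Gamma)$ by the first part. It then remains to identify $HH^*(\mathscr{B}_\Gamma)$ with $SH^*(X_\Gamma)$. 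Here I would use that $\mathscr{B}_\Gamma$ is a model for the wrapped Fukaya category $\mathcal{W}(X_\Gamma)$ (via generation of $\mathcal{W}(X_\Gamma)$ by the cocores $L_v$ together with Rem.~(\ref{sftshit})), combined with the surgery formula of \cite{BEE}, which over a field of characteristic $0$ identifies symplectic homology with the Hochschild homology of this model. To pass from Hochschild homology to the cohomological statement I would invoke the open Calabi-Yau property of Ginzburg's DG-algebra \cite{ginzburg}, which furnishes $HH_{2-*}(\mathscr{G}_\Gamma) \cong HH^*(\mathscr{G}_\Gamma)$; transporting this along the quasi-isomorphism of Thm.~(\ref{chainmap}) converts the homological output of \cite{BEE} into the desired $SH^*(X_\Gamma) \cong HH^*(\mathscr{G}_\Gamma)$.

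I expect the main obstacle to lie in the second part rather than the first. The Gerstenhaber isomorphism is formal once Keller's theorem is in hand, whereas the identification $SH^*(X_\Gamma) \cong HH^*(\mathscr{B}_\Gamma)$ leans on the correspondence between the Symplectic Field Theory and Hamiltonian formulations described in Rem.~(\ref{sftshit}), whose analytic foundations are not fully established. The honest course is to package the geometric inputs (\cite{BEE}, Abouzaid's generation criterion, and the SFT/Hamiltonian comparison) as the single statement $SH^*(X_\Gamma)\cong HH^*(\mathcal{W}(X_\Gamma))$, and then carry out the remaining steps—quasi-isomorphism invariance of $HH^*$, the Calabi-Yau duality between $HH_*$ and $HH^*$, and the algebraic Koszul duality—entirely within the DG-algebra framework, so that the characteristic-$0$ and Dynkin-type hypotheses enter only through Thm.~(\ref{chainmap}) and the char-$0$ validity of the \cite{BEE} surgery formula.
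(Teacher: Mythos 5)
Your proposal matches the paper's argument essentially step for step: the first isomorphism is obtained by feeding the Koszul duality of Thm.~(\ref{koszulness}) and Cor.~(\ref{koszulness2}) into Keller's theorem, and the second chain combines Thm.~(\ref{chainmap}), the Calabi--Yau duality $HH_{2-*}(\mathscr{G}_\Gamma)\cong HH^*(\mathscr{G}_\Gamma)$, and the characteristic-zero surgery formula of \cite{BEE}, exactly as the paper does. Your caveat about the SFT/Hamiltonian comparison is also the caveat the paper itself makes in Rem.~(\ref{sftshit}), so there is nothing to correct.
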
 

\begin{remark} Note that all of the Gerstenhaber algebras appearing in the
	above theorem are induced from a natural underlying 
	Batalin-Vilkovisky (BV) algebra structure. In the case of symplectic
	cohomology, BV-algebra structure is given by a geometric construction
	reminiscent of the loop rotation in string topology and in the cases of
	$\mathscr{G}_\Gamma$ and $A_\Gamma$, it is induced by the underlying Calabi-Yau
	structure on these DG-algebras, which allows one to dualize the Connes differential $B$ on
	Hochschild homology. However, the above theorem does not claim an
	isomorphism of the underlying Batalin-Vilkovisky structures. We believe that this can be
    achieved, however, it requires a finer investigation of Calabi-Yau structures.  
	On the other hand, we explain in Rem.~(\ref{BVremark}) that for $\Gamma$ non-Dynkin and
    non-extended Dynkin, we have an isomorphism of
Batalin-Vilkovisky algebras between $HH^*(\mathscr{G}_\Gamma)$ and $HH^*(A_\Gamma)$ as it turns out
that there is a unique way of equipping this Gerstenhaber algebra with a BV-algebra structure.   \end{remark}

\begin{remark} It is well-known that in the case when $\Gamma$ is Dynkin, the exact
	Lagrangian spheres $S_v$ split-generate the Fukaya category
	$\mathcal{F}(X_\Gamma)$ of compact exact Lagrangians - this follows for example by combining
	\cite[Lem. 4.15]{seidelgraded}  and \cite[Cor. 5.8]{thebook}. Furthermore,
	as mentioned in the beginning of Sec.~(\ref{seckoszul}), one expects that
	the non-compact Lagrangians $L_v$ split-generate the wrapped Fukaya category.  Hence, one could interpret the above result as showing that: \[
	HH^*(\mathcal{F}(X_\Gamma)) \cong HH^*(\mathcal{W}(X_\Gamma)) \] On the
	other hand, it is by no means the case that  
	$D^\pi \mathcal{F}(X_\Gamma)$ and $D^\pi \mathcal{W}(X_\Gamma)$ are equivalent as triangulated categories. (Here, we mean an equivalence between the Karoubi-completed triangulated closures of $\mathcal{F}(X_\Gamma)$ and $\mathcal{W}(X_\Gamma)$). This is clear from the fact that the latter category has objects with infinite-dimensional endomorphisms (over $\f{K}$) but every object in the former
    has finite-dimensional endomorphisms. More strikingly, the monotone Lagrangian tori studied in
    \cite{lekilimaydanskiy} give objects in $D^\pi \mathcal{W}(X_\Gamma)$ for $\Gamma = A_{n}$ with
    finite-dimensional endomorphisms and yet these do not belong to the category $D^\pi
    \mathcal{F}(X_\Gamma)$. One has to collapse the grading to $\mathbb{Z}_2$
    in order to admit these objects in $\mathcal{F}(X_\Gamma)$.
\end{remark}

In the next section, we give computations of $HH^*(A_\Gamma)$ for all trees $\Gamma$except $E_6,E_7,E_8$.

\section{Hochschild cohomology computations}
\label{weshallcompute}
\subsection{Non-Dynkin case} 
\label{nondynkin}
In this section we assume that $\Gamma$ is a non-Dynkin tree and describe the Hochschild cohomology
$HH^*(\mathscr{G}_\Gamma)$ of the associated Ginzburg DG-algebra. Note, however, that as explained in the introduction, when $\Gamma$ is non-Dynkin, $\mathscr{B}_\Gamma$ is a non-trivial deformation of $\mathscr{G}_\Gamma$, and so this computation does not directly give enough information to compute $HH^*(\mathscr{B}_\Gamma)$, and thus $SH^*(X_\Gamma)$. However, at least away from characteristic 0, the computation of $HH^*(\mathscr{G}_\Gamma) \cong HH^*(A_\Gamma)$ is still of geometric significance as it controls the deformations of the compact Fukaya category $\mathcal{F}(X_\Gamma)$. 

Recall that for non-Dynkin $\Gamma$, the cohomology $H^*(\mathscr{G}_\Gamma) \cong\Pi_\Gamma$ is supported in total degree $0$ and moreover $\mathscr{G}_\Gamma$ is formal, i.e. it is quasi-isomorphic to the preprojective algebra $\Pi_\Gamma$.  
Therefore we have an isomorphism of Gerstenhaber algebras \[ HH^*(\mathscr{G}_\Gamma)\cong HH^*(\Pi_\Gamma) \] where $\Pi_\Gamma$ is to be considered as a trivially graded algebra.
For any non-Dynkin quiver $\Gamma$, the Gerstenhaber structure of the Hochschild cohomology of $\Pi=\Pi_\Gamma$ is described in \cite{schedler} (and previously in \cite{BVEG} when $\mathrm{char}(\f{K})=0$).
We do not have anything new to say here, we simply review some of the results of \cite{BVEG} and \cite{schedler} to give a flavor of what's known. For an impressive amount of further information see the comprehensive work of Schedler \cite{schedler}.

The Hochschild cohomology $HH^*(\Pi_\Gamma)$ turns out to be  trivial in every grading except for $0,1$ and $2$. A way to see this is to use the Koszul bimodule resolution given in Eqn.~(\ref{koszulbi}). Recall that for $\Gamma$ non-Dynkin, $\Pi_\Gamma$ is Koszul in the classical sense with Koszul dual $A= A_\Gamma$. The latter has a decomposition into its graded pieces as $A= A_0 \oplus A_1 \oplus A_2$. Hence, the Koszul bimodule resolution takes the form:
\[ 0 \to \bigoplus_v e_v \Pi e_v  \to \bigoplus_v e_v A_1 \otimes_{\mathrm{k}} \Pi e_v \to \bigoplus_v e_v A_2 \otimes_\mathrm{k} \Pi e_v \to 0 \]
Moreover, it is well-known that $\Pi$ is Calabi-Yau of dimension $2$ (see \cite[Def. 3.2.3]{ginzburg}), hence a duality result of Van den Bergh \cite{VdB} applies and we have a canonical isomorphism 
 $$ HH^* (\Pi) \cong HH_{2-*} (\Pi) \ . $$ For the $\f{K}$-vector space structure of the Hochschild cohomology let us recall some general facts (see e.g. \cite{loday}) which apply to any algebra (with trivial grading and differential). The zeroth cohomology $HH^0(\Pi)$ is given by the center $Z(\Pi)$, and $HH^1(\Pi)$ is given by \emph{outer derivations} $Der (\Pi) / Inn (\Pi)$. Recall that a derivation is a linear map $D: \Pi \to \Pi$ satisfying the Leibniz rule, and each $a \in \Pi$ defines an inner derivation by $D_a (b )= ab-ba$. The zeroth homology $ HH_0(\Pi)$ is isomorphic to $\Pi_{cyc} := \Pi / [\Pi, \Pi]$, where $[\Pi, \Pi] \subset \Pi$ is the $\f{K}$-linear subspace spanned by the commutators.

 \begin{theorem}[\cite{schedler}-v1, Cor. 10.1.2, cf. \cite{BVEG}, Thm. 8.4.1]
\label{sched}
The $\f{K}$-vector space structure of the Hochschild cohomology $HH^*(\Pi)$ of the preprojective algebra associated to a non-Dynkin quiver is as follows.
\begin{enumerate}
\item If $\Gamma$ is extended Dynkin, then $HH^0 (\Pi) \cong Z(\Pi) \cong e_{v_0} \Pi e_{v_0}$, where $v_0$ is a vertex in $\Gamma$ whose complement is Dynkin. 
Otherwise the center $Z(\Pi)$ is isomorphic to $\f{K}$.
\item $HH^1(\Pi) \cong Der (\Pi) / Inn (\Pi) \cong Z(\Pi) \oplus (F \otimes_{\mathbb{Z}} \f{K} ) \oplus \left(T \otimes_{\mathbb{Z}} \bigoplus_{p} \operatorname{Hom}_{\mathbb{Z}} (\mathbb{F}_p, \f{K})\right) $, where $F$ and $T$ are the free and torsion parts of $\Pi^{\mathbb{Z}}_{cyc}$, respectively, and  $\Pi^{\mathbb{Z}}$ is the preprojective $\mathbb{Z}$-algebra associated to $\Gamma$. 
\item $HH^2 (\Pi) \cong HH_0 (\Pi) \cong \Pi_{cyc}$                     
\end{enumerate}
\end{theorem}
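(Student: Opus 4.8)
The plan is to follow \cite{schedler} (and \cite{BVEG} in characteristic $0$), organizing everything around the three-term complex displayed above---which, by Koszulity of $\Pi = \Pi_\Gamma$ and the decomposition $A_\Gamma = A_0 \oplus A_1 \oplus A_2$, computes $HH^*(\Pi)$ via the diagonal Koszul bimodule complex of Eqn.~(\ref{koszulbi})---together with the $2$-Calabi--Yau property. Since this complex has length two, $HH^i(\Pi) = 0$ for $i \geq 3$, so only $HH^0, HH^1, HH^2$ survive, and each claimed group is the cohomology at the corresponding spot. Throughout I would also invoke Van den Bergh's duality \cite{VdB}, supplied by the $2$-Calabi--Yau property \cite{ginzburg} as a canonical isomorphism $HH^r(\Pi) \cong HH_{2-r}(\Pi)$.

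For part (1), $HH^0(\Pi)$ is the kernel of the first differential, which is the center $Z(\Pi)$ by the standard interpretation of zeroth Hochschild cohomology. The explicit description is then the known structure theory of affine preprojective algebras: for extended Dynkin $\Gamma$ the algebra $\Pi$ is module-finite over its center, the center being the coordinate ring of the associated Kleinian singularity and realized as $e_{v_0} \Pi e_{v_0}$ for an extending vertex $v_0$ (the one whose deletion leaves a finite Dynkin diagram); for all other non-Dynkin $\Gamma$ one checks that central elements must be scalars, giving $Z(\Pi) \cong \f{K}$. This is Crawley-Boevey's computation as recorded in \cite{schedler}, which I would cite rather than redo. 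Part (3) is the cleanest: the duality $HH^r \cong HH_{2-r}$ gives $HH^2(\Pi) \cong HH_0(\Pi)$, and $HH_0(\Pi) = \Pi/[\Pi,\Pi] = \Pi_{cyc}$ by the general identification of zeroth Hochschild homology with the cyclic quotient, so $HH^2(\Pi) \cong \Pi_{cyc}$.

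The genuine obstacle is part (2). Here I would simultaneously view $HH^1(\Pi)$ as $\mathrm{Der}(\Pi)/\mathrm{Inn}(\Pi)$ (its cochain-level meaning) and as $HH_1(\Pi)$ (via Calabi--Yau duality with $r=1$), and then compute $HH_1$ by base change from the integral preprojective algebra $\Pi^{\mathbb{Z}}$, using $\Pi \cong \Pi^{\mathbb{Z}} \otimes_{\mathbb{Z}} \f{K}$. A universal-coefficient sequence relates $HH_1(\Pi)$ to $HH_1(\Pi^{\mathbb{Z}}) \otimes_{\mathbb{Z}} \f{K}$ and to $\Tor_1^{\mathbb{Z}}(HH_0(\Pi^{\mathbb{Z}}), \f{K}) = \Tor_1^{\mathbb{Z}}(\Pi^{\mathbb{Z}}_{cyc}, \f{K})$. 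Writing $\Pi^{\mathbb{Z}}_{cyc} = F \oplus T$ with $F$ free and $T$ torsion, the torsion-free contributions assemble into the summand $Z(\Pi) \oplus (F \otimes_{\mathbb{Z}} \f{K})$, while $\Tor_1^{\mathbb{Z}}(T, \f{K})$ reassembles, prime by prime, into $T \otimes_{\mathbb{Z}} \bigoplus_p \operatorname{Hom}_{\mathbb{Z}}(\mathbb{F}_p, \f{K})$, a term that is nonzero precisely in the characteristics dividing the torsion of $\Pi^{\mathbb{Z}}_{cyc}$. The hard and genuinely characteristic-sensitive input is the precise determination of $\Pi^{\mathbb{Z}}_{cyc}$ and its torsion; this is the substance of \cite{schedler}, and I would invoke it verbatim. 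In characteristic $0$ the $\Tor$ terms vanish and one recovers the cleaner statement of \cite{BVEG}.
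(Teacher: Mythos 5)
Your proposal matches the paper's treatment: the paper does not reprove this result but quotes it from Schedler (and \cite{BVEG} in characteristic $0$), surrounding it with exactly the ingredients you list --- the length-two Koszul bimodule complex forcing $HH^{\geq 3}(\Pi)=0$, Van den Bergh duality $HH^*(\Pi)\cong HH_{2-*}(\Pi)$ from the $CY2$ property, and the standard identifications of $HH^0$, $HH^1$, and $HH_0$ with the center, outer derivations, and $\Pi_{cyc}$. Your additional sketch of part (2) via base change from $\Pi^{\mathbb{Z}}$ and a universal-coefficient sequence is a reasonable gloss on where Schedler's $F$ and $T$ terms originate, and you correctly defer the substantive computation to the cited reference, as the paper does.
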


\begin{remark}
	In the extended Dynkin case, by the McKay correspondence $Z(\Pi)$ is isomorphic to the ring of invariant polynomials in $\f{K}[x,y]$ under the action  of the corresponding finite subgroup
    $G\subset SL_2(\f{K})$ as long as $\f{K}$ has $|G|$-{th} roots of unity (see \cite[Thm.
    9.1.1]{schedler}-v1). Furthermore, in this case $T$ is trivial and hence $HH^*(\Pi)$ is determined by $Z(\Pi)$ and $\Pi_{cyc}$, unless the characteristic of $\f{K}$ is a ``bad prime'' for $\Gamma$, i.e. $2$ for $\tilde{D}_n$, $2$ or $3$ for $\tilde{E}_6$ and $\tilde{E}_7$, and $2, 3$ or $5$ for $\tilde{E}_8$ \cite{schedler}. Note that the Hilbert series of $Z(\Pi)$ and $\Pi_{cyc}$, as algebras graded by path-length, are given in  \cite{EG} and \cite{schedler}. 
\end{remark}

The quotient $\Pi_{cyc}$ can be considered as a graded Lie algebra with  the path-length grading  and the Lie bracket induced by the \emph{necklace Lie bracket} $\{ \cdot , \cdot \}$ on $\Pi$, given by 
$$\{ p,q\} = \sum_{g_{wv} \in \Gamma_1} (\partial_{vw} q)(\partial_{wv} p) - (\partial_{wv} q)(\partial_{vw} p)  \ .$$
Here, for any path $p \in \Pi$ and adjoint pair $(v,w)$ in $\Gamma$,  $\partial_{wv} p$ is given as the sum
$$\sum_i g_{i-1} \cdots g_1 g_l \cdots g_{i+1}$$ 
taken over all $i$ for which the $i^{th}$ arrow $g_i$ in the path $p=g_l \cdots g_1$ is $g_{wv}$. 

Note that the Lie bracket $[D, D'] = D \circ D' - D' \circ D$ on $Der (\Pi) / Inn (\Pi)$ coincides with the Gerstenhaber bracket on $HH^1(\Pi)$ in favorable cases, e.g. if $\mathrm{char}(\f{K})=0$ and $\Gamma$ is not extended Dynkin.

The Lie brackets above are used to describe the (cup) product as well as the Gerstenhaber bracket on
$HH^*(\Pi)$ in \cite{BVEG}, when $\mathrm{char}( \f{K}) = 0$. We now recall the description of the
Gerstenhaber algebra structure of $HH^*(\Pi)$ in \cite{schedler}, for arbitrary
$\mathrm{char}(\f{K})$, using the BV operator $\Delta$ dual to the Connes
differential (see, e.g. \cite{loday}) on $HH_*(\Pi)$. The \emph{Euler derivation} $\mathrm{eu}$ on $\Pi_{cyc}$ is defined as
multiplication by $l$ on each path of length $l$, and the derivation $u$, called \emph{half
Euler derivation} in \cite{schedler}, multiplies each path by the number of edges from $\Gamma$ that
it contains.  Note that we have $\mathrm{eu} = 2u$ as elements of $HH^1(\Pi)$. In other words, their difference is an inner derivation. The first summand of
$HH^1(\Pi)$ in Thm.(\ref{sched}) consists of multiples of $u$ by $Z(\Pi)$.

\begin{theorem}[\cite{schedler}-v1, Thm. 10.3.1] As a BV-algebra, $HH^*(\Pi)$ is determined by the following properties.
\begin{enumerate} 
    \item The graded-commutative product $$\cup : HH^i(\Pi) \otimes HH^j(\Pi) \to HH^{i+j}(\Pi)$$ is given as follows.
\begin{enumerate}
\item If $\theta, \theta' \in Der (\Pi) / Inn (\Pi) \cong HH^1(\Pi)$ and $\theta'$ belongs to the $F \otimes_{\mathbb{Z}} \f{K}$ summand of $HH^1(\Pi)$, then $\theta \cup \theta'$ is obtained by considering $\theta'$ as an element of $\Pi_{cyc}$ and applying the derivation $\theta$ to it.
\item If none of $\theta, \theta' \in  HH^1(\Pi)$ belongs to  the $F \otimes_{\mathbb{Z}} \f{K}$ summand, then $\theta \cup \theta' =0$.
\item If $ij=0$, then $\cup$ is given by multiplication in $\Pi$. 
\end{enumerate}
\item The BV-operator $$\Delta: HH^i(\Pi) \to HH^{i-1}(\Pi)$$ dual to the Connes differential is given as follows.
\begin{enumerate} 
\item We have
$$\Delta (u) =1,\ \ \  \Delta( z \cup \theta ) = \theta (z) + z \Delta(\theta) $$
for every $z \in HH^0(\Pi) \cong Z(\Pi) $, $\theta \in Der (\Pi) / Inn (\Pi) \cong HH^1(\Pi)$.  
The BV-operator vanishes on the $\left(T \otimes_{\mathbb{Z}} \bigoplus_p \operatorname{Hom}_{\mathbb{Z}} (\mathbb{F}_p, \f{K})\right)$ summand of $HH^1(\Pi)$.
\item The operator $\Delta: HH^2(\Pi)\cong \Pi_{cyc} \to  Der (\Pi) / Inn (\Pi) \cong HH^1(\Pi) $ maps to the $F \otimes_{\mathbb{Z}} \f{K}$ summand and it is  given by
$$ \Delta (g_l \cdots g_1) = \sum_{i=1}^l \pm  \partial_{g_i^*} (\cdot) g_{i-1} \cdots g_1 g_l \cdots g_{i+1} \ , $$
where each $g_i$ is an arrow in the double of the quiver $\Gamma$ and the sign is positive if and
        only if $g_i \in \Gamma$.
\end{enumerate}

\end{enumerate}
\end{theorem}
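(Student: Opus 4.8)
The plan is to transport the entire computation to Hochschild \emph{homology}, where the relevant differential --- the Connes operator $B$ --- is defined at the chain level, and then dualize using the Calabi--Yau structure. Since $\Pi = \Pi_\Gamma$ is Koszul and Calabi--Yau of dimension $2$, Van den Bergh duality \cite{VdB} gives canonical isomorphisms $HH^i(\Pi) \cong HH_{2-i}(\Pi)$, and under these the BV operator $\Delta$ on cohomology is by definition the dual of the degree $+1$ Connes differential $B\colon HH_{i}(\Pi) \to HH_{i+1}(\Pi)$. Thus the problem splits into three tasks: compute the cup product on $HH^*(\Pi)$, compute $B$ on $HH_*(\Pi)$ explicitly, and check that these two pieces of data determine the whole BV-algebra structure.

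First I would fix small complexes for both $HH_*(\Pi)$ and $HH^*(\Pi)$ from the Koszul bimodule resolution in Eqn.~(\ref{koszulbi}); because $A_\Gamma = A_0 \oplus A_1 \oplus A_2$ this is a three-term complex, so both $HH_*$ and $HH^*$ are concentrated in degrees $0,1,2$ as already recorded in Thm.~(\ref{sched}), with $HH^0 \cong Z(\Pi)$, $HH^1 \cong \operatorname{Der}(\Pi)/\operatorname{Inn}(\Pi)$ and $HH^2 \cong \Pi_{cyc}$. For the cup product the cases with $ij=0$ are immediate: $HH^0 = Z(\Pi)$ acts on everything by multiplication in $\Pi$, which is part (1)(c). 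The genuinely new content is the product of two classes in $HH^1$. Here I would use the identification of $HH^1$ with outer derivations together with the fact that the $F\otimes_{\mathbb Z}\f{K}$ summand carries the image of $\Delta\colon HH^2 \cong \Pi_{cyc} \to HH^1$; representing one factor $\theta'$ as an element of $\Pi_{cyc}$ and applying the derivation $\theta$ to it yields part (1)(a), while the vanishing in part (1)(b) follows from a degree/parity count on the small complex, the product landing in a piece of $HH^2$ that is forced to vanish.

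Next I would compute $B$ on $HH_*(\Pi)$. Transporting the reduced cyclic differential through the identifications $HH_0(\Pi) \cong \Pi_{cyc}$ and $HH_2(\Pi) \cong Z(\Pi)$, the normalisation $\Delta(u)=1$ (with $u$ the half-Euler derivation) and the Leibniz-type formula $\Delta(z \cup \theta) = \theta(z) + z\,\Delta(\theta)$ of part (2)(a) come from the behaviour of $B$ on the lowest-degree part of the cyclic complex. The formula of part (2)(b) is obtained by writing a cyclic word $g_l \cdots g_1$ in $HH^2 \cong \Pi_{cyc}$, applying $B$ cyclically, and reading off the result in $HH^1$ via the Calabi--Yau pairing; each cyclic rotation contributes the partial-derivative term $\partial_{g_i^*}(\cdot)\, g_{i-1}\cdots g_1 g_l \cdots g_{i+1}$, with the sign dictated by whether $g_i$ lies in $\Gamma$ or in its opposite. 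That these data \emph{determine} the BV-algebra then follows because the Gerstenhaber bracket is recovered from the product and $\Delta$ via the compatibility relation $[x,y] = (-1)^{|x|}\Delta(xy) - (-1)^{|x|}\Delta(x)y - x\Delta(y)$ fixed in our conventions, and $HH^*$ is generated in degrees $\leq 2$.

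The main obstacle I anticipate is the explicit sign-and-torsion bookkeeping in part (2)(b): unwinding Van den Bergh duality to a chain-level identification of $B$ with the stated partial-derivative operator requires a careful choice of the Calabi--Yau class and its compatibility with the Koszul resolution, and the appearance of the torsion summand $T \otimes_{\mathbb Z} \bigoplus_p \operatorname{Hom}_{\mathbb Z}(\mathbb F_p, \f{K})$ in $HH^1$ means one cannot work over $\f{K}$ naively but must track the integral preprojective algebra $\Pi^{\mathbb Z}$ and invoke a universal-coefficients argument --- precisely the point where the behaviour in bad characteristics enters. Controlling $B$ on this torsion part (where it vanishes, per part (2)(a)) is the delicate step that separates the characteristic-zero computation of \cite{BVEG} from the general one of \cite{schedler}.
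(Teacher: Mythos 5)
This theorem is not proved in the paper at all: it is quoted verbatim from Schedler (the citation \cite{schedler}-v1, Thm.\ 10.3.1 is in the theorem header), and the surrounding text says explicitly that the authors ``do not have anything new to say here'' and are only reviewing results of \cite{BVEG} and \cite{schedler}. So there is no internal proof to compare your proposal against; what can be assessed is whether your outline would actually establish the statement. Its broad architecture --- work with the three-term Koszul bimodule complex, use Van den Bergh duality $HH^i(\Pi)\cong HH_{2-i}(\Pi)$ to define $\Delta$ as the dual of the Connes differential $B$, and recover the bracket from the BV compatibility relation --- is indeed the strategy of the cited references, and parts (1)(c) and the generation-in-degrees-$\leq 2$ remark are fine.

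There are, however, two concrete gaps. First, your justification of (1)(b) by ``a degree/parity count on the small complex, the product landing in a piece of $HH^2$ that is forced to vanish'' does not work: $HH^2\cong\Pi_{cyc}$ is nonzero, and the summands of $HH^1$ outside $F\otimes_{\mathbb Z}\f{K}$ are $Z(\Pi)\cdot u$ and the torsion summand. Graded commutativity gives $2\,\theta\cup\theta=0$ but says nothing when $\mathrm{char}\,\f{K}=2$, which is exactly the case where the torsion summand is nonzero and where the statement has content; proving $(zu)\cup(z'u)=0$ and the vanishing of products with the torsion part requires the integral computation over $\Pi^{\mathbb Z}$, not a parity argument. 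Second, your route to (1)(a) via ``the $F\otimes_{\mathbb Z}\f{K}$ summand carries the image of $\Delta$'' is not a derivation of the cup product: the BV relation expresses $[\theta,\theta']$ in terms of $\Delta(\theta\cup\theta')$, $\Delta(\theta)$, $\Delta(\theta')$ and the product, so it only determines $\Delta(\theta\cup\theta')$, and one cannot recover $\theta\cup\theta'$ itself unless $\Delta\colon HH^2\to HH^1$ were injective, which it is not. The formula of (1)(a) has to be verified at the chain level (the cup product of two derivations on the reduced bar or Koszul complex, transported through the duality), and the normalisation $\Delta(u)=1$ likewise requires an explicit chain-level identification rather than an appeal to ``the lowest-degree part of the cyclic complex.'' You correctly flag the integral/universal-coefficients bookkeeping as the hard step, but in the general-characteristic statement it is not a final verification --- it is the proof, and your outline defers precisely the parts that distinguish Schedler's theorem from the characteristic-zero case of \cite{BVEG}.
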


\begin{remark} \label{BVremark} A word of caution is in order. For $\Gamma$ non-Dynkin, the
    BV-algebra structure on $HH^*(\Pi_\Gamma)$ is induced by the 2-Calabi-Yau structure (in the sense
    of Ginzburg \cite{ginzburg}, also known as smooth Calabi-Yau structure) on the homologically smooth algebra $\Pi_\Gamma$. This means that we have an isomorphism of
    $\Pi_\Gamma$-bimodules: 
    \[ \Pi_\Gamma \simeq \mathrm{RHom}_{\Pi_\Gamma-\Pi_\Gamma} (\Pi_\Gamma, \Pi_\Gamma \otimes \Pi_\Gamma)[2], \]
    where the bimodule structure on the right is with respect to the inner bimodule structure on
    $\Pi_\Gamma \otimes \Pi_\Gamma$ and $\mathrm{RHom}$ is taken with respect to the outer bimodule structure on $\Pi_\Gamma
    \otimes \Pi_\Gamma$. Two such 2-Calabi-Yau structures differ by an invertible element in
    $HH^0(\Pi_\Gamma)$. The effect by such an invertible $\phi$ is to replace $\Delta$ by $\phi^{-1} \Delta \phi$ (\cite[Rmk. 4.8]{seidelPicard}).

    We can consider the Koszul dual notion. Namely, by Koszul duality, for $\Gamma$ non-Dynkin, we have 
    $HH^*(\Pi_\Gamma) \cong HH^*(A_\Gamma)$ and then the BV-algebra structure can be seen as naturally
    arising, from a weak Calabi-Yau structure on $A_\Gamma$. Recall that, a weak Calabi-Yau
    structure (also known as Frobenius structure or compact Calabi-Yau structure) of dimension 2 on the finite-dimensional algebra $A_\Gamma$ is a quasi-isomorphism of $A_\Gamma$-bimodules:
    \[ A_\Gamma \simeq A_\Gamma^{\vee}[-2], \]
    where $A_\Gamma^{\vee}$ is the $\mathbb{K}$-linear dual of $A_\Gamma$. Two such Calabi-Yau structures again differ by an invertible element in
    $HH^0(A_\Gamma)$. 
    
    In any case, if $\Gamma$ is non-Dynkin and non-extended Dynkin, then by Thm.~(\ref{sched}), $HH^0(\Pi_\Gamma) \cong
    HH^0(A_\Gamma) \cong \mathbb{K}$ is rank 1 generated by the identity, hence there exists (up to scaling) at most one (Ginzburg) Calabi-Yau structure
on $\Pi_\Gamma$ and at most one (weak) Calabi-Yau structure on
    $A_\Gamma$. These Calabi-Yau structures can either be constructed algebraically as in
 \cite{ginzburg} or symplectically as a manifestation of Poincar\'e duality for the Fukaya
category of compact Lagrangians or the open Calabi-Yau property of the wrapped Fukaya category. 

Now, {\bf suppose} $\mathscr{B}_\Gamma \simeq \mathscr{G}_\Gamma$. Then, since
$\mathscr{G}_\Gamma$ is formal, we would have an isomorphism $SH^*(X_\Gamma) \cong
HH^*(\mathscr{B}_\Gamma) \cong HH^*(\Pi_\Gamma)$. Under this isomorphism, the natural $BV$-algebra structure on $SH^*(X_\Gamma)$ given by the loop rotation operator $\Delta:
  SH^*(X_\Gamma) \to SH^{*-1}(X_\Gamma)$ has to coincide with the algebraically constructed
BV-algebra structure on $HH^*(\Pi_\Gamma)$ in the case that $\Gamma$ is non-Dynkin and
non-extended Dynkin. 
    
	On the other hand, combining the results from \cite[Thm. 24]{menichi} and
	\cite[Ch. 12-14]{abouzmanuz} one deduces that 
	\[ SH^*(T^*S^2) \cong HH^*(C_{2-*}(\Omega S^2)) \cong HH^*(C^*(S^2)) \] does not admit a
    \emph{dilation} over a field of characteristic 2 \footnote{An independent verification of
    this fact based on a Morse-Bott computation of BV-operator on $SH^*(T^*S^2)$ was
communicated to us by P. Seidel.}.  Recall that a dilation is an element $b \in
	SH^1(X_\Gamma)$ such that \[ \Delta b =1 \] where $\Delta : SH^*(X_\Gamma)
	\to SH^{*-1}(X_\Gamma)$ is the BV-operator in symplectic cohomology. Furthermore,
    since $T^*S^2$ can be embedded as a Liouville subdomain of $X_\Gamma$, one has a restriction
    map, $SH^*(X_\Gamma) \to SH^*(T^*S^2)$ which is a map of BV-algebras. Therefore, a
    dilation on $X_\Gamma$ can be restricted to a dilation on $T^*S^2$. 
    On
	the other hand, we see from the above theorem that there is a class $u \in HH^1(\Pi_\Gamma)$ that is sent to the identity by
	the BV-operator induced from the Calabi-Yau structure on $\Pi_\Gamma$. Hence, we arrive at a contradiction. 

    This is in agreement with Rmk. (\ref{subtree}) where we have seen that $\mathscr{B}_\Gamma$ is a non-trivial deformation of $\mathscr{G}_\Gamma$ over a field of characteristic 2.
\end{remark}

\subsection{Dynkin case} 

In this section we compute the Hochschild cohomology of the zigzag algebra
$A_\Gamma$ associated with a \emph{Dynkin} tree. If the underlying tree
$\Gamma$ is of type $A_1$, i.e. a single vertex, then $A_\Gamma =
\f{K}[x]/(x^2)$ with $|x|=2$ and it is a Koszul algebra. Its Hochschild
cohomology was computed in Ex.~(\ref{singlevertex}) above. Thus, hereafter we
assume $\Gamma \neq A_1$. It turns out that if the underlying tree $\Gamma$ is
of Dynkin type but not a single vertex, then $A_\Gamma$ is an almost-Koszul
algebra (in the sense of \cite{almostkoszul}). In this situation, Koszul
complex leads to a construction of a minimal \emph{periodic} resolution. We
first review the basics of quadratic algebras and the associated Koszul
complexes. 

\subsubsection{Zigzag algebra $A_\Gamma$ as a trivial extension}
\label{sec-trivial}

Recall that for any $\Gamma$, the zigzag algebra $A_\Gamma$ is defined as the quotient of the path
algebra $\f{K}\mathrm{D}\Gamma$ of the double quiver $\mathrm{D}\Gamma$ by the ideal $J$ generated by the elements 	\begin{itemize}
	\item $a_{u v} a_{v w}$ such that $u \neq w$, where $v$ is adjacent to both $u,w$, and
	\item $a_{v w} a_{w v} - a_{vu} a_{uv}$ where $v$ is adjacent to both $u,w$.
	\end{itemize}
	Clearly, this is an example of a quadratic algebra over $\mathrm{k}$ where $V$ is the
    $\f{K}$-vector space generated by the edges $a_{wv}$ of $\mathrm{D}\Gamma$ and supported in
    grading 1. The path-length grading on $\f{K}\mathrm{D}\Gamma$  descends to $A_\Gamma$ where it is supported in degrees $0,1$ and $2$.  It is straightforward to verify that:
	\begin{proposition} For any tree $\Gamma$ the quadratic dual $A^{!}_\Gamma$ of the zigzag algebra $A_\Gamma$ is the preprojective algebra $\Pi_\Gamma$, when both are equipped with path-length grading. 
	\end{proposition}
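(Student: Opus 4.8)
The plan is to compute the quadratic dual directly, by showing that the orthogonal complement $J^\perp$ defining $A^!_\Gamma$ is exactly the (quadratic part of the) preprojective ideal, once $V^\vee$ is identified with the arrow space of the double $\mathrm{D}\Gamma$. First I would fix that identification. Since $A_\Gamma$ is quadratic over $\mathrm{k}$ with generating bimodule $V=\bigoplus \f{K}a_{wv}$ spanned by the arrows of $\mathrm{D}\Gamma$ (concentrated in internal degree $1$), the dual bimodule $V^\vee[-1]$ is spanned by the dual basis $\{a_{wv}^\vee\}$. Because $e_iV^\vee e_j$ is dual to $e_jVe_i$, the functional $a_{wv}^\vee$ has source $w$ and target $v$, so it is naturally identified, up to sign, with the opposite arrow $a_{vw}$ of $\mathrm{D}\Gamma$. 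This gives an isomorphism of $\mathrm{k}$-bimodules $\iota\colon V^\vee[-1]\cong V$, hence $T_\mathrm{k}(V^\vee[-1])\cong \f{K}\mathrm{D}\Gamma$ matching path-length gradings, and reduces the statement to identifying $J^\perp[-2]$ with the preprojective relations.

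Next I would decompose $V\otimes_\mathrm{k}V=\bigoplus_{u,w}e_u(V\otimes_\mathrm{k}V)e_w$ and read off bases as length-two paths $w\to v\to u$ in $\mathrm{D}\Gamma$. Here the tree hypothesis does all the work: two distinct vertices of a tree have at most one common neighbour (a second one would close a cycle), so for $u\neq w$ the component $e_u(V\otimes_\mathrm{k}V)e_w$ is either zero or one-dimensional, spanned by $a_{uv}a_{vw}$ for the unique common neighbour $v$; while the diagonal component $e_v(V\otimes_\mathrm{k}V)e_v$ has basis the round-trips $\{a_{vw}a_{wv}:w\sim v\}$, of dimension $\deg v$. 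Inspecting the two families generating $J$, the \emph{no-turning} relations $a_{uv}a_{vw}$ with $u\neq w$ fill each off-diagonal slot entirely, whereas the \emph{loop-equality} relations $a_{vw}a_{wv}-a_{vu}a_{uv}$ span exactly the codimension-one subspace of $e_v(V\otimes_\mathrm{k}V)e_v$ cut out by requiring the loop-coefficients to sum to zero.

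Finally I would compute $J^\perp$ slot by slot using the pairing $\langle v_2^\vee\otimes v_1^\vee,\,v_1\otimes v_2\rangle=(-1)^{|v_2|}v_2^\vee(v_2)v_1^\vee(v_1)$. Since $J$ is the full space in every off-diagonal slot, $J^\perp$ vanishes there, which matches the fact that the preprojective relations are supported only at vertices. In the diagonal slot at $v$, the annihilator of the sum-zero hyperplane is the line spanned by the equal-weight functional $\sum_{w\sim v}\pm(a_{vw}a_{wv})^\vee$; transporting this through $\iota$ together with the order-reversal built into the pairing turns it into $\sum_{w\sim v}\pm a_{vw}a_{wv}$, i.e.\ the localization at $v$ of the relation $\sum_{g\in\Gamma_1}(g^*g-gg^*)$ from Definition (\ref{preprojective}). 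Collecting over all vertices identifies $J^\perp[-2]$ with the quadratic preprojective relations, giving $A^!_\Gamma\cong \f{K}\mathrm{D}\Gamma/(\sum_g g^*g-gg^*)=\Pi_\Gamma$ as path-length graded algebras.

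I expect the main obstacle to be the sign bookkeeping in this last step: one must check that the factor $(-1)^{|v_2|}$ in the pairing, the order-reversal in dualizing a tensor, and the chosen orientation convention $\iota(a_{wv}^\vee)=\pm a_{vw}$ conspire to reproduce precisely the alternating signs of $g^*g-gg^*$ rather than, say, an all-plus sum. This is the only place where the orientation of $\Gamma$ away from the root genuinely enters; everything else is the dimension count, which is forced by the tree structure (and which would fail, producing extra off-diagonal relations, as soon as $\Gamma$ contains a cycle).
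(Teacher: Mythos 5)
Your proposal is correct, and it is exactly the direct verification the authors have in mind: the paper states this proposition with no proof at all (it is declared ``straightforward to verify''), so your slot-by-slot computation of $J^\perp$ --- off-diagonal slots killed because the tree condition makes them one-dimensional and entirely contained in $J$, the diagonal slot at each vertex reduced to the annihilator of the sum-zero hyperplane --- supplies precisely the missing argument. The one point you flag as remaining, the sign bookkeeping matching $\sum_g g^*g-gg^*$, does work out (for a tree one can always normalize edge signs, and the $\pm$ pattern produced by the pairing and by $\iota$ reproduces the $+$ on arrows of $\Gamma$ and $-$ on their reverses), so your proof is complete modulo that routine check.
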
 \QED 

As mentioned before, when $\Gamma$ is a single vertex, or not a Dynkin type
tree, $A_\Gamma$ is a Koszul algebra. For these cases, we have already computed
$HH^*(A_\Gamma)$ above (see Sec.~(\ref{nondynkin}) and Ex.~(\ref{singlevertex})). Henceforth, we will assume that $\Gamma$ is
\emph{Dynkin}, but not a single vertex. These cases are the only cases when  $A^{!}_\Gamma = \Pi_\Gamma$ is finite-dimensional. 

Let us drop $\Gamma$ from the notation for the moment and write \[ A = A_0
\oplus A_1 \oplus A_2  \text{\ \ \ and \ \ \ } \Pi= \Pi_0 \oplus \Pi_1 \oplus
\ldots \oplus \Pi_{h-2} \] for the graded pieces of $A$ and $\Pi$. 
Here $h$ stands for the Coxeter number of the Dynkin tree and it is equal to $n+1$, $2n-2$, $12$, $18$, and $30$, for $A_n$, $D_n$, $E_6$, $E_7$, and $E_8$, respectively (\cite{almostkoszul}). 

It turns out that, in this case, $A_\Gamma$ is not Koszul and its Koszul complex
(\ref{koszulcomplex}) is not acyclic. Indeed, the Koszul complex is given by 
\begin{align}\label{koszul} 
0\to A_\Gamma \to \Pi_1 \otimes_{\mathrm{k}} A_\Gamma \to \cdots \to \Pi_{h-2} \otimes_{\mathrm{k}} A_\Gamma  \rightarrow 0
\end{align} 
and it fails to be exact at the right end but only there (\cite{almostkoszul}). Nonetheless, in \cite{almostkoszul} the authors are able to modify the Koszul bimodule complex to obtain a $(2h-2)$-periodic complex that computes Hochschild cohomology of $A_\Gamma$. Indeed, the algebras $A_\Gamma$ belong to a class of periodic algebras which are \emph{almost Koszul}. 

We will, however, now turn to a slightly different approach, which makes use of the fact that $A_\Gamma$ is isomorphic to a \emph{trivial extension algebra}. 

\begin{definition} Let $B$ be a finite-dimensional algebra over the field $\f{K}$. Let $B^{\vee}:=Hom_{\f{K}}(B, \f{K})$ be the linear dual of $B$, viewed naturally as a $B$-bimodule. The trivial extension algebra of $B$, denoted by $\mathcal{T}(B)$, is the vector space $B \oplus B^{\vee}$ equipped with the multiplication:	
	\[ (x,f)\cdot (y,g) = (xy, xg+fy) \]
	If $B$ is graded, to get a $CY2$ algebra, we grade $\mathcal{T}(B)$ so that $\mathcal{T}(B) = B \oplus B^{\vee}[-2]$.
\end{definition} 

Let $A^{\to} = \f{K}\Gamma / J$ be the quotient of the path algebra of a quiver with respect to an arbitrary orientation of the edges modulo the ideal generated by paths of length 2. The following proposition appears in \cite[Prop. 9]{huekho} and results from an easy computation.

\begin{proposition} \label{trivial} $A_\Gamma$ is isomorphic to the trivial extension algebra $\mathcal{T}(A^{\to})$. 
\end{proposition}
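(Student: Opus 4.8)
The plan is to exhibit an explicit graded $\mathbb{K}$-linear bijection $\Phi\colon A_\Gamma \to \mathcal{T}(A^{\to})$ and then verify that it intertwines the two multiplications. Fix once and for all the orientation of $\Gamma$ used to define $A^{\to}=\mathbb{K}\Gamma/J$, and for each edge let $\alpha_{wv}$ denote the corresponding arrow $v\to w$ of $A^{\to}$ (so its source is $v$). Since $\Gamma$ is a tree, every path of length $\geq 2$ already lies in the ideal defining $A^{\to}$, so $A^{\to}$ has $\mathbb{K}$-basis given by the idempotents $e_v$ (in degree $0$) and the arrows $\alpha_{wv}$ (in degree $1$); hence $\{e_v^\vee\}$ and $\{\alpha_{wv}^\vee\}$ form a basis of $(A^{\to})^\vee$. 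With the grading $\mathcal{T}(A^{\to})=A^{\to}\oplus(A^{\to})^\vee[-2]$, the classes $e_v^\vee$ sit in degree $2$ and the $\alpha_{wv}^\vee$ in degree $1$, so $\mathcal{T}(A^{\to})$ is supported in degrees $0,1,2$ with exactly the same graded dimensions as $A_\Gamma$ (by Prop.~(\ref{compacts}): $|\Gamma_0|$ idempotents, $2|\Gamma_1|$ arrows of the double, and $|\Gamma_0|$ top classes $a_{vw}a_{wv}$).

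I would define $\Phi$ on generators by
\[ \Phi(e_v)=(e_v,0),\quad \Phi(a_{wv})=(\alpha_{wv},0),\quad \Phi(a_{vw})=(0,\alpha_{wv}^\vee), \]
for each edge oriented $v\to w$, and extend multiplicatively; thus the forward arrows of the double go into the $A^{\to}$-summand and the reversed arrows into the dual summand. First I would check well-definedness, namely that $\Phi$ annihilates the two families of relations of Prop.~(\ref{compacts}); combined with multiplicativity and the graded dimension count above, this simultaneously forces bijectivity.

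The crux is the behaviour of $\Phi$ on degree-$2$ products, for which I would use the trivial-extension rule $(x,f)(y,g)=(xy,\,xg+fy)$ together with the dual-bimodule actions $(f\cdot b)(c)=f(bc)$ and $(b\cdot f)(c)=f(cb)$ on $(A^{\to})^\vee$. There are three cases. A composition of two forward arrows maps to a length-$2$ path of $A^{\to}$, which vanishes, matching $a_{uv}a_{vw}=0$ for $u\neq w$. Most importantly, a forward-then-reverse composition $a_{vw}a_{wv}$ maps to $(0,\alpha_{wv}^\vee\cdot\alpha_{wv})$, and one computes $\alpha_{wv}^\vee\cdot\alpha_{wv}=e_v^\vee$ (it evaluates to $1$ on $e_v$ and to $0$ on every other basis element); the very same class $e_v^\vee$ arises from each edge at $v$ whatever its chosen orientation, which is precisely the content of the zigzag relation $a_{vw}a_{wv}=a_{vu}a_{uv}$, so this is where the proposition genuinely lives. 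Finally, all products that would land in total degree $\geq 3$ must vanish on both sides, which is automatic since $(A^{\to})^\vee$ is a square-zero ideal of $\mathcal{T}(A^{\to})$ and $A_\Gamma$ is concentrated in degrees $0,1,2$.

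The main obstacle I anticipate is purely bookkeeping: tracking the left/right bimodule actions on $(A^{\to})^\vee$ consistently with the paper's right-to-left multiplication convention, and confirming that the orientation-dependent choices built into $\Phi$ wash out, so that $\Phi$ sends the loop $a_{vw}a_{wv}$ to the orientation-independent class $(0,e_v^\vee)$. Because the generators here lie in purely even and odd internal degrees, the Koszul signs entering the trivial-extension product are mild, so I do not expect real sign difficulties; as the text indicates, once the basis identification above is fixed the verification is essentially routine.
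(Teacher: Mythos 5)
Your proposal is correct, and it is exactly the ``easy computation'' that the paper delegates to \cite[Prop.~9]{huekho} rather than writing out: identify the forward arrows of $\mathrm{D}\Gamma$ with the arrows of $A^{\to}$ and the reversed arrows with the dual basis of $(A^{\to})^{\vee}$, and observe that $\alpha_{wv}^{\vee}\cdot\alpha_{wv}=e_v^{\vee}$ independently of the edge, which is precisely the second zigzag relation. The only point worth tightening is the final step: well-definedness plus equal graded dimensions does not by itself force bijectivity --- you also need surjectivity, but this is immediate from your own computation, since the images of the generators together with $\Phi(a_{vw}a_{wv})=(0,e_v^{\vee})$ already exhaust a basis of $\mathcal{T}(A^{\to})$ (every vertex having a neighbour because $\Gamma\neq A_1$ here).
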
 \QED

In particular, if we orient $\Gamma$ so that each vertex is either a sink or a source, then there are no paths of length 2, hence $A_\Gamma$ is a trivial extension algebra of the path algebra $\f{K}\Gamma$ in the bipartite orientation.

\begin{remark} There is a way to understand the above proposition in terms of
	symplectic topology. Namely, one can consider a Lefschetz fibration $f:
	\f{C}^3 \to \f{C}$, $(x,y,z) \mapsto f(x,y,z)$ given by perturbing the simple singularities
	\begin{align*}
		A_n &:x^2+y^2+z^{n+1} \text{ for } n \geq 1 , \\
		D_n &:x^2+zy^2+z^{n-1} \text{ for } n \geq 4, \\
		E_6 &: x^2+y^3+z^4, \\
		E_7 &: x^2+y^3+yz^3, \\
		E_8 &:x^2+y^3+z^5. 
	\end{align*} 
	One can then identify the surface $X_\Gamma$ with a regular fiber of
	these fibrations, i.e., the Milnor fibre of the singularity. The spheres
	$S_v$ can be identified with the vanishing spheres and the corresponding thimbles
	generate the Fukaya-Seidel category of $f$ by a famous result of Seidel \cite[Thm. 18.24]{thebook}. For a suitable choice of grading structures and ordering of objects, the Floer endomorphism algebra
	$A^{\to}$ of these thimbles in the Fukaya-Seidel category of $f$ coincides
	with the path algebra of $\f{K}\Gamma$ modulo the ideal generated by
	length 2 paths. The algebra isomorphism \[ A_\Gamma = A^{\to} \oplus A^{\to}[-2]
\] follows from general relationship between Fukaya-Seidel category of a
	Lefschetz fibration and the Fukaya category of its fiber (see \cite[Sec. 4]{seidelshhh}). 
\end{remark}

We next recall the following theorem about trivial extension algebras, which we will apply to path
algebras of quivers whose underlying graph is a tree. Note that by a well-known result of
Bernstein-Gelfand-Ponomarev \cite{bgp}, the path algebras $\f{K}Q$ of quivers $Q$ obtained by
orienting edges of the same \emph{tree} in different ways are derived equivalent algebras.

\begin{theorem} \label{ricky}(Rickard \cite{rickard}) Suppose $C$ and $D$ are derived equivalent algebras. Then their trivial extensions $\mathcal{T}(C)$ and $\mathcal{T}(D)$ are also derived equivalent. In particular, $HH^*(\mathcal{T}(C))$ and $HH^*(\mathcal{T}(D))$ are isomorphic as Gerstenhaber algebras.
\end{theorem}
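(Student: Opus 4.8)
The plan is to prove the statement in two stages: first that $\mathcal{T}(C)$ and $\mathcal{T}(D)$ are derived equivalent, and then that this yields the Gerstenhaber isomorphism on Hochschild cohomology. The second stage is the easier one: once a derived equivalence $D^b(\mathcal{T}(C)) \simeq D^b(\mathcal{T}(D))$ is in hand, the isomorphism $HH^*(\mathcal{T}(C)) \cong HH^*(\mathcal{T}(D))$ of Gerstenhaber algebras (indeed of $B_\infty$-algebras at the chain level) is a formal consequence of the derived invariance of Hochschild cohomology, which is exactly the result of Keller \cite{keller} already invoked elsewhere in this paper. So the real content is the first stage, and the first thing I would do is reduce it, via Rickard's Morita theory for derived categories, to exhibiting a tilting complex over $\mathcal{T}(C)$ whose endomorphism algebra is $\mathcal{T}(D)$.

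For the construction I would use the following reformulation. By Rickard's theorem realizing derived equivalences as derived functors, the given equivalence $D^b(C) \simeq D^b(D)$ is induced by a two-sided tilting complex, i.e. a complex of $C$-$D$-bimodules $X$ that is invertible for the derived tensor product, with inverse $Y = \operatorname{RHom}_D(X, D) \simeq \operatorname{RHom}_C(X, C)$. The key structural fact I would exploit is that a derived equivalence preserves the Nakayama (dualizing) bimodule: writing $C^\vee = \operatorname{Hom}_{\f{K}}(C, \f{K})$ for the $\f{K}$-dual, viewed as a $C$-bimodule, one has a canonical isomorphism $Y \otimes^L_C C^\vee \otimes^L_C X \simeq D^\vee$ in $D^b(D \otimes_{\f{K}} D^{op})$. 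This holds because $C^\vee \simeq \operatorname{RHom}_{\f{K}}(C, \f{K})$ is intrinsic to the derived category (it represents the Serre/Nakayama functor), and two-sided tilting complexes intertwine it on the two sides.

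Given this compatibility, I would assemble a two-sided tilting complex $\tilde X$ between $\mathcal{T}(C) = C \ltimes C^\vee$ and $\mathcal{T}(D) = D \ltimes D^\vee$ out of $X$ and the identification above, the point being that the two summands of each trivial extension — the algebra and its dual bimodule — are transported to the corresponding summands on the other side, so that $\tilde X$ inherits invertibility. Equivalently, and closer to Rickard's original argument, I would pass through the repetitive algebra $\widehat{C}$ (the locally finite, self-injective $\mathbb{Z}$-covering of $\mathcal{T}(C)$, following Hughes--Waschb\"usch and Happel): a tilting complex over $C$ lifts to one over $\widehat{C}$ by placing a $\mathbb{Z}$-indexed family of shifted copies compatibly with the covering grading, yielding $\widehat{C}$ and $\widehat{D}$ derived equivalent, and then descending along the orbit quotient $\widehat{C}/\langle \nu \rangle \cong \mathcal{T}(C)$ to conclude. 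Either route uses crucially that $\mathcal{T}(C)$ is a symmetric (self-injective) algebra.

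The main obstacle I anticipate is precisely the bookkeeping in the last step: verifying that the transported complex $\tilde X$ (or the lifted tilting complex over $\widehat{C}$) is genuinely a two-sided tilting complex — that it has no self-extensions in nonzero degrees, that it generates, and that it has the correct endomorphism algebra — rather than merely a complex that is ``morally'' invertible. This is where the compatibility of the dualizing-bimodule transport with the multiplicative (not just module-theoretic) structure of the trivial extension must be checked, and where the self-injectivity of $\mathcal{T}(C)$ and the behaviour of the Nakayama automorphism under the equivalence enter. Since the theorem is quoted from Rickard \cite{rickard}, in practice I would cite his construction for this verification rather than reproduce it in full.
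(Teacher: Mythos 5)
The paper does not prove this statement at all: it is imported verbatim from Rickard \cite{rickard} (for the derived equivalence of the trivial extensions) together with Keller \cite{keller} (for upgrading derived equivalence to a Gerstenhaber-algebra isomorphism on Hochschild cohomology), so there is no in-paper argument to compare yours against. Your sketch is a correct and essentially standard account of how the result is actually established, and you correctly isolate the two ingredients: the Keller step is formal, and the substance is producing a tilting complex over $\mathcal{T}(C)$ with endomorphism ring $\mathcal{T}(D)$. For the record, Rickard's original 1989 argument is slightly more elementary than either of your two routes: given a one-sided tilting complex $T$ over $C$ with $\operatorname{End}_{D(C)}(T)\cong D$, he takes $\bar T = T\otimes_C \mathcal{T}(C) \cong T \oplus (T\otimes_C C^\vee)$, and the whole proof is the computation $\operatorname{Hom}_{D(C)}(T, T\otimes_C C^\vee[n]) \cong \operatorname{Hom}_{D(C)}(T,T[-n])^\vee$ (a Serre-duality/adjunction argument), which kills the self-extensions in nonzero degrees and identifies $\operatorname{End}(\bar T)$ with $D\oplus D^\vee$ carrying the trivial-extension multiplication. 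Your first route (transporting the Nakayama bimodule along a two-sided tilting complex, using $Y\otimes^L_C C^\vee\otimes^L_C X\simeq D^\vee$) and your second (lifting to the repetitive algebra and descending) are both legitimate repackagings that appear in the later literature; they buy functoriality and a cleaner conceptual picture at the cost of the bimodule bookkeeping you flag, whereas Rickard's one-sided computation is shorter but gives only an equivalence rather than a preferred bimodule realizing it. Since the paper itself only cites the result, your decision to defer the verification to \cite{rickard} is exactly what the paper does, and nothing in your outline is wrong.
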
 

Our strategy will be to apply the above theorem to $\mathcal{T}(A^{\to})=A_\Gamma$ to
pass to another algebra whose Hochschild cohomology is previously computed.
However, it is important to note that the above theorem is for trivially graded
algebras. On the other hand, we need to compute $HH^{*}(A_\Gamma)$ as a
bigraded algebra. What's worse, since $A_\Gamma$ has elements in both even and
odd degrees, we cannot simply forget about the grading and reinstate it
afterwards, as in a graded resolution, odd elements affects the signs. 

We next explain how to deal with this tricky point. Namely, recall from Prop.~(\ref{compacts}) that $A_\Gamma$ is the graded algebra obtained as 
\[ A_\Gamma = \bigoplus_{v,w} HF^*(S_v,S_w) \] 

On the other hand, given integers $\sigma_v \in \f{Z}$ for every vertex $v$, we can
define another graded algebra: \[ \tilde{A}_\Gamma = \bigoplus_{v,w} Hom(S_v
	[\sigma_v] , S_w[\sigma_w]) = \bigoplus_{v,w} HF^*(S_v,S_w)[\sigma_w-\sigma_v] \] where
	$S_v[n_v]$ denotes a graded object whose grading is shifted down by
	$n_v$.  Clearly, $A_\Gamma$ and $\tilde{A}_\Gamma$ are graded Morita
	equivalent (in particular, derived equivalent). 
	Therefore, the (graded) Hochschild cohomologies of $A_\Gamma$ and $\tilde{A}_\Gamma$ are canonically
    isomorphic (see for ex. \cite[Sec. (1c)]{seidelflux}). Hence, for the purpose of computing
    Hochschild cohomology of $A_\Gamma$, we can choose the shifts $\sigma_v$ so that the shifted
    algebra is supported in even degrees. In fact, using the standard tree form of $\Gamma$ as in
    Fig.~(\ref{fig1}), we simply shift the object $S_v$ up $S_v[-\delta_v]$, where $\delta_v$ is the
    distance from the root to the vertex $v$. In this way, any arrow in the double $\mathrm{D}\Gamma$ is in degree $0$ or $2$ according to whether it points towards or away from the root.

{\bf Summary} To compute $HH^*(A_\Gamma)$ as a graded Gerstenhaber algebra, we follow this procedure:
\begin{itemize}
	\item First check that it is possible to shift gradings so that $A_\Gamma$ is supported in even degrees. 
      \item Forget the grading all together, and treat $A_\Gamma$ as an
ungraded algebra. 
	\item Compute the algebra structure of Hochschild cohomology of the
ungraded algebra by relating it to previous computations using derived equivalences of ungraded algebras in Thm.~(\ref{ricky}). This algebra will have only the cohomological grading $r$.
	\item Finally, reinstate the $s$-grading on $HH^*(A_\Gamma)$ by finding explicit (graded)
cocycles for the generators of Hochschild cohomology as an algebra. 
\end{itemize}

\subsubsection{Type $A$} 
\label{typeA}

Throughout this section, $\Gamma$ is the Dynkin tree $A_n$, $n>1$. We describe
the Hochschild cohomology ring of the zigzag algebra $A_\Gamma$ in detail. We
follow the strategy outlined in the previous section. Namely, we first
determine the Hochschild cohomology of $A_\Gamma$ as an ungraded algebra. The
result will be singly graded with the cohomological grading $r$. We then
reinstate the $s$ grading by explicitly identifying generators.

As was mentioned in Prop.~(\ref{trivial}), $A_\Gamma$ is isomorphic to the
trivial extension algebra of the path algebra $\f{K}Q$ of the quiver $Q$ with
the underlying tree $\Gamma=A_n$ and oriented with the bipartite orientation
(see Fig.~(\ref{bipar})). Furthermore, as explained above, the derived equivalence class of a path algebra of quiver, and hence by Thm.~(\ref{ricky}), the derived equivalence
class of trivial extensions of $\f{K}Q$ does not depend on the choice of the orientation of the
edges of the underlying tree. 

\begin{figure}[htb!]
\centering
\begin{tikzpicture}
	\tikzset{vertex/.style = {style=circle,draw, fill,  minimum size = 2pt,inner sep=1pt}}
\tikzset{edge/.style = {->,>=stealth',shorten >=8pt, shorten <=8pt  }}

\node[vertex] (a) at  (0,0) {};
\node[vertex] (c) at  (8,0) {};
\node[vertex] (a1) at (1.5,0) {};
\node[vertex] (a2) at (3,0) {};

\draw[edge] (a)  to (a1);
\draw[edge] (a2) to (a1);

\path (a2) to node {\dots} (c);
\node [shape=circle,minimum size=2pt, inner sep=1pt] (a3) at (4.5,0) {};
\draw[edge] (a2) to (a3);

\node [shape=circle,minimum size=2pt, inner sep=1pt] (c1) at (6.5,0) {};
\draw[edge] (c) to (c1);

\end{tikzpicture}
\caption{$A_n$ quiver in bipartite orientation}
\label{bipar}
\end{figure}
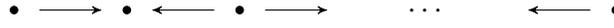

Let $B_\Gamma$ be the trivial extension algebra of the path algebra of $\Gamma=A_n$ where the underlying quiver is now oriented in the linear orientation (see Fig.~(\ref{linear})). 

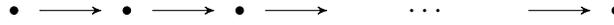
\begin{figure}[htb!]
\centering

\begin{tikzpicture}
	\tikzset{vertex/.style = {style=circle,draw, fill,  minimum size = 2pt,inner sep=1pt}}
\tikzset{edge/.style = {->,>=stealth',shorten >=8pt, shorten <=8pt  }}

% vertices
\node[vertex] (a) at  (0,0) {};
\node[vertex] (c) at  (8,0) {};
\node[vertex] (a1) at (1.5,0) {};
\node[vertex] (a2) at (3,0) {};

% edges

\draw[edge] (a)  to (a1);
\draw[edge] (a1) to (a2);

\path (a2) to node {\dots} (c);
\node [shape=circle,minimum size=2pt, inner sep=1pt] (a3) at (4.5,0) {};
\draw[edge] (a2) to (a3);

\node [shape=circle,minimum size=2pt, inner sep=1pt] (c1) at (6.5,0) {};
\draw[edge] (c1) to (c);

\end{tikzpicture}

\caption{$A_n$ quiver in linear orientation}
\label{linear}
\end{figure}

Let $\tilde{A}_{n-1}$ be the extended Dynkin quiver of type $A_{n-1}$, namely the quiver with cyclic orientation whose underlying graph is a
simple cycle with $n$ vertices and $n$ edges (see Fig.~(\ref{cyclicorientation})), and let us denote the ideal generated by paths of
length $\geq n+1$ by $J_{n+1}$.

\begin{figure}[htb!]
\centering
\begin{tikzpicture}
\tikzset{vertex/.style = {style=circle,draw, fill,  minimum size = 2pt,inner sep=1pt}}
\tikzset{edge/.style = {->,>=stealth',shorten >=8pt, shorten <=8pt  }}
\def \n {5}
\def \radius {1.5cm}
\def \margin {8} % margin in angles, depends on the radius

\foreach \s in {1,...,\n}
{
\node[vertex] at ({360/\n * (\s - 1)}:\radius) {} ;
  \ifthenelse{\s=1}{  
  \draw[edge, dashed] ({360/\n * (\s - 1)+\margin}:\radius) 	
  arc ({360/\n * (\s - 1)+\margin}:{360/\n * (\s)-\margin}:\radius);}
   \draw[edge] ({360/\n * (\s - 1)+\margin}:\radius) 	
   arc ({360/\n * (\s - 1)+\margin}:{360/\n * (\s)-\margin}:\radius); 
}

\end{tikzpicture}
\caption{Cyclic quiver $\tilde{A}_{n-1}$ }
\label{cyclicorientation}
\end{figure}
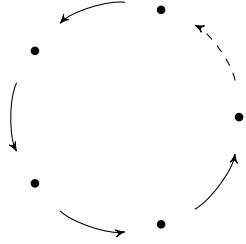

The following well-known fact (cf. \cite{almostkoszul}) can be verified by identifying $\f{K}\Gamma$ with its image under the natural inclusion $\f{K}\Gamma \to \f{K}\tilde{A}_{n-1}/J_{n+1}$, and observing that the subspace of $\f{K}\tilde{A}_{n-1}/J_{n+1}$ spanned by paths containing the unique arrow in the complement of $\Gamma$ in $\tilde{A}_{n-1}$ is canonically isomorphic to the linear dual of $\f{K}\Gamma$ as a $\f{K}\Gamma$-bimodule. 

\begin{lemma} 
$B_\Gamma$ is isomorphic to the truncated algebra
$\f{K}\tilde{A}_{n-1}/J_{n+1}$.
\end{lemma}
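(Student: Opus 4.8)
The plan is to produce an explicit isomorphism by decomposing the truncated algebra as a vector space into two pieces and matching them with the two summands of the trivial extension $\mathcal{T}(\f{K}\Gamma) = \f{K}\Gamma \oplus (\f{K}\Gamma)^\vee$. First I would fix notation: label the vertices of $\tilde{A}_{n-1}$ by $1,\ldots,n$ and its arrows by $a_i : i \to i+1$ for $1 \leq i \leq n-1$ together with the single wrap-around arrow $c : n \to 1$, so that deleting $c$ recovers precisely the linearly oriented $A_n$ quiver $\Gamma$. This deletion induces an algebra homomorphism $\f{K}\Gamma \to \f{K}\tilde{A}_{n-1}/J_{n+1}$ whose image is the span of the paths that do not traverse $c$. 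Since the longest directed path in the linear $A_n$ quiver has length $n-1$, every product of such paths again has length $\leq n-1 < n+1$, so this image is a genuine subalgebra on which no truncation occurs, and the map is injective.

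Next I would analyze the paths that do traverse $c$. Because the cycle has length $n$ and we quotient by $J_{n+1}$, any surviving path uses $c$ at most once; a path using $c$ exactly once runs from a vertex $j$ to a vertex $i$ with $i \leq j$ and has length $n-(j-i)$. Counting these gives $\binom{n+1}{2} = \dim_{\f{K}}\f{K}\Gamma$ of them, so their span $N$ satisfies $\f{K}\tilde{A}_{n-1}/J_{n+1} = \f{K}\Gamma \oplus N$ as vector spaces. The key point is the bimodule identification $N \cong (\f{K}\Gamma)^\vee$. To set this up cleanly I would introduce the functional $\lambda$ on $\f{K}\tilde{A}_{n-1}/J_{n+1}$ that records the coefficient on the top-degree loops $\ell_1,\ldots,\ell_n$ (the length-$n$ loops $\ell_v : v \to v$ going once around the cycle), and observe that $\lambda(xy)=\lambda(yx)$ on basis paths, since $xy$ is a full loop if and only if $yx$ is. The induced pairing $N \times \f{K}\Gamma \to \f{K}$, $(q,p) \mapsto \lambda(qp)$, is nondegenerate because each basis path $q : j \to i$ of $N$ pairs nontrivially with exactly one $\Gamma$-path, its complement $p_q : i \to j$ with $q p_q = \ell_i$. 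Its invariance is the chain $\lambda((aq)p) = \lambda(a(qp)) = \lambda((qp)a) = \lambda(q(pa))$, using associativity of multiplication and $\lambda(xy)=\lambda(yx)$; this is exactly the statement that $q \mapsto \lambda(q\,\cdot)$ is a $\f{K}\Gamma$-bimodule isomorphism $N \xrightarrow{\sim} (\f{K}\Gamma)^\vee$.

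Finally I would check that multiplication in $\f{K}\tilde{A}_{n-1}/J_{n+1}$ agrees, under the decomposition $\f{K}\Gamma \oplus N$, with the trivial-extension product $(x,f)\cdot(y,g) = (xy,\, xg+fy)$: products of two $c$-avoiding paths stay $c$-avoiding (the $xy$ term); a $c$-avoiding path times an element of $N$ lands in $N$ and, by the invariant pairing above, realizes exactly the $\f{K}\Gamma$-action on $(\f{K}\Gamma)^\vee$ (the $xg$ and $fy$ terms); and a product of two elements of $N$ would traverse $c$ twice, forcing length $\geq n+1$ and hence vanishing in the quotient (matching $(\f{K}\Gamma)^\vee \cdot (\f{K}\Gamma)^\vee = 0$). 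I expect the only delicate step to be the bimodule identification of the middle paragraph: one must resist the naive attempt to read off compatibility from $\langle q,p\rangle$ for a fixed complement, since multiplying the top loops further annihilates them, and instead route the argument through the invariant form $\lambda$ as above. Once this is in place the isomorphism $B_\Gamma = \mathcal{T}(\f{K}\Gamma) \cong \f{K}\tilde{A}_{n-1}/J_{n+1}$ is immediate, and if a graded statement is wanted it is compatible with $\mathcal{T}(B) = B \oplus B^\vee[-2]$ upon recording that each top loop $\ell_v$ occupies the top internal degree.
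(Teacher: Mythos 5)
Your proof is correct and follows essentially the same route the paper indicates: identify $\f{K}\Gamma$ with the span of the paths avoiding the extra arrow $c$, and identify the span of the paths through $c$ with $(\f{K}\Gamma)^\vee$ as a $\f{K}\Gamma$-bimodule, the truncation killing all products of two such paths. Your use of the trace-like functional $\lambda$ on the full loops to make the bimodule identification canonical and to verify invariance is exactly the right way to fill in the detail the paper leaves implicit.
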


The derived equivalence between $A_\Gamma$ and $B_\Gamma$ implies an
isomorphism between the Hochschild cohomology rings. On the other hand, the
Hochschild cohomology of the (trivially graded) algebra $B_\Gamma$ is studied
in \cite{holm, ErHo, BLM}. In particular, the algebra structure of
$HH^*(B_\Gamma)$ over a field of arbitrary characteristic was already known.
Our contribution is to determine the internal $s$ grading coming from the grading
of $A_\Gamma$. We have the following result:

\begin{theorem} 
	\label{hha} 

As a (graded) commutative $\f{K}$-algebra the $(r,s)$-bigraded the Hochschild cohomology algebra \[ HH^*(A_\Gamma) = \bigoplus_{r+s=*} HH^r(A_\Gamma, A_\Gamma[s]),  \ \  \]
of the graded $\mathrm{k}$-algebra $A_\Gamma$ is given by the following generators and relations. (The subscripts of the generators, except for
     $s_i$, refer to total degrees.)  

\begin{itemize}
	\item Suppose $\mathrm{char} \f{K} \nmid n+1$. We have generators labeled along with their bidegrees $(r,s)$ given by:
		\begin{align*} s_1, \dots, s_n &\text{\ \ \ \ \ } (0, 2) \\
			t_1 &\text{\ \ \ \ \ } (1, 0) \\
			t_0 &\text{\ \ \ \ \ } (2,-2)  \\
			t_{\text{\scriptsize{-}}2} &\text{\ \ \ \ \ } (2n,-2n-2)  
		\end{align*}
and relations 
 \[ s_i s_j=s_i t_j=t_1^2=t_0^n=0 \ . \]
\item Suppose $\mathrm{char} \f{K} \mid n+1$. We have generators labeled along with their bidegrees $(r,s)$ given by:
	\begin{align*}  s_1, \dots, s_n, &\text{\ \ \ \ \ } (0, 2) \\
			t_1 &\text{\ \ \ \ \ } (1, 0) \\
			t_0 &\text{\ \ \ \ \ } (2,-2)  \\
			u_{\text{\scriptsize{-}}1} &\text{\ \ \ \ \ } (2n-1,-2n) \\  
                        t_{\text{\scriptsize{-}}2} &\text{\ \ \ \ \ } (2n,-2n-2)  
		\end{align*}
and relations 
\begin{align*} s_i s_j= s_i t_1 = s_i t_0 & =t_1^2=0  \\
	s_i u_{\text{\scriptsize{-}}1} &= t_1t_0^{n-1} \\ 
	s_i t_{\text{\scriptsize{-}}2} & =  t_0^n \\     
	t_0 u_{\text{\scriptsize{-}}1} &= t_1t_{\text{\scriptsize{-}}2} \\   t_1u_{\text{\scriptsize{-}}1} &= \alpha\ t_0^n \\  u_{\text{\scriptsize{-}}1}^2   &= \beta\ t^{n-1}_0 t_{\text{\scriptsize{-}}2}  
\end{align*}
where $\alpha=\beta=1$ if $\mathrm{char} (\f{K}) = 2$ and $4 \nmid n+1 $, otherwise $\alpha = \beta =0$. 
\end{itemize}
\end{theorem}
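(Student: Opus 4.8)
The plan is to follow verbatim the three-step strategy summarized at the end of Sec.~(\ref{sec-trivial}): first compute $HH^*(A_\Gamma)$ as an \emph{ungraded} ring by transporting the problem to a better-understood algebra through derived equivalences, and only then reinstate the internal $s$-grading by exhibiting explicit graded cocycles. The characteristic dichotomy in the statement of Thm.~(\ref{hha}) is already present at the ungraded level, so the genuinely new content is the determination of the internal degrees $s$; the cohomological degrees $r$ and the ring relations will come for free from the literature.

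For the first step I would invoke Prop.~(\ref{trivial}) to write $A_\Gamma \cong \mathcal{T}(A^{\to})$, taking the bipartite orientation of $A_n$ so that $A^{\to} = \f{K}Q_{\mathrm{bip}}$ has no length-$2$ paths and $A_\Gamma = \mathcal{T}(\f{K}Q_{\mathrm{bip}})$. By the Bernstein--Gelfand--Ponomarev theorem \cite{bgp} the path algebra of $A_n$ in the bipartite orientation is derived equivalent to the path algebra in the linear orientation, so Thm.~(\ref{ricky}) (Rickard) yields an isomorphism of Gerstenhaber algebras $HH^*(A_\Gamma) \cong HH^*(B_\Gamma)$ with $B_\Gamma = \mathcal{T}(\f{K}Q_{\mathrm{lin}})$. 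The Lemma identifying $B_\Gamma$ with the truncated cyclic quiver algebra $\f{K}\tilde{A}_{n-1}/J_{n+1}$ then lets me quote the known ungraded ring structure of $HH^*(\f{K}\tilde{A}_{n-1}/J_{n+1})$ from \cite{holm, ErHo, BLM}. This produces all generators together with their cohomological degrees $r$ and all relations, and the two cases $\mathrm{char}\,\f{K} \nmid n+1$ and $\mathrm{char}\,\f{K} \mid n+1$ emerge exactly as stated, the extra generator $u_{-1}$ and the coefficients $\alpha,\beta$ being features of the ungraded answer.

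The second and main step is to install the internal grading $s$. Because the equivalences above are purely ungraded, $s$ must be recovered directly on $A_\Gamma$. Here I would use the grading-shift device of Sec.~(\ref{sec-trivial}): after shifting $S_v \mapsto S_v[-\delta_v]$ the algebra $A_\Gamma$ is supported in even internal degrees, so $HH^*(A_\Gamma)$ carries an honest bigrading and each ring generator is forced into a single bidegree. To pin down that bidegree I would compute with the graded $(2h-2)=2n$-periodic minimal bimodule resolution of $A_\Gamma$ furnished by \cite{almostkoszul}, whose terms are explicit graded shifts of $\bigoplus_v e_v A_\Gamma \otimes_{\mathrm{k}} A_\Gamma e_v$ read off from the Koszul complex (\ref{koszul}). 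The classes $s_1,\dots,s_n$ are represented by the $n$ central loop elements $a_{vw}a_{wv}$ of internal degree $2$, giving bidegree $(0,2)$; the derivation class $t_1$ and the length-$2$ class $t_0$ are extracted from the first two terms of the resolution, yielding $(1,0)$ and $(2,-2)$; and $t_{-2}$ is the period-$2n$ generator, whose internal degree $-2n-2$ is fixed by the Calabi--Yau/Nakayama twist governing the periodicity. A strong consistency check, which I would use to propagate the computation, is that every relation in the statement is bihomogeneous: for instance in the case $\mathrm{char}\,\f{K}\mid n+1$ the relation $s_i t_{-2}=t_0^{\,n}$ balances bidegree $(2n,-2n)$ on both sides, and $s_i u_{-1}=t_1 t_0^{\,n-1}$ forces $u_{-1}$ into bidegree $(2n-1,-2n)$. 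The characteristic-sensitive coefficients $\alpha,\beta$ are then computed by an explicit cup-product evaluation on the periodic resolution.

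The hardest part will be the bookkeeping in this last step. Since $A_\Gamma$ genuinely mixes even and odd internal degrees, one cannot forget and reinstate the grading without the even-support shift, and a single misplaced sign in a graded cocycle can corrupt a product. I expect the real work to concentrate in two places: determining the internal degree $-2n-2$ of the periodicity generator $t_{-2}$, where the weak Calabi--Yau structure on $A_\Gamma$ (equivalently, the self-injective structure underlying the $2n$-periodicity) must be tracked through the resolution; and pinning down $\alpha$ and $\beta$, which requires an honest cup-product computation and is precisely where the characteristic enters. Everything else reduces to routine, if tedious, verification on cocycle representatives.
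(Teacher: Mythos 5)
Your skeleton is the paper's: reduce the ungraded ring structure to $HH^*(B_\Gamma)\cong HH^*(\f{K}\tilde{A}_{n-1}/J_{n+1})$ via Prop.~(\ref{trivial}), \cite{bgp} and Thm.~(\ref{ricky}), quote \cite{holm, ErHo} for the generators, $r$-degrees and relations (including $\alpha,\beta$, so your proposed ``honest cup-product computation'' for these is redundant --- they are already part of the ungraded presentation you imported), and then recover the $s$-grading on $A_\Gamma$ itself. Where you genuinely diverge is in how the $s$-degrees of the higher generators are pinned down. The paper writes explicit cocycles in the reduced bar complex only for $r=0,1,2$ (the central elements $s_i$, a derivation $\tau_1$, and a $2$-cocycle $\tau_0$), handles $u_{\text{\scriptsize{-}}1}$ and $t_{\text{\scriptsize{-}}2}$ in the case $\mathrm{char}\,\f{K}\mid n+1$ by the ``replace by the homogeneous component'' trick you gesture at (note that bihomogeneity of a relation does not literally \emph{force} homogeneity of a generator; one must check the relations are insensitive to adding the inhomogeneous correction term, which is the step the paper spells out), and --- crucially --- determines the $s$-degree of $t_{\text{\scriptsize{-}}2}$ when $\mathrm{char}\,\f{K}\nmid n+1$ by a universal coefficient argument over $\f{Z}$, bootstrapping from the characteristics dividing $n+1$. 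Your alternative for that last point, reading the degree of the periodicity generator off the graded $(2h-2)$-periodic bimodule resolution of \cite{almostkoszul} and its Nakayama-twisted graded shift, is viable (the paper itself invokes exactly this graded periodicity to tabulate ranks in the $D_n$ case), and it is arguably more structural; but be aware that in the case $\mathrm{char}\,\f{K}\nmid n+1$ no relation constrains $t_{\text{\scriptsize{-}}2}$ at all, so this is the one place where your resolution argument must carry the full weight: you need to verify both that $\bigoplus_s HH^{2n}(A,A[s])$ is one-dimensional (so $t_{\text{\scriptsize{-}}2}$ is automatically $s$-homogeneous) and that the graded shift per period of the minimal resolution is $-2n-2$, with the Nakayama twist tracked correctly. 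With that bookkeeping done, your route closes the same gaps the paper's UCT argument does.
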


\begin{proof} The presentation of $HH^*(A_\Gamma)$ given above is adapted from
	the presentation of $HH^*(B_\Gamma)$ as a $\f{K}$-algebra graded by the
	cohomological grading, which was calculated in \cite[Thm. 8.1 and
	8.2]{holm} and \cite[Thm 5.19]{ErHo}. 
	In view of the isomorphism between
	$HH^*(A_\Gamma)$ and $HH^*(B_\Gamma)$ as $\f{K}$-algebras graded with
	respect to the cohomological $r$ gradings, it remains to determine the
	$s$ gradings. 
	In particular, the rank of $ HH^r(B_\Gamma) \cong \bigoplus_{s} HH^r(A_\Gamma, A_\Gamma[s])$ is given explicitly in \cite{holm, ErHo} for each $r$ and it can be recovered from the presentations in the statement. 
	We will make extensive use of this information in the following arguments. 
    
In what follows, we describe generators as elements of the reduced bar-resolution:
\begin{equation} \label{reducedbar} CC^* (A, A) := \mathrm{hom}_{\mathrm{k}}(T\bar{A},A) \end{equation}
where $A= A_\Gamma$ and $\bar{A}= A/\mathrm{k}$. The grading on $A$ gives a decomposition:
\[ CC^{*} (A,A) = \bigoplus_{*= r+s} CC^r(A,A[s]) \]
where the Hochschild differential $\delta$ is of bidegree $(1,0)$. We find explicit cocycles for $r=0,1,2$ and show that the $s$ gradings of other
generators are determined by the relations given above.

	As a graded algebra $A_\Gamma = A_0 \oplus A_1 \oplus A_2$, with components given by: 
\begin{align*}  
A_0 = \bigoplus_{i=1}^n \f{K}e_i, \ \ \ A_1  = \bigoplus_{i=1}^{n-1} \f{K}a_i \oplus  \bigoplus_{i=1}^{n-1} \f{K}b_i, \ \ \ A_2 &= \bigoplus_{i=1}^n \f{K}s_i 
\end{align*}
where $e_{i+1}a_i e_i = a_i$, $e_i b_i e_{i+1} = b_i$, and $s_{i+1} = a_i b_i = b_{i+1} a_{i+1}$.

The Hochschild differential $\delta$ in the complex (\ref{reducedbar}) is given by the formula in \cite[Eqn. 1.8]{thebook} (recall also the convention in Eqn.~(\ref{dgsigns})). We will only need the differentials on $CC^r(A, A[s])$ for $r=0,1,2$. These are given by:
\begin{align*}
\delta (c) (x_1) &= \mu^2(x_1,c) + (-1)^{(s-1)(|x_1|-1)} \mu^2(c, x_1)  \ ,\\
    \delta (c) (x_2, x_1) &= \mu^2(x_2,c(x_1)) + (-1)^{(s-1)(|x_1|-1)} \mu^2(c(x_2), x_1) +(-1)^{s} c(\mu^2(x_2,x_1))   \ , \\
	\delta (c) (x_3, x_2, x_1) &= \mu^2(x_3, c(x_2,x_1)) + (-1)^{(s-1)(|x_1|-1)} \mu^2(c(x_3, x_2),x_1) \\ & \omit\hfill $+ (-1)^{s} c(x_3, \mu^2(x_2,x_1))+ (-1)^{s+|x_1|-1} c(\mu^2(x_3,x_2), x_1) \ , $ 
\end{align*}
for $c\in CC^0(A, A[s])$, for $c\in CC^1(A, A[s])$, and for $c\in CC^2(A, A[s])$, respectively.
	
	{\bf r=0}: The $0$-cocycles are given by central elements. The identity element 
\[ \sum_j e_j \in CC^{0}(A,A[0]) \]
and the elements 
\[ s_i  \in CC^{0}(A,A[2]) \ \ \ \text{for} \ \ \ i=1, \ldots, n \]
give a basis of the center of $A$ over $\f{K}$.

{\bf r=1}: The $1$-cocycles are given by derivations. We define a $1$-cocycle $\tau_1 \in CC^1(A,A[0])$ by
\begin{align*} 
\tau_1 (a_i) = -a_i \ \ , \ \  \tau_1 (b_i) = 0 \ \ , \ \ \tau_1 (s_i) = s_i
\end{align*} 
for all $i=1,\ldots n$. 
It is straightforward to check that $\tau_1$ is a derivation but not an inner derivation so it is a non-trivial element of  $\bigoplus_s HH^{1}(A, A[s])$ which is $1$-dimensional for any $\f{K}$. Therefore, any generator of this group, in particular $t_1$, must have the same $s$ grading as $\tau_1$.

{\bf r=2}: We define a $2$-cocycle $\tau_0 \in CC^2(A,A[-2])$ given by:
\begin{align*} 
\tau_0 (a_i,b_i) &= (-1)^i e_{i+1}\\ 
\tau_0 (a_i, s_i) &=  (-1)^{i+1} a_i \\
\tau_0 (s_{i}, b_i) &= (-1)^{i} b_i \\ 
\tau_0 (s_{i}, s_i) & = (-1)^{i+1} s_i
\end{align*} 
for all $i=1,\ldots n$. Applying the Hochschild differential we get:
\[ (\delta (\tau_0)) (x_3,x_2,x_1) = (-1)^{|x_1|+|x_2|} x_3 \tau_0(x_2,x_1) - \tau_0(x_3,x_2)x_1 + (-1)^{|x_1|} \tau_0(x_3, x_2 x_1) - (-1)^{|x_1|+|x_2|} \tau_0(x_3x_2,x_1) \]
It is straightforward (if tedious) to check that this expression vanishes identically on $\bar{A}^{\otimes 3}$. On the other hand, $\tau_0$ cannot be a coboundary, since any $\kappa \in CC^1(A,A[-2])$ has to be of the form:
\[ \kappa(s_i) = m_i e_i , \text{\ for some\ } m_i \in \f{K} \] 
and the Hochschild differential takes the form:
\[ (-1)^{|x_1|} (\delta( \kappa)) (x_2,x_1) = x_2 \kappa(x_1) + \kappa(x_2 x_1) - (-1)^{|x_1|} \kappa(x_2)x_1  \]
which gives, in particular, that $\delta(\kappa)(s_i,s_i) = 0$ and $\delta(\kappa)(a_i,s_i) = m_i a_i$.

Hence, $\tau_0$ cannot be of the form $\delta(\kappa)$ and therefore it represents a non-trivial element of the group $\bigoplus_s HH^2(A, A[s])$. 
But we know that this group is $1$-dimensional over any field $\f{K}$, consequently any generator of this group over arbitrary field $\f{K}$ must have the same $s$ grading as $\tau_0$.

It is harder to find explicit cocycles representing the elements $u_{\text{\scriptsize{-}}1}$ and $t_{\text{\scriptsize{-}}2}$ given in the statement of the theorem. 
Fortunately, for the purpose of determining the $s$ gradings we do not need explicit cocycles for these. 

The element $u_{\text{\scriptsize{-}}1}$ appears only if $\mathrm{char} \f{K}  \mid n+1$, and it satisfies the equation: 
\[ s_i u_{\text{\scriptsize{-}}1} = t_1 t_0^{n-1} \]
Since the $s$ gradings of $s_i$, $t_1$ and $t_0$ are $2,0$ and $-2$, respectively, it follows that the projection $u'_{\text{\scriptsize{-}}1}$ of $u_{\text{\scriptsize{-}}1}$ to $HH^{2n-1}(A, A[-2n])$ must be nonzero.
\emph{A priori} $u_{\text{\scriptsize{-}}1}$ is not necessarily homogeneous with respect to the $s$ grading, but it has $r$ grading $2n-1$, and $\bigoplus_s HH^{2n-1}(A, A[s])$ is 2-dimensional with generators $u_{\text{\scriptsize{-}}1}$ and $t_1 t_0^{n-1}$. 
Therefore, $u_{\text{\scriptsize{-}}1}$ has a decomposition $u'_{\text{\scriptsize{-}}1} + \lambda t_1 t_0^{n-1}$ into $(r,s)$-homogeneous elements  for some $\lambda \in \f{K}$.
On the other hand, the relations in the statement of the theorem which involve $u_{\text{\scriptsize{-}}1}$ are satisfied by $u_{\text{\scriptsize{-}}1}$ if and only if they are satisfied by $u'_{\text{\scriptsize{-}}1} =u_{\text{\scriptsize{-}}1} - \lambda t_1 t_0^{n-1}$. 
Therefore, we may freely replace $u_{\text{\scriptsize{-}}1}$ by $u'_{\text{\scriptsize{-}}1}$ and hence assume that it is homogeneous with $s$ grading $-2n$.

Similarly, if $\mathrm{char} \f{K}  \mid  n+1$, then $t_{\text{\scriptsize{-}}2} \in \bigoplus_s HH^{2n}(A, A[s])$ appears in the relation \[
s_i t_{\text{\scriptsize{-}}2} =  t_0^{n} \]
and $\bigoplus_s HH^{2n}(A, A[s])$ is 2-dimensional with generators $t_{\text{\scriptsize{-}}2} $ and $t_0^n$. 
As a consequence, $t_{\text{\scriptsize{-}}2}$ has a decomposition $t_{\text{\scriptsize{-}}2} = t'_{\text{\scriptsize{-}}2} + \lambda t_0^n$ into $(r,s)$-homogeneous elements for some $\lambda \in \f{K}$ and  $t'_{\text{\scriptsize{-}}2} \neq 0$. 
The argument we used for $u_{\text{\scriptsize{-}}1}$ applies here as well and we may assume that $t_{\text{\scriptsize{-}}2}$ is homogeneous with $s$ grading $-2n-2$.

Finally, we need to determine the $s$ grading of $t_{\text{\scriptsize{-}}2}$ over a field $\f{K}$ for which $\mathrm{char} \f{K}  \nmid n+1$. Since $A$ can be defined over $\f{Z}$, its Hochschild cohomology groups can also be defined over $\f{Z}$. Furthermore, since $A$ has finite rank as a $\f{Z}$-module, the bar-complex over $\f{Z}$ is just a chain complex of finitely generated free abelian groups. So we can apply the universal coefficient theorem 
\begin{equation}\label{uct}
0 \to \bigoplus_s HH_{\f{Z}}^{r}(A, A[s]) \otimes \f{K} \to \bigoplus_s HH_{\f{K}}^{r}(A \otimes \f{K}, A[s] \otimes \f{K} ) \to \mathrm{Tor} \left(\bigoplus_s HH_{\f{Z}}^{r+1}(A, A[s]), \f{K}\right) \to 0 
\end{equation}
Now, it follows from the presentation given in the statement that the middle group for $r=2n+1$ has rank 1 for any field $\f{K}$ and we know that it is supported in internal degree $s=-2n-2$ if the field $\mathrm{char} \f{K} \mid n+1$. Therefore, we deduce from the universal coefficient theorem (by testing $\f{K}=\f{F}_p$ for infinitely many primes $p$) that: 
\[ \bigoplus_s HH_\f{Z}^{2n+1}(A,A[s])  = \f{Z}[2n+2] \]
hence, in particular 
\[ \bigoplus_s HH_\f{K}^{2n+1}(A,A[s])  = \f{K}[2n+2] \]
Finally, observe that the element
\[ t_1 t_{\text{\scriptsize{-}}2} \in \bigoplus_{s} HH^{2n+1}(A,A[s]) = \f{K}[2n+2]\]
 is a generator of the Hochschild cohomology group in grading $r=2n+1$ over an arbitrary field $\f{K}$, and hence $t_{\text{\scriptsize{-}}2}$ must have $s$ grading $-2n-2$ over arbitrary field $\f{K}$. 
\end{proof}

\begin{remark} Over the finite field $\f{F}_3$ of characteristic $3$, the group
	algebra $\f{F}_3 \mathfrak{S}_3$ of the symmetric group in 3 letters
	is isomorphic to the algebra $A_\Gamma$ for $\Gamma=A_2$. A
	presentation for the Hochschild cohomology ring of this group algebra
	was given in \cite[Thm. 7.1] {siewither}. This agrees with the presentation given above.
\end{remark}

As a consequence of Thm.~(\ref{hha}) we conclude that the group $\bigoplus_{r+s=*} HH^r(A_\Gamma,A_\Gamma[s])$ is nontrivial if and only if $*\leq 2$. 
If $\mathrm{char} \f{K}  \nmid n+1$, the rank is $n$ at each $*\leq 2$, otherwise the rank is $n$ for $*=2,1$ and $n+1$ for $*\leq 0$.

Recall that we have proved in Thm.~(\ref{voila}) that there is an isomorphism of Gerstenhaber algebras:
$$SH^*(X_\Gamma) \cong HH^*(A_\Gamma)$$
over a field $\f{K}$ of characteristic zero, where the Conley-Zehnder grading on the left corresponds to the total grading $r+s$ on the right. Having computed $HH^*(A_\Gamma)$ as a bigraded algebra, we immediately get a description of the algebra structure of symplectic cohomology. Let us also record its rank.

\begin{corollary} \label{sha}
The symplectic cohomology group $SH^{*}(X_\Gamma)$ over a field $\f{K}$ of characteristic zero is of rank $n$ if $* \leq 2$ and it is trivial otherwise.
\end{corollary}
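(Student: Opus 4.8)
The plan is to deduce the corollary directly from the two main computational inputs already established. By Thm.~(\ref{voila}), since $\Gamma = A_n$ is Dynkin and $\mathrm{char}(\f{K}) = 0$, there is an isomorphism $SH^*(X_\Gamma) \cong HH^*(A_\Gamma)$ of (bi)graded Gerstenhaber algebras under which the Conley--Zehnder grading on the left corresponds to the total grading $r+s$ on the right. Thus it suffices to compute, for each integer $*$, the $\f{K}$-rank of the total-degree piece $\bigoplus_{r+s=*} HH^r(A_\Gamma, A_\Gamma[s])$ and to verify that it equals $n$ exactly when $* \le 2$ and vanishes otherwise; this I would read off from the explicit bigraded presentation of $HH^*(A_\Gamma)$ in Thm.~(\ref{hha}).

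Since $\mathrm{char}(\f{K}) = 0$ does not divide $n+1$, we are in the first case of Thm.~(\ref{hha}): $HH^*(A_\Gamma)$ is the graded-commutative algebra on generators $s_1,\dots,s_n$ of bidegree $(0,2)$, $t_1$ of bidegree $(1,0)$, $t_0$ of bidegree $(2,-2)$, and $t_{-2}$ of bidegree $(2n,-2n-2)$, modulo the relations $s_i s_j = s_i t_1 = s_i t_0 = s_i t_{-2} = t_1^2 = t_0^n = 0$. First I would extract an additive $\f{K}$-basis of monomials: graded commutativity and these vanishing relations force each nonzero monomial to contain at most one factor $s_i$, at most one $t_1$, at most $n-1$ factors $t_0$, and --- since no relation bounds it --- an arbitrary power of $t_{-2}$; moreover any monomial containing some $s_i$ can contain no other generator. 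This produces the basis consisting of $s_1,\dots,s_n$ together with the monomials $t_1^{b}\, t_0^{c}\, t_{-2}^{d}$ with $b \in \{0,1\}$, $0 \le c \le n-1$ and $d \ge 0$, of total degrees $2$ and $b-2d$ respectively.

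I would then count by total degree. The classes $s_i$ give $n$ elements in total degree $2$, and no monomial $t_1^{b}t_0^{c}t_{-2}^{d}$ has total degree exceeding $1$ (as $b-2d \le 1$), so nothing occurs above degree $2$. For a fixed $* \le 1$, the congruence $b - 2d = *$ determines $b \equiv * \pmod 2$ and hence $b$ and $d$ uniquely, while $c$ ranges freely over $\{0,\dots,n-1\}$; thus each such total degree carries exactly $n$ classes, and the same holds at $*=0$ (where $b=d=0$ yields $t_0^c$, including the identity) and at $*=2$ (the $s_i$). Every total degree $* \le 2$ therefore has rank exactly $n$, and transporting this count through Thm.~(\ref{voila}) gives $SH^*(X_\Gamma)$ of rank $n$ for $* \le 2$ and trivial otherwise.

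The one genuinely delicate point is reading the basis correctly off the presentation --- in particular being sure that $s_i$ annihilates \emph{every} $t$-generator (so that $s_i t_{-2} = 0$) and that $t_0^n = 0$, the latter holding precisely because $\mathrm{char}(\f{K}) \nmid n+1$. The alternative reading in which $s_i t_{-2} \ne 0$ would double the rank in nonpositive degrees, and this is exactly what happens when $\mathrm{char}(\f{K}) \mid n+1$, where the relation $s_i t_{-2} = t_0^n$ is nontrivial and the rank jumps to $n+1$ for $* \le 0$. Restricting to characteristic zero is what both licenses the isomorphism of Thm.~(\ref{voila}) and rules out this jump, so no input beyond Thm.~(\ref{voila}) and Thm.~(\ref{hha}) is required.
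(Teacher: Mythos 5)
Your proposal is correct and follows essentially the same route as the paper: invoke Thm.~(\ref{voila}) to identify $SH^*(X_\Gamma)$ with $HH^*(A_\Gamma)$ in total degree, then read the ranks off the characteristic-zero case of the presentation in Thm.~(\ref{hha}). The paper states the rank count (rank $n$ for each $*\leq 2$ when $\mathrm{char}\,\f{K}\nmid n+1$) without spelling out the monomial basis; your explicit enumeration of the basis $\{s_i\}\cup\{t_1^{b}t_0^{c}t_{\text{\scriptsize{-}}2}^{d}\}$ and the degree bookkeeping $b-2d=*$ fills in that routine verification correctly.
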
 

We have also performed computer-aided checks on our calculations. Tables~(\ref{a2}), and ~(\ref{a3}) list the ranks (of a finite portion) for the cases $A_2$ and $A_3$.

\begin{table}[htb!]
\centering
\begin{tikzpicture}
\matrix (mymatrix) [matrix of nodes, nodes in empty cells, text height=1.5ex, text width=2.5ex, align=right]
{
\begin{scope} \tikz\node[overlay] at (-2.2ex,-0.6ex){\footnotesize r+s};\tikz\node[overlay] at (-1ex,0.8ex){\footnotesize s}; \end{scope} 
   & 2 & 1 & 0 & $-1$ & $-2$ & $-3$ & $-4$ & $-5$ & $-6$ & $-7$ & $-8$  \\ 
 2 & 2 & 0 & 0 &  0 &  0 &  0 &  0 &  0 &  0 &  0 &  0  \\ 
 1 & 0 & 0 & 1 &  0 &  1 &  0 &  0 &  0 &  0 &  0 &  0  \\
 0 & 0 & 0 & 1 &  0 &  1 &  0 &  x &  0 &  0 &  0 &  0  \\
-1 & 0 & 0 & 0 &  0 &  0 &  0 &  x &  0 &  1 &  0 &  1  \\
-2 & 0 & 0 & 0 &  0 &  0 &  0 &  0 &  0 &  1 &  0 &  1  \\
};
\draw (mymatrix-1-1.south west) ++ (-0.2cm,0) -- (mymatrix-1-12.south east);
\draw (mymatrix-1-2.north west) -- (mymatrix-6-2.south west);
\draw (mymatrix-1-1.north west) -- (mymatrix-1-1.south east);%n k diagonal line
\end{tikzpicture}
\caption{$\Gamma= A_2$.  $x$ is 1 if $\text{char} \f{K} = 3$, $0$ otherwise.} 
\label{a2}
\end{table}

\begin{table}[htb!]
\centering
\begin{tikzpicture}
\matrix (mymatrix) [matrix of nodes, nodes in empty cells, text height=1.5ex, text width=2.5ex, align=right]
{
\begin{scope} \tikz\node[overlay] at (-2.2ex,-0.6ex){\footnotesize r+s};\tikz\node[overlay] at (-1ex,0.8ex){\footnotesize s}; \end{scope} 
   & 2 & 1 & 0 & $-1$ & $-2$ & $-3$ & $-4$ & $-5$ & $-6$ & $-7$ & $-8$  \\ 
 2 & 3 & 0 & 0 &  0 &  0 &  0 &  0 &  0 &  0 &  0 &  0  \\ 
 1 & 0 & 0 & 1 &  0 &  1 &  0 &  1 &  0 &  0 &  0 &  0  \\
 0 & 0 & 0 & 1 &  0 &  1 &  0 &  1 &  0 &  x &  0 &  0  \\
-1 & 0 & 0 & 0 &  0 &  0 &  0 &  0 &  0 &  x &  0 &  1  \\
-2 & 0 & 0 & 0 &  0 &  0 &  0 &  0 &  0 &  0 &  0 &  1  \\
};
\draw (mymatrix-1-1.south west) ++ (-0.2cm,0) -- (mymatrix-1-12.south east);
\draw (mymatrix-1-2.north west) -- (mymatrix-6-2.south west);
\draw (mymatrix-1-1.north west) -- (mymatrix-1-1.south east);%n k diagonal line
\end{tikzpicture}
\caption{$\Gamma= A_3$.  $x$ is 1 if $\text{char} \f{K} = 2$, $0$ otherwise.} 
\label{a3}
\end{table}

%\newpage

\subsubsection{Type $D$} 
\label{typeD}

In this section we consider the case where $\Gamma$ is the Dynkin tree $D_n$, $n\geq 4$. Most of the arguments in the previous section apply verbatim or with minor modifications. So we will focus on the differences and provide details as necessary.

Considering the quiver based on $\Gamma$ with the orientation of the arrows given by Fig.~(\ref{D_n quiver}), we obtain the following result.
\begin{figure}[htb!]
\centering

\begin{tikzpicture}
	\tikzset{vertex/.style = {style=circle,draw, fill,  minimum size = 2pt,inner sep=1pt}}
\tikzset{edge/.style = {->,>=stealth',shorten >=8pt, shorten <=8pt  }}

% vertices
\node[vertex] (a) at  (0,0) {};
\node[vertex] (a1) at (1.5,0) {};
\node[vertex] (a2) at (3,0) {};
\node[vertex] (c) at  (7,0) {};
\node[vertex] (c1) at  (8.25,0.5) {};
\node[vertex] (c2) at  (8.25,-0.5) {};

% edges

\draw[edge] (a)  to (a1);
\draw[edge] (a1) to (a2);

\path (a2) to node {\dots} (c);
\node [shape=circle,minimum size=2pt, inner sep=1pt] (a3) at (4.5,0) {};
\draw[edge] (a2) to (a3);

\node [shape=circle,minimum size=2pt, inner sep=1pt] (c3) at (5.5,0) {};
\draw[edge] (c3) to (c);

\draw[edge] (c)  to (c1);
\draw[edge] (c) to (c2);

\node[] at (0.7,0.1) {\tiny{$a_1$}};
\node[] at (2.2,0.1) {\tiny{$a_2$}};
\node[] at (3.7,0.1) {\tiny{$a_3$}};
\node[] at (6.2,0.1) {\tiny{$a_{n-3}$}};

\node[] at (7.5,0.4) {\tiny{$a_{n-2}$}};
\node[] at (7.5,-0.45) {\tiny{$a_{n-1}$}};

\end{tikzpicture}

\caption{$D_n$ quiver}
\label{D_n quiver}
\end{figure}
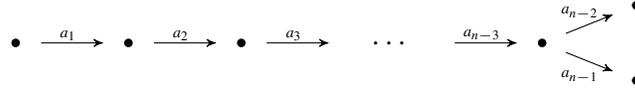

\begin{lemma}
The trivial extension algebra $B_\Gamma$ of the path algebra $\f{K} \Gamma$ is isomorphic to the quotient $\f{K} Q/ I$, where $Q$ is the quiver given in Fig.~(\ref{Q}) and $I$ is the ideal generated by the elements
\begin{align*}
& \beta_{n-1}\gamma_{n-1} - \beta_n \gamma_n \\
& \alpha_i \cdots \alpha_1 \beta_n \gamma_n \alpha_{n-3} \cdots \alpha_i \\
& \gamma_n \alpha_{n-3} \cdots \alpha_1 \beta_{n-1} \\
& \gamma_{n-1} \alpha_{n-3} \cdots \alpha_1 \beta_n
\end{align*}
\end{lemma}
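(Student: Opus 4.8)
The plan is to realize the stated presentation as the image of one explicit algebra map and then finish by a dimension count, exactly in the spirit of the identification $B_\Gamma\cong\mathbb{K}\tilde{A}_{n-1}/J_{n+1}$ used in the preceding ($A_n$) lemma. Write $B=\mathbb{K}\Gamma$ for the path algebra of the oriented $D_n$ quiver of Fig.~(\ref{D_n quiver}), so that $B_\Gamma=\mathcal{T}(B)=B\oplus B^\vee$ with $B^\vee$ a square-zero bimodule whose actions are $(q\cdot p^\vee)(r)=p^\vee(qr)$ and $(p^\vee\cdot q)(r)=p^\vee(rq)$ for paths $p,q,r$. Since the vertex $1$ is the unique source and $n-1,n$ are the only sinks, the only maximal (two-sidedly non-extendable) paths are $p_{n-1}=\gamma_{n-1}\alpha_{n-3}\cdots\alpha_1$ and $p_n=\gamma_n\alpha_{n-3}\cdots\alpha_1$, and $B^\vee$ is generated as a $B$-bimodule by $p_{n-1}^\vee,p_n^\vee$ because every path is a subpath of one of these. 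I would then define an algebra homomorphism $\phi:\mathbb{K}Q\to B_\Gamma$ from the quiver $Q$ of Fig.~(\ref{Q}) by sending each original arrow ($\alpha_i,\gamma_{n-1},\gamma_n$) to the corresponding arrow of $B\subset B_\Gamma$ and the two return arrows by $\phi(\beta_{n-1})=(0,p_{n-1}^\vee)$, $\phi(\beta_n)=(0,p_n^\vee)$, extended multiplicatively.

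Next I would verify that $\phi$ kills the four families generating $I$, a routine computation with the bimodule action. For instance $\phi(\beta_{n-1}\gamma_{n-1})=(0,\,p_{n-1}^\vee\cdot\gamma_{n-1})=(0,(\alpha_{n-3}\cdots\alpha_1)^\vee)=\phi(\beta_n\gamma_n)$, giving the relation $\beta_{n-1}\gamma_{n-1}-\beta_n\gamma_n$. The relations $\gamma_n\alpha_{n-3}\cdots\alpha_1\beta_{n-1}$ and $\gamma_{n-1}\alpha_{n-3}\cdots\alpha_1\beta_n$ map to $(0,\gamma_n\cdot\gamma_{n-1}^\vee)$ and $(0,\gamma_{n-1}\cdot\gamma_n^\vee)$, both zero because no path $r$ satisfies $r\gamma_n=\gamma_{n-1}$ (the two sinks differ). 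Finally each relation of the form $\alpha_i\cdots\alpha_1\beta_n\gamma_n\alpha_{n-3}\cdots\alpha_i$ maps to $(0,u\cdot v^\vee)$ with $u=\alpha_i\cdots\alpha_1$ of length $i$ and $v=\alpha_{i-1}\cdots\alpha_1$ of length $i-1$, hence to $0$ for length reasons. Thus $I\subseteq\ker\phi$ and $\phi$ descends to $\bar\phi:\mathbb{K}Q/I\to B_\Gamma$.

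Surjectivity of $\bar\phi$ is immediate: the images of the original arrows generate $B$, and sandwiching $\beta_{n-1},\beta_n$ between original paths produces $c\cdot p_{n-1}^\vee\cdot c'$ and $c\cdot p_n^\vee\cdot c'$, which sweep out a basis of $B^\vee$. To conclude I would show $\bar\phi$ is an isomorphism by checking $\dim_{\mathbb{K}}\mathbb{K}Q/I=\dim_{\mathbb{K}}B_\Gamma=2\dim_{\mathbb{K}}B$. The decisive step — and the main obstacle — is establishing a normal form for paths in $\mathbb{K}Q$ modulo $I$. Here I would argue that relations $(1),(3),(4)$ together eliminate every occurrence of a second return arrow: any path using two of the $\beta$'s necessarily contains a subword $\gamma_{\ast}\alpha_{n-3}\cdots\alpha_1\beta_{\ast}$, which after applying $\beta_{n-1}\gamma_{n-1}=\beta_n\gamma_n$ is captured by relation $(3)$ or $(4)$ and hence vanishes. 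Consequently every class is represented by a path using at most one $\beta$; relations $(1)$ and $(2)$ then trim the single-$\beta$ representatives so that $\bar\phi$ carries them bijectively onto the dual basis $\{p^\vee\}$ of $B^\vee$, while the $\beta$-free paths biject with the path basis of $B$.

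The delicate bookkeeping is precisely verifying that this spanning set has exactly $2\dim_{\mathbb{K}}B$ elements with no further collisions; once that count is secured, $\bar\phi$ is a surjection between $\mathbb{K}$-spaces of equal finite dimension, and therefore the asserted isomorphism $B_\Gamma\cong\mathbb{K}Q/I$. I expect the verification of $I\subseteq\ker\phi$ to be entirely mechanical, and essentially all the genuine content to lie in this normal-form/dimension argument, which could alternatively be packaged (as in the $A_n$ case) as an explicit identification of the span of single-$\beta$ paths with $B^\vee$ as a $B$-bimodule.
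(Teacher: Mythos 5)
Your proposal is correct and follows essentially the same route as the paper's proof: both hinge on the identification $\beta_{n-1}\mapsto(a_{n-2}a_{n-3}\cdots a_1)^\vee$, $\beta_n\mapsto(a_{n-1}a_{n-3}\cdots a_1)^\vee$ and the resulting bimodule isomorphism between the span of the single-$\beta$ paths and $(\f{K}\Gamma)^\vee$, with the remaining normal-form/dimension bookkeeping declared routine in both accounts. One small caveat: the bimodule actions you display have left and right interchanged relative to the standard dual action $(q\cdot p^\vee)(r)=p^\vee(rq)$, $(p^\vee\cdot q)(r)=p^\vee(qr)$, which is the one your subsequent computations actually (and correctly) use.
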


\begin{figure}[htb!]
\centering

\begin{tikzpicture}
	\tikzset{vertex/.style = {style=circle,draw, fill,  minimum size = 2pt,inner sep=1pt}}
\tikzset{edge/.style = {->,>=stealth',shorten >=8pt, shorten <=8pt  }}

% vertices
\node[vertex] (a) at  (0,0) {};
\node[vertex] (a1) at (1.5,0) {};
\node[vertex] (a2) at (3,0) {};
\node[vertex] (c) at  (7,0) {};
\node[vertex] (c1) at  (8.25,1.5) {};
\node[vertex] (c2) at  (8.25,-1.5) {};

% edges

\draw[edge] (a)  to (a1);
\draw[edge] (a1) to (a2);

\path (a2) to node {\dots} (c);
\node [shape=circle,minimum size=2pt, inner sep=1pt] (a3) at (4.5,0) {};
\draw[edge] (a2) to (a3);

\node [shape=circle,minimum size=2pt, inner sep=1pt] (c3) at (5.5,0) {};
\draw[edge] (c3) to (c);

\draw[edge] (c)  to (c1);
\draw[edge] (c) to (c2);

\node [shape=circle,minimum size=2pt, inner sep=1pt] (a0) at (-0.05,0.07) {};
\draw[edge] (c1)  to (a0);
\node [shape=circle,minimum size=2pt, inner sep=1pt] (a0) at (-0.05,-0.07) {};
\draw[edge] (c2)  to (a0);

\node[] at (0.8,0.1) {\tiny{$\alpha_1$}};
\node[] at (2.2,0.1) {\tiny{$\alpha_2$}};
\node[] at (3.7,0.1) {\tiny{$\alpha_3$}};
\node[] at (6.2,0.1) {\tiny{$\alpha_{n-3}$}};

\node[] at (4.5,1.1) {\tiny{$\beta_{n-1}$}};
\node[] at (4.5,-1.1) {\tiny{$\beta_n$}};

\node[] at (7.8,0.32) {\tiny{$\gamma_{n-1}$}};
\node[] at (7.65,-0.38) {\tiny{$\gamma_n$}};

\end{tikzpicture}

\caption{The quiver $Q$}
\label{Q}
\end{figure}
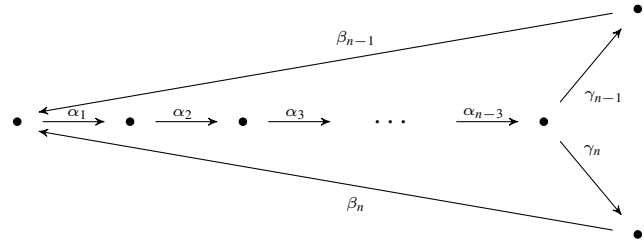

\begin{proof}
Using the identifications $a_i \leftrightarrow \alpha_i$ for $1\leq i \leq n-3$ and $a_j \leftrightarrow \gamma_{j+1}$ for $j=n-2$ and $n-1$ we can consider $\f{K} \Gamma$ as a subalgebra of $\f{K} Q/ I$. 
Observe that $\f{K} Q/ I$ decomposes as a direct sum $\f{K} \Gamma \oplus V $ and $V$ is generated by $\beta_{n-1}$ and $\beta_n$ as a $\f{K} \Gamma$-bimodule. 
Moreover, as $\f{K} \Gamma$-bimodules, $V$ and the dual of $\f{K} \Gamma$ are isomorphic via
\begin{align*}
\psi : V & \to (\f{K} \Gamma)^{\vee} \\
\beta_{n-1} &\mapsto (a_{n-2} a_{n-3} \cdots a_2 a_1 ) ^{\vee} \\
\beta_n &\mapsto  (a_{n-1} a_{n-3} \cdots a_2 a_1 ) ^{\vee}
\end{align*}

It is straightforward to check that this map is a well-defined isomorphism.

In fact, $(\f{K} \Gamma)^{\vee}$ can also be considered as a subalgebra of $\f{K} Q/ I$ by identifying the dual $p^{\vee}$ of a path $p \in \f{K} \Gamma$ with the path $q \in \f{K} Q/ I$ such that 
$$q \cdot p = \tau^t (\beta_n\gamma_n\alpha_{n-3} \cdots \alpha_1) = \tau^t (\beta_{n-1}\gamma_{n-1}\alpha_{n-3} \cdots \alpha_1) \in \f{K} Q/ I \ , $$
where $\tau$ denotes the simple rotation action on the cycles and $t$ is the distance between the initial points of $p$ and $\alpha_1$.  
\end{proof}
 
As a consequence of this lemma and the discussions in the previous section, there is an
isomorphism between the Hochschild cohomology rings of the zigzag algebra $A_\Gamma$ and $B_\Gamma$.
On the other hand, the Hochschild cohomology of $B_\Gamma$ as a trivially graded algebra was
described in detail in \cite{VG, volkov}. As in the case of $\Gamma = A_n$ (see Thm.~(\ref{hha})),
we determine the internal grading $s$ induced by the zigzag algebra and obtain the following result.
This extra information does not appear in \cite{VG, volkov} and the determination of this grading is the main contribution given in the following theorem. 

\begin{theorem}\label{hhd}
Let $\Gamma =D_n$, $n\geq 4$. The $(r,s)$-bigraded Hochschild cohomology algebra 
$$ HH^*(A_\Gamma) = \bigoplus_{r+s=*} HH^r(A_\Gamma, A_\Gamma [s]) $$
of the graded $\mathrm{k}$-algebra $A_\Gamma$ is (graded) commutative and given by the following
    generators and relations. (The subscripts of the generators, except for the $s_i$, refer to total
    degrees.)  

\begin{enumerate}

	\item Suppose $\mathrm{char} \f{K}\neq2$.
We have generators labeled along with their bidegrees $(r,s)$ 
\begin{align*} 
			s_1, \dots, s_n &\text{\ \ \ \ \ } (0, 2) \\
			t_1 &\text{\ \ \ \ \ } (1, 0) \\
			r_1 &\text{\ \ \ \ \ } (2n-3,-2n+4) \\
			t_0 &\text{\ \ \ \ \ } (4, -4) \\
			r_0 &\text{\ \ \ \ \ } (2n-4, -2n+4 ) \\
			t_{\text{\scriptsize{-}}2} &\text{\ \ \ \ \ } (4n-6, -4n+4) \\  
\end{align*}
and relations 
$$s_i s_j=s_i t_j= s_ir_j=t_1^2=t_1r_1=r_1^2=t_0^{n-1}=0$$
\vskip-0.5ex
\begin{center} \begin{tabular}{r|c|c} 
& (if $n$ is even) & (if $n$ is odd) \\ [0.5ex]
$t_1r_0=$&$\left(\frac{n}{2}\right) t_1t_0^{(n-2)/2} -(n-1) r_1$ & $\left(\frac{n-1}{2}\right) r_1 $\\
$2t_0r_1 =$&$ t_1t_0^{n/2}$ & $ 0$\\
$2r_1r_0=$&$0 $&$t_1t_0^{n-2}$\\
$2t_0r_0 = $&$t_0^{n/2}$ & $0$ \\
$2r_0^2=$&$ \left(\frac{n}{2}\right) t_0^{n-2}$ & $\left(\frac{n-1}{2}\right) t_0^{n-2}$
\end{tabular} \end{center}
\vskip1cm

\item  Suppose $\mathrm{char} \f{K}= 2$. 
We have generators labeled along with their bidegrees $(r,s)$ 
\begin{align*} s_1, \dots, s_n &\text{\ \ \ \ \ } (0, 2) \\
			t_1 &\text{\ \ \ \ \ } (1, 0) \\
			u_1 &\text{\ \ \ \ \ } (3,-2) \\
			t_0 &\text{\ \ \ \ \ } (4, -4) \\
			r_0 &\text{\ \ \ \ \ } (2n-4, -2n+4 ) \\
			u_0 &\text{\ \ \ \ \ } \left(4 \left \lfloor \frac{n}{2} \right \rfloor  , - 4 \left \lfloor \frac{n}{2} \right \rfloor  \right) \\
			u_{\text{\scriptsize{-}}1} &\text{\ \ \ \ \ } \left(4 \left \lfloor \frac{n-1}{2} \right \rfloor  +1, - 4 \left \lfloor \frac{n-1}{2} \right \rfloor -2 \right) \\
			t_{\text{\scriptsize{-}}2} &\text{\ \ \ \ \ } (4n-6, -4n+4 ) 
\end{align*}
and relations 
$$ s_i s_j=s_i t_1= s_i u_1 = s_iu_0=0$$
$$ t_1^2= u_1^2= u_0^2=u_1u_0=0$$
$$t_0^{\left \lfloor \frac{n}{2} \right \rfloor}= u_1t_0^{\left \lfloor \frac{n-1}{2} \right \rfloor}=0$$
$$r_0^2=\left \lfloor \frac{n}{2} \right \rfloor u_0t_0^{\left \lfloor \frac{n-3}{2} \right \rfloor} $$
$$ s_jt_0= t_1u_1  $$ 
\vskip-0.5ex
\begin{center} \begin{tabular}{r|c|c} 
& (if $n$ is even) & (if $n$ is odd) \\ [0.5ex]

$u_{\text{\scriptsize{-}}1}^2 =$&$ t_{\text{\scriptsize{-}}2}$ & $ t_{\text{\scriptsize{-}}2}t_0$\\
$u_1u_{\text{\scriptsize{-}}1}=$&$ u_0$ & $u_0t_0$\\
$t_0r_0=$&$ u_1u_{\text{\scriptsize{-}}1}$ & $t_1u_{\text{\scriptsize{-}}1}$\\
$u_1r_0=$&$0$ & $t_1u_0 $\\
$s_ju_{\text{\scriptsize{-}}1}=$&$\begin{cases} \ \ \left(\frac{n-2}{2}\right) t_1t_0^{(n-2)/2} +t_1r_0  &, \ \text{if} \ \ j \leq n-1 , \\  \ \ \ \ \ \ \left(\frac{n}{2}\right) t_1t_0^{(n-2)/2} +t_1r_0 &, \ \text{if} \ \ j =  n  \end{cases}$ & $ u_1r_0 $\\
$s_jr_0=$&$\begin{cases} \ \ t_1u_1t_0^{ (n-4)/2} &, \ \text{if} \ \ j \leq n-1 , \\ \ \ \  \ \ \ 0 &, \ \text{if} \ \ j =  n  \end{cases}$ &$ 0 $ \\
$u_{\text{\scriptsize{-}}1}r_0 = $& & $t_1t_{\text{\scriptsize{-}}2} $ \\
$t_1r_0 = $& & $\left(\frac{n-1}{2}\right) u_1t_0^{(n-3)/2} $ \\
$s_jt_{\text{\scriptsize{-}}2}=$& & $r_0u_0 $
\end{tabular} \end{center}

\end{enumerate}

\end{theorem}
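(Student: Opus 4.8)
The plan is to follow, step for step, the three-stage strategy already carried out for type $A_n$ in the proof of Thm.~(\ref{hha}), now feeding in the quiver $Q$ and the ideal $I$ identified in the preceding lemma. First, by Prop.~(\ref{trivial}) together with the shift trick explained in the Summary of Sec.~(\ref{sec-trivial}), I would replace $A_\Gamma$ by a graded-Morita-equivalent algebra supported in even internal degrees, so that the internal grading $s$ can be discarded during the ring computation and reinstated afterwards without sign ambiguities. Applying Rickard's theorem (Thm.~(\ref{ricky})) to the derived equivalence between the differently oriented path algebras, combined with the identification $B_\Gamma \cong \f{K}Q/I$ from the preceding lemma, yields an isomorphism $HH^*(A_\Gamma) \cong HH^*(B_\Gamma)$ of Gerstenhaber algebras. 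The ring structure of $HH^*(B_\Gamma)$ graded only by the cohomological degree $r$ is then imported verbatim from \cite{VG, volkov}; everything except the internal grading $s$ on each generator is thereby settled, and the $s$-grading is the genuinely new content to be supplied.

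Second, I would reinstate the $s$-grading on the low cohomological-degree generators by exhibiting explicit homogeneous cocycles in the reduced bar complex $CC^*(A_\Gamma, A_\Gamma) = \hom_{\mathrm{k}}(T\bar{A}, A)$ of Eqn.~(\ref{reducedbar}), exactly as $\tau_1$ and $\tau_0$ were produced in type $A$. The central elements $e_v$ and $s_i$ give the $r=0$ classes in bidegrees $(0,0)$ and $(0,2)$; an explicit derivation gives $t_1$ in bidegree $(1,0)$; and a direct, if more laborious, computation on $\bar{A}^{\otimes 4}$ produces a homogeneous cocycle representing $t_0$ in bidegree $(4,-4)$ and checks that it is not a coboundary. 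Since the groups $\bigoplus_s HH^r(A_\Gamma, A_\Gamma[s])$ have small rank for these $r$ (ranks recoverable from the presentation and from \cite{VG, volkov}), producing a single nonzero homogeneous representative pins down the $s$-degree of each such generator.

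Third, for the higher generators—$r_0, r_1$ in the odd-characteristic presentation and $u_1, u_0, u_{-1}, r_0, t_{-2}$ in characteristic $2$—I would avoid explicit cocycles and instead read off the $s$-gradings from the multiplicative relations, just as $u_{-1}$ and $t_{-2}$ were handled in type $A$. Each such generator enters a relation equating a product involving it with a monomial in generators of already-known bidegree (for instance $2t_0 r_1 = t_1 t_0^{n/2}$ or $u_1 u_{-1} = u_0$); because the ambient bigraded group is again low-dimensional, the generator splits as a homogeneous piece of the forced bidegree plus a multiple of a known monomial, and one may replace it by its homogeneous part without altering the relations. To make the gradings uniform across all characteristics I would run the universal-coefficient argument over $\f{Z}$ (Eqn.~(\ref{uct})), testing against $\f{F}_p$ for infinitely many primes to locate the single internal degree in which each class lives.

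The hard part will be the characteristic $2$ computation. There the presentation has many more generators and relations, the statement bifurcates into separate tables according to the parity of $n$, and several relations carry the combinatorial coefficients $\lfloor n/2 \rfloor$ and $\lfloor (n-1)/2 \rfloor$ that must be matched against the ranks in \cite{volkov}. The truly delicate points are (i) securing homogeneity of generators such as $u_{-1}$ and $r_0$ that are a priori defined only modulo lower length filtration, and (ii) verifying that \emph{every} relation in the statement is simultaneously consistent with the reinstated bidegrees—a finite but error-prone bookkeeping task, which I would cross-check against the computer-aided rank tables in the spirit of Figs.~(\ref{a2}),~(\ref{a3}).
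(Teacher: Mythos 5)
Your proposal is, in architecture, exactly the proof the paper gives: the reduction via Prop.~(\ref{trivial}) and the grading shift, Rickard's theorem together with the identification $B_\Gamma \cong \f{K}Q/I$, importing the $r$-graded ring structure of $HH^*(B_\Gamma)$ from \cite{VG, volkov}, explicit reduced-bar-complex cocycles in low cohomological degree, and then relation-chasing plus the integral universal coefficient sequence (\ref{uct}) for the higher generators, with the homogeneity-modulo-decomposables issue resolved exactly as you describe. The one place you deviate is $t_0$: you propose to exhibit an explicit homogeneous cocycle on $\bar{A}^{\otimes 4}$ and verify it is not a coboundary, whereas the paper never writes down a cocycle at $r=4$. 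Instead, in characteristic $2$ it constructs an explicit cocycle $\upsilon_1 \in CC^3(A,A[-2])$ representing $u_1$ --- a far smaller verification, since $\bigoplus_s HH^3(A,A[s])$ is $1$-dimensional there --- and deduces the bidegree of $t_0$ from the relation $s_n t_0 = t_1 u_1$ (with an extra twist for $n=4$, where $r_0$ also lives in degree $4$); the characteristic-$\neq 2$ case is then obtained by the universal coefficient argument, using the vanishing of $HH^3$ away from characteristic $2$ to pin down the torsion $\f{Z}_2[2]$ of $HH^4_{\f{Z}}$. So the $r=3$ computation in characteristic $2$ is the anchor for \emph{all} characteristics in the paper. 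Your direct attack at $r=4$ would presumably succeed, but it is substantially more laborious and, given the relations, unnecessary; otherwise the two arguments coincide, including the case split on the parity of $n$ and the characteristic in the final stage.
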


\begin{proof}
The presentation of the algebra structure of $HH^*(B_\Gamma)$ in \cite[Thm. 4]{volkov} provides all the generators with their $r$ gradings and relations.  The derived equivalence between $A_\Gamma$ and $B_\Gamma$ gives
$$ HH^r(B_\Gamma) \cong \bigoplus_s HH^r(A_\Gamma, A_\Gamma [s]) \ . $$
Therefore it suffices to determine the $s$ gradings of the generators in the statement.
Extending the notation in Fig.~(\ref{D_n quiver}), we consider the decomposition of the graded algebra $A_\Gamma$ into homogeneous $\f{K}$-subspaces $A_0$, $A_1$, and $A_2$ spanned by 
$$\{ e_1, \dots , e_n \}, \{ a_1 , b_1, \dots , a_{n-1} , b_{n-1}  \}, \ \text{and} \  \{ s_1 , \dots , s_n\}, $$ 
respectively, where 
$$e_{i+1} a_i e_i = a_i, \ e_i b_i e_{i+1} = b_i, e_n a_{n-1} e_{n-2} = a_{n-1} , \ e_{n-2} b_{n-1}  e_n = b_{n-1} , $$
$$s_1=b_1a_1, \ s_{i+1} = a_{i} b_{i} = b_{i+1} a_{i+1}, \ s_{n-2} = a_{n-3} b_{n-3} = b_j a_j, \ \text{and} \ s_{j+1} = a_j b_j , $$ 
for $1\leq i \leq n-4$ and $j=n-2, n-1$.  

As in the proof of Thm.~(\ref{hha}), we will again use the reduced bar-resolution associated to $A=A_\Gamma$ and denote the Hochschild differential by $\delta$. 
Consequently, the discussion for $r=0,1$ is exactly the same as in the proof of Thm.~(\ref{hha}). 
We identify the $s$ gradings of $s_1 , \dots , s_n$ and $t_1$ as in the statement. 

For every nonnegative integer $r$, the dimension of $\bigoplus_s HH^r(A, A [s]) \cong  HH^r (B_\Gamma)$ can be deduced from the presentation in the statement and it is explicitly given in \cite[Thm. 3]{volkov}. 
We will make extensive use of this information.
To begin with, note that $\bigoplus_s HH^2(A, A [s])$ is trivial over any field $\f{K}$, and $\bigoplus_s HH^3(A, A [s])$ is 1-dimensional  if $\mathrm{char} \f{K} =2$ and trivial otherwise. 
Over a field $\f{K}$ of characteristic $2$, for $c \in CC^3(A, A[s])$, the Hochschild differential $\delta$ is given by:
$$ \delta(c) (x_4, x_3, x_2, x_1) = x_4 c(  x_3, x_2, x_1) + c (x_4, x_3, x_2 ) x_1 + c (x_4 x_3, x_2, x_1) + c (x_4, x_3 x_2, x_1) + c (x_4, x_3, x_2 x_1). $$

We claim that, if $\mathrm{char}\f{K}=2$, there is a cocycle $\upsilon_1 \in CC^3(A, A[-2])$ which is not the coboundary of any $\kappa \in  CC^2(A, A[s])$. 
This and the fact that $\bigoplus_s HH^3(A, A [s])$ is 1-dimensional imply that the $s$ grading of $u_1$ must be $-2$, same as $\upsilon_1$. 
To describe the graded homomorphism $\upsilon_1 : \bar{A}^{\otimes 3} \to A[-2]$ uniquely, it suffices to list the generators of $\bar{A}^{\otimes 3}$ on which $\upsilon_1$ is nonzero. 
It necessarily vanishes on any element of degree $5$ or $6$ in $\bar{A}^{\otimes 3}$ since $A$ is supported in gradings between $0$ and $2$. 
We declare $\upsilon_1$ to be nonzero exactly on those nontrivial elements $(x_3, x_2 , x_1) \in \bar{A}^{\otimes 3}$ which satisfy one of the following conditions:
\begin{itemize}
\item One of $x_1$, $x_2$ and $x_3$ is of the form $a_i$ and the other two is of the form $b_i$, possibly with different indices, and  $(x_3, x_2 , x_1) \neq (b_{n-1}, a_{n-1} ,b_{n-2})$. 
\item Exactly one of $x_1$, $x_2$ and $x_3$ is of the form $s_k$, and the initial point of $x_1$ matches the terminal point of $x_3$.
\item $(x_3, x_2 , x_1) = (a_{n-2} , b_{n-1} , a_{n-1})$. 
\end{itemize}

It is straightforward to check that $\upsilon_1$ is a cocycle. To see that it is not a coboundary, suppose that $c \in CC^2(A, A[-2])$. Then 
$$ \delta (\kappa)  ((b_2, a_2, s_2)+(a_2,s_2,b_2)+(s_2,b_2,a_2)) = b_2 \kappa(a_2,s_2) + a_2\kappa(s_2,b_2)+\kappa(a_2,s_2)b_2+\kappa(s_2,b_2)a_2 $$
after cancellations.
Observe that the right-hand side is either $s_2+s_3$ or $0$, depending on the values of $\kappa(a_2,s_2)$ and $\kappa(s_2,b_2)$. Since \[ \upsilon_1 ((b_2, a_2, s_2)+(a_2,s_2,b_2)+(s_2,b_2,a_2)) = s_3, \] $\upsilon_1$ cannot be a coboundary. 

Next we determine the $s$ grading of $t_0$. 
Consider the case  $\mathrm{char}\f{K} = 2$.
If $n=4$, then $\bigoplus_s HH^4(A, A[s])$ has generators $t_0, r_0$ and $t_1u_1$. 
Note that any relation satisfied by $t_0$ and $r_0$ is also satisfied by $t_0-\gamma t_1 u_1$ and $r_0 -\gamma t_1 u_1$, respectively, for any $\gamma \in \f{K}$. 
Therefore, without loss of generality, we may assume that there are $s$-homogeneous generators $t'_0$, $r'_0$ and constants $\alpha, \beta \in \f{K}$ such that
$$ t_0 = t'_0 + \alpha r'_0 \ \mbox{ and } \ r_0 = r'_0 + \beta t'_0 \ . $$
From the relations regarding $s_nr_0$ and $s_n t_0$ we obtain
$$ 0 = s_n r_0 = s_n r'_0 + \beta s_n t'_0 \ \mbox{ and } \ 0 \neq u_1t_1 = s_n t_0 = s_nt'_0 + \alpha s_n r'_0$$
Since the gradings of $u_1, t_1$ and $s_n$ are established above, the second equation implies that at least one of $t'_0$ and $r'_0$ has $s$ grading $-4$; in fact they both do, as the following arguments show.
If $s_nr'_0 \neq 0$, then the first equation proves that $r'_0$ and $t'_0$ have the same $s$ grading which is necessarily $-4$. 
So suppose $s_nr'_0 = 0$. 
Now the second equation gives $s_nt'_0 \neq 0$. 
Moreover, the first equation implies $\beta=0$ which means $r_0=r'_0$; in particular $r_0$ is $s$-homogeneous. 
So we can use the relation $s_1 r_0 = t_1u_1$ to establish the $s$ grading of $r'_0$ as $-4$ .
On the other hand, under the assumption $s_nr'_0 = 0$, the second equation becomes $s_nt'_0 = u_1t_1$ implying that $t'_0$ has $s$ grading $-4$ as well.
Therefore, regardless of the value of $s_nr'_0$, $s$ gradings of $t_0$ and $r_0$ are both $-4$.

If $n>4$ and $\mathrm{char} \f{K} =2$, then $\bigoplus_s HH^4(A, A[s])$ has rank $2$ with generators $t_0$ and $t_1u_1$, hence we may assume that there is an $s$-homogeneous generator $t'_0$ and $\alpha \in \f{K}$ such that $ t_0 = t'_0 + \alpha t_1u_1$. 
The relation $s_n t_0 = t_1 u_1$ implies that the $s$ grading of $t'_0$ is $-4$. The $s$ grading of $t_1u_1$ is $-2$ by previous computations. 
If $n$ is even, then any relation in the statement holds for $t_0$ if and only if it holds for $t'_0$. 
Therefore, without loss of generality, we may assume that $t_0=t'_0$ is $s$-homogeneous with grading $-4$, at least when $n$ is even.
The same conclusion holds for odd $n$ as well, but we will not prove (nor use) it until Case {\bf 3} below.  

Let us now consider the $s$ grading of $t_0$ when $\mathrm{char}\f{K} \neq 2$. 
Regardless of whether $n=4$ or not, the argument uses the universal coefficient theorem (\ref{uct}) as in the proof of Thm.~(\ref{hha}).
First of all, considering that $\bigoplus_s HH^2(A, A[s])$ is trivial for any field $\f{K}$ and using (\ref{uct}) for $r=2$, we conclude that $ \bigoplus_s HH_{\f{Z}}^3 (A, A[s])$ has no torsion. 
Since $\bigoplus_s HH^3(A, A[s])$ is trivial when $\mathrm{char} \f{K}  \neq 2$, applying the universal coefficient theorem (\ref{uct}) for $r=3$ implies that $ \bigoplus_s HH_{\f{Z}}^3 (A, A[s])$ has no free component either, hence it is trivial. 
Moreover, the same exact sequence and the fact that for $\mathrm{char} \f{K} =2$, $\bigoplus_s HH^3(A, A[s])$ is generated by $u_1$ whose $s$ grading is computed as $-2$ above, establish the torsion of $ \bigoplus_s HH_{\f{Z}}^4 (A, A[s])$ as $\f{Z}_2 [2]$. 

The argument above for $\mathrm{char} \f{K} =2$ shows that $ \bigoplus_s HH^4 (A, A[s]) \cong \f{K}^d[4] \oplus \f{K}[2]$, where $d=2$ if $n=4$ and $d=1$ otherwise.
Using the fact that $ \bigoplus_s HH^4 (A, A[s])$ is $d$-dimensional for any field $\f{K}$ with $\mathrm{char} \f{K} \neq 2$, and applying the universal coefficient theorem (\ref{uct}) for $r=4$ to infinitely many characteristics, we conclude that $ \bigoplus_s HH_{\f{Z}}^4 (A, A[s])$ is in fact $\f{Z}^d[4] \oplus \f{Z}_2[2]$. 
In particular, $ \bigoplus_s HH^4 (A, A[s])$ is supported in $s$ grading $-4$ whenever $\mathrm{char}\f{K} \neq 2$, and the $s$ grading of $t_0$ is $-4$ unless $n$ is odd and $\mathrm{char}\f{K}=2$.

The rest of the argument varies slightly according to the parity of $n$ and the characteristic of the base field.

{{\bf Case 1} ($n$ even and $\mathrm{char} \f{K} = 2$)} 

We need to determine the $s$ gradings of the rest of the generators, namely $u_{\text{\scriptsize{-}}1}, t_{\text{\scriptsize{-}}2}, u_0$ and $r_0$. 
Since
$$ \{ u_{\text{\scriptsize{-}}1}, t_1r_0, t_1 t_0^{(n-2)/2}\} $$ 
forms a basis of $\bigoplus_s HH^{2n-3}(A, A[s])$, $$u_{\text{\scriptsize{-}}1} = u'_{\text{\scriptsize{-}}1} + \alpha t_1r_0 + \beta t_1 t_0^{(n-2)/2}$$ for some $s$-homogeneous $u'_{\text{\scriptsize{-}}1}\neq 0$ and some $\alpha, \beta \in \f{K}$. Observe that any relation satisfied by $u_{\text{\scriptsize{-}}1}$ is satisfied by $u'_{\text{\scriptsize{-}}1}$ as well. Therefore, without loss of generality, we may assume that $u_{\text{\scriptsize{-}}1}=u'_{\text{\scriptsize{-}}1}$ and its $s$ grading is $-2n+2$ as a result of the relation
$$s_n u_{\text{\scriptsize{-}}1} - s_1 u_{\text{\scriptsize{-}}1} = t_1t_0^{(n-2)/2} \ . $$
Moreover, by the relations $u_0 = u_1 u_{\text{\scriptsize{-}}1}$ and $t_{\text{\scriptsize{-}}2} = u_{\text{\scriptsize{-}}1}^2$, both $u_0$ and $t_{\text{\scriptsize{-}}2}$ are $s$-homogeneous with gradings $-2n$ and $-4n+4$, respectively. 
Regarding $r_0$, note that 
$$ \{ r_0, t_0^{(n-2)/2},  t_1u_1t_0^{(n-4)/2}\} $$ 
forms a basis of $\bigoplus_s HH^{2n-4}(A, A[s])$. Hence 
$$ r_0 =r'_0 +  \alpha t_0^{(n-2)/2} + \beta t_1u_1t_0^{(n-4)/2}$$ 
for some $s$-homogeneous $r'_0 \neq 0$ and some $\alpha, \beta \in \f{K}$. It is straightforward to check that any relation satisfied by $r_0$ is also satisfied by $r_0 - \beta t_1u_1t_0^{(n-4)/2}$, so we may assume that $ r_0 =r'_0 +  \alpha t_0^{(n-2)/2}$. Moreover, the relation $u_0= t_0r_0 = t_0r'_0$ implies that the $s$ grading of $r'_0$ is $-2n+4$, the same as that of  $t_0^{(n-2)/2}$. Therefore, $r_0$ is $s$-homogeneous with this grading as well. 

{{\bf Case 2 } ($n$ even and $\mathrm{char}\ \f{K} \neq 2)$} 

We have a single argument for the $s$ grading of $r_0$ and $r_1$ which belong to 2-dimensional spaces $\bigoplus_s HH^{2n-4}(A, A[s])$ and $\bigoplus_s HH^{2n-3}(A, A[s])$, respectively. 
We take $s$-homogeneous elements $r'_0\neq 0$ and $r'_1\neq 0$ such that 
$$ r_0 = r'_0 + \alpha t_0^{(n-2)/2} \ \mbox{ and } \ r_1=r'_1+\beta t_1 t_0^{(n-2)/2} \ . $$

Suppose that $\mathrm{char}\f{K} \nmid n-1$. 
By way of contradiction, assume that $r_0$ is not $s$-homogeneous, i.e. $\alpha \neq 0$ and the $s$-grading of $r'_0$ is not $-2n+4$. 
Then $t_1r_0=\left(\frac{n}{2}\right) t_1t_0^{(n-2)/2} -(n-1) r_1$ implies that $-(n-1)r'_1 = t_1r'_0$ for grading reasons. 
Consequently, the $s$-gradings of $r'_0$ and $r'_1$ should match. 
Moreover, since $2t_0r_1=t_1t_0^{n/2}$, and again for grading reasons, $\beta \neq 0$. 
But then,  $\alpha\beta t_1t_0^{n-2} \neq 0$ and its $s$ grading does not match with the $s$-grading of any other term in the product $r_1r_0$ contradicting with $r_1r_0=0$. 
Therefore $r_0$ is $s$-homogeneous, and so is $r_1$, in fact with the same $s$ grading, as a consequence of 
$$t_1r_0=\left(\frac{n}{2}\right) t_1t_0^{(n-2)/2} -(n-1) r_1 \ .$$
In order to account for the possibility that $\mathrm{char}\f{K} \mid \frac{n}{2}$, instead of the relation above we use the relation $2t_0r_0 = t_0^{n/2}$ to obtain the common $s$ grading of $r_0$ and $r_1$.

For a field $\f{K}$ with $\mathrm{char}\f{K} \neq 2$, both $\bigoplus_s HH^{2n-3}(A, A[s])$ and $\bigoplus_s HH^{2n-4}(A, A[s])$ are 2-dimensional, and moreover we just proved that when $\mathrm{char}\f{K} \nmid n-1$, each of these spaces are supported in $s=-2n+4$. 
By using the universal coefficient theorem (\ref{uct}) for $r=2n-4$ we conclude that, as long as $\mathrm{char}\f{K} \neq 2$ (even if $\mathrm{char}\f{K}$ divides 
$n-1$) both $\bigoplus_s HH^{2n-3}(A, A[s])$ and $\bigoplus_s HH^{2n-4}(A, A[s])$ are supported in $s=-2n+4$. 
In particular, the common $s$ grading of $r_0$ and $r_1$ is $-2n+4$.

The $s$ grading of the remaining generator $t_{\text{\scriptsize{-}}2}$ is obtained by the following argument which applies to odd $n$ as well. 
First of all, $t_{\text{\scriptsize{-}}2}$ is $s$-homogeneous as it belongs to the 1-dimensional space $\bigoplus_s HH^{4n-6}(A, A[s])$.
On the other hand, $\bigoplus_s HH^{4n-5}(A, A[s])$ is 1-dimensional over any field $\f{K}$ and it is generated by $t_1t_{\text{\scriptsize{-}}2}$. 
Since we already have the $s$ grading of $t_1t_{\text{\scriptsize{-}}2}$ for $\mathrm{char}\f{K} =2$ from the previous case, we obtain the $s$ grading of  $t_{\text{\scriptsize{-}}2}$ over any field using the universal coefficient theorem (\ref{uct}) for $r=4n-5$.

{\bf Case 3 }($n$ odd and $\mathrm{char}\ \f{K} =2$) 

In this case, the $s$ grading of $r_0$ can be obtained by an argument which works regardless of $\mathrm{char}\f{K}$.
Over any $\f{K}$, $\bigoplus_s HH^{2n-4}(A, A[s])$ is 1-dimensional and generated by $r_0$ which is therefore $s$-homogeneous. 
Applying the universal coefficient theorem (\ref{uct}) for $r=2n-4$ and infinitely many different characteristics, we conclude that $\bigoplus_s HH_{\f{Z}}^{2n-4}(A, A[s]) \cong \f{Z}$ and to  
establish the $s$ grading of this group, it suffices to use the relation $2r_0^2 = \left( \frac{n-1}{2} \right) t_0^{n-2}$ over a field of characteristic $0$. 
In particular, $r_0$ has $s$ grading $-2n+4$ for any field $\f{K}$.

The generator $u_0$ belongs to the 1-dimensional space $\bigoplus_s HH^{2n-2}(A, A[s])$, hence it is $s$-homogeneous, and its $s$ grading is determined by the relation $u_1r_0=t_1u_0$.

Next we consider $u_{\text{\scriptsize{-}}1}$. 
It belongs to $\bigoplus_s HH^{2n-1}(A, A[s])$ which is generated by $u_{\text{\scriptsize{-}}1}$ and $u_1r_0$. 
So $u_{\text{\scriptsize{-}}1}= u'_{\text{\scriptsize{-}}1} + \alpha u_1r_0$ for some $\alpha \in \f{K}$ and $s$-homogeneous $u'_{\text{\scriptsize{-}}1} \neq 0$. 
Observe that any relation which involves $u_{\text{\scriptsize{-}}1}$ is satisfied by $u'_{\text{\scriptsize{-}}1}$ as well.
Hence we may assume that $u_{\text{\scriptsize{-}}1}$ is $s$-homogeneous. 
Its $s$ grading is obtained from 
$$t_1u_{\text{\scriptsize{-}}1} = t_0r_0 = t'_0r_0 \ .$$
Note that we have not established the $s$ homogeneity of $t_0$ in this case yet, and that is why we had to refer to $t'_0$ in the relation above and use the fact that $t_0r_0 -t'_0r_0=0$ since  it is a multiple of $t_1u_1r_0=s_jt_0r_0=0$.

Finally, we determine the $s$ gradings of $t_0$ and $t_{\text{\scriptsize{-}}2}$ simultaneously. 
In the case we consider, they belong to 2-dimensional spaces 
\[ \bigoplus_s HH^{4}(A, A[s]) \text{\ \ \ and\ \ \ } \bigoplus_s HH^{4n-6}(A, A[s]),\] with respective bases $\{ t_0 , t_1u_1 \}$ and $\{ t_{\text{\scriptsize{-}}2} , r_0u_0 \}$.
So there are $s$-homogeneous elements $t'_0$ and $t'_{\text{\scriptsize{-}}2}$ with constants $\alpha, \beta \in \f{K}$ such that 
$$ t_0 = t'_0 + \alpha  t_1u_1 \ \mbox { and } \ t_{\text{\scriptsize{-}}2} = t'_{\text{\scriptsize{-}}2}+ \beta r_0 u_0 \ . $$
In fact, the $s$ gradings of $t'_0$ and $t'_{\text{\scriptsize{-}}2}$ are $-4$ and $-4n+4$, respectively since $s_jt'_0 = t_1u_1$ and $s_j t'_{\text{\scriptsize{-}}2} = r_0u_0$.
It is straightforward to check that any relation in the statement, except for $u^2_{\text{\scriptsize{-}}1} =  t_{\text{\scriptsize{-}}2} t_0$, holds for $t_0$ and $t_{\text{\scriptsize{-}}2}$ if and only if it holds for $t'_0$ and $t'_{\text{\scriptsize{-}}2}$. 
To check that the remaining relation holds, we use
$$ u^2_{\text{\scriptsize{-}}1} =  t_{\text{\scriptsize{-}}2} t_0 = t'_{\text{\scriptsize{-}}2} t'_0 + \alpha t'_{\text{\scriptsize{-}}2} t_1u_1 + \beta t'_0r_0u_0+\alpha\beta t_1u_1r_0u_0 $$
and observe that the only term on the right-hand-side of the above relation whose $s$ grading matches that of $ u^2_{\text{\scriptsize{-}}1} $ is $t'_{\text{\scriptsize{-}}2} t'_0 $.
Therefore, without loss of generality, we may assume that $ t_0 = t'_0 $ and $t_{\text{\scriptsize{-}}2} = t'_{\text{\scriptsize{-}}2}$.

{\bf Case 4} ($n$ odd and $\mathrm{char}\ \f{K} \neq 2$) 

The $s$ gradings of $t_{\text{\scriptsize{-}}2}$ and $r_0$ are already obtained in Cases 2 and 3 above. 

The remaining generator $r_1$ is $s$-homogeneous since it belongs to the 1-dimensional space $\bigoplus_s HH^{2n-3}(A, A[s])$ and its $s$ grading is determined by the relation $2r_1r_0=t_1t_0^{n-2}$.
\end{proof}

Using Thm.~(\ref{intrinsic}) which is due to Seidel and Thomas, one gets the following consequence of the computation above.

\begin{corollary}
If $\mathrm{char}\f{K} \neq 2$ and $\Gamma$ is of type $D_n$, $n\geq4$, then the zigzag algebra $A_\Gamma$ is intrinsically formal. 
\end{corollary}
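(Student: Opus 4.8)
The plan is to deduce the statement from Thm.~(\ref{intrinsic}). Since $A_\Gamma$ is a graded $\mathrm{k}$-algebra augmented over $A_0 = \mathrm{k}$, it is enough to check the vanishing criterion $HH^{2-s}(A_\Gamma, A_\Gamma[s]) = 0$ for all $s<0$. Writing the total degree as $r+s$, the group $HH^{2-s}(A_\Gamma, A_\Gamma[s])$ sits in total degree $(2-s)+s = 2$, so the criterion is equivalent to the assertion that the total-degree-$2$ part of $HH^*(A_\Gamma)$ is concentrated in internal degrees $s\geq 0$. Crucially, all the information needed is already contained in the bigraded presentation established in Thm.~(\ref{hhd}), Case (1) (valid for $\mathrm{char}\,\f{K}\neq 2$), so what remains is a short bookkeeping argument on bidegrees, with no new geometric or homological input.

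First I would read off the total degrees $r+s$ of the ring generators from Thm.~(\ref{hhd}): the classes $s_1,\dots,s_n$ have total degree $2$, both $t_1$ and $r_1$ have total degree $1$, both $t_0$ and $r_0$ have total degree $0$, and $t_{\text{\scriptsize{-}}2}$ has total degree $-2$. Because the cup product is additive in total degree and $HH^*(A_\Gamma)$ is graded commutative, every class is a $\f{K}$-linear combination of monomials in these generators, and I can analyze the total-degree-$2$ piece monomial by monomial. The relations split this into two cases. The relations $s_i s_j = s_i t_j = s_i r_j = 0$ show that any monomial which contains some $s_i$ is either a scalar multiple of a single $s_i$ --- lying in bidegree $(0,2)$, hence in internal degree $s=2>0$ --- or zero.

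Next I would treat monomials involving none of the $s_i$. Writing such a monomial in normal form $t_1^{a} r_1^{b} t_0^{c} r_0^{d} t_{\text{\scriptsize{-}}2}^{e}$, its total degree equals $a+b-2e$. The relations $t_1^2 = r_1^2 = t_1 r_1 = 0$ force a nonzero monomial to have $a+b\leq 1$, and since $e\geq 0$ this gives $a+b-2e \leq 1 < 2$. Hence no $s_i$-free monomial reaches total degree $2$, and the total-degree-$2$ part of $HH^*(A_\Gamma)$ is spanned precisely by $\{s_1,\dots,s_n\}$, all in internal degree $s=2$. In particular $HH^{2-s}(A_\Gamma, A_\Gamma[s]) = 0$ for every $s<0$, and Thm.~(\ref{intrinsic}) then gives that $A_\Gamma$ is intrinsically formal.

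I do not anticipate a real obstacle: the hard work is the computation of the bigraded ring $HH^*(A_\Gamma)$ carried out in Thm.~(\ref{hhd}), and this corollary merely extracts the vanishing along the single diagonal $r+s=2$, $s<0$. The only subtlety --- and the precise place where the hypothesis $\mathrm{char}\,\f{K}\neq 2$ enters --- is that I am using the characteristic-$\neq 2$ presentation. In characteristic $2$ the presentation of Thm.~(\ref{hhd}) Case (2) acquires generators $u_1$ (bidegree $(3,-2)$) and $u_{\text{\scriptsize{-}}1}$ in negative internal degree, and these produce nonzero total-degree-$2$ classes with $s<0$ (for instance $t_1 u_1$, of bidegree $(4,-2)$, identified with $s_j t_0$ by the relations); this is exactly the mechanism by which intrinsic formality, and hence the conclusion, fails in characteristic $2$, consistent with the non-quasi-isomorphism statement in Thm.~(\ref{chainmap}).
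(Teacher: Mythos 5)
Your proposal is correct and follows exactly the paper's route: the corollary is deduced from Thm.~(\ref{intrinsic}) together with the bigraded presentation of Thm.~(\ref{hhd}), and your monomial bookkeeping reproduces the paper's observation that for $\mathrm{char}\,\f{K}\neq 2$ the total-degree-$2$ part $\bigoplus_{r+s=2}HH^r(A_\Gamma,A_\Gamma[s])$ is spanned by $s_1,\dots,s_n$ in internal degree $s=2$. No gaps.
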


One can write explicit bases for the relevant $\f{K}$-vector subspaces of $HH^*(A_\Gamma)$ as follows.

If $\mathrm{char}\f{K} \neq 2$, then $\bigoplus_{r+s=2} HH^r(A,A[s])$ is spanned by $\{ s_1, \dots , s_n\}$, and for any nonnegative integer $m$ and $i=0,1$ a basis of $\bigoplus_{r+s=i-2m} HH^r(A,A[s])$ is given by
$$ \{ r_it_{\text{\scriptsize{-}}2}^m, t_1^it_0^k t_{\text{\scriptsize{-}}2}^m\ : \ 0 \leq k \leq n-2\} \ . $$

When $\mathrm{char}\ \f{K} =2$, the increase in the dimensions of these spaces is immediate from the statement of Thm.~(\ref{hhd}). The subspace $\bigoplus_{r+s=2} HH^r(A,A[s])$ is spanned by 
$$\left\{ s_j, t_1u_1t_0^k = s_nt_0^{k+1}\ : \ 1\leq j \leq n, \ 0 \leq k \leq \left \lfloor \frac{n-4}{2} \right \rfloor \right\}  $$
and depending on the parity of $n$, $\bigoplus_{r+s=1} HH^r(A,A[s])$ is spanned by
$$\left\{ u_1t_0^k , t_1t_0^l, t_1r_0t_0^l \ : \ 0 \leq k \leq \frac{n-4}{2} , \ 0 \leq l \leq \frac{n-2}{2}  \right\} $$
if $n$ is even, and by
$$\left\{ u_1t_0^l , t_1t_0^l, t_1u_0t_0^l \ : \ 0 \leq l \leq  \frac{n-3}{2}  \right\}$$
if $n$ is odd.

If $n$ is even and $m$ is nonnegative, then a basis of $\bigoplus_{r+s=-m} HH^r(A,A[s])$ can be given as
$$\left\{  t_0^lu_{\text{\scriptsize{-}}1}^m ,r_0t_0^lu_{\text{\scriptsize{-}}1}^m, t_1 t_0^lu_{\text{\scriptsize{-}}1}^{m+1}, r_0t_1t_0^l u_{\text{\scriptsize{-}}1}^{m+1} \ :  \ 0 \leq l \leq \frac{n-2}{2} \right\} $$

If $n$ is odd and $m$ is nonnegative, then 
\[ \bigoplus_{r+s=-2m} HH^r(A,A[s]) \text{\ \ \ and\ \ \ } \bigoplus_{r+s=-2m-1} HH^r(A,A[s])\]are spanned by
$$\left\{  t_0^lt_{\text{\scriptsize{-}}2}^m,r_0t_0^lt_{\text{\scriptsize{-}}2}^m, u_0 t_0^lt_{\text{\scriptsize{-}}2}^{m}, u_0 r_0t_0^l t_{\text{\scriptsize{-}}2}^{m}\ :  \ 0 \leq l \leq  \frac{n-3}{2}  \right\}$$
and
$$\left\{  u_{\text{\scriptsize{-}}1}t_0^lt_{\text{\scriptsize{-}}2}^m,u_{\text{\scriptsize{-}}1}r_0t_0^lt_{\text{\scriptsize{-}}2}^m, u_{\text{\scriptsize{-}}1}u_0 t_0^lt_{\text{\scriptsize{-}}2}^{m}, u_{\text{\scriptsize{-}}1}u_0 r_0t_0^l t_{\text{\scriptsize{-}}2}^{m}\ :  \ 0 \leq l \leq  \frac{n-3}{2}  \right\}, $$
respectively. 

Therefore, the group $\bigoplus_{r+s=*} HH^r(A_\Gamma,A_\Gamma[s])$ is nontrivial if and only if $*\leq 2$. 
If the ground field has characteristic $2$, the rank is $n+\lfloor{\frac{n-2}{2}\rfloor}$ for $*=2,1$ and $4\lfloor{\frac{n}{2}\rfloor}$ for $*\leq 0$. Otherwise the rank is $n$ at each $*\leq 2$. Therefore, it follows from Thm.~(\ref{voila})
that we have:
\begin{corollary} \label{shd}
The symplectic cohomology group $SH^{*}(X_\Gamma)$ over a field of characteristic zero is of rank $n$ if $*\leq 2$ and it is trivial otherwise.
\end{corollary}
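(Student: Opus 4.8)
The plan is to deduce this directly from Thm.~(\ref{voila}) together with the bigraded computation of $HH^*(A_\Gamma)$ in Thm.~(\ref{hhd}); no new geometric input is needed. First I would invoke Thm.~(\ref{voila}), which over a field $\f{K}$ of characteristic zero gives an isomorphism $SH^*(X_\Gamma) \cong HH^*(A_\Gamma)$ under which the Conley--Zehnder grading on the left corresponds to the total degree $*=r+s$ on the right. Thus the rank of $SH^*(X_\Gamma)$ in each degree equals the rank of $\bigoplus_{r+s=*} HH^r(A_\Gamma, A_\Gamma[s])$, and the statement reduces to a purely algebraic rank count. Note that characteristic zero is used twice: to license the $SH$-identification of Thm.~(\ref{voila}), and to select the $\mathrm{char}\,\f{K}\neq 2$ branch of Thm.~(\ref{hhd}).

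Next I would read off the ranks from the $\mathrm{char}\,\f{K}\neq 2$ presentation of Thm.~(\ref{hhd})(1) and the explicit bases recorded just above this corollary. At total degree $*=2$ the space $\bigoplus_{r+s=2} HH^r(A_\Gamma,A_\Gamma[s])$ is spanned by $\{s_1,\dots,s_n\}$, so it has rank $n$. For $*=i-2m$ with $i\in\{0,1\}$ and $m\geq 0$, the explicit basis is
\[ \{\, r_i\, t_{\text{\scriptsize{-}}2}^{\,m},\ \ t_1^{\,i}\, t_0^{\,k}\, t_{\text{\scriptsize{-}}2}^{\,m} : 0\leq k\leq n-2 \,\}, \]
which consists of $1+(n-1)=n$ elements. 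Since the integers $i-2m$ with $i\in\{0,1\}$ and $m\geq 0$ exhaust all integers $\leq 1$, this shows that the rank is exactly $n$ for every $*\leq 2$.

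Finally I would verify vanishing in degrees $>2$. The generators carry total degrees $2$ (the $s_i$), $1$ (namely $t_1$ and $r_1$), $0$ ($t_0,r_0$) and $-2$ ($t_{\text{\scriptsize{-}}2}$), so the only classes of positive total degree are $s_i$, $t_1$, and $r_1$. The relations $s_is_j=s_it_j=s_ir_j=t_1^2=t_1r_1=r_1^2=0$ force every product of two such generators to vanish, so no class has total degree $>2$. Hence $HH^r(A_\Gamma,A_\Gamma[s])=0$ whenever $r+s>2$, and correspondingly $SH^*(X_\Gamma)=0$ for $*>2$.

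I do not expect a genuine obstacle here: the corollary is essentially a bookkeeping consequence of the computations already in hand, and the counting above is elementary. The only point requiring care is the grading dictionary --- one must use precisely the identification in Thm.~(\ref{voila}) that the symplectic (Conley--Zehnder) degree matches the \emph{total} degree $r+s$, and not the cohomological degree $r$ alone --- together with the (trivial) observation that $\mathrm{char}(\f{K})=0$ places us in the $\mathrm{char}\,\f{K}\neq 2$ case of Thm.~(\ref{hhd}), for which every graded piece in degrees $\leq 2$ has rank exactly $n$.
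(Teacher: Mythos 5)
Your proposal is correct and follows exactly the paper's route: the corollary is obtained by combining the isomorphism $SH^*(X_\Gamma)\cong HH^*(A_\Gamma)$ of Thm.~(\ref{voila}) (matching the Conley--Zehnder grading with the total degree $r+s$) with the rank count in total degree read off from Thm.~(\ref{hhd}) and the explicit bases listed just before the corollary. Your bookkeeping ($n$ classes in each total degree $\leq 2$, vanishing above degree $2$ because all products of positive-total-degree generators are killed by the relations) is the same argument the paper leaves implicit.
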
 

As before, for convenient access, we give tables listing the ranks of a truncated piece of our calculation. As mentioned in Sec.~(\ref{sec-trivial}), $A_\Gamma$ has a graded periodic resolution as a graded bimodule, from which it follows easily that for $\Gamma = D_n$, $n\geq 4$, the ranks of the Hochschild cohomology groups obeys the following periodicity:
\[ \text{rank} HH^{r}(A, A[s]) = \text{rank} HH^{r+(4n-6)}(A , A[s-(4n-4)]) , \ \ \text{for} \ \ r >0 \] 
In this presentation, multiplication by the generator $t_{\text{\scriptsize{-}}2}$ gives rise to this periodicity. The tables below give the truncation which includes a fundamental domain of the period in the cases $\Gamma= D_4, D_5, D_6$.
We have also performed computer-aided checks in these cases.

\begin{table}[htb!]
        \centering
\begin{tikzpicture}
\matrix (mymatrix) [matrix of nodes, nodes in empty cells, text height=1.5ex, text width=2.5ex, align=right]
{
\begin{scope} \tikz\node[overlay] at (-2.2ex,-0.6ex){\footnotesize r+s};\tikz\node[overlay] at (-1ex,0.8ex){\footnotesize s}; \end{scope} 
   & 2 & 1 & 0 & $-1$ & $-2$ & $-3$ & $-4$ & $-5$ & $-6$ & $-7$ & $-8$ & $-9$ & $-10$ \\ 
 2 & 4 & 0 & 0 &  0 &  x &  0 &  0 &  0 &  0 &  0 &  0  &  0 &  0 \\ 
 1 & 0 & 0 & 1 &  0 &  x &  0 &  2 &  0 &  0 &  0 &  1  &  0 &  0 \\
 0 & 0 & 0 & 1 &  0 &  0 &  0 &  2 &  0 &  x &  0 &  1  &  0 &  2x  \\
-1 & 0 & 0 & 0 &  0 &  0 &  0 &  0 &  0 &  x &  0 &  0  &  0 &  2x \\
};
\draw (mymatrix-1-1.south west) ++ (-0.2cm,0) -- (mymatrix-1-14.south east);
\draw (mymatrix-1-2.north west) -- (mymatrix-5-2.south west);
\draw (mymatrix-1-1.north west) -- (mymatrix-1-1.south east);%n k diagonal line
\end{tikzpicture}
\caption{$\Gamma= D_4$.  $x$ is 1 if $\text{char} \f{K} = 2$, $0$ otherwise.} 
\label{d4table}
\end{table}

\begin{table}[htb!]
\centering
\begin{tikzpicture}
\matrix (mymatrix) [matrix of nodes, nodes in empty cells, text height=1.5ex, text width=2.8ex, align=right]
{
\begin{scope} \tikz\node[overlay] at (-2.2ex,-0.6ex){\footnotesize r+s};\tikz\node[overlay] at (-1ex,0.8ex){\footnotesize s}; \end{scope} 
   & 2 & 1 & 0 & $-1$ & $-2$ & $-3$ & $-4$ & $-5$ & $-6$ & $-7$ & $-8$ & $-9$ & $-10$ & $-11$ & $-12$ & $-13$ & $-14$\\ 
 2 & 5 & 0 & 0 &  0 &  x &  0 &  0 &  0 &  0 &  0 &  0 & 0 & 0 & 0 & 0 & 0 & 0 \\ 
 1 & 0 & 0 & 1 &  0 &  x &  0 &  1 &  0 &  1 &  0 &  1 & 0 & 0 & 0 & 1 & 0 & 0 \\ 
 0 & 0 & 0 & 1 &  0 &  0 &  0 &  1 &  0 &  1 &  0 &  1 & 0 & x & 0 & 1 & 0 & x \\
-1 & 0 & 0 & 0 &  0 &  0 &  0 &  0 &  0 &  0 &  0 &  0 & 0 & x & 0 & 0 & 0 & x \\
};
\draw (mymatrix-1-1.south west) ++ (-0.2cm,0) -- (mymatrix-1-18.south east);
\draw (mymatrix-1-2.north west) -- (mymatrix-5-2.south west);
\draw (mymatrix-1-1.north west) -- (mymatrix-1-1.south east);%n k diagonal line
\end{tikzpicture}
\caption{$\Gamma= D_5$.  $x$ is 1 if $\text{char} \f{K} = 2$, $0$ otherwise.} 
\end{table}

\begin{table}[htb!]
\centering
\begin{tikzpicture}
\matrix (mymatrix) [matrix of nodes, nodes in empty cells, text height=1.5ex, text width=2.9ex, align=right]
{
\begin{scope} \tikz\node[overlay] at (-2.2ex,-0.6ex){\footnotesize r+s};\tikz\node[overlay] at (-1ex,0.8ex){\footnotesize s}; \end{scope} 
   & 2 & 1 & 0 & $-1$ & $-2$ & $-3$ & $-4$ & $-5$ & $-6$ & $-7$ & $-8$ & $-9$  & $-10$ & $-11$ & $-12$   & $-13$ & $-14$ & $-15$ & $-16$ & $-17$ & $-18$\\ 
 2 & 6 & 0 & 0 &  0 &  x &  0 &  0 &  0 &  x &  0 &  0  & 0 & 0 & 0 & 0 & 0 &0 &0 &0 &0 &0\\ 
 1 & 0 & 0 & 1 &  0 &  x &  0 &  1 &  0 &  x &  0 &  2  & 0 & 0 & 0 & 1 & 0 &0 &0 &1&0 &0 \\
 0 & 0 & 0 & 1 &  0 &  0 &  0 &  1 &  0 &  0 &  0 &  2  & 0 & x & 0 & 1 &0 &x &0 &1 &0 &2x\\
-1 & 0 & 0 & 0 &  0 &  0 &  0 &  0 &  0 &  0 &  0 &  0  & 0 & x & 0 & 0 &0 &x &0 &0 &0 &2x\\
};
\draw (mymatrix-1-1.south west) ++ (-0.2cm,0) -- (mymatrix-1-22.south east);
\draw (mymatrix-1-2.north west) -- (mymatrix-5-2.south west);
\draw (mymatrix-1-1.north west) -- (mymatrix-1-1.south east);%n k diagonal line
\end{tikzpicture}
\caption{$\Gamma= D_6$.  $x$ is 1 if $\text{char} \f{K} = 2$, $0$ otherwise.} 
\end{table}

\ 

\begin{remark} As a result of the computation for $\Gamma= D_n$, we have $HH^{2}(A_\Gamma,A_\Gamma [s])=0$ for all $s$ over any field $\f{K}$. This rigidity has a
	useful implication in Floer theory: namely, if one has a
	$D_n$-configuration of Lagrangian spheres $S_v$ in a symplectic
	4-manifold $M$, then the Floer cohomology algebra $\bigoplus_{v,w} HF_M
	^*(S_v,S_w)$ is isomorphic to $A_\Gamma$, i.e. it is independent of the
	symplectic manifold $M$. Furthermore, if $\mathrm{char}\f{K} \neq 2$,
	intrinsic formality implies that in fact the $A_\infty$-algebra
	$\bigoplus_{v,w} CF_M^*(S_v,S_w)$ is quasi-isomorphic to $A_\Gamma$.
\end{remark}

\section{Conclusion}

\subsection{Comparison with geometric viewpoint}

We would like to discuss the algebraic computations given in Sec.
(\ref{typeA}) in terms of the symplectic geometry of the Milnor fibre
$X_\Gamma$. We shall omit some of the details but the geometric set-up that we
are about to lay out is taken from \cite{seidelgraded}. Consider $\f{C}^3$ with
its standard symplectic form $d\alpha$ where 
\[ \alpha = -\frac{1}{4} d^c (|z_1|^2 + |z_2|^2 + |z_3|^2) \]
Let $p : \f{C}^3 \to \f{C}$ be the polynomial: \[ p(z_1,z_2,z_3) =
z_1^{n+1} + z_2^2+ z_3^2 \] which has an isolated singularity at the origin of
type $A_n$.  Consider also the Hamiltonian function $H: \f{C}^3 \to \f{R}$ given by \[
H(z_1,z_2,z_3) = 2|z_1|^2 + (n+1)|z_2|^2 + (n+1) |z_3|^2 \] Let $\psi$ be a
cut-off function such that $\psi(t^2) =1$ for $t \leq 1/3$ and $\psi(t^2) = 0 $
for $t \geq 2/3$. For $u\in \f{C} \backslash \{ 0\}$ with $0< |u| < \epsilon$ for
sufficiently small $\epsilon$, we consider the Milnor fibre: \[ \{ z \in
\f{C}^3 : p(z) = \psi(H(z))u \} \] For sufficiently small $\epsilon$, this is a
symplectic submanifold of $\f{C}^3$ and can be symplectically identified with
$X_\Gamma$. For $r \geq 2/3$, we let $L_r = F \cap \{ H=r \}$ be the link of
the singularity. In other words, for such $r$, we have \[ L_r = \{ z \in
\f{C}^3 : 2 |z_1|^2 + (n+1) |z_2|^2 + (n+1) |z_3|^2 = r, p(z) =0 \}.
\] For $r>0$, $L_r$ inherits a contact structure $\alpha|_{L_r}$ and outside of a compact set $X_\Gamma$ can be identified with the positive symplectization of $L_r$. 
The appealing feature of this set-up is that the Reeb vector field $R_r$ on $L_r$ has a periodic flow given by:
\[ t \cdot (z_1, z_2, z_3) = ( e^{\frac{4it}{r}} z_1,  e^{\frac{2 (n+1) it}{r}} z_2 , e^{\frac{2 (n+1) it}{r}} z_3 )   \]
Thus, all the Reeb orbits are along the circle direction of a Seifert fibred structure on the Lens space $L_r \cong L(n+1,n)$. Furthermore, since the Reeb flow is explicit, we can actually write down all the orbits. Let us take $Y_\Gamma = L_1$ as our contact boundary. There are two types of simple orbits:
\begin{itemize} 	
	\item Generic simple orbits of period $\frac{\pi}{2n+2} \mathrm{lcm}(2,n+1)$. These are orbits through points $(z_1,z_2,z_3) \in Y_\Gamma$ such that $z_1 \neq 0$. The $N^{th}$ multiple cover of these orbits have Conley-Zehnder index $2N$ if $n$ is odd, $4N$ if $n$ is even. 
	\item Exceptional simple orbits of period $\frac{\pi}{n+1}$. These are orbits through points $(0,z_2, z_3) \in Y_\Gamma$. The $N^{th}$ multiple cover of this orbit has Conley-Zehnder index $2 \lfloor \frac{2N}{n+1}\rfloor +1 $ except when $2N= M(n+1)$ for some $M \in \f{Z}$, in which case the index is $2M$.
\end{itemize}

For each $N \in \f{Z}_+$, we can consider $N$-fold multiple covers of generic simple orbits together with $(n+1)N$-fold (resp. $(n+1)N/2)$-fold) for $n$ even (resp. $n$ odd) multiple covers of exceptional orbits as parametrized by the manifold $L(n+1,n)$ and the $N$-fold cover of exceptional orbits for each $N \in \f{Z}_+$ not divisible by $n+1$ (resp. $(n+1)/2$) for $n$ even (resp. $n$ odd) as parametrized by $S^1 \sqcup S^1$. This leads to a standard Morse-Bott type spectral sequence converging to $SH^*(X_\Gamma)$ (see
\cite{biased} and/or \cite{otto} for a more recent exposition). For example, for $n=2$, the  $E_1$ page is given by
\begin{equation} 
	E_1^{pq} = \begin{cases}
		H^{q}(X_\Gamma;\f{K}) & \text{if\ } p = 0, \\
		H^{q-p-2}((S^1 \sqcup S^1); \f{K}) & \text{if\ } p =2l+1 < 0, \\
		H^{q-p}(L(3,2);\f{K}) \oplus H^{q-p-2}((S^1 \sqcup S^1); \f{K}) & \text{if\ } p= 2l < 0, \\
		0 & \text{if\ } p>0.
	\end{cases}
\end{equation}
The higher differentials come from contributions of holomorphic cylinders counted in the differential of symplectic cohomology. A finite truncation of the $E_1$ page of this spectral sequence is showin in Table \ref{e1page}. 
\begin{table}[htb!]
\centering
\begin{tikzpicture}
\matrix (mymatrix) [matrix of nodes, nodes in empty cells, text height=1.5ex, text width=2.5ex, align=right]
{
\begin{scope} \tikz\node[overlay] at (-2.6ex,-0.2ex){\footnotesize p+q};\tikz\node[overlay] at (-1ex,0.8ex){\footnotesize q}; \end{scope} 
   & 2 & 1 & 0 & $-1$ & $-2$ & $-3$ & $-4$ &  \\ 
 2 & 2 & 0 & 0 & 0 &  0 &  0 &  0 &     \\ 
 1 & 2 & 0 & 0 & 0 &  0 &  0 &  0 &     \\
 0 & 0 & 2 & 1 & 0 &  0 &  0 &  0 &     \\
-1 & 0 & 3 & 0 & 0 &  0 &  0 &  0 &     \\
-2 & 0 & 0 &x+2& 0 &  0 &  0 &  0 &     \\
-3 & 0 & 0 & 2 & x &  0 &  0 &  0 &     \\
-4 & 0 & 0 & 0 & 2 &  1 &  0 &  0 &     \\
-5 & 0 & 0 & 0 & 3 &  0 &  0 &  0 &     \\
-6 & 0 & 0 & 0 & 0 & x+2 &  0 &  0 &     \\
-7 & 0 & 0 & 0 & 0 &  2 &  x &  0 &     \\
-8 & 0 & 0 & 0 & 0 &  0 &  2 &  1 &     \\
};
\draw (mymatrix-1-1.south west) ++ (-0.2cm,0) -- (mymatrix-1-8.south east);
\draw (mymatrix-1-2.north west) -- (mymatrix-12-2.south west);
\draw (mymatrix-1-1.north west) -- (mymatrix-1-1.south east);%n k diagonal line
\end{tikzpicture}
\caption{$E_1$ page of the Morse-Bott spectral sequence for $\Gamma = A_2$.  $x$ is 1 if $\text{char} \f{K} = 3$, $0$ otherwise.} 
\label{e1page}
\end{table}

\vspace{-2mm}

Comparing this with our results from Sec.~(\ref{typeA}), which correspond to a calculation of the total complex at the $E_\infty$ page of the spectral sequence, gives us information about the holomorphic cylinders contributing to the differential of symplectic cohomology. 
For example, if $\mathrm{char}\f{K}=3$, the spectral sequence has to be degenerate but otherwise there has to be a non-trivial differential. 
See also the appendix of \cite{otto} for a similar spectral sequence obtained via another natural choice of a contact form on the lens space $L(n+1,n)$. 

In conclusion, even though this geometric point of view leads to an appealing
description of the generators of the chain complex, it seems harder to
determine the cohomology this way, let alone its multiplicative structure.
However, it is reassuring that the algebraic approach taken in this paper and
the geometric picture just outlined are compatible. 

\vspace{-3mm}

\subsection{Generalizations} 

In this paper, we have studied Legendrian links $\Lambda \subset (S^3,
\xi_{std})$ which are obtained by plumbing Legendrian unknots according to a
plumbing tree $\Gamma$.  One might wonder what Koszul duality has to say when
$\Lambda$ is a more general Legendrian submanifold. Of course, one can study
this plumbing construction in higher dimensions. Both Ginzburg DG-algebra and
the zigzag algebra have analogues corresponding to higher-dimensional
plumbings, and we expect that our calculations can be extended in a
straightforward way. 

Perhaps, a more interesting direction to pursue is the following. One of our
main observations was that the Legendrian cohomology DG-algebra of $\Lambda$
admits a certain natural augmentation $\epsilon : LCA^*(\Lambda) \to
\mathrm{k}$ such that  \begin{equation} \label{generalize}
	\text{RHom}_{LCA^*(\Lambda)^{op}}(\mathrm{k},\mathrm{k})
\end{equation} is quasi-isomorphic to a finite-dimensional associative algebra
$A$, whose Hochschild complex is isomorphic to that of $LCA^*(\Lambda)$ by an
$A_\infty$-version of Koszul duality.

One could contemplate generalizing this construction to an arbitrary Legendrian link $\Lambda$
whose $LCA^*(\Lambda)$ admits an augmentation $\epsilon$. In general, one cannot expect to have the
connectedness and the finiteness conditions required in Thm.~(\ref{dgkoszul}). Furthermore, in
general, $LCA^*(\Lambda)$ is not graded over $\f{Z}$ but over $\f{Z}/N$ for some $N > 0$. These pose
important restrictions, analogous to the assumption of simply connectedness that appears in the classical story
discussed in the introduction. One could partially extend Koszul duality to these more general
situations if one takes completions with respect to the augmentation ideal.

\end{document}